\theoremstyle{plain}
\newtheorem{cor}{Corollary}
\newtheorem{lem}[cor]{Lemma}
\newtheorem{prop}[cor]{Proposition}
\newtheorem{thm}[cor]{Theorem}
\theoremstyle{definition}
\newtheorem{definition}[cor]{Definition}
\newtheorem{remark}[cor]{Remark}
\numberwithin{cor}{section}
\numberwithin{equation}{section}
\DeclareMathOperator{\C}{C}
\DeclareMathOperator{\BUC}{BUC}
\DeclareMathOperator{\BV}{BV}
\DeclareMathOperator{\Lip}{Lip}
\DeclareMathOperator{\sgn}{sgn}
\newcommand{\abs}[1]{\left|#1\right|}
\newcommand{\norm}[1]{\left\|#1\right\|}
\providecommand{\ud}[1]{\, \mathrm{d} #1}
\providecommand{\dx}{\ud{x}}
\providecommand{\dy}{\ud{y}}
\providecommand{\dxi}{\ud \xi}
\providecommand{\deta}{\ud{\eta}}
\providecommand{\dr}{\ud{r}}
\providecommand{\dxp}{\ud{x'}}
\providecommand{\dxip}{\ud{\xi'}}
\providecommand{\dyp}{\ud{y'}}
\providecommand{\ds}{\ud{s}}
\providecommand{\dt}{\ud{t}}
\providecommand{\dz}{\ud{z}}
\providecommand{\dd}{\ud}
\def\XXint#1#2#3{{\setbox0=\hbox{$#1{#2#3}{\int}$ }
\vcenter{\hbox{$#2#3$ }}\kern-.6\wd0}}
\title{Well-posedness of nonlinear diffusion equations with nonlinear, conservative noise}
\author{Benjamin Fehrman, Benjamin Gess}
\address{University of Oxford, Mathematical Institute, Oxford OX2 6GG, United Kingdom  \newline Max Planck Institute for Mathematics in the Sciences \\ Inselstr. 22\\ 04103 Leipzig, Germany \newline Fakult\"at f\"ur Mathematik \\ Universit\"at Bielefeld \\ Universit\"atstr. 25 \\ 33615 Bielefeld \\ Germany }
\email{Benjamin.Fehrman@maths.ox.ac.uk \\ Benjamin.Gess@mis.mpg.de}
\date{\today}
\begin{document}

\begin{abstract}
We prove the pathwise well-posedness of stochastic porous media and fast diffusion equations driven by nonlinear, conservative noise. As a consequence, the generation of a random dynamical system is obtained. This extends results of the second author and Souganidis, who considered analogous spatially homogeneous and first-order equations, and earlier works of Lions, Perthame, and Souganidis. 
\end{abstract}

\maketitle

\section{Introduction}\label{intro}

In this paper, we consider stochastic porous media and fast diffusion equations with nonlinear, conservative noise of the form
\begin{equation}\label{intro_eq} \left\{\begin{array}{ll} \partial_t u=\Delta (|u|^{m-1}u)+\nabla\cdot (A(x,u)\circ \dz_t)& \textrm{on}\;\;\mathbb{T}^d\times(0,\infty), \\ u=u_0 & \textrm{on}\;\;\mathbb{T}^d\times\{0\},\end{array}\right.\end{equation}
for a diffusion exponent $m\in(0,\infty)$, nonnegative initial data $u_0\in L^2(\mathbb{T}^d)$, and an $n$-dimensional, $\alpha$-H\"older continuous, geometric rough path $z$, which in particular applies to the case when $z$ is an $n$-dimensional Brownian motion.  The domain $\mathbb{T}^d$ is the $d$-dimensional unit torus.  The matrix-valued nonlinearity
$$A(x,\xi)=\left(a_{ij}(x,\xi)\right):\mathbb{T}^d\times\mathbb{R}\rightarrow\mathcal{M}^{d\times n},$$
is assumed to be regular, with required regularity dictated by regularity of the rough path $z$.  

This type of stochastic porous media equation arises, for example, as an approximative model for the fluctuating hydrodynamics of the zero range particle process about its hydrodynamic limit, as a continuum limit of mean field stochastic differential equations with common noise, with notable relation to the theory of mean field games, as an approximation to the Dean-Kawasaki equation arising in fluctuating fluid dynamics, and as a model for thin films of Newtonian fluids with negligible surface tension.  More details on these applications are given in Section~\ref{sec:appl} below. 

The methods of this paper prove that equation \eqref{intro_eq} is pathwise well-posed using primarily analytic techniques and rough path analysis. It should be noted that even in the case where $z$ is given by a Brownian motion and even in the probabilistic (i.e.\ non-pathwise) sense, the well-posedness of \eqref{intro_eq} could not be shown thus far.  In addition, the results of this paper establish the existence of a random dynamical system for \eqref{intro_eq}, which is known to be a notoriously difficult problem for stochastic partial differential equations with nonlinear noise and which is, in general, largely open.  These are the first results proving the existence of a random dynamical system for a nonlinear SPDE with $x$-dependent, nonlinear noise.  Even in the linear case $m=1$, and despite much effort \cite{Flandoli,Gess1,MohammedZhangZhao}, this could not be shown previously.

The nonlinearity of the stochastic term prevents the application of transformation methods that are often used for equations driven by affine-linear noise. Instead, our method is based on passing to the equation's kinetic formulation, introduced by Chen and Perthame \cite{ChenPerthame}. 
Motivated by the theory of stochastic viscosity solutions for fully-nonlinear second-order stochastic partial differential equations of Lions and Souganidis \cite{LSstoch5,LSstoch4,LSstoch3,LSstoch2,LSstoch1}, and the work of Lions, Perthame and Souganidis \cite{LPS1,LPS} and the second author and Souganidis \cite{GessSouganidis,GessSouganidis1,GessSouganidis2} on stochastic scalar conservation laws, this gives rise to the notion of a \emph{pathwise kinetic solution} (cf.\ 
Definition~\ref{def_solution} below).

The methods developed in \cite{GessSouganidis} for scalar conservation laws with $x$-dependent flux rely on weak convergence arguments and so-called generalized kinetic solutions. These kind of arguments do not apply to the parabolic-hyperbolic case \eqref{intro_eq}, since the class of pathwise entropy solutions to \eqref{intro_eq} is not closed under weak convergence. For this reason, in \cite{GessSouganidis1} a strong convergence method, based on a uniform $BV$-estimate and continuous dependence on the driving signal $z$ with respect to the uniform topology was introduced. These arguments are strictly restricted to $x$-independent noise. Indeed, neither a uniform $BV$-estimate for solutions to \eqref{intro_eq} seems to be available, nor, as the theory of rough paths tells us, should the continuity of solutions with respect to $z$ in uniform topology be expected.

As a consequence, new arguments have to be introduced in order to handle \eqref{intro_eq}. In this spirit, the proof of uniqueness of solutions to \eqref{intro_eq} heavily relies on the observation of new cancellations and error estimates.  The proof furthermore uses sharp regularity estimates which, in the fast diffusion case $m\in(0,1)$, are new even in the deterministic setting.  As a first main result, in Section~\ref{uniqueness}, we obtain the uniqueness of pathwise kinetic solutions with nonnegative initial data.

\begin{thm}\label{intro_unique}  Let $u^1_0,u^2_0\in L^2_+(\mathbb{T}^d)$.  Pathwise kinetic solutions $u^1$ and $u^2$ of \eqref{intro_eq} with initial data $u^1_0$ and $u^2_0$ satisfy
$$\norm{u^1-u^2}_{L^\infty\left([0,\infty);L^1(\mathbb{T}^d)\right)}\leq \norm{u^1_0-u^2_0}_{L^1(\mathbb{T}^d)}.$$
In particular, pathwise kinetic solutions are unique.  \end{thm}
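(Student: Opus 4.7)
The plan is to prove the $L^1$-contraction via doubling of variables in the kinetic formulation. For nonnegative data, set $\chi^i(x,\xi,t) := \mathbf{1}_{\{0<\xi<u^i(x,t)\}}$ for $i=1,2$. Since $\chi^i\in\{0,1\}$,
\[
\|u^1(\cdot,t)-u^2(\cdot,t)\|_{L^1(\mathbb{T}^d)} = \int_{\mathbb{T}^d}\int_0^\infty \bigl(\chi^1+\chi^2-2\chi^1\chi^2\bigr)\,d\xi\,dx,
\]
and since $\int u^i(\cdot,t)\,dx = \int u^i_0\,dx$ by the conservative form of the noise, the task reduces to lower bounding $\int\int \chi^1\chi^2\,d\xi\,dx$ at time $t$ by its value at $t=0$.

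Next, I would double variables: with standard mollifiers $\rho_\varepsilon(x-y)$ and $\psi_\delta(\xi-\eta)$, set
\[
I_{\varepsilon,\delta}(t) := \int \chi^1(x,\xi,t)\,\chi^2(y,\eta,t)\,\rho_\varepsilon(x-y)\,\psi_\delta(\xi-\eta)\,dx\,dy\,d\xi\,d\eta,
\]
and apply the rough product rule using the kinetic equation of Definition~\ref{def_solution}, which schematically has the form $d\chi^i = m|\xi|^{m-1}\Delta_x\chi^i\,dt + \partial_\xi q^i\,dt - a(x,\xi)\cdot\nabla_x\chi^i\circ dz_t$, with $a=\partial_\xi A$ and $q^i\geq 0$ the sum of the parabolic dissipation and entropy defect measures. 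The time evolution of $I_{\varepsilon,\delta}$ then splits into four families of contributions: (a) the parabolic cross terms, which after integration by parts yield the diagonal dissipation (good sign) plus a second-order commutator in $\xi$ that vanishes as $\delta\to 0$ using the sharp $\xi$-regularity estimates emphasized in the introduction; (b) the defect-measure terms, which are nonnegative in the limit $\varepsilon,\delta\to 0$; (c) the rough, conservative noise cross terms; and (d) first-order mollification commutators from (a) and from the drift.

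The main obstacle is (c). After combining the two noise contributions and integrating by parts against $\rho_\varepsilon$, they form a rough integral of the form
\[
\int_0^t\!\!\int \bigl(a(x,\xi)-a(y,\eta)\bigr)\cdot\nabla_x\rho_\varepsilon(x-y)\,\chi^1(x,\xi,s)\,\chi^2(y,\eta,s)\,\psi_\delta(\xi-\eta)\,dx\,dy\,d\xi\,d\eta\,\circ dz_s.
\]
Pointwise the prefactor is $O(\varepsilon+\delta)$ by the Lipschitz regularity of $a$, which would close the estimate for a smooth driver; but for a genuine $\alpha$-H\"older rough path the Stratonovich correction involves the iterated integrals $\mathbf{z}^{(2)}$ and is controlled not by the sup-norm of the integrand but by a higher joint $(x,\xi)$-regularity, so the naive bound diverges as $\varepsilon,\delta\to 0$. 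This is where the new cancellations highlighted in the introduction must enter: one symmetrizes the integrand under $(x,\xi)\leftrightarrow(y,\eta)$ to isolate the antisymmetric component against which the second-order rough path pairs favorably, leaving a remainder that, by the regularity of $a$ and of the kinetic solutions, is controlled by the rough-path norm times a positive power of $\varepsilon$ and $\delta$. Sending first $\delta\to 0$, to preserve the sign of the $\xi$-dissipation and the defect measure, and then $\varepsilon\to 0$, one obtains the desired lower bound on $\int\int \chi^1\chi^2\,d\xi\,dx$, and combining with the identity of the first paragraph yields the stated $L^1$-contraction; uniqueness is the specialization $u^1_0=u^2_0$.
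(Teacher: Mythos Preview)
Your plan has a structural gap at the noise step. In Definition~\ref{def_solution} the kinetic equation is \emph{not} written with a rough integral against $dz$; it is tested only against functions $\rho_{t_0,r}$ that are themselves transported by the inverse rough characteristics \eqref{char_rough_eq}, so the noise is absorbed into the test functions and equation~\eqref{transport_equation} contains only the diffusion term and the defect measures. Consequently there is no ``rough product rule'' to apply to $I_{\varepsilon,\delta}$, and the object you write as $\int_0^t\!\int(a(x,\xi)-a(y,\eta))\cdot\nabla_x\rho_\varepsilon\,\chi^1\chi^2\,\psi_\delta\circ dz_s$ is never formed. Making sense of such a rough integral against indicator-type integrands would itself be a serious problem, and your symmetrization claim (``the antisymmetric component pairs favorably with $\mathbf{z}^{(2)}$'') is not an argument but a hope; no mechanism is given for why the second-level correction should gain a positive power of $\varepsilon,\delta$.

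The paper's mechanism is different. One regularizes the \emph{transported} kinetic functions $\tilde\chi^j_{t_i,r}=\chi^j_r(X^{x,\xi}_{t_i,r},\Xi^{x,\xi}_{t_i,r})$ and performs a time-splitting over a partition $\mathcal{P}$ of $[0,T]$. On each subinterval the noise is gone by construction; the new error terms come from the interaction of the $x$-dependent characteristics with the diffusion and with the defect measures (swapping $\nabla_x Y^{x,\xi}$ for $\nabla_{x'}Y^{x',\xi'}$, etc.), and these are bounded by $C|\mathcal{P}|^\alpha$ times controlled quantities via the rough path estimates of Proposition~\ref{rough_est}. Letting $\epsilon\to0$ and then $|\mathcal{P}|\to0$ closes the argument. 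A second point you do not address: after the parabolic cancellation one is left with an error $\textrm{Err}^4_i$ carrying the factor $\bigl(|\xi|^{\frac{m-1}{2}}-|\xi'|^{\frac{m-1}{2}}\bigr)^2$, which for $m\in(0,1)\cup(1,2]$ is controlled only through the singular moment estimate $\int|\xi|^{-1}q<\infty$ of Proposition~\ref{aux_log}. This is precisely where nonnegativity of the data enters, and it is absent from your sketch.
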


As pointed out above, compactness arguments used in the spatially homogeneous setting are not available for \eqref{intro_eq}. Instead, the proof of existence introduced in this work relies on new a priori estimates both in space and time. In Section~\ref{sol_exists}, we prove existence for general initial data.

\begin{thm}\label{intro_existence}  Let $u_0\in L^2(\mathbb{T}^d)$.  There exists a pathwise kinetic solution $u$ of \eqref{intro_eq} with initial data $u_0$.  Furthermore, if $u_0\in L^2_+(\mathbb{T}^d)$, then, for each $T>0$,
$$u\in L^\infty([0,T];L^2_+(\mathbb{T}^d)).$$
\end{thm}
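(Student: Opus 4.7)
The plan is to construct a pathwise kinetic solution by jointly regularizing the rough path $z$ and the initial datum $u_0$, establishing uniform a priori estimates on the regularized solutions, and extracting a limit by a compactness argument. First I would fix smooth approximations $z^\eta$ of $z$ converging in the $\alpha'$-H\"older geometric rough path topology for some $\alpha'<\alpha$, together with smooth approximations $u_0^\eta\in C^\infty(\mathbb{T}^d)$ of $u_0$ converging in $L^2(\mathbb{T}^d)$, arranged so that $u_0^\eta\geq 0$ whenever $u_0\geq 0$. For the regularized equation
\[
\partial_t u^\eta = \Delta(|u^\eta|^{m-1}u^\eta)+\nabla\cdot\bigl(A(x,u^\eta)\,\dot z^\eta_t\bigr),
\]
existence of a smooth solution $u^\eta$ follows from standard nonlinear parabolic theory, for instance by Galerkin approximation combined with a vanishing-viscosity approximation of the degenerate porous-medium (respectively fast-diffusion) nonlinearity.

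Next I would establish uniform bounds on $\{u^\eta\}$ independent of the regularization. Testing against $u^\eta$ yields an $L^\infty_t L^2_x$ bound together with the entropy dissipation estimate
\[
\int_0^T\!\!\int_{\mathbb{T}^d}\bigl|\nabla\bigl(|u^\eta|^{\frac{m-1}{2}}u^\eta\bigr)\bigr|^2\,dx\,dt\leq C\bigl(\|u_0\|_{L^2},\|z\|_{C^\alpha}\bigr),
\]
the contribution of the divergence-form noise being absorbed by the regularity of $A$ and the H\"older norm of $z^\eta$ through integration by parts. In parallel I would derive a temporal regularity estimate on $u^\eta$ read off from the weak kinetic form of the equation. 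An Aubin-Lions argument then provides a subsequence $u^\eta\to u$ strongly in $L^1_{\mathrm{loc}}([0,T]\times\mathbb{T}^d)$, hence almost everywhere after further extraction.

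It remains to identify the limit as a pathwise kinetic solution of \eqref{intro_eq}. The nonlinear diffusion term $\Delta(|u^\eta|^{m-1}u^\eta)$ and the kinetic measure dissipation pass by strong $L^1$ convergence, Fatou, and weak-$*$ lower semicontinuity, while the parabolic defect measure is controlled by the entropy dissipation bound. The decisive step is the passage to the limit in the stochastic term, which requires continuous dependence on the driver in the rough path topology rather than in the uniform topology ruled out in the introduction. For this I would exploit the quantitative stability machinery underlying the proof of Theorem~\ref{intro_unique}, applied now to pairs of approximants indexed by $\eta,\eta'\to 0$, in order to show that $\{u^\eta\}$ is Cauchy in $L^1$ along the rough-path regularization. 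When $u_0\in L^2_+(\mathbb{T}^d)$, the preservation of nonnegativity at the smooth level, which follows from a maximum-principle argument exploiting the divergence form of the stochastic term together with the uniform $L^2$ estimate, is inherited by the limit, yielding the claimed $L^\infty([0,T];L^2_+(\mathbb{T}^d))$ bound.

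The main obstacle I anticipate is precisely this joint limit in the rough term: in the absence of a uniform $BV$ estimate and of continuous dependence on $z$ in the uniform topology, the compactness of $\{u^\eta\}$ must be combined with the sharp new cancellation and error estimates for the rough integral developed in the uniqueness argument, and these estimates must remain uniform as $z^\eta\to z$ in the rough-path metric. Making this work likely requires careful use of the geometric structure of the enhanced path and a quantitative control of the commutator terms arising when one doubles variables to compare $u^\eta$ and $u^{\eta'}$ along an $L^1$-contraction-type estimate that is robust with respect to mollification of the driver.
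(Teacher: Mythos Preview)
Your outline shares the broad architecture with the paper (regularize the driver, obtain uniform $L^\infty_tL^2_x$ and parabolic-defect bounds, extract a limit), but it diverges from the paper at the most delicate point and, as written, contains a genuine gap.

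The gap is in the compactness step. You propose to obtain temporal regularity of $u^\eta$ ``read off from the weak kinetic form of the equation'' and then run Aubin--Lions. But the weak form of the equation for $u^\eta$ (or for its kinetic function $\chi^\eta$) contains the transport term driven by $\dot z^\eta$, and this term is \emph{not} uniformly bounded as $\eta\to 0$; any direct $H^{-s}$ bound on $\partial_t\chi^\eta$ would carry a factor $\|\dot z^\eta\|$ and blow up. The paper's key device, which you are missing, is to work instead with the \emph{transported} kinetic function $\tilde\chi^{\eta,\epsilon}(x,\xi,t)=\chi^{\eta,\epsilon}(X^{x,\xi,\epsilon}_{0,t},\Xi^{x,\xi,\epsilon}_{0,t},t)$, where $(X,\Xi)$ solve the characteristic system driven by $z^\epsilon$. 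Because the characteristics exactly cancel the first-order transport, $\partial_t\tilde\chi^{\eta,\epsilon}$ involves only the diffusion and the defect measures, and one obtains a stable $L^1_tH^{-s}_{x,\xi}$ bound (Proposition~\ref{time_frac}). Spatial/velocity regularity is obtained separately by proving $\chi^{\eta,\epsilon}\in L^1_tW^{s,1}_{x,\xi}$ for $s<\frac{2}{m+1}\wedge 1$ via fractional Sobolev estimates (Proposition~\ref{fractional_sobolev}), and is then transferred to $\tilde\chi^{\eta,\epsilon}$ using that the characteristic flow is a measure-preserving diffeomorphism with bounded gradient. Aubin--Lions--Simon is applied to $\{\tilde\chi^{\eta,\epsilon}\}$, not to $\{u^\eta\}$.

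Your fallback---use the uniqueness machinery to show $\{u^\eta\}$ is Cauchy along the rough-path regularization---is \emph{not} what the paper does, and is genuinely harder than the route the paper takes. Theorem~\ref{intro_unique} compares two pathwise kinetic solutions driven by the \emph{same} rough path; comparing $u^\eta$ and $u^{\eta'}$ requires handling different characteristic systems and produces additional commutator errors that are not treated in the uniqueness proof. The paper avoids this entirely: once strong $L^1$ convergence of $\chi^{\eta,\epsilon}$ is obtained from compactness, the passage to the limit in the stochastic term is automatic because in Definition~\ref{def_solution} the noise appears only through the test functions $\rho_{t_0,r}=\rho_0(Y^{x,\xi}_{r,r-t_0},\Pi^{x,\xi}_{r,r-t_0})$, and these converge uniformly by rough-path stability of the characteristic flow (Proposition~\ref{rough_est}). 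Continuity in the driver (Theorem~\ref{intro_noise_cts}) is then deduced \emph{a posteriori} from compactness plus uniqueness, rather than being an input to existence.

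Finally, the nonnegativity claim is handled in the paper not by a maximum principle at the smooth level but kinetically: one tests the limiting equation against $\sgn_-(\xi)$ transported along characteristics (Corollary~\ref{path_L1}), using that the velocity characteristic preserves sign.
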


{\color{black}It is well known, see for instance Lyons \cite{Lyons91}, that solutions to stochastic differential equations do not depend continuously on the driving noise.  However, in \cite{Lyons98} Lyons observed that continuity of the solution map can be recovered by means of a finer \emph{rough path} topology.  These ideas are recalled in Section~\ref{sec_rough}.

We prove an analogous result for pathwise kinetic solutions.  Namely, as a consequence of the analysis leading to Theorem~\ref{intro_unique} and Theorem~\ref{intro_existence}, we prove that solutions of \eqref{intro_eq} depend continuously on the driving noise.  In the following statement, the metric $d_\alpha$ denotes the $\alpha$-H\"older metric on the space of geometric rough paths introduced in Section~\ref{sec_rough}.  Since the solution map is a map between metric spaces, continuity is phrased in terms of sequential continuity.

\begin{thm}\label{intro_noise_cts}  Let $u_0\in L^2_+(\mathbb{T}^d)$ and $T>0$.  Let $\{z^n\}_{n=1}^\infty$ and $z$ be a sequence of $n$-dimensional, $\alpha$-H\"older continuous geometric rough paths on $[0,T]$ satisfying
$$\lim_{n\rightarrow\infty}d_\alpha(z^n,z)=0.$$
Let $\{u^n\}_{n=1}^\infty$ and $u$ denote the pathwise kinetic solutions to \eqref{intro_eq} on $[0,T]$ with initial data $u_0$ and driving signals $\{z^n\}_{n=1}^\infty$ and $z$ respectively.   Then,
$$\lim_{n\rightarrow\infty}\norm{u^n-u}_{L^\infty([0,T];L^1(\mathbb{T}^d))}=0.$$

\end{thm}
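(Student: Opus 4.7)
The plan is to re-run the doubling-of-variables argument underlying the $L^1$-contraction of Theorem~\ref{intro_unique}, but for two solutions driven by different rough paths. When the driver is common, the conservative noise terms cancel exactly in the kinetic pairing up to the commutator and parabolic-defect errors that are absorbed by optimising the regularisation scales; when the drivers differ, the cancellation leaves a residual rough integral, and the theorem should follow once that residual is quantified by the rough-path metric $d_\alpha(z^n,z)$.

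Concretely, I would write the kinetic equations for the kinetic densities $\chi^n=\chi_{u^n}$ and $\chi=\chi_u$, test their difference against a symmetric convolution kernel at spatial scale $\delta$ and velocity scale $\eta$, and organise the resulting identity into three groups. The first group is the contraction estimate of Theorem~\ref{intro_unique}, driven by the common kinetic structure and recovering $\|u^n-u\|_{L^1}$ in the limit $\delta,\eta\to 0$. The second group consists of the parabolic-defect and $x$-dependent commutator terms, handled exactly as in the uniqueness proof. The third group is a new rough-driven contribution coming from the difference of the two conservative noise terms. This last piece, interpreted as a rough integral against the difference of the enhanced signals $\mathbf{z}^n-\mathbf{z}$, should be bounded via the sewing lemma and standard rough-integration estimates by
\[
C(\delta,\eta,\|A\|,\|u_0\|)\, d_\alpha(z^n,z),
\]
where the constant is uniform in $n$ since $d_\alpha(z^n,z)\to 0$ yields a uniform $\alpha$-H\"older rough-path bound on the $\mathbf{z}^n$. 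One then passes to the limit $n\to\infty$ first at fixed $\delta,\eta$, which kills the rough-path residual, and only afterwards sends $\delta,\eta\to 0$ as in Theorem~\ref{intro_unique}; since the two solutions share the initial datum $u_0$, no initial contribution survives and the desired convergence follows.

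The main obstacle will be making the rough-path estimate of the residual term both rigorous and uniform in $n$. At the kinetic level the conservative noise is a rough integral whose Gubinelli derivative depends on the solution itself through $A(x,\cdot)$, so bounding its variation with respect to the enhanced driver requires that both the first and second levels of $\mathbf{z}^n$ act in a controlled way on the regularised kinetic density, and that the controlled-rough-path remainders of $u^n$ and $u$ are bounded uniformly in $n$ in a common norm. The necessary uniform bounds should come from the a priori estimates developed for the existence proof of Theorem~\ref{intro_existence}, but tracking how $C(\delta,\eta,\ldots)$ grows as the regularisation scales vanish, and ensuring that the iterated limits in $n$ and in $(\delta,\eta)$ still combine to zero, is the genuinely delicate point.
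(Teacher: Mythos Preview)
Your approach is genuinely different from the paper's, and it has a real gap.

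The paper does \emph{not} attempt a direct quantitative comparison of $u^n$ and $u$.  Instead it argues by compactness and uniqueness: the convergence $d_\alpha(z^n,z)\to 0$ gives a uniform rough-path bound on the $z^n$, hence (via Proposition~\ref{rough_est}) uniform-in-$n$ versions of all the a priori estimates used in the existence proof (Propositions~\ref{stable_L1}--\ref{fractional_sobolev}, Corollary~\ref{prop_transfer}).  Aubin--Lions--Simon then gives subsequential convergence of $u^{n_k}$ in $L^1$, the limit is identified as a pathwise kinetic solution driven by $z$ with initial datum $u_0$, and uniqueness (Theorem~\ref{theorem_uniqueness}) forces the limit to be $u$; convergence of the full sequence follows.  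The paper explicitly remarks that this method ``do[es] not yield an explicit estimate quantifying the convergence of the solutions in terms of the convergence of the noise.''

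The gap in your proposal is that the pathwise kinetic formulation of this paper contains \emph{no rough integrals at the solution level}.  The noise is removed entirely by testing against functions transported along the characteristic flow \eqref{kin_rough_back}; see Definition~\ref{def_solution} and equation~\eqref{transport_equation}.  There is therefore no ``residual rough integral against $\mathbf{z}^n-\mathbf{z}$'' to which the sewing lemma or Gubinelli-derivative bounds could be applied, and the solutions $u^n$ are not constructed as controlled rough paths.  If you tried to run the doubling argument with two different drivers, the obstruction would appear differently: the regularised kinetic functions $\tilde{\chi}^{n,\epsilon}$ and $\tilde{\chi}^{\epsilon}$ would have to be transported along \emph{different} characteristic flows, and the mismatch would enter the cross term $\int \tilde{\chi}^{n,\epsilon}\tilde{\chi}^{\epsilon}$ through differences like $Y^{x,\xi,z^n}_{r,r-t_i}-Y^{x,\xi,z}_{r,r-t_i}$.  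These are indeed controlled by $C\,d_\alpha(z^n,z)$ via Proposition~\ref{rough_est}, but they sit inside the convolution kernels $\rho^{\epsilon}$, so the resulting error is of order $\epsilon^{-(d+1)}d_\alpha(z^n,z)$ (or worse once derivatives of the kernels appear).  Your plan to take $n\to\infty$ at fixed $\epsilon$ and then $\epsilon\to 0$ could in principle survive this, but you have not shown that the \emph{other} error terms from Steps~2--5 of Theorem~\ref{theorem_uniqueness} --- which now also pick up characteristic-difference contributions --- remain uniformly bounded in $n$ and still vanish as $\epsilon\to 0$ and $|\mathcal{P}|\to 0$.  This is the ``genuinely delicate point'' you flag, and it is not addressed; the paper sidesteps it completely by going through compactness.
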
}

Furthermore, the existence of a random dynamical system for \eqref{intro_eq} is immediate from Theorem \ref{intro_unique} and Theorem \ref{intro_existence}.   A more complete discussion concerning random dynamical systems in general can be found in the work of Flandoli \cite{Flandoli}, the second author \cite{Gess1}, and Mohammed, Zhang, and Zhao \cite{MohammedZhangZhao}.  In the context of this paper, the existence of a random dynamical system amounts to proving an almost-sure inhomogeneous semigroup property for the equation.

Precisely, suppose that $t\in[0,\infty)\mapsto z_t=z_t(\omega)$ arises from the sample paths of a stochastic process defined on a probability space $\omega\in(\Omega,\mathcal{F},\mathbb{P})$.  Let $u(u_0,s,t;z_\cdot(\omega))$ denote the solution of \eqref{intro_eq} at time $t\geq s$, beginning from time $s\geq0$ with noise $z_\cdot(\omega)$ and initial data $u_0$.  To prove the existence of a random dynamical system, it is necessary to show that, for every $u_0\in L^2_+(\mathbb{T}^d)$, for almost every $\omega\in\Omega$,
\begin{equation}\label{intro_RDS} u(u_0,s,t;z_\cdot(\omega))=u(u_0,0,t-s;z_{\cdot+s}(\omega))\;\;\textrm{for every}\;\;0\leq s\leq t<\infty.\end{equation}
The pathwise results of Theorem~\ref{intro_unique} and Theorem~\ref{intro_existence} immediately imply \eqref{intro_RDS}, since there is precisely one zero set for all times.  For simplicity, the statement is specialized to the case of fractional Brownian motion.

\begin{thm}\label{intro_rds}  Suppose that the noise $t\in[0,\infty)\mapsto z_t(\omega)$ arises from the sample paths of a fractional Brownian motion with Hurst parameter $H\in(\frac{1}{4},1)$ defined on a probability space $\omega\in(\Omega,\mathcal{F},\mathbb{P})$.  Equation~\eqref{intro_eq} interpreted in the sense of Definition~\ref{def_solution} defines a random dynamical system on $L^2_+(\mathbb{T}^d)$. \end{thm}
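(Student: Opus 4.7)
The plan is to combine pathwise well-posedness with a canonical probability space for fractional Brownian motion. I would begin by setting up the metric dynamical system $(\Omega,\mathcal{F},\mathbb{P},(\theta_s)_{s\ge 0})$: take $\Omega$ to be the space of continuous $\mathbb{R}^n$-valued paths vanishing at $0$, $\mathbb{P}$ the law of fractional Brownian motion with Hurst parameter $H\in(1/4,1)$, and $\theta_s\omega(\cdot):=\omega(\cdot+s)-\omega(s)$, which preserves $\mathbb{P}$ by stationarity of increments. Since $H>1/4$, the path $\omega$ admits a canonical $\alpha$-H\"older geometric rough path lift $z(\omega)$ for suitable $\alpha<H$, defined $\mathbb{P}$-almost surely; call $N$ the exceptional null set where this construction fails. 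The set $N$ can be taken $\theta_s$-invariant, and on its complement $z(\theta_s\omega)$ coincides with the canonical time-shift of $z(\omega)$ by $s$.

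Next I would define the candidate cocycle $\varphi(t,\omega)u_0:=u(u_0,0,t;z(\omega))$ for $\omega\notin N$, with existence supplied by Theorem~\ref{intro_existence} and uniqueness by Theorem~\ref{intro_unique}. Two identities must be verified deterministically for each $\omega\notin N$. The first is the semigroup identity $u(u_0,0,t;z(\omega))=u(u(u_0,0,s;z(\omega)),s,t;z(\omega))$ for $0\le s\le t$, which is immediate from Theorem~\ref{intro_unique}, since both sides are pathwise kinetic solutions to the same initial-value problem on $[s,t]$. The second is \eqref{intro_RDS}: applying Theorem~\ref{intro_unique} to $r\mapsto u(v_0,s,s+r;z(\omega))$ and $r\mapsto u(v_0,0,r;z(\theta_s\omega))$, both of which solve \eqref{intro_eq} on $[0,t-s]$ from initial data $v_0$ driven by the same rough path, forces them to coincide. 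Combining the two identities yields the cocycle property $\varphi(t+s,\omega)=\varphi(t,\theta_s\omega)\circ\varphi(s,\omega)$ for every $\omega\notin N$ and every $u_0\in L^2_+(\mathbb{T}^d)$. For the required joint measurability of $\varphi$ in $(t,\omega,u_0)$, I would compose the measurable lift $\omega\mapsto z(\omega)$ with Theorem~\ref{intro_noise_cts}, which supplies continuity of the solution map $z\mapsto u(u_0,0,t;z)$ with respect to the metric $d_\alpha$.

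The principal obstacle, and the reason the pathwise framework is essential, is the null-set issue. A probabilistic approach would typically produce a $\mathbb{P}$-null set depending on the pair $(s,t)$, and the uncountable union over $0\le s\le t$ cannot be absorbed into a single exceptional set; without a single null set it is impossible to speak of a genuine cocycle on a full-measure invariant set. Here, by contrast, all exceptions collapse into the single $\theta$-invariant set $N$ outside which the rough-path lift, and hence the pathwise kinetic solution from any initial datum and over any time interval, exists uniquely, so the cocycle identity is genuinely deterministic. This is the content of the author's remark that ``there is precisely one zero set for all times.''
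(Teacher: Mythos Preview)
Your proposal is correct and matches the paper's approach: the paper does not give a detailed proof but simply asserts that \eqref{intro_RDS} is immediate from Theorems~\ref{intro_unique} and~\ref{intro_existence} since ``there is precisely one zero set for all times.'' You have filled in precisely the details the paper leaves implicit---the canonical geometric rough-path lift of fractional Brownian motion for $H>\tfrac{1}{4}$, its shift-compatibility under $\theta_s$, the deterministic verification of the cocycle identity via pathwise uniqueness, and measurability via Theorem~\ref{intro_noise_cts}.
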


We remark that the methods of this paper apply to general initial data in $L^2(\mathbb{T}^d)$ provided the diffusion exponent satisfies $m=1$ or $m>2$.

\begin{thm}\label{intro_sign}  Suppose that $m=1$ or $m>2$.  For every $u_0\in L^2(\mathbb{T}^d)$, there exists a unique pathwise kinetic solution of \eqref{intro_eq} and the analogous conclusions of Theorems~\ref{intro_unique}, \ref{intro_existence}, \ref{intro_noise_cts}, and \ref{intro_rds} are satisfied.  \end{thm}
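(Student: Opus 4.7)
The plan is to revisit each of Theorems~\ref{intro_unique}--\ref{intro_rds} and verify that the arguments extend to signed initial data precisely when $m=1$ or $m>2$. The first step is to extend Definition~\ref{def_solution} by replacing the one-sided kinetic function with the signed kinetic function $\chi(u,\xi)=\mathbf{1}_{\{0<\xi<u\}}-\mathbf{1}_{\{u<\xi<0\}}$ with $\xi$ ranging over $\mathbb{R}$, together with the associated nonnegative parabolic defect measure supported on the graph of $u$. Both the kinetic formulation and the $L^2$ energy identity are symmetric under $u\mapsto -u$, so the structural ingredients of Sections~\ref{uniqueness} and~\ref{sol_exists} are sign-agnostic.

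For uniqueness, I would repeat the doubling-of-variables argument of Theorem~\ref{intro_unique}, starting from the identity $|u^1-u^2|=\int|\chi^1(\xi)-\chi^2(\xi)|\,\mathrm{d}\xi$, which remains valid for signed data. The threshold $m=1$ or $m>2$ enters through the parabolic interaction terms, which involve $\sqrt{\phi'(\xi)}=\sqrt{m}\,|\xi|^{(m-1)/2}$ for $\phi(u)=|u|^{m-1}u$. For $m=1$, the equation is linear and the analysis is immediate. For $m>2$, the nonlinearity $\phi$ is $C^2(\mathbb{R})$ with $\phi'(0)=\phi''(0)=0$, and $|\xi|^{(m-1)/2}$ is H\"older continuous with exponent $(m-1)/2>1/2$ across $\xi=0$; these regularity properties are precisely what is needed to control the cross-defect terms in the sign-changing doubling-of-variables argument, exactly reproducing the cancellations that in the nonnegative regime only needed $|\xi|$ to be bounded below. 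In the intermediate range $1<m\leq 2$ the blow-up of $\phi''$ at the origin obstructs these cancellations along the zero set $\{u=0\}$, which is why this case is excluded.

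For existence, I would approximate a signed $u_0\in L^2(\mathbb{T}^d)$ by smooth signed data, regularize $A(x,\cdot)$ and the driving signal $z$, solve the approximating equations, and pass to the limit using the a priori bounds from Section~\ref{sol_exists}. The energy estimate, the parabolic defect bound, and the time-regularity estimate used there are all symmetric in $u$ and never use positivity. Theorem~\ref{intro_noise_cts} follows from the same quantitative rough-path stability estimate, which is sign-independent once uniqueness is in hand. Theorem~\ref{intro_rds} is then an immediate corollary of pathwise uniqueness, as noted in the excerpt, since the single exceptional null set covers all times simultaneously.

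The main obstacle will be to isolate exactly where the threshold $m>2$ is forced in the uniqueness proof and to verify term-by-term that the kinetic defect cancellations survive in the presence of the zero set of $u$; once this accounting is carried out on the interaction terms involving $\sqrt{\phi'(\xi)}\sqrt{\phi'(\eta)}$, the remaining extensions to signed data are essentially bookkeeping on top of the proofs of Theorems~\ref{intro_unique}--\ref{intro_rds}.
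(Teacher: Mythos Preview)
Your proposal is essentially correct and aligns with the paper's approach. The paper's argument is precisely the observation you outline: in the uniqueness proof (Theorem~\ref{theorem_uniqueness}), positivity of the initial data is used \emph{only} in the treatment of the error term $\textrm{Err}^4_i$ (the term involving $(|\xi|^{\frac{m-1}{2}}-|\xi'|^{\frac{m-1}{2}})^2$) in the range $m\in(0,1)\cup(1,2]$, via Proposition~\ref{aux_log}; for $m=1$ this term vanishes identically, and for $m>2$ the paper already handles $\textrm{Err}^4_i$ by a velocity cutoff and the H\"older/Lipschitz continuity of $\xi\mapsto|\xi|^{\frac{m-1}{2}}$, yielding a bound $C|t_{i+1}-t_i|\,\epsilon^{(3\wedge m)-2}$ that vanishes as $\epsilon\to0$ without ever invoking positivity. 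Two small clarifications: first, Definition~\ref{def_solution} is already stated with the signed kinetic function $\overline{\chi}$, so no extension of the definition is needed; second, the $m>2$ argument is not a reproduction of a nonnegative-case cancellation but is the \emph{same} argument in both settings---it simply never required positivity to begin with, whereas the separate $m\in(0,1)\cup(1,2]$ argument did.
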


Finally, the methods of this paper also apply to equations set on the whole space, provided the diffusion coefficient satisfies $m=1$ or $m\geq 3$, and the details can be found in the first version of this paper \cite{FehrmanGess1}.

\begin{thm}\label{intro_space}  Suppose that $m=1$ or $m\geq 3$.  For every $u_0\in \left(L^1\cap L^2\right)(\mathbb{R}^d)$, there exists a unique pathwise kinetic solution of \eqref{intro_eq} and the analogous conclusions of Theorems~\ref{intro_unique}, \ref{intro_existence},  \ref{intro_noise_cts}, and \ref{intro_rds} are satisfied.  \end{thm}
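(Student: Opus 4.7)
The plan is to reduce Theorem~\ref{intro_space} to the torus statements proved above via an exhaustion argument on $\mathbb{R}^d$, using the $L^1$ integrability of the initial data as a substitute for the compactness of $\mathbb{T}^d$. A pathwise kinetic solution on $\mathbb{R}^d$ is defined exactly as in Definition~\ref{def_solution} but tested against functions of compact support in $x$; the $L^1\cap L^2$ control on $u_0$ propagates to an $L^\infty_t((L^1\cap L^2)_x)$ bound on the solution, which guarantees that every integral entering the kinetic formulation converges in the regimes $m=1$ or $m\geq 3$, and that the parabolic dissipation estimate $\nabla u^{(m+1)/2}\in L^2_{t,x}$ holds globally on $\mathbb{R}^d$.

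For existence, I would approximate $u_0$ by $u_0^R=u_0\mathbf{1}_{B_R}$, extend periodically to a torus of side $4R$, and invoke Theorem~\ref{intro_existence} to obtain solutions $u^R$. The $L^1$-contraction of Theorem~\ref{intro_unique}, together with uniform tail control $\|u^R\|_{L^1(\mathbb{R}^d\setminus B_K)}\to 0$ as $K\to\infty$ uniformly in $R$ coming from conservation of the $L^1$ norm and the decay of $u_0$, makes $(u^R)$ Cauchy in $L^\infty([0,T];L^1(B_K))$ for every $K$; the limit yields the desired solution. The linear case $m=1$ and the strongly degenerate case $m\geq 3$ are precisely the regimes in which the relevant local compactness and integrability estimates propagate through this approximation.

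The main obstacle is uniqueness. The torus proof of Theorem~\ref{intro_unique} proceeds by a doubling-of-variables argument producing delicate new cancellations and error estimates; to transplant it to $\mathbb{R}^d$, one inserts a smooth spatial cutoff $\varphi_R$ into the test function, which generates remainder terms involving $\nabla\varphi_R$ and $\Delta\varphi_R$ paired against both the parabolic contribution $\nabla(|u|^{m-1}u)$ and the stochastic contribution $\nabla\cdot(A(x,u)\circ\dz)$. Showing that these cutoff remainders vanish as $R\to\infty$ is the technical heart of the proof: combining the global $L^\infty_tL^1_x$ and $L^\infty_t L^2_x$ bounds with the dissipation estimate and H\"older interpolation yields decay of order $R^{-\gamma}$ exactly when $m=1$ or $m\geq 3$, since for intermediate $m$ the integrability of $|u|^{m-1}u$ obtained from $u\in L^1\cap L^2$ is insufficient to absorb the error. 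Once the contraction is established, Theorems~\ref{intro_noise_cts} and~\ref{intro_rds} transfer by the same approximation-and-cutoff scheme together with pathwise uniqueness, which again provides a single exceptional null set supporting the cocycle identity.
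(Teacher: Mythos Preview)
The paper itself does not prove this theorem; it only states that ``the details for Theorem~\ref{intro_space} are similar, but require additional estimates due to the unboundedness of the domain'' and defers to the first version \cite{FehrmanGess1}. The indicated approach is therefore to rerun the entire kinetic uniqueness and existence argument directly on $\mathbb{R}^d$, localizing the estimates of Section~\ref{uniqueness} and Section~\ref{sol_exists}---not to pass through a sequence of periodic problems.

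Your existence scheme has a genuine gap. You propose solving on tori $(\mathbb{R}/4R\mathbb{Z})^d$ and using the $L^1$-contraction of Theorem~\ref{intro_unique} to make $(u^R)_R$ Cauchy. But that contraction compares two solutions of the \emph{same} equation on the \emph{same} torus; there is no statement in the paper comparing $u^R$ and $u^{R'}$ when $R\neq R'$, and none is obvious, since the characteristics and the kinetic formulation live on different configuration spaces. Your tail-control claim is also unjustified: conservation of the $L^1$-norm and the decay of $u_0$ at $t=0$ do not by themselves prevent mass from being transported or diffused into $\mathbb{R}^d\setminus B_K$ at positive times. The porous medium diffusion has infinite speed of propagation for $m\leq 1$, and even for $m>1$ the kinetic framework does not give a clean finite-speed estimate; the transport part has bounded characteristics (Proposition~\ref{rough_est}), but that alone does not imply uniform $L^1$ tail smallness of $u^R$.

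Your uniqueness sketch---insert a spatial cutoff $\varphi_R$ into the doubling argument and show the extra terms vanish---is the right idea and is what the paper alludes to. However, your explanation of the restriction $m=1$ or $m\geq 3$ is not correct as written: the relevant obstruction is not the $L^p$ integrability of $u^{[m]}$ deduced from $u\in L^1\cap L^2$. The restriction enters through the treatment of the error term $\mathrm{Err}^4_i$ in Step~5 of Theorem~\ref{theorem_uniqueness}. On the torus, for $m\in(2,3)$ one drops the kinetic functions $\chi^1\chi^2$ and uses $|\mathbb{T}^d|<\infty$ together with the H\"older estimate $(|\xi|^{(m-1)/2}-|\xi'|^{(m-1)/2})^2\leq C|\xi-\xi'|^{m-1}$ to obtain the bound $C\epsilon^{m-2}$ in \eqref{tu_33}. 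On $\mathbb{R}^d$ one cannot drop $\chi^1\chi^2$, and retaining them costs integrability in $x$ that competes with the power of $\epsilon$; only for $m\geq 3$, where the map $\xi\mapsto|\xi|^{(m-1)/2}$ is genuinely Lipschitz and yields the full factor $|\xi-\xi'|^2$, does the argument close without appealing to Proposition~\ref{aux_log} (which fails for signed data). For $m=1$ the term $\mathrm{Err}^4_i$ vanishes identically, so no such issue arises. You should locate the restriction there, not in a heuristic about $|u|^{m-1}u$.
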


\begin{remark} The $L^2$ integrability of the initial data is assumed for simplicity only. At the cost of additional technicalities, the results of this paper can be extended to nonnegative initial data in $L^1_+(\mathbb{T}^d)$.  This requires, in particular, a modification to the definition of a pathwise kinetic solution, since the \emph{entropy} and \emph{parabolic defect measures} will no longer be globally integrable (cf.\ Definition~\ref{def_solution} below).  The proof of uniqueness and the stable estimates would also need to be localized in order to account for the lack of integrability. \end{remark}

\subsection{Applications}\label{sec:appl}

Equations of the form \eqref{intro_eq} arise in several applications.  It was shown by Ferrari, Presutti, and Vares \cite{FerrariPresuttiVares1} that the hydrodynamic limit of a zero range particle process satisfies a nonlinear diffusion equation of the type
\begin{equation}\label{eq_Dirr}\partial_tu=\Delta \Phi(u) \;\;\textrm{in}\;\;\mathbb{T}^d\times(0,\infty),\end{equation}
where $\Phi$ is the mean local jump rate.  For instance, in the porous media case $\Phi(\rho)=\rho\abs{\rho}^{m-1}$, this means that the process exhibits a high rate of diffusion in regions of high concentration.

The fluctuating hydrodynamics of the zero range process about its hydrodynamic limit were subsequently studied by Ferrari, Presutti, and Vares \cite{FerrariPresuttiVares}, and were informally shown by Dirr, Stamatakis, and Zimmer \cite{DirrStamatakisZimmer} to satisfy a stochastic nonlinear diffusion equation of the type
\begin{equation}\label{eq_Dirr_2}\partial_t u=\Delta\Phi(u)+\nabla\cdot\left(\sqrt{\Phi(u)}\mathcal{N}\right)\;\;\textrm{in}\;\;\mathbb{T}^d\times(0,\infty),\end{equation}
where $\mathcal{N}$ is a space-time white noise.   Equation \eqref{intro_eq} represents a regularization of \eqref{eq_Dirr_2} for $\Phi(\rho)=\rho\abs{\rho}^{m-1}$ given by a smoothing of the square root function and a regularization of the noise in space.

For a second example, consider an {\color{black} $(L+1)$-dimensional} system of mean field stochastic differential equations, for $i\in\{0,\ldots,L\}$,
\begin{equation}\label{mean_field} \dd X^i_t=A^L(X^i_t,\frac{1}{L}\sum_{j\neq i}\delta_{X^j_t})\circ \dd B_t+\Sigma^L(\frac{1}{L}\sum_{j\neq i}\delta_{X^j_t})\dd W^i_t\;\;\textrm{for}\;\;t\in(0,\infty),\end{equation}
where $L\geq 1$ and {\color{black}$\{B_t^i\}_{i=1}^d$ and $\{W_t^i\}_{i=1}^n$} are independent Brownian motions.  The first term is interpreted in the Stratonovich sense and the second term is interpreted in the It\^o sense.  For each $L\geq 1$, the nonlinearities $A^L:\mathbb{T}^d\times\mathcal{P}(\mathbb{T}^d)\rightarrow\mathcal{M}^{d\times n}$ and $\Sigma^L:\mathcal{P}(\mathbb{T}^d)\rightarrow\mathbb{R}$ are assumed to be continuous with respect to the topology of weak convergence on the space of probability measures.

It follows informally from the theory of mean field games, as introduced by Lasry and Lions \cite{LasryLions1,LasryLions2,LasryLions}, that the {\color{black} conditional density $m$ of the empirical law of the solution {\color{black}$X_t=(X^0_t,\ldots,X^L_t)$} with respect to $B_t$}, in the mean field limit $L\rightarrow\infty$, evolves according to an equation of the form
\begin{equation}\label{mean_field_1} \partial_tm =\frac{1}{2}\Delta\left( \sigma^2(m)m\right) +\nabla\cdot(A(x,m)m\circ \dd B_t)\;\; \textrm{in}\;\;\mathbb{T}^d\times(0,\infty),\end{equation}
provided the nonlocal nonlinearities $\{A^L\}_{\{L\geq 1\}}$ and $\{\Sigma^L\}_{\{L\geq 1\}}$ satisfy appropriate assumptions which guarantee that, as $L\rightarrow\infty$, they converge to local functions $A:\mathbb{T}^d\times\mathbb{R}\rightarrow\mathcal{M}^{n\times d}$ and $\sigma:\mathbb{R}\rightarrow\mathbb{R}$ of the density.

A third application of equations of the type \eqref{intro_eq}, for $m=1$, is given as an approximation to the Dean-Kawasaki model for the diffusion of particles subject to thermal advection in a fluctuating fluid.  In this model, proposed by Dean \cite{Dean}, Kawasaki \cite{Kawasaki}, and Marconi and Tarazona \cite{MarconiTarazona}, and recently studied by Donev, Fai and Vanden-Eijnden \cite{DonevFaiVanden}, the density of the particles $c$ evolves according to the stochastic equation
\begin{equation}\label{particle_eq}\partial_t c= \sigma\Delta c+\nabla\cdot\left(cv+\sqrt{2\sigma c}\mathcal{N}\right)\;\;\textrm{in}\;\;\mathbb{T}^d\times(0,\infty),\end{equation}
where $\sigma>0$ is a diffusion coefficient, $v$ is a smooth and divergence free velocity field, and $\mathcal{N}$ is a space-time white noise.  Equation \eqref{intro_eq}, for $m=1$, therefore represents a regularized version of \eqref{particle_eq}, which is obtained by smoothing the square root function and considering noise that is regular in space and driven by a rough path in time.

An additional application arises as a stochastic model for the evolution of a thin film consisting of an incompressible Newtonian liquid on a flat $d$-dimensional substrate proposed by Gr\"un, Mecke, and Rauscher \cite{GrunMeckeRauscher}.  Their model describes the evolution of the thickness $h$ of the substrate, which is the solution of the stochastic partial differential equation
\begin{equation}\label{thin_film} \partial_t h=\nabla\cdot\left(h^n\nabla\left(\frac{1}{3}\Phi'(h)-\gamma\Delta h\right)\right)+\nabla\cdot\left(\frac{h^{\frac{3}{2}}}{3}\mathcal{N}\right)\;\; \textrm{in}\;\;\mathbb{T}^d\times(0,\infty),\end{equation}
where $\Phi$ is the effective interface potential describing the interaction of the liquid and the substrate, $\gamma>0$ is the surface tension coefficient, $\mathcal{N}$ is a space-time white noise, and $n>0$ describes the mobility function depending on the flow condition at the liquid-solid interface.  In \cite{GrunMeckeRauscher}, a no-slip boundary condition is assumed, which corresponds to $n=3$.  Equation \eqref{intro_eq} can be viewed as a simplified model of equation \eqref{thin_film} in the case that the effective interface potential $\Phi(\xi)\simeq\abs{\xi}^{s}$ for small values $\xi\in\mathbb{R}$ and for some $s\geq 1-n$, and in the case that the surface tension $\gamma\simeq 0$ is negligible.

\subsection{Relation to previous work and methodology}  The methods of this paper build upon the theory of stochastic viscosity solutions for fully-nonlinear second-order stochastic partial differential equations introduced by \cite{LSstoch5,LSstoch4,LSstoch3,LSstoch2,LSstoch1}, and the work \cite{LPS1,LPS} and \cite{GessSouganidis,GessSouganidis1,GessSouganidis2} on scalar conservation laws driven by multiple rough fluxes. As laid out above, the application of these ideas is, however, complicated by the nonlinear structure of the noise.

Motivated by the methods of \cite{GessSouganidis,GessSouganidis1}, we first pass to the kinetic formulation of \eqref{intro_eq} introduced by \cite{ChenPerthame} and Perthame \cite{Perthame}.  The precise details can be found in Section~\ref{kinetic}.  This yields an equation in $(d+1)$-variables for which the noise enters as a linear transport.  The transport is well-defined for rough driving signals, as shown in Lyons and Qian \cite{LyonsQian}, when interpreting the underlying system as a rough differential equation.  The details are presented in Section~\ref{characteristics}, where Definition~\ref{def_solution} presents the notion of a \emph{pathwise kinetic solution}.

The definition is formally obtained by flowing the corresponding kinetic solution along the system of rough characteristics, which are defined globally in time.  This is effectively achieved by considering a class of test functions which are  transported by the corresponding system of inverse characteristics.  In this regard, our setting resembles more closely \cite{LPS1,LPS} and \cite{GessSouganidis,GessSouganidis1} and is simpler than the general stochastic viscosity theory  \cite{LSstoch5,LSstoch4,LSstoch3,LSstoch2,LSstoch1}.  There, the noise is removed by flowing test functions along a system of stochastic characteristics arising from a stochastic Hamilton-Jacobi equation, which are defined only locally in time and are therefore less easily inverted.

With regard to the stochastic term, in comparison to \cite{LPS1,LPS}, the noise is multi-dimensional, if $n>1$, and spatially inhomogeneous---that is, $x$-dependent.  Therefore, the characteristic equations cannot be solved explicitly and it is therefore necessary to use rough path estimates from Section~\ref{sec_rough} in order to understand the cancellations.  Furthermore, these cancellations depend crucially on the conservative structure of the equation, which implies, in particular, that the stochastic characteristics preserve the underlying Lebesgue measure.

The interaction between the $x$-dependent characteristics and nonlinear diffusion term significantly complicates the proof of uniqueness.  This is evidenced by our need to use Proposition~\ref{aux_log} to handle the case of small diffusion exponents, an argument which has no analogue in the deterministic or stochastic settings.  The estimate of Proposition~\ref{aux_log} is simply false, in general, for signed initial data and is, in some sense, an optimal regularity statement encoded by a finite singular moment of the solution's \emph{parabolic defect measure} (cf. Definition~\ref{def_solution}).

The proof of existence for second-order equations is also significantly more involved than in the first-order case.  This is due to the aforementioned fact that the space of pathwise kinetic solutions is not closed with respect to weak convergence.   We therefore prove the existence of solutions by proving the strong convergence of the kinetic solutions corresponding to a sequence of regularized equations in Section~\ref{sol_exists}.  In particular, we prove a stable estimate for the kinetic functions in the fractional Sobolev space $W^{s,1}$, for any $s\in(0,\frac{2}{m+1}\wedge 1)$ (cf. Proposition~\ref{fractional_sobolev}).  This regularity is based upon Proposition~\ref{stable_L1} and Proposition~\ref{stable_estimates}, which prove that, locally in time, pathwise kinetic solutions preserve the basic regularity of solutions to the deterministic porous medium equation.

In combination, Theorem \ref{intro_unique} and Theorem \ref{intro_existence} prove the pathwise well-posedness of equation \eqref{intro_eq} for every initial data $u_0\in L^2_+(\mathbb{T}^d)$, and for every diffusion exponent $m\in(0,\infty)$.  We remark that these results also incorporate the notion of renormalized solutions, as originally introduced by DiPerna and Lions \cite{DiPernaLions} in the context of the Boltzmann equation and subsequently used in the context of nonlinear parabolic problems by Blanchard and Murat \cite{BlanchardMurat} and Blanchard and Redwane \cite{BlanchardRedwane1,BlanchardRedwane}.  This is due to the fact that we do not, in general, require the integrability of the signed power of the initial data $\abs{u_0}^{m-1}u_0$.

Finally, we remark that while probabilistic and pathwise techniques have not been successful in treating \eqref{intro_eq}, they have previously been used to prove the well-posedness of stochastic porous medium equations in the simpler cases of additive or multiplicative noise.  This includes, for instance, the work of Barbu, Bogachev, Da Prato, and R\"ockner \cite{BarbuBogachevDaPratoRockner}, Barbu, Da Prato, and R\"ockner \cite{BarbuDaPratoRoeckner,BarbuDaPratoRoeckner1,BarbuDaPratoRoeckner2,BarbuDaPratoRoeckner3}, Barbu and R\"ockner \cite{BarbuRoeckner}, Barbu, R\"ockner, and Russo \cite{BarbuRoecknerRusso}, Da Prato and R\"ockner \cite{DaPratoRoeckner}, Da Prato, R\"ockner, Rozovski\u\i, and Wang \cite{DaPratoRoecknerRozovskiiWang}, the second author \cite{Gess}, Kim \cite{Kim}, Krylov and Rozovski\u\i\ \cite{KrylovRozovskii1,KrylovRozovskii}, Pardoux \cite{Pardoux}, Pr\'ev\^ot and R\"ockner \cite{PrevotRoeckner}, Ren, R\"ockner, and Wang \cite{RenRoecknerWang}, R\"ockner and Wang \cite{RoecknerWang}, and Rozovski\u\i\ \cite{Rozovskii}.

\subsection{Structure of the paper}  The paper is organized as follows.  In Section~\ref{preliminaries}, we present our assumptions.  In Section~\ref{characteristics}, we analyze the associated system of stochastic characteristics and present the definition of a pathwise kinetic solution.  The proof of uniqueness appears in Section~\ref{uniqueness} and the proof of existence appears in Section~\ref{sol_exists}.  The remainder of the paper consists of an appendix.  In Section~\ref{kinetic}, we prove the existence of kinetic solutions to a regularization of equation \eqref{intro_eq}.  In Section~\ref{sec_rough}, we present some stability results from the theory of rough paths.  Finally, in Section~\ref{sec_frac}, we prove some basic properties of fractional Sobolev spaces and establish the regularity of pathwise kinetic solutions on the level of their kinetic functions.

\section{Preliminaries}\label{preliminaries}

\subsection{Assumptions}

The spatial dimension is one or greater:
\begin{equation}\label{prelim_dimension} d\geq 1.\end{equation}
The diffusion exponent is $m\in(0,\infty)$, and the signed power
$$u^{[m]}:=\abs{u}^{m-1}u.$$
{\color{black}The noise is a geometric rough path:  for $n\geq 1$ and a H\"older exponent $\alpha\in(0,1)$, for each $T>0$,
\begin{equation}\label{prelim_Holder} z_t=(z^1_t,\ldots,z^n_t)\in \C^{0,\alpha}\left([0,T];G^{\left\lfloor \frac{1}{\alpha}\right\rfloor}(\mathbb{R}^n)\right),\end{equation}
where  $\C^{0,\alpha}([0,T];G^{\left\lfloor \frac{1}{\alpha}\right\rfloor}(\mathbb{R}^n))$ is the space of $n$-dimensional, $\alpha$-H\"older continuous geometric rough paths on $[0,T]$.  See Section~\ref{sec_rough} for a brief introduction to and references on rough path theory.}

The coefficients have derivatives which are smooth and bounded:  for $\gamma>\frac{1}{\alpha}$, for each $i\in\{1,\ldots,d\}$ and $j\in\{1,\ldots,n\}$,
\begin{equation}\label{prelim_regular} \nabla_xa_{ij}(x,\xi)\in \C^{\gamma+2}(\mathbb{T}^d\times\mathbb{R};\mathbb{R}^d)\;\;\textrm{and}\;\;\partial_\xi a_{ij}(x,\xi)\in \C^{\gamma+2}(\mathbb{T}^d\times\mathbb{R}).\end{equation}
This regularity is necessary in order to obtain the rough path estimates of Proposition~\ref{rough_est}.  In particular, as the regularity of the noise decreases, more regularity is required from the coefficients.

Finally, the nonlinearity $A(x,\xi)$ satisfies:
\begin{equation}\label{prelim_vanish}\sum_{i=1}^d\partial_{x_i}a_{ij}(x,0)=0\;\;\textrm{for each}\;\;x\in\mathbb{T}^{d}\;\;\textrm{and}\;\;j\in\{1,\ldots,n\}.\end{equation}
This assumption guarantees that the underlying stochastic characteristics preserve the sign of the velocity variable.  Even in the case of smooth driving signals, this condition is necessary to ensure that the evolution of \eqref{intro_eq} does not increase the mass of the initial condition.

Finally, for every $p\in[0,\infty]$, the space $L^p_+(\mathbb{T}^d)$ denotes the the space of nonnegative $L^p$-functions on the torus.  That is, $L^p_+(\mathbb{T}^d)$ is the closure of the space of nonnegative, smooth functions on $\mathbb{T}^d$ with respect to the $L^p(\mathbb{T}^d)$-norm.

\section{Definition of pathwise kinetic solutions}\label{characteristics}

In order to understand equation \eqref{intro_eq}, we will introduce a uniformly elliptic regularization driven by smooth noise.  The assumption \eqref{prelim_Holder} that $z$ is a geometric rough path ensures that there exists a sequence of smooth paths
\begin{equation}\label{e_noise} \left\{z^\epsilon:[0,\infty)\rightarrow\mathbb{R}^n\right\}_{\epsilon\in(0,1)},\end{equation}
such that, as $\epsilon\rightarrow 0$, {\color{black}for each $T>0$,} the paths $z^\epsilon$ converge to $z$ with respect to the $\alpha$-H\"older norm on the space of geometric rough paths $\C^{0,\alpha}([0,T];G^{\left\lfloor \frac{1}{\alpha}\right\rfloor}(\mathbb{R}^n))$ in the sense of \eqref{geometric_path}.  The precise meaning of this convergence is presented in Section~\ref{sec_rough}.  In what follows, for $\epsilon\in(0,1)$, we will use $\dot{z}^\epsilon$ to denote the time derivative of the smooth path.

It is furthermore necessary to introduce an $\eta$-perturbation by the Laplacian, for $\eta\in(0,1)$, in order to remove the degeneracy of the porous medium operator.  We therefore consider the equation, for $\eta\in(0,1)$ and $\epsilon\in(0,1)$,
\begin{equation}\label{chareq_eq}\left\{\begin{array}{ll} \partial_tu=\Delta u^{[m]}+\eta\Delta u+\nabla\cdot \left(A(x,u)\dot{z}^\epsilon_t\right) & \textrm{in}\;\;\mathbb{T}^d\times(0,\infty), \\ u=u_0 & \textrm{on}\;\;\mathbb{T}^d\times\{0\}.\end{array}\right.\end{equation}
The following proposition establishes the well-posedness of \eqref{chareq_eq}.  The proof and additional estimates can be found in Proposition~\ref{smooth_equation}.

\begin{prop}\label{chareq_prop}  For each $\eta\in(0,1)$, $\epsilon\in(0,1)$, and $u_0\in L^2(\mathbb{T}^d)$, there exists a classical solution of the equation
$$\left\{\begin{array}{ll} \partial_t u=\Delta u^{[m]}+\eta\Delta u+\nabla\cdot \left(A(x,u)\dot{z}^\epsilon_t\right) & \textrm{in}\;\;\mathbb{T}^d\times(0,\infty), \\ u=u_0 & \textrm{on}\;\;\mathbb{T}^d\times\{0\}.\end{array}\right.$$
\end{prop}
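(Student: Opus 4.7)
The plan is to prove existence through a standard double approximation that reduces \eqref{chareq_eq} to a uniformly parabolic quasilinear equation with smooth coefficients and smooth data.  The structural point is that, because $\eta>0$ and $\dot{z}^\epsilon_\cdot$ is smooth in $t$, the diffusion operator can be written as $\nabla\cdot((m|u|^{m-1}+\eta)\nabla u)$, so its ellipticity is bounded below by $\eta$ independently of $u$; the only obstructions to invoking classical quasilinear theory directly are the singularity or unboundedness of $\Phi(\xi):=\xi^{[m]}$ (singular $\Phi'$ at $\xi=0$ for $m<1$, and unbounded $\Phi'$ for large $|\xi|$ when $m>1$) together with the mere $L^2$-regularity of $u_0$.

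First I would regularize both the nonlinearity and the initial datum.  For $\delta\in(0,1)$, fix $\Phi_\delta\in C^\infty(\mathbb{R})$ with $\delta\leq\Phi_\delta'\leq\delta^{-1}$ and $\Phi_\delta\to\Phi$ locally uniformly as $\delta\to0$, and fix $u_0^\delta\in C^\infty(\mathbb{T}^d)$ with $u_0^\delta\to u_0$ in $L^2(\mathbb{T}^d)$ and $\|u_0^\delta\|_{L^\infty}<\infty$.  For each $\delta$ the equation
\begin{equation*}
\partial_t u^\delta=\Delta\Phi_\delta(u^\delta)+\eta\Delta u^\delta+\nabla\cdot\bigl(A(x,u^\delta)\dot{z}^\epsilon_t\bigr),\qquad u^\delta(\cdot,0)=u_0^\delta,
\end{equation*}
is uniformly parabolic quasilinear with smooth coefficients and smooth data, so classical quasilinear parabolic theory (e.g. a Galerkin or Schauder fixed-point argument followed by parabolic Schauder bootstrap) produces a classical solution $u^\delta\in C^{2,1}(\mathbb{T}^d\times[0,\infty))$.

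Next I would derive a priori estimates that are uniform in $\delta$.  A maximum principle argument, invoking \eqref{prelim_vanish} so that the transport term does not drive growth at extrema (in particular, to preserve the zero level), yields an $L^\infty$ bound on $u^\delta$ in terms of $\|u_0^\delta\|_{L^\infty}$.  Testing against $u^\delta$ and exploiting the conservative structure of $\nabla\cdot(A(x,u^\delta)\dot{z}^\epsilon_t)$ yields an energy identity controlling $\|u^\delta\|_{L^\infty_t L^2_x}$ and $\|\nabla u^\delta\|_{L^2_{t,x}}$.  With $u^\delta$ uniformly bounded, the coefficient $\Phi_\delta'(u^\delta)+\eta$ stays within a fixed interval uniformly in $\delta$, so Schauder estimates on the linearized equation provide locally uniform $C^{2,1}$ bounds on compact subsets of $\mathbb{T}^d\times(0,\infty)$.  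Extracting a subsequence $u^{\delta_k}\to u$ in $C^{2,1}_{\mathrm{loc}}$ and using that $\Phi_{\delta_k}\to\Phi$ locally uniformly on the bounded range of the $u^{\delta_k}$, the limit satisfies \eqref{chareq_eq} classically on positive times, with $L^2$-continuity at $t=0$ following from the energy bound.

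The main obstacle is the interface between the $L^2$ initial datum and classical regularity on $(0,\infty)$: one cannot expect $C^{2,1}$ bounds up to $t=0$ from $L^2$ data, so the above uniform bounds must be combined with instantaneous parabolic smoothing (or, equivalently, with a further truncation of $u_0$ that is then removed using an $L^1$-$L^\infty$ smoothing estimate).  In the fast-diffusion range $m<1$, the singularity of $\Phi'$ at $\xi=0$ is masked by the uniform ellipticity floor $\eta>0$; in the porous-media range $m>1$, the uniform $L^\infty$ bound keeps $\Phi_\delta'$ controlled on the relevant range.  Both degeneracies are therefore handled precisely by the $\eta$-regularization that is absent from \eqref{intro_eq} and whose effect must be removed later by the kinetic analysis.
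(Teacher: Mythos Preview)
Your strategy is essentially that of the paper (Proposition~\ref{smooth_equation} in the appendix): approximate the porous medium nonlinearity by a smooth, globally Lipschitz one (the paper uses a two-parameter family $\phi^{M,\delta}$, a mollified truncation, rather than your single $\Phi_\delta$), obtain classical solutions to the uniformly parabolic approximants via standard quasilinear theory (Lady\v{z}enskaja--Solonnikov--Ural'ceva), derive the maximum-principle and energy a priori bounds, and pass to the limit.  The paper does not regularize $u_0$; it works directly with $L^2$ data and records the $L^\infty$ estimate as an additional output.

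The one substantive difference is the compactness step.  The paper combines the uniform bounds $u^{M,\delta}\in L^\infty_tL^2_x\cap L^2_tH^1_x$ and $\partial_t u^{M,\delta}\in L^2_tH^{-1}_x$ with Aubin--Lions--Simon to extract a strong $L^2_tL^2_x$ limit, and then invokes the quasilinear regularity theory of \cite[Chapter~V]{LSUBook} on the \emph{limit} equation to conclude it is classical.  You instead aim for uniform $C^{2,1}_{\mathrm{loc}}$ Schauder bounds on the approximants and pass to the limit in that topology.  Your assertion that ``$\Phi_\delta'(u^\delta)+\eta$ stays within a fixed interval uniformly in $\delta$'' is false for $m\in(0,1)$: any approximation $\Phi_\delta'$ of $m|\xi|^{m-1}$ must blow up near $\xi=0$ as $\delta\to0$, so the \emph{upper} bound on the diffusion coefficient is not uniform, and the De~Giorgi--Nash--Moser and Schauder constants degenerate.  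The ellipticity floor $\eta$ controls only the lower bound; it does not ``mask'' the upper-bound singularity.  The paper's Aubin--Lions route avoids needing uniform coefficient bounds on the approximants and is therefore the more robust path; you can simply replace your Schauder step by this weak-compactness argument and the rest of your outline goes through.
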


The kinetic formulation of \eqref{chareq_eq}, which is derived in more detail in Section~\ref{kinetic}, is obtained by introducing the kinetic function $\overline{\chi}:\mathbb{R}^2\rightarrow\{-1,0,1\}$ defined by
\begin{equation}\label{chareq_kinetic_foundation}\overline{\chi}(s,\xi):=\left\{\begin{array}{ll} 1 & \textrm{if}\;\;0<\xi<s, \\ -1 & \textrm{if}\;\;s<\xi<0, \\ 0 & \textrm{else.}\end{array}\right.\end{equation}
We then define, for each $\eta\in(0,1)$ and $\epsilon\in(0,1)$, for $u^{\eta,\epsilon}$ the solution of \eqref{chareq_eq}, the composition
\begin{equation}\label{chareq_kinetic_function}\chi^{\eta,\epsilon}(x,\xi,t):=\overline{\chi}(u^{\eta,\epsilon}(x,t),\xi).\end{equation}
After expanding the divergence appearing in \eqref{chareq_eq} by defining the matrix-valued function
\begin{equation}\label{chareq_b}b(x,\xi)=(b_{ij}(x,\xi)):=\partial_\xi A(x,\xi)\in\mathcal{M}^{d\times n},\end{equation}
and the vector-valued
\begin{equation}\label{chareq_c}c(x,\xi)=(c_j(x,\xi)):=\left(\sum_{i=1}^d\partial_{x_i}a_{ij}(x,\xi)\right)\in\mathbb{R}^n,\end{equation}
we prove in Proposition~\ref{smooth_kinetic_solution} that, for each $\eta\in(0,1)$ and $\epsilon\in(0,1)$, the kinetic function $\chi^{\eta,\epsilon}$ is a distributional solution of the equation
\begin{equation}\label{chareq_kin_eq}\begin{aligned} \partial_t\chi^{\eta,\epsilon}= & m\abs{\xi}^{m-1}\Delta_x\chi^{\eta,\epsilon}+\eta\Delta_x\chi^{\eta,\epsilon} +b(x,\xi)\dot{z}^\epsilon_t\cdot \nabla_x\chi^{\eta,\epsilon}- \left(c(x,\xi)\cdot\dot{z}^\epsilon_t\right)\partial_\xi\chi^{\eta,\epsilon} \\ & + \partial_\xi p^{\eta,\epsilon}(x,\xi,t)+\partial_\xi q^{\eta,\epsilon}(x,\xi,t), \end{aligned}\end{equation}
on $\mathbb{T}^d\times\mathbb{R}\times(0,\infty)$, with initial data $\overline{\chi}(u_0(x),\xi)$.  Here, the measure $p^{\eta,\epsilon}$ is the entropy defect measure
$$p^{\eta,\epsilon}(x,\xi,t):=\delta_0\left(\xi-u^{\eta,\epsilon}(x,t)\right)\eta\abs{\nabla u^{\eta,\epsilon}(x,t)}^2,$$
and the measure $q^{\eta,\epsilon}$ is the parabolic defect measure
$$q^{\eta,\epsilon}(x,\xi,t):=\delta_0\left(\xi-u^{\eta,\epsilon}(x,t)\right)\frac{4m}{(m+1)^2}\abs{\nabla\left(u^{\eta,\epsilon}\right)^{\left[\frac{m+1}{2}\right]}(x,t)}^2,$$
where $\delta_0$ denotes the one-dimensional Dirac mass centered at the origin.  The sense in which the kinetic function satisfies \eqref{chareq_kin_eq} is made precise by the following proposition.  The proof can be found in Proposition~\ref{smooth_kinetic_solution}.

\begin{prop}\label{chareq_smooth_kinetic_solution}  For each $\eta\in(0,1)$, $\epsilon\in(0,1)$, and $u_0\in L^2(\mathbb{T}^d)$, let $u^{\eta,\epsilon}$ denote the solution of \eqref{chareq_eq} from Proposition \ref{chareq_prop}.  Then, the kinetic function $\chi^{\eta,\epsilon}$ defined in \eqref{chareq_kinetic_function} is a distributional solution of \eqref{chareq_kin_eq} in the sense that, for every $t_1,t_2\in[0,\infty)$, for every $\psi\in\C^\infty_c(\mathbb{T}^d\times\mathbb{R}\times[t_1,t_2]))$,
\begin{equation}\label{chareq_smooth_kinetic_eq}\begin{aligned}& \left.\int_\mathbb{R}\int_{\mathbb{T}^d}\chi^{\eta,\epsilon}(x,\xi,t)\psi(x,\xi,t)\dx\dxi\right|_{t=t_1}^{t_2} =  \int_{t_1}^{t_2}\int_\mathbb{R}\int_{\mathbb{T}^d}\chi^{\eta,\epsilon} \partial_t\psi\dx\dxi\dt  \\ &\quad +\int_{t_1}^{t_2}\int_\mathbb{R}\int_{\mathbb{T}^d}\left(m\abs{\xi}^{m-1}+\eta\right)\chi^{\eta,\epsilon} \Delta_x\psi\dx\dxi\dt \\ &\quad- \int_{t_1}^{t_2}\int_\mathbb{R}\int_{\mathbb{T}^d}\chi^{\eta,\epsilon}\nabla_x\cdot\left(\left(b(x,\xi)\dot{z}^\epsilon_t\right)\psi\right)-\chi^{\eta,\epsilon}\partial_\xi\left(\left(c(x,\xi)\cdot \dot{z}^\epsilon_t\right)\psi\right)\dx\dxi\dt \\ &\quad-\int_{t_1}^{t_2}\int_\mathbb{R}\int_{\mathbb{T}^d}\left(p^{\eta,\epsilon}+q^{\eta,\epsilon}\right)\partial_\xi\psi \dx\dxi\dt. \end{aligned}\end{equation}
\end{prop}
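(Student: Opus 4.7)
Fix a test function $\psi\in \C^\infty_c(\mathbb{T}^d\times\mathbb{R}\times[t_1,t_2])$ and introduce
$$F(t) := \int_\mathbb{R}\int_{\mathbb{T}^d}\chi^{\eta,\epsilon}(x,\xi,t)\psi(x,\xi,t)\dx\dxi = \int_{\mathbb{T}^d}\int_0^{u^{\eta,\epsilon}(x,t)}\psi(x,\xi,t)\dxi\dx,$$
where the second equality follows from the pointwise identity $\int_\mathbb{R}\overline{\chi}(s,\xi)\phi(\xi)\dxi = \int_0^s\phi(\xi)\dxi$. Since $u^{\eta,\epsilon}$ is classical by Proposition~\ref{chareq_prop}, $F$ is continuously differentiable on $[t_1,t_2]$. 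The plan is to compute $F'(t)$, substitute \eqref{chareq_eq} into the resulting factor $\partial_tu^{\eta,\epsilon}$, redistribute all spatial and $\xi$-derivatives onto $\psi$ through integration by parts, and recover \eqref{chareq_smooth_kinetic_eq} by integrating over $[t_1,t_2]$ via the fundamental theorem of calculus.

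The chain rule yields $F'(t) = \iint\chi^{\eta,\epsilon}\partial_t\psi\dxi\dx + \int_{\mathbb{T}^d}\psi(x,u^{\eta,\epsilon},t)\,\partial_tu^{\eta,\epsilon}\dx$. Substituting \eqref{chareq_eq} into the second term, I would treat each of the three spatial contributions by one integration by parts in $x$, combined with the chain rule $\nabla_x[\psi(x,u^{\eta,\epsilon},t)] = (\nabla_x\psi)(x,u^{\eta,\epsilon},t) + (\partial_\xi\psi)(x,u^{\eta,\epsilon},t)\nabla u^{\eta,\epsilon}$. For the porous medium term $\int\psi(x,u^{\eta,\epsilon},t)\Delta(u^{\eta,\epsilon})^{[m]}\dx$, using $\nabla(u^{\eta,\epsilon})^{[m]} = m|u^{\eta,\epsilon}|^{m-1}\nabla u^{\eta,\epsilon}$, this split produces two pieces. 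The $(\partial_\xi\psi)$-piece gives $-\iint q^{\eta,\epsilon}\partial_\xi\psi$ once the identification $\frac{4m}{(m+1)^2}|\nabla(u^{\eta,\epsilon})^{[(m+1)/2]}|^2 = m|u^{\eta,\epsilon}|^{m-1}|\nabla u^{\eta,\epsilon}|^2$ is inserted. The $(\nabla_x\psi)$-piece, using the distributional identity $m|u^{\eta,\epsilon}|^{m-1}\nabla u^{\eta,\epsilon}\,\delta_0(\xi-u^{\eta,\epsilon}) = m|\xi|^{m-1}\nabla_x\chi^{\eta,\epsilon}$ (which rests on $\nabla_x\chi^{\eta,\epsilon} = \delta_0(\xi - u^{\eta,\epsilon})\nabla u^{\eta,\epsilon}$ and $|\xi|^{m-1}\delta_0(\xi - u^{\eta,\epsilon}) = |u^{\eta,\epsilon}|^{m-1}\delta_0(\xi - u^{\eta,\epsilon})$), becomes $\iint m|\xi|^{m-1}\chi^{\eta,\epsilon}\Delta_x\psi$ after a second integration by parts in $x$. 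The same procedure on $\eta\Delta u^{\eta,\epsilon}$ yields the $\eta$-Laplacian term together with $-\iint p^{\eta,\epsilon}\partial_\xi\psi$.

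For the transport contribution I would expand $\nabla\cdot(A(x,u^{\eta,\epsilon})\dot{z}^\epsilon_t) = c(x,u^{\eta,\epsilon})\cdot\dot{z}^\epsilon_t + \sum_{i,j}b_{ij}(x,u^{\eta,\epsilon})\dot{z}^{\epsilon,j}_t\partial_{x_i}u^{\eta,\epsilon}$. The $b$-piece, via $\partial_{x_i}\chi^{\eta,\epsilon} = \delta_0(\xi-u^{\eta,\epsilon})\partial_{x_i}u^{\eta,\epsilon}$ and one integration by parts in $x$, becomes $-\iint\chi^{\eta,\epsilon}\nabla_x\cdot[(b(x,\xi)\dot{z}^\epsilon_t)\psi]$. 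The $c$-piece is the only place where hypothesis \eqref{prelim_vanish} enters: using $\partial_\xi\chi^{\eta,\epsilon} = \delta_0(\xi) - \delta_0(\xi-u^{\eta,\epsilon})$ together with $c(x,0)=0$, the boundary term at $\xi=0$ vanishes, giving
$$\int_{\mathbb{T}^d}\psi(x,u^{\eta,\epsilon},t)\,c(x,u^{\eta,\epsilon})\cdot\dot{z}^\epsilon_t\dx = -\iint\psi\,(c(x,\xi)\cdot\dot{z}^\epsilon_t)\,\partial_\xi\chi^{\eta,\epsilon}\dxi\dx,$$
and one further integration by parts in $\xi$ produces $\iint\chi^{\eta,\epsilon}\partial_\xi[(c(x,\xi)\cdot\dot{z}^\epsilon_t)\psi]$. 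Collecting all contributions and integrating $F'(t)\dt$ on $[t_1,t_2]$ gives \eqref{chareq_smooth_kinetic_eq}.

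The main delicate step is the transport contribution; the distributional products involving $\delta_0(\xi-u^{\eta,\epsilon})$ and the weight $|\xi|^{m-1}$ are all rigorously justified because $u^{\eta,\epsilon}$ is classical, $\psi$ is smooth and compactly supported, and $|\xi|^{m-1}$ is locally integrable in $\xi$ for every $m\in(0,\infty)$. The structural significance of assumption \eqref{prelim_vanish} is that, in its absence, the identity $\partial_\xi\chi^{\eta,\epsilon} = \delta_0(\xi) - \delta_0(\xi-u^{\eta,\epsilon})$ would leave a spurious source term supported on $\{\xi=0\}$ in the kinetic equation arising from $c(x,0)\neq 0$.
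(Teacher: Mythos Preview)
Your proposal is correct and follows essentially the same route as the paper's proof (given in Proposition~\ref{smooth_kinetic_solution}). Both arguments test the original equation with the composition $\psi(x,u^{\eta,\epsilon}(x,t),t)$, use the chain rule in $x$ to split each diffusion term into a $\nabla_x\psi$-piece (yielding the Laplacian terms after a second integration by parts via $\nabla_x\chi^{\eta,\epsilon}=\delta_0(\xi-u^{\eta,\epsilon})\nabla u^{\eta,\epsilon}$) and a $\partial_\xi\psi$-piece (yielding the defect measures), and use $\partial_\xi\chi^{\eta,\epsilon}=\delta_0(\xi)-\delta_0(\xi-u^{\eta,\epsilon})$ together with $c(x,0)=0$ from \eqref{prelim_vanish} to handle the $c$-term; your identification of where \eqref{prelim_vanish} is needed is exactly the paper's.
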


The purpose of this section is to understand the system of stochastic characteristics associated to equation \eqref{chareq_smooth_kinetic_eq}, where the goal is to remove the dependency of equation on the derivative of the noise.  To achieve this, test functions are transported by a system of inverse stochastic characteristics, where the transport of a test function $\rho_0\in \C^\infty_c(\mathbb{T}^d\times\mathbb{R})$ is the solution
\begin{equation}\label{char_eq_initial} \left\{\begin{array}{ll} \partial_t\rho^\epsilon=\nabla_x\cdot\left(\left(b(x,\xi)\dot{z}^\epsilon_t\right)\rho^\epsilon\right)-\partial_\xi\left(\left(c(x,\xi)\cdot \dot{z}^\epsilon_t\right) \rho^\epsilon\right) & \textrm{in}\;\;\mathbb{T}^d\times\mathbb{R}\times(0,\infty), \\ \rho^\epsilon=\rho_0 & \textrm{on}\;\;\mathbb{T}^d\times\mathbb{R}\times\{0\}.\end{array}\right.\end{equation}
Indeed, it is not immediately clear that (\ref{char_eq_initial}) is a transport equation.  However, thanks to the equation's conservative structure, and in particular using definitions (\ref{chareq_b}) and (\ref{chareq_c}), it follows from a direct computation that
\begin{equation}\label{char_conservative} \nabla_x\cdot\left(b(x,\xi)\dot{z}^\epsilon_t\right)-\partial_\xi\left(c(x,\xi)\cdot \dot{z}^\epsilon_t\right)=0.\end{equation}
Therefore, equation (\ref{char_eq_initial}) simplifies to yield the pure transport equation
\begin{equation}\label{char_eq} \left\{\begin{array}{ll} \partial_t\rho^\epsilon=b(x,\xi)\dot{z}^\epsilon_t\cdot\nabla_x\rho^\epsilon-\left(c(x,\xi)\cdot \dot{z}^\epsilon_t\right) \partial_\xi\rho^\epsilon & \textrm{in}\;\;\mathbb{T}^d\times\mathbb{R}\times(0,\infty), \\ \rho^\epsilon=\rho_0 & \textrm{on}\;\;\mathbb{T}^d\times\mathbb{R}\times\{0\}.\end{array}\right.\end{equation}
We will now prove that $\rho^\epsilon$ of (\ref{char_eq}) is represented by the initial data $\rho_0$ transported by a system of underlying inverse characteristics.

The forward characteristic $(X^{x,\xi,\epsilon}_{t_0,t}, \Xi^{x,\xi,\epsilon}_{t_0,t})$ associated to (\ref{char_eq}) beginning at $t_0\geq 0$ and $(x,\xi)\in\mathbb{T}^d\times\mathbb{R}$ is defined as the solution of the system
\begin{equation}\label{kin_forward} \left\{\begin{array}{ll} \dot{X}^{x,\xi,\epsilon}_{t_0,t} = -b(X^{x,\xi,\epsilon}_{t_0,t}, \Xi^{x,\xi,\epsilon}_{t_0,t})\dot{z}^\epsilon_t & \textrm{in}\;\;(t_0,\infty), \\ \dot{\Xi}^{x,\xi,\epsilon}_{t_0,t} = c(X^{x,\xi,\epsilon}_{t_0,t}, \Xi^{x,\xi,\epsilon}_{t_0,t})\cdot \dot{z}^\epsilon_t & \textrm{in}\;\;(t_0,\infty), \\ (X^{x,\xi,\epsilon}_{t_0,t_0}, \Xi^{x,\xi,\epsilon}_{t_0,t_0})=(x,\xi). & \end{array}\right.\end{equation}
The corresponding backward characteristic is obtained by reversing the path $z$.  For each $t_0\geq 0$, define the reversed path
$$z^\epsilon_{t_0,t}:=z^\epsilon_{t-t_0}\;\;\textrm{for each}\;t\in[0,t_0].$$
The backward characteristic $(Y^{x,\xi,\epsilon}_{t_0,t},\Pi^{x,\xi,\epsilon}_{t_0,t})$ beginning from $(x,\xi)\in\mathbb{T}^d\times\mathbb{R}$ corresponding to the path reversed at time $t_0\geq 0$ is the solution of the system
\begin{equation}\label{kin_backward} \left\{\begin{array}{ll} \dot{Y}^{x,\xi,\epsilon}_{t_0,t}= -b(Y^{x,\xi,\epsilon}_{t_0,t},\Pi^{x,\xi,\epsilon}_{t_0,t})\dot{z}^\epsilon_{t_0,t} & \textrm{in}\;\;(0,t_0), \\ \dot{\Pi}^{x,\xi,\epsilon}_{t_0,t} = c(Y^{x,\xi,\epsilon}_{t_0,t},\Pi^{x,\xi,\epsilon}_{t_0,t})\cdot \dot{z}^\epsilon_{t_0,t} & \textrm{in}\;\;(0,t_0), \\ (Y^{x,\xi,\epsilon}_{t_0,0},\Pi^{x,\xi,\epsilon}_{t_0,0})=(x,\xi). &\end{array}\right.\end{equation}
The characteristics (\ref{kin_forward}) and (\ref{kin_backward}) are mutually inverse in the sense that, for each $(x,\xi)\in\mathbb{T}^d\times\mathbb{R}$, for each $t_0\geq 0$ and $t\geq t_0$, and for each $s_0\geq 0$ and $s\in[0,s_0]$,
\begin{equation}\label{kin_inverse} \left(X^{Y^{x,\xi,\epsilon}_{t,t-t_0},\Pi^{x,\xi,\epsilon}_{t,t-t_0},\epsilon}_{t_0,t}, \Xi^{Y^{x,\xi,\epsilon}_{t,t-t_0}, \Pi^{x,\xi,\epsilon}_{t,t-t_0},\epsilon}_{t_0,t}\right)=\left(Y^{X^{x,\xi,\epsilon}_{s_0-s,s},\Xi^{x,\xi,\epsilon}_{s_0-s,s},\epsilon}_{s_0,s}, \Pi^{X^{x,\xi,\epsilon}_{s_0-s,s}, \Xi^{x,\xi,\epsilon}_{s_0-s,s},\epsilon}_{s_0,s}\right)=(x,\xi). \end{equation}

The solution of (\ref{char_eq}) is the transport of the initial data by the backward characteristics (\ref{kin_backward}).  Precisely, for each $\rho_0\in\C^\infty_c(\mathbb{T}^d\times\mathbb{R})$, a direct computation proves that the solution $\rho$ of (\ref{char_eq}) admits the representation
\begin{equation}\label{kin_rep} \rho^\epsilon(x,\xi,t)=\rho_0(Y^{x,\xi,\epsilon}_{t,t}, \Pi^{x,\xi,\epsilon}_{t,t}).\end{equation}
For the arguments of this paper, it will be furthermore necessary to start the forward and backward characteristics at arbitrary points $t_0\in[0,\infty)$.  That is, for each $t_0\in[0,\infty)$, consider the equation
\begin{equation}\label{kin_translate} \left\{\begin{array}{ll} \partial_t\rho^\epsilon_{t_0,t}=\left(b(x,\xi)\dot{z}^\epsilon_t\right)\cdot\nabla_x\rho^\epsilon_{t_0,t}-\left(c(x,\xi)\cdot \dot{z}^\epsilon_t\right) \partial_\xi\rho^\epsilon_{t_0,t} & \textrm{in}\;\;\mathbb{T}^d\times\mathbb{R}\times(t_0,\infty), \\ \rho^\epsilon_{t_0,t_0}=\rho_0 & \textrm{on}\;\;\mathbb{T}^d\times\mathbb{R}\times\{t_0\}.\end{array}\right.\end{equation}
The identical computations leading to (\ref{kin_rep}) prove that, for each $\rho_0\in \C^\infty_c(\mathbb{T}^d\times\mathbb{R})$, the solution of (\ref{kin_translate}) is given by
\begin{equation}\label{kinetic_rep_1} \rho^\epsilon_{t_0,t}(x,\xi,t)=\rho_0(Y^{x,\xi,\epsilon}_{t,t-t_0}, \Pi^{x,\xi,\epsilon}_{t,t-t_0}).\end{equation}

Furthermore, as a consequence of (\ref{char_eq_initial}) and (\ref{char_conservative}), the characteristics preserve the Lebesgue measure on $\mathbb{T}^d\times\mathbb{R}$.  That is, for every $0\leq t_0<t_1$ and $0<s_1<s_0$, for every $\psi\in L^1(\mathbb{T}^d\times\mathbb{R})$,
\begin{equation}\label{kin_measure}\int_\mathbb{R}\int_{\mathbb{T}^d}\psi(x,\xi)\dx \dxi = \int_\mathbb{R}\int_{\mathbb{T}^d} \psi(X^{x,\xi,\epsilon}_{t_0,t_1}, \Xi^{x,\xi,\epsilon}_{t_0,t_1})\;\dx\dxi = \int_\mathbb{R}\int_{\mathbb{T}^d}\psi(Y^{x,\xi,\epsilon}_{s_0,s_1}, \Pi^{x,\xi,\epsilon}_{s_0,s_1})\dx\dxi.\end{equation}
This observation is implicit in the definition of a pathwise kinetic solution to (\ref{intro_eq}), and it is essential to the proof of uniqueness in the next section.  It is also a consequence of (\ref{prelim_vanish}) that the characteristics preserve the sign of the velocity.  That is, for each $(x,\xi)\in\mathbb{T}^d\times\mathbb{R}$, for each $t_0\geq 0$ and $t\geq t_0$, and for each $s_0\geq 0$ and $s\in[0,s_0]$,
\begin{equation}\label{kin_sign} \Xi^{x,\xi,\epsilon}_{t_0,t}=\Pi^{x,\xi,\epsilon}_{s_0,s}=0\;\;\textrm{if and only if}\;\;\xi=0,\;\;\textrm{and}\;\;\sgn(\xi)=\sgn(\Xi^{x,\xi,\epsilon}_{t_0,t})=\sgn(\Pi^{x,\xi,\epsilon}_{s_0,s})\;\;\textrm{if}\;\;\xi\neq 0.\end{equation}

The following proposition, which is an immediate consequence of the smoothness (\ref{prelim_regular}), Proposition \ref{chareq_prop}, and equation (\ref{kin_translate}), makes precise the notion of testing equation \eqref{chareq_smooth_kinetic_eq} with functions transported along the inverse characteristics.  The transport is expressed by the representation (\ref{kinetic_rep_1}).  Finally, we remark that the integration by parts formula is an immediate consequence of the distributional equality
$$\nabla_x\chi^{\eta,\epsilon}(x,\xi,t)=\delta_0(\xi-u^{\eta,\epsilon}(x,t))\nabla u^{\eta,\epsilon},$$
which can be proven, for instance, by considering the composition of a convolution of \eqref{chareq_kinetic_foundation} with $u^{\eta,\epsilon}$, and then using the fact that $u^{\eta,\epsilon}$ has a distributional derivative.

\begin{prop}\label{kinetic_smooth_equation}  Let $\eta\in(0,1)$, $\epsilon\in(0,1)$, and $u_0\in L^2(\mathbb{T}^d)$.  The kinetic function $\chi^{\eta,\epsilon}$ from Proposition \ref{chareq_smooth_kinetic_solution} satisfies, for each $t_0,t_1\in[0,\infty)$ and $\rho_0\in \C^\infty_c(\mathbb{T}^d\times\mathbb{R})$, for the solution $\rho^\epsilon_{t_0,\cdot}(\cdot,\cdot)$ of \eqref{kin_translate},
\begin{equation}\label{transport_equation_smooth}\begin{aligned} &\left.\int_\mathbb{R}\int_{\mathbb{T}^d}\chi^{\eta,\epsilon}(x,\xi,s)\rho^\epsilon_{t_0,s}(x,\xi)\dx\dxi\right|_{s=t_0}^{t_1}   \\ & = \int_{t_0}^{t_1}\int_\mathbb{R}\int_{\mathbb{T}^d}\left(m\abs{\xi}^{m-1}+\eta\right)\chi^{\eta,\epsilon}(x,\xi,s) \Delta_x\rho^\epsilon_{t_0,s}(x,\xi) \dx\dxi\ds \\ & \quad -\int_{t_0}^{t_1}\int_\mathbb{R}\int_{\mathbb{T}^d}\left(p^{\eta,\epsilon}(x,\xi,s)+q^{\eta,\epsilon}(x,\xi,s)\right)\partial_\xi\rho^\epsilon_{t_0,s}(x,\xi) \dx\dxi\ds. \end{aligned}\end{equation}
\end{prop}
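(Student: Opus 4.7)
The plan is to substitute the transported test function $\rho^\epsilon_{t_0,s}$ into the identity \eqref{chareq_smooth_kinetic_eq} provided by Proposition~\ref{chareq_smooth_kinetic_solution} and observe that the first-order transport terms cancel exactly against the time-derivative term.

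First I would check admissibility: by \eqref{prelim_regular} and the smoothness of $z^\epsilon$, the characteristic system \eqref{kin_forward}--\eqref{kin_backward} has smooth, globally defined flows, so the representation formula \eqref{kinetic_rep_1} shows that $\rho^\epsilon_{t_0,\cdot}\in \C^\infty(\mathbb{T}^d\times\mathbb{R}\times[t_0,t_1])$. Because $\rho_0$ has compact support in $\xi$ and the flow displacement of the $\xi$-coordinate is bounded on the compact interval $[t_0,t_1]$ (by a Gronwall argument using \eqref{prelim_regular} and the boundedness of $\dot z^\epsilon$ on the interval), the function $\rho^\epsilon_{t_0,s}$ has compact support in $\xi$ uniformly in $s\in[t_0,t_1]$. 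It is therefore admissible as a test function in \eqref{chareq_smooth_kinetic_eq} (after a standard spatial cutoff if one is uneasy about periodicity in $x$; none is needed here since $\mathbb{T}^d$ is compact).

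Next I would expand the transport term in \eqref{chareq_smooth_kinetic_eq} using the conservative cancellation \eqref{char_conservative}. The product rule gives
\begin{equation*}
\nabla_x\cdot\bigl((b(x,\xi)\dot z^\epsilon_s)\psi\bigr)-\partial_\xi\bigl((c(x,\xi)\cdot\dot z^\epsilon_s)\psi\bigr)
=\bigl(b(x,\xi)\dot z^\epsilon_s\bigr)\cdot\nabla_x\psi-\bigl(c(x,\xi)\cdot\dot z^\epsilon_s\bigr)\partial_\xi\psi,
\end{equation*}
since the contribution from derivatives of $b\dot z^\epsilon_s$ and $c\cdot\dot z^\epsilon_s$ vanishes by \eqref{char_conservative}. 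Setting $\psi=\rho^\epsilon_{t_0,s}$ and invoking the transport equation \eqref{kin_translate} identifies this expression precisely with $\partial_s\rho^\epsilon_{t_0,s}$. Hence the third line of \eqref{chareq_smooth_kinetic_eq} equals
\begin{equation*}
-\int_{t_0}^{t_1}\!\!\int_{\mathbb{R}}\!\!\int_{\mathbb{T}^d}\chi^{\eta,\epsilon}(x,\xi,s)\,\partial_s\rho^\epsilon_{t_0,s}(x,\xi)\dx\dxi\ds,
\end{equation*}
which cancels the $\partial_t\psi$ contribution appearing on the first line of \eqref{chareq_smooth_kinetic_eq}.

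What remains after this cancellation are exactly the diffusive term $\int(m|\xi|^{m-1}+\eta)\chi^{\eta,\epsilon}\Delta_x\rho^\epsilon_{t_0,s}$ and the defect term $-\int(p^{\eta,\epsilon}+q^{\eta,\epsilon})\partial_\xi\rho^\epsilon_{t_0,s}$, together with the boundary values $\left.\int\chi^{\eta,\epsilon}\rho^\epsilon_{t_0,s}\right|_{s=t_0}^{t_1}$, which is precisely \eqref{transport_equation_smooth}. The only mildly delicate step is the admissibility of $\rho^\epsilon_{t_0,s}$ as a test function: away from that, the identity is a direct consequence of the product rule and the conservative cancellation \eqref{char_conservative}. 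I expect no substantial obstacle, consistent with the paper's remark that the result is immediate from the smoothness \eqref{prelim_regular}, Proposition~\ref{chareq_prop}, and equation \eqref{kin_translate}.
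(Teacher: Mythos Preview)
Your proposal is correct and follows exactly the approach the paper indicates: it says the proposition is ``an immediate consequence of the smoothness \eqref{prelim_regular}, Proposition~\ref{chareq_prop}, and equation \eqref{kin_translate},'' and you have simply written out those immediate details, using \eqref{char_conservative} to reduce the divergence-form transport term to $\partial_s\rho^\epsilon_{t_0,s}$, which then cancels the $\partial_t\psi$ term in \eqref{chareq_smooth_kinetic_eq}. There is nothing to add.
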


The essential observation in the passage to the singular limit $\epsilon\rightarrow 0$ is that the system of characteristics \eqref{kin_backward} is well-posed for rough noise when interpreted as a rough differential equation.  In view of the representation \eqref{kinetic_rep_1}, this implies the well-posedness of the transport equation \eqref{char_eq} for rough signals as well.  The details are presented in Section~\ref{sec_rough}.

For each $(x,\xi)\in\mathbb{T}^d\times\mathbb{R}$ and $t_0\geq 0$, let $\left(X^{x,\xi}_{t_0,t},\Xi^{x,\xi}_{t_0,t}\right)$ denote the solution of the rough differential equation
\begin{equation}\label{kin_rough_for} \left\{\begin{array}{ll}  \dd X^{x,\xi}_{t_0,t} = -b(X^{x,\xi}_{t_0,t}, \Xi^{x,\xi}_{t_0,t})\circ \dz_t & \textrm{in}\;\;(t_0,\infty), \\ \dd \Xi^{x,\xi}_{t_0,t}= c(X^{x,\xi}_{t_0,t}, \Xi^{x,\xi}_{t_0,t})\circ \dz_t & \textrm{in}\;\;(t_0,\infty), \\ (X^{x,\xi}_{t_0,t_0}, \Xi^{x,\xi}_{t_0,t_0})=(x,\xi). & \end{array}\right.\end{equation}
Similarly, for each $t_0\geq 0$ and $(x,\xi)\in\mathbb{T}^d\times\mathbb{R}$, for the reversed path
$$z_{t_0,t}:=z_{t_0-t}\;\;\textrm{for}\;\;t\in[0,t_0],$$
let $\left(Y^{x,\xi}_{t_0,t}, \Pi^{x,\xi}_{t_0,t}\right)$ denote the solution of the inverse rough differential equation
\begin{equation}\label{kin_rough_back} \left\{\begin{array}{ll}  \dd Y^{x,\xi}_{t_0,t} = -b(Y^{x,\xi}_{t_0,t}, \Pi^{x,\xi}_{t_0,t})\circ \dz_{t_0,t} & \textrm{in}\;\;(0,t_0), \\ \dd \Pi^{x,\xi}_{t_0,t}= c(Y^{x,\xi}_{t_0,t}, \Pi^{x,\xi}_{t_0,t})\circ \dz_{t_0,t} & \textrm{in}\;\;(0,t_0), \\ (Y^{x,\xi}_{t_0,0}, \Pi^{x,\xi}_{t_0,0})=(x,\xi). & \end{array}\right.\end{equation}
The systems \eqref{kin_rough_for} and \eqref{kin_rough_back} are inverse in the sense that, for every $(x,\xi)\in\mathbb{T}^d\times\mathbb{R}$, $0\leq t_0\leq t$, and $0\leq s\leq s_0$,
$$\left(X^{Y^{x,\xi}_{t,t-t_0},\Pi^{x,\xi}_{t,t-t_0}}_{t_0,t},\Xi^{Y^{x,\xi}_{t,t-t_0},\Pi^{x,\xi}_{t,t-t_0}}_{t_0,t}\right)=(x,\xi)\;\;\textrm{and}\;\;\left(Y^{X^{x,\xi}_{s-s_0,s},\Xi^{x,\xi}_{s-s_0,s}}_{s_0,s},\Pi^{X^{x,\xi}_{s-s_0,s},\Xi^{x,\xi}_{s-s_0,s}}_{s_0,s}\right)=(x,\xi).$$

The conservative structure of the equation is preserved in the limit, since it is immediate from \eqref{kin_measure} that the rough characteristics preserve the Lebesgue measure.  That is, for each $0\leq t_0\leq t_1$ and for each $0\leq s_1\leq s_0$, for every $\psi\in L^1(\mathbb{T}^d\times\mathbb{R})$,
\begin{equation}\label{kin_rough_measure}\int_\mathbb{R}\int_{\mathbb{T}^d}\psi(x,\xi)\dx \dxi = \int_\mathbb{R}\int_{\mathbb{T}^d} \psi(X^{x,\xi}_{t_0,t_1}, \Xi^{x,\xi}_{t_0,t_1})\;\dx\dxi = \int_\mathbb{R}\int_{\mathbb{T}^d}\psi(Y^{x,\xi,\epsilon}_{s_0,s_1}, \Pi^{x,\xi}_{s_0,s_1})\dx\dxi.\end{equation}
It is also a consequence of \eqref{prelim_vanish} and \eqref{kin_sign} that the rough characteristics preserve the sign of the velocity.  That is, for each $(x,\xi)\in\mathbb{T}^d\times\mathbb{R}$, $0\leq t_0\leq t_1$, and $0\leq s_1\leq s_0$,
\begin{equation}\label{kin_rough_sign} \Xi^{x,\xi}_{t_0,t_1}=\Pi^{x,\xi}_{s_0,s_1}=0\;\;\textrm{if and only if}\;\;\xi=0,\;\;\textrm{and}\;\;\sgn(\xi)=\sgn(\Xi^{x,\xi}_{t_0,t_1})=\sgn(\Pi^{x,\xi}_{s_0,s_1})\;\;\textrm{if}\;\;\xi\neq 0.\end{equation}

It follows from well-posedness of the characteristics systems \eqref{kin_rough_for} and \eqref{kin_rough_back} that the rough transport equation, for each $t_0\geq 0$,
\begin{equation}\label{char_rough_eq} \left\{\begin{array}{ll} \partial_t\rho_{t_0,t}=\left(b(x,\xi)\circ\dz_t\right)\cdot\nabla_x\rho_{t_0,t}-\left(c(x,\xi)\circ\dz_t\right) \partial_\xi\rho_{t_0,t} & \textrm{in}\;\;\mathbb{T}^d\times\mathbb{R}\times(t_0,\infty), \\ \rho_{t_0,t_0}=\rho_0 & \textrm{on}\;\;\mathbb{T}^d\times\mathbb{R}\times\{t_0\},\end{array}\right.\end{equation}
is well-posed for initial data $\rho_0\in\C^\infty_c(\mathbb{T}^d\times\mathbb{R})$.  Indeed, in analogy with \eqref{kinetic_rep_1}, the solution is represented by the transport of the initial data by the inverse characteristics \eqref{kin_rough_back}.  That is, for each $t_0\geq 0$ and $\rho_0\in\C^\infty_c(\mathbb{T}^d\times\mathbb{R})$, the solution of \eqref{char_rough_eq} admits the representation
\begin{equation}\label{kin_rough_rep} \rho_{t_0,t}(x,\xi)=\rho_0\left(Y^{x,\xi}_{t,t-t_0}, \Pi^{x,\xi}_{t,t-t_0}\right).\end{equation}

We are now prepared to present the definition of a pathwise kinetic solution.  Proposition~\ref{stable_L1} and Proposition~\ref{stable_estimates} prove that, uniformly for the solutions $\{u^{\eta,\epsilon}\}_{\eta,\epsilon\in(0,1)}$,
\begin{equation}\label{kin_stable_estimates_2}\norm{u^{\eta,\epsilon}}_{L^\infty\left([0,\infty);L^1(\mathbb{T}^d)\right)}\leq \norm{u_0}_{L^1(\mathbb{T}^d)},\end{equation}
and, for each $T>0$, for $C=C(m,d,T)>0$,
\begin{equation}\begin{aligned}\label{kin_stable_estimates_1} & \norm{u^{\eta,\epsilon}}_{L^\infty([0,T];L^2(\mathbb{T}^d))}^2+\norm{\nabla \left(u^{\eta,\epsilon}\right)^{\left[\frac{m+1}{2}\right]}}_{L^2([0,T];L^2(\mathbb{T}^d;\mathbb{R}^d))}+\norm{\eta^\frac{1}{2}\nabla u^{\eta,\epsilon}}^2_{L^2\left([0,T];L^2(\mathbb{T}^d;\mathbb{R}^d)\right)} \\ & \leq  C\left(\norm{u_0}^2_{L^2(\mathbb{T}^d)}+\norm{u_0}^2_{L^1(\mathbb{T}^d)}+\norm{u_0}_{L^1(\mathbb{T}^d)}^{m+1}\right).\end{aligned}\end{equation}

It is not difficult to prove that, as $\eta\rightarrow 0$, the entropy defect measures $\left\{p^{\eta,\epsilon}\right\}_{\eta,\epsilon\in(0,1)}$ converge weakly to zero, owing to the regularity implied by the parabolic defect measures $\left\{q^{\eta,\epsilon}\right\}_{\eta,\epsilon\in(0,1)}$.  However, due to the weak lower semicontinuity of the $L^2$-norm, along a subsequence, the weak limit of the parabolic defect measures $\left\{q^{\eta,\epsilon}\right\}_{\eta,\epsilon\in(0,1)}$ may lose mass in the limit, since the gradients
$$\left\{\nabla\left(u^{\eta,\epsilon}\right)^{\left[\frac{m+1}{2}\right]}\right\}_{\eta,\epsilon\in(0,1)},$$
will, in general, converge only weakly.  The entropy defect measure appearing in Definiton~\ref{def_solution} is therefore necessary to account for this potential loss of mass.

\begin{definition}\label{def_solution} For $u_0\in L^2(\mathbb{T}^d)$, a \emph{pathwise kinetic solution} of \eqref{intro_eq} is a function satisfying, for each $T>0$,
{\color{black}$$u\in L^\infty([0,T];L^2(\mathbb{T}^d)),$$}
and the following two properties.

(i)  For each $T>0$, 
$$u^{\left[\frac{m+1}{2}\right]}\in L^2([0,T];H^1(\mathbb{T}^d)).$$
In particular, for each $T>0$, the parabolic defect measure
$$q(x,\xi,t):=\frac{4m}{(m+1)^2}\delta_0(\xi-u(x,t))\abs{\nabla u^{\left[\frac{m+1}{2}\right]}}^2\;\;\textrm{for}\;\;(x,\xi,t)\in \mathbb{T}^d\times\mathbb{R}\times(0,\infty),$$
is finite on $\mathbb{T}^d\times\mathbb{R}\times (0,T)$.

(ii)  For the kinetic function
$$\chi(x,\xi,t):=\overline{\chi}(u(x,t),\xi)\;\;\textrm{for}\;\;(x,\xi,t)\in \mathbb{T}^d\times\mathbb{R}\times[0,\infty),$$
there exists a finite, nonnegative entropy defect measure $p$ on $\mathbb{T}^d\times\mathbb{R}\times(0,\infty)$ satisfying, for each $T>0$,
$$\int_0^T\int_\mathbb{R}\int_{\mathbb{T}^d}p\dx\dxi\dr<\infty,$$
and a subset $\mathcal{N}\subset(0,\infty)$ of Lebesgue measure zero such that, for every $\rho_0\in\C^\infty_c(\mathbb{T}^d\times\mathbb{R})$, for $\rho_{s,\cdot}(\cdot,\cdot)$ satisfying \eqref{char_rough_eq}, for every $s<t\in[0,\infty)\setminus\mathcal{N}$,
\begin{equation}\begin{aligned}\label{transport_equation}\left.\int_\mathbb{R}\int_{\mathbb{T}^d}\chi(x,\xi,r)\rho_{s,r}(x,\xi)\dx\dxi\right|_{r=s}^{r=t} = & \int_s^t\int_\mathbb{R}\int_{\mathbb{T}^d} m\abs{\xi}^{m-1}\chi\Delta \rho_{s,r}\dx\dxi\dr \\ & - \int_s^t\int_\mathbb{R}\int_{\mathbb{T}^d} \left(p+q\right) \partial_\xi \rho_{s,r}\dx\dxi\dr,\end{aligned}\end{equation}
where the initial condition is enforced in the sense that, when $s=0$,
$$\int_\mathbb{R}\int_{\mathbb{T}^d}\chi(x,\xi,0)\rho_{0,0}(x,\xi)\dx\dxi=\int_\mathbb{R}\int_{\mathbb{T}^d}\overline{\chi}(u_0(x),\xi)\rho_0(x,\xi)\dx\dxi.$$
\end{definition}

\begin{remark} Observe that \eqref{transport_equation} is equivalent to requiring that the kinetic function $\chi$ satisfies, for each $\phi\in\C^\infty_c([0,\infty))$, $t_0\geq 0$, and $\rho_0\in \C^\infty_c(\mathbb{T}^d\times\mathbb{R})$, for the solution $\rho_{t_0,\cdot}(\cdot,\cdot)$ of (\ref{char_rough_eq}),
\begin{equation}\label{transport_equation_last}\begin{aligned} \int_{t_0}^\infty\int_\mathbb{R}\int_{\mathbb{T}^d}\chi(x,\xi,r)\rho_{t_0,r}(x,\xi)\phi'(r) = & -\int_\mathbb{R}\int_{\mathbb{T}^d}\chi(x,\xi,0)\rho_{t_0,t_0}(x,\xi)\phi(t_0)\\ & -\int_{t_0}^\infty\int_\mathbb{R}\int_{\mathbb{T}^d}m\abs{\xi}^{m-1}\chi(x,\xi,r)\Delta_x\rho_{t_0,r}(x,\xi)\phi(r) \\ & +\int_{t_0}^\infty\int_\mathbb{R}\int_{\mathbb{T}^d}\left(p(x,\xi,r)+q(x,\xi,r)\right)\partial_\xi\rho_{t_0,r}(x,\xi)\phi(r). \end{aligned}\end{equation}
The proof is a consequence of the Lebesgue differentiation theorem applied in time to a sequence of smooth approximations of the indicator functions of intervals $[t_0,t_1]$, for each $t_1\geq t_0$. \end{remark}

{\color{black}We observe that the regularity of Definition~\ref{def_solution} (i) implies that every pathwise kinetic solutions satisfies the following integration by parts formula:  for each $\psi\in\C^\infty_c(\mathbb{T}^d\times\mathbb{R}\times[0,\infty))$, for each $t\geq 0$,
\begin{equation}\label{ibp_formula}\int_0^t\int_\mathbb{R}\int_{\mathbb{T}^d} \frac{m+1}{2}\abs{\xi}^\frac{m-1}{2}\chi(x,\xi,r) \nabla\psi(x,\xi,r)\dx\dxi\dr = -\int_0^t\int_{\mathbb{T}^d} \nabla u^{\left[\frac{m+1}{2}\right]}\psi(x,u(x,r),r)\dx\dr.\end{equation}
We emphasize that in anisotropic settings, see for instance \cite[Definition~2.2]{ChenPerthame}, it would be furthermore necessary to postulate either a chain rule or integration by parts formula like \eqref{ibp_formula} in the definition of a pathwise kinetic solution.  The proof of the \eqref{ibp_formula} is consequence of the following lemma, which is motivated by \cite[Appendix~A]{ChenPerthame} and which relies upon the fact that the nonlinear diffusive term is isotropic.

\begin{lem}\label{lem_ibp}  Let $z:\mathbb{T}^d\rightarrow\mathbb{R}$ be measurable and suppose that
$$z^{\left[\frac{m+1}{2}\right]}\in H^1(\mathbb{T}^d).$$
Then, for each $\psi\in\C^\infty_c(\mathbb{T}^d\times\mathbb{R})$, for the kinetic function $\chi$ of $z$,
$$\int_\mathbb{R}\int_{\mathbb{T}^d} \frac{m+1}{2}\abs{\xi}^\frac{m-1}{2}\chi(x,\xi) \nabla\psi(x,\xi)\dx\dxi = -\int_{\mathbb{T}^d} \nabla z^{\left[\frac{m+1}{2}\right]}\psi(x,z(x))\dx.$$
\end{lem}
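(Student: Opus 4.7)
The plan is to reduce the identity to the classical chain rule by introducing the potential
\begin{equation*}
\Theta(x,W) := \int_0^W \psi(x,\tau^{[2/(m+1)]})\,d\tau,
\end{equation*}
where the signed power is understood as $\tau^{[2/(m+1)]} = \sgn(\tau)|\tau|^{2/(m+1)}$. The key observation is that the change of variables $\tau = \xi^{[(m+1)/2]}$, with $d\tau = \tfrac{m+1}{2}|\xi|^{(m-1)/2}\,d\xi$, allows the integral in $\xi$ appearing on the left-hand side of the lemma to be recast as an integral in $\tau$ against the Sobolev function $W := z^{[(m+1)/2]} \in H^1(\mathbb{T}^d)$.

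First, I would verify that $\Theta \in C^1(\mathbb{T}^d\times\mathbb{R})$ with bounded derivatives, by computing
\begin{equation*}
\partial_W \Theta(x,W) = \psi(x,W^{[2/(m+1)]}), \qquad \nabla_x\Theta(x,W) = \int_0^W \nabla_x\psi(x,\tau^{[2/(m+1)]})\,d\tau,
\end{equation*}
both of which are jointly continuous in $(x,W)$ (using the continuity of $W\mapsto W^{[2/(m+1)]}$ and the smooth compact support of $\psi$) and bounded uniformly in $(x,W)$. Second, by the substitution above and the definition of the kinetic function,
\begin{equation*}
\nabla_x\Theta(x,W(x)) = \int_0^{z(x)} \tfrac{m+1}{2}|\xi|^{(m-1)/2}\nabla_x\psi(x,\xi)\,d\xi = \int_\mathbb{R} \tfrac{m+1}{2}|\xi|^{(m-1)/2}\chi(x,\xi)\nabla_x\psi(x,\xi)\,d\xi,
\end{equation*}
while $\partial_W\Theta(x,W(x)) = \psi(x,z(x))$, since $W(x)^{[2/(m+1)]} = z(x)$.

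Third, I would apply the Sobolev chain rule to $x\mapsto \Theta(x,W(x))$. Approximate $W$ by smooth $W_n\to W$ in $H^1(\mathbb{T}^d)$ and a.e.\ so that the classical chain rule yields
\begin{equation*}
\nabla_x\bigl[\Theta(x,W_n(x))\bigr] = \nabla_x\Theta(x,W_n(x)) + \partial_W\Theta(x,W_n(x))\nabla W_n(x).
\end{equation*}
Integrating over $\mathbb{T}^d$, the left-hand side vanishes by periodicity. Passing $n\to\infty$ using the uniform boundedness and continuity of $\nabla_x\Theta$ and $\partial_W\Theta$, the dominated convergence theorem, and the $H^1$-convergence of $W_n$, yields
\begin{equation*}
\int_{\mathbb{T}^d}\nabla_x\Theta(x,W(x))\,dx = -\int_{\mathbb{T}^d}\psi(x,z(x))\nabla W(x)\,dx,
\end{equation*}
which is the claimed identity once the identifications above are made.

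The main obstacle is the verification that $\Theta$ is genuinely $C^1$ across $W=0$, where $W\mapsto W^{[2/(m+1)]}$ fails to be smooth for $m\ne 1$. This is precisely where the isotropy of the diffusive term enters: because $\psi$ is smooth and bounded and the nonsmoothness of $\tau^{[2/(m+1)]}$ is only Hölder, the composition $\psi(x,\tau^{[2/(m+1)]})$ remains continuous and bounded, which suffices for the chain rule argument via the smooth approximation of $W$. If the underlying nonlinearity were anisotropic, this reduction would fail and an integration by parts formula would have to be postulated independently, as in \cite{ChenPerthame}.
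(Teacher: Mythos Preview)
Your proof is correct and is essentially the same as the paper's: both hinge on the change of variables $\tau=\xi^{[(m+1)/2]}$ to reduce the identity to a statement about the $H^1$ function $W=z^{[(m+1)/2]}$, followed by an approximation argument. The only difference is packaging: the paper identifies $\chi(x,\beta^{-1}(\xi))$ with the kinetic function $\tilde\chi$ of $W$ and uses the distributional identity $\nabla_x\tilde\chi=\delta_0(\xi-W)\nabla W$, whereas you encode the same computation via the potential $\Theta$ and the Sobolev chain rule.
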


\begin{proof}  Let $\psi\in\C^\infty_c(\mathbb{T}^d\times\mathbb{R})$ be arbitrary.  For a measurable function $z$ on $\mathbb{T}^d$ satisfying $z^{\left[\frac{m+1}{2}\right]}\in H^1(\mathbb{T}^d)$, we will write $\chi$ for the kinetic function of $z$ and $\tilde{\chi}$ for the kinetic function of the signed power $z^{\left[\frac{m+1}{2}\right]}$.  Define the signed power, for $\xi\in\mathbb{R}$,
$$\beta(\xi):=\xi^{\left[\frac{m+1}{2}\right]}.$$
The monotonicity of $\beta$ and the change of variables formula prove that
\begin{equation}\label{libp_1}\int_\mathbb{R}\int_{\mathbb{T}^d} \frac{m+1}{2}\abs{\xi}^\frac{m-1}{2}\chi(x,\xi) \nabla\psi(x,\xi)\dx\dxi=\int_\mathbb{R}\int_{\mathbb{T}^d}\chi(x,\beta^{-1}(\xi))\nabla\psi(x,\beta^{-1}(\xi)).\end{equation}
It follows from the definitions of $\beta$ and the kinetic functions $\chi$ and $\tilde{\chi}$ that, for each $(x,\xi)\in\mathbb{T}^d\times\mathbb{R}$,
\begin{equation}\label{libp_2}\chi(x,\beta^{-1}(\xi))=\tilde{\chi}(x,\xi).\end{equation}
Since $z^{\left[\frac{m+1}{2}\right]}\in H^1(\mathbb{T}^d)$, an approximation argument proves the distributional equality
\begin{equation}\label{libp_3}\nabla\tilde{\chi}(x,\xi)=\delta_0(\xi-z^{\left[\frac{m+1}{2}\right]})\nabla z^{\left[\frac{m+1}{2}\right]}.\end{equation}
where $\delta_0$ is the one-dimensional Dirac mass at zero.  Therefore, returning to \eqref{libp_1}, it follows from \eqref{libp_2}, \eqref{libp_3}, and the definition of $\beta$ that
$$\begin{aligned}\int_\mathbb{R}\int_{\mathbb{T}^d} \frac{m+1}{2}\abs{\xi}^\frac{m-1}{2}\chi(x,\xi) \nabla\psi(x,\xi)\dx\dxi= & \int_\mathbb{R}\int_{\mathbb{T}^d}\tilde{\chi}(x,\xi)\nabla\psi(x,\beta^{-1}(\xi))\dx\dxi \\ = & -\int_{\mathbb{T}^d}\nabla z^{\left[\frac{m+1}{2}\right]}\psi\left(x,\beta^{-1}\left(z^{\left[\frac{m+1}{2}\right]}(x)\right)\right)\dx \\ = & -\int_{\mathbb{T}^d}\nabla z^{\left[\frac{m+1}{2}\right]}\psi(x,z(x))\dx,\end{aligned}$$
which completes the proof. \end{proof}}

\section{Uniqueness}\label{uniqueness}

In this section, we prove that pathwise kinetic solutions are unique.  In order to motivate and give an overview of the proof, we begin by briefly sketching the uniqueness argument for the deterministic porous medium equation
\begin{equation}\label{sketch_0}\left\{\begin{array}{ll} \partial_t u=\Delta u^{[m]} & \textrm{in}\;\;\mathbb{T}^d\times(0,\infty), \\ u=u_0 & \textrm{on}\;\;\mathbb{\mathbb{T}}^d\times\{0\}.\end{array}\right.\end{equation}
The corresponding kinetic formulation is
\begin{equation}\label{sketch}\left\{\begin{array}{ll} \partial_t \chi=m\abs{\xi}^{m-1}\Delta_x\chi+\partial_\xi(p+q) & \textrm{in}\;\;\mathbb{T}^d\times\mathbb{R}\times(0,\infty), \\ \chi=\overline{\chi}(u_0;\xi) & \textrm{on}\;\;\mathbb{T}^d\times\mathbb{R}\times\{0\},\end{array}\right.\end{equation}
where $p\geq 0$ is the nonnegative entropy defect measure and the parabolic defect measure $q$ is defined by
$$q(x,\xi,t):=\delta_0(\xi-u(x,t))\frac{4m}{(m+1)^2}\abs{\nabla u^{\left[\frac{m+1}{2}\right]}(x,t)}^2.$$

In this setting, the following proof of uniqueness is due to \cite{ChenPerthame}.  Suppose that $u^1$ and $u^2$ are two kinetic solutions of \eqref{sketch_0} in the sense that the associated kinetic functions $\chi^1$ and $\chi^2$ solve \eqref{sketch}.  Properties of the kinetic function yield the identity
\begin{equation}\label{cp_arg}\begin{aligned}  \int_{\mathbb{T}^d}\abs{u^1-u^2}\dx = &  \int_{\mathbb{R}}\int_{\mathbb{T}^d}\abs{\chi^1-\chi^2}^2\dx\dxi = \int_{\mathbb{R}}\int_{\mathbb{T}^d}\abs{\chi^1}+\abs{\chi^2}-2\chi^1\chi^2\dx\dxi \\ = & \int_{\mathbb{R}}\int_{\mathbb{T}^d}\chi^1\sgn(\xi)+\chi^2\sgn(\xi)-2\chi^1\chi^2\dx\dxi.\end{aligned}\end{equation}
The distributional equalities, for $i\in\{1,2\}$,
$$\partial_\xi\chi^i(x,\xi,t)=\delta_0(\xi)-\delta_0(\xi-u^i(x,t))\;\;\textrm{and}\;\;\;\nabla_x\chi^i(x,\xi,t)=\delta_0(\xi-u^i(x,t))\nabla u^i(x,t),$$
yield formally, after taking the derivative in time of \eqref{cp_arg}, applying equation \eqref{sketch}, and integrating by parts in space,
{\color{black}\begin{equation}\label{sketch_1}\begin{aligned}\partial_t\int_{\mathbb{T}^d}\abs{u^1-u^2}= & \frac{16m}{(m+1)^2}\int_{\mathbb{R}}\int_{\mathbb{T}^d}\delta_0(\xi-u^1(x,t))\delta_0(\xi-u^2(x,t))\nabla(u^1)^{\left[\frac{m+1}{2}\right]}\cdot \nabla(u^2)^{\left[\frac{m+1}{2}\right]}\\ & -2\int_{\mathbb{R}}\int_{\mathbb{T}^d}\delta_0(\xi-u^1(x,t))\left(p^2(x,\xi,t)+q^2(x,\xi,t)\right) \\ & -2\int_{\mathbb{R}}\int_{\mathbb{T}^d}\delta_0(\xi-u^2(x,t))\left(p^1(x,\xi,t)+q^1(x,\xi,t)\right).  \end{aligned}\end{equation}}
Applications of H\"older's inequality and Young's inequality, together with the definition of the parabolic defect measure and the nonegativity of the entropy defect measure, prove that the righthand side of \eqref{sketch_1} is nonpositive.  Integrating in time then completes the proof of uniqueness.

The formal argument leading to \eqref{sketch_1} provides the outline for the proof of Theorem~\ref{theorem_uniqueness} below.  However, even to justify the formal computation, care must be taken to avoid the product of $\delta$-distributions.  This is achieved by regularizing the $\sgn$ and kinetic functions in the spatial and velocity variables.  Additional error terms arise due to the transport of test functions by the inverse characteristics, which are handled using a time-splitting argument that relies crucially on the conservative structure of the equation.

The proof of uniqueness is broken down into six steps.  The first introduces the regularization, the second handles the terms involving the $\sgn$ function, and the third handles the mixed term.  The fourth makes rigorous the cancellation coming from the parabolic defect measures, the fifth analyzes the error terms, and the sixth concludes the proof by passing to the limit first with respect to the regularization and second the time-splitting.

\begin{remark}  In the proof of Theorem~\ref{theorem_uniqueness} and for the remainder of the paper, after applying the integration by parts formula of Lemma~\ref{lem_ibp}, we will frequently encounter derivatives of functions $f(x,\xi,r):\mathbb{T}^d\times\mathbb{R}\times[0,\infty)\rightarrow\mathbb{R}$ evaluated at $\xi=u(x,r)$.  In order to simplify the notation, we make the convention that
$$\nabla_xf(x,u(x,r),r):=\left.\nabla_xf(x,\xi,r)\right|_{\xi=u(x,r)},$$
and analogous conventions for all possible derivatives.  That is, in every case, the notation indicates the derivative of $f$ evaluated at $(x,u(x,r),r)$ as opposed to the derivative of the full composition.  \end{remark}

\begin{thm}\label{theorem_uniqueness}  Let $u_0^1,u_0^2\in L^2_+(\mathbb{T}^d)$.  Suppose that $u^1$ and $u^2$ are pathwise kinetic solutions of \eqref{intro_eq} in the sense of Definition~\ref{def_solution} with initial data $u^1_0$ and $u^2_0$.  Then,
$$\norm{u^1-u^2}_{L^\infty\left([0,\infty);L^1(\mathbb{T}^d)\right)}\leq \norm{u^1_0-u^2_0}_{L^1(\mathbb{T}^d)}.$$
\end{thm}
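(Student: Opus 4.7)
The plan is to mimic the deterministic Chen--Perthame argument sketched in \eqref{cp_arg}--\eqref{sketch_1}, but in this rough, $x$-dependent setting one cannot directly test the equation for $\chi^1$ against $\chi^2$: admissible test functions in \eqref{transport_equation} must be smooth and transported by the inverse rough characteristics \eqref{kin_rough_back}. Following the six-step outline announced after \eqref{sketch_1}, I would (i) mollify $\chi^1,\chi^2$ in $x$ and $\xi$ via a product kernel $\rho^\beta_x(x)\rho^\beta_\xi(\xi)$ so that products such as $\chi^1\chi^2$ and $\chi^i\sgn(\xi)$ become admissible in the limit, and (ii) partition time into short intervals $[\tau_k,\tau_{k+1}]$ on which a mollified, transported version of one solution serves as test function against the other, using the representation \eqref{kin_rough_rep}. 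The conservative structure encoded in \eqref{kin_rough_measure} is crucial: it guarantees that when one symmetrizes the two tests, the contributions of the rough transport cancel, leaving only diffusive and defect-measure terms.

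Starting from $\int_{\mathbb T^d}|u^1-u^2|\dx = \int_{\mathbb R}\int_{\mathbb T^d}(\chi^1\sgn\xi + \chi^2\sgn\xi - 2\chi^1\chi^2)\dx\dxi$, which uses the sign preservation \eqref{kin_rough_sign} and $u^i_0\geq 0$, I apply \eqref{transport_equation} to each of the three pieces over $[\tau_k,\tau_{k+1}]$ and sum in $k$. After sending $\beta\to 0$, Lemma~\ref{lem_ibp} together with the distributional identities $\partial_\xi\chi^i=\delta_0(\xi)-\delta_0(\xi-u^i)$ and $\nabla_x\chi^i=\delta_0(\xi-u^i)\nabla u^i$ converts the cross term into
\[
\frac{16m}{(m+1)^2}\int_{\mathbb R}\int_{\mathbb T^d}\delta_0(\xi-u^1)\delta_0(\xi-u^2)\,\nabla(u^1)^{\left[\frac{m+1}{2}\right]}\!\cdot\nabla(u^2)^{\left[\frac{m+1}{2}\right]},
\]
which Young's inequality bounds by the diagonal contributions $\int\delta_0(\xi-u^2)q^1+\int\delta_0(\xi-u^1)q^2$ that appear with the opposite sign in the $\sgn$-terms. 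Together with the nonnegativity of $p^1,p^2$, this closes the cancellation at the level of the purely parabolic terms, exactly as in the deterministic case.

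The main obstacle, and the genuinely new point, is the error analysis for the diffusive contributions of $m|\xi|^{m-1}\Delta_x$ acting on transported test functions. The difference $\rho_{\tau_k,\tau_{k+1}}-\rho_{\tau_k,\tau_k}$ is controlled, via the rough path estimates of Section~\ref{sec_rough} (in particular Proposition~\ref{rough_est}), by $|\tau_{k+1}-\tau_k|^\alpha$ times higher derivatives of $\rho_0$; a careful balance between the mollification scale $\beta$ and the partition mesh $|\pi|$ is then needed to make the sum of these errors over $k$ vanish. The fast diffusion regime $m\in(0,1)$ is the delicate case because $|\xi|^{m-1}$ is singular at $\xi=0$: I expect to use the singular-moment estimate of Proposition~\ref{aux_log} to tame the near-zero contribution of the parabolic defect measure, an input that has no counterpart in the deterministic or first-order stochastic theory. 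Once these errors are shown to vanish, sending first $\beta\to 0$ and then $|\pi|\to 0$ yields $\int_{\mathbb T^d}|u^1(\cdot,t)-u^2(\cdot,t)|\dx\leq\int_{\mathbb T^d}|u_0^1-u_0^2|\dx$ for every $t\in[0,\infty)\setminus\mathcal N$, and the full statement follows.
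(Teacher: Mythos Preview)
Your overall architecture matches the paper's proof: time-splitting, mollification in $(x,\xi)$, the identity \eqref{cp_arg}, transported test functions via \eqref{kin_rough_rep}, cancellation of the cross gradient term against the defect measures by Young's inequality, and Proposition~\ref{aux_log} for the singular error in the regime $m\in(0,1)\cup(1,2]$. The gap is in your description of the error mechanism for the diffusive contribution.

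You propose to bound ``the difference $\rho_{\tau_k,\tau_{k+1}}-\rho_{\tau_k,\tau_k}$'' by $|\pi|^\alpha$ times higher derivatives of the mollifier and then to balance $\beta$ against $|\pi|$. This is not what is done, and your own conclusion (``sending first $\beta\to 0$ and then $|\pi|\to 0$'') is inconsistent with a coupled choice of scales. The actual errors do not arise from comparing the transported kernel to the untransported one; they arise from the way the spatial Laplacian interacts with the $(x,\xi)$-dependent characteristics, and the paper's mechanism produces bounds that are \emph{uniform in the mollification scale}.

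Concretely: writing $\nabla_x\rho^{1,\epsilon}_{t_i,r}=-\nabla_y\rho^{1,\epsilon}_{t_i,r}\cdot\nabla_xY^{x,\xi}_{r,r-t_i}-\partial_\eta\rho^{1,\epsilon}_{t_i,r}\,\nabla_x\Pi^{x,\xi}_{r,r-t_i}$ (equation \eqref{u_4}), one integrates by parts in the auxiliary variables $(y,\eta)$ to move the derivative onto $\tilde\chi^{2,\epsilon}$, and then \emph{adds and subtracts} the Jacobian at the second point, $\nabla_{x'}Y^{x',\xi'}_{r,r-t_i}$. The residual carries the factor $(\nabla_xY^{x,\xi}-\nabla_{x'}Y^{x',\xi'})\,\nabla_y\rho^{2,\epsilon}$; on the support of $\rho^{1,\epsilon}\rho^{2,\epsilon}$ one has $|x-x'|+|\xi-\xi'|\leq C\epsilon$ by \eqref{u_10}, and the second-derivative estimate of Proposition~\ref{rough_est} gives $|\nabla_xY^{x,\xi}-\nabla_{x'}Y^{x',\xi'}|\leq C(t_{i+1}-t_i)^\alpha\epsilon$. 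This $\epsilon$ exactly cancels the $1/\epsilon$ from $|\nabla_y\rho^{2,\epsilon}|$, and one obtains the $\epsilon$-uniform bounds \eqref{new_3} and \eqref{new_8} of size $C|\mathcal P|^\alpha$ times controlled quantities. One then passes $\epsilon\to 0$ first and $|\mathcal P|\to 0$ second, with no coupling. This Jacobian-swap cancellation is the ``new cancellation'' highlighted in the introduction and is the heart of the proof; your proposal does not identify it. The residual term $\textrm{Err}^4$ from the splitting $|\xi|^{m-1}+|\xi'|^{m-1}=(|\xi|^{\frac{m-1}{2}}-|\xi'|^{\frac{m-1}{2}})^2+2|\xi\xi'|^{\frac{m-1}{2}}$ is treated separately and is indeed where Proposition~\ref{aux_log} enters, as you correctly anticipated.
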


\begin{proof}  The proof will proceed in six steps.  The first introduces an approximation scheme which is necessary in order to apply the equation.

\textbf{Step 1:  The approximation scheme.}  Let $u^1$ and $u^2$ be two pathwise kinetic solutions corresponding to initial data $u_0^1,u_0^2\in L^1(\mathbb{T}^d)$.  We will write $\chi^1$ and $\chi^2$ for the corresponding kinetic functions, and $p^1, p^2$ and $q^1, q^2$ respectively for the entropy and parabolic defect measures.  In order to simplify the notation in what follows, for each $j\in\{1,2\}$ and for each $(x,\xi,t)\in\mathbb{T}^d\times\mathbb{R}\times[0,T]$, we will write
$$\chi^j_r(x,\xi):=\chi^j(x,\xi,r),\;\;p^j_r(x,\xi):=p^j(x,\xi,r),\;\;\textrm{and}\;\;q^j_r(x,\xi)=q^j(x,\xi,r).$$

The argument will proceed via a time-splitting argument that is made possible by the conservative structure of the equation and, in particular, equation (\ref{kin_measure}), which asserts that characteristics preserve the Lebesgue measure.  Let $\mathcal{N}^1$ and $\mathcal{N}^2$ denote the zero sets corresponding to $u^1$ and $u^2$ respectively, and define $\mathcal{N}=\mathcal{N}^1\cup\mathcal{N}^2$.  Let $T\in\left([0,\infty)\setminus\mathcal{N}\right)$ be arbitrary and fix a partition $\mathcal{P}\subset\left([0,T]\setminus \mathcal{N}\right)$,
$$\mathcal{P}:=\left\{0=t_0<t_1<\ldots<t_{N-1}<t_N=T\right\}.$$
For each $i\in\{0,\ldots,N-1\}$, we will write
$$\tilde{\chi}_{t_i,t}(x,\xi):=\chi_t(X^{x,\xi}_{t_i,t}, \Xi^{x,\xi}_{t_i,t})\;\;\textrm{for each}\;\;(x,\xi,t)\in\mathbb{T}^d\times\mathbb{R}\times[t_i,\infty),$$
where $\left(X^{x,\xi}_{t_i,t},\Xi^{x,\xi}_{t_i,t}\right)$ denote the solution of the translated characteristic equation beginning from $t_i\geq 0$ and $(x,\xi)\in\mathbb{T}^d\times\mathbb{R}$.

It is then immediate from (\ref{kin_measure}) and properties of the kinetic function that
\begin{equation}\label{u_0}\begin{aligned} & \left.\int_{\mathbb{R}}\int_{\mathbb{T}^d}\abs{\chi^1_r-\chi^2_r}^2\dy\deta\right|_{r=0}^T \\  & = \sum_{i=0}^{N-1}\left.\int_\mathbb{R}\int_{\mathbb{T}^d}\abs{\chi^1_r-\chi^2_r}^2\dy\deta\right|_{r=t_i}^{t_{i+1}} \\ & = \left.\sum_{i=0}^{N-1}\int_\mathbb{R}\int_{\mathbb{T}^d}\left(\abs{\chi^1_r}+\abs{\chi^2_r}-2\chi^1_r\chi^2_r\right)\dy\deta \right|_{r=t_i}^{t_{i+1}} \\ & = \left.\sum_{i=0}^{N-1}\int_\mathbb{R}\int_{\mathbb{T}^d}\left(\abs{\tilde{\chi}^1_{t_i,r}}+\abs{\tilde{\chi}^2_{t_i,r}}-2\tilde{\chi}^1_{t_i,r}\tilde{\chi}^2_{t_i,r}\right)\dy\deta \right|_{r=t_i}^{t_{i+1}}  \\ & = \left.\sum_{i=0}^{N-1}\lim_{\epsilon\rightarrow 0} \int_\mathbb{R}\int_{\mathbb{T}^d}\left(\tilde{\chi}^{1,\epsilon}_{t_i,r}\tilde{\sgn}^\epsilon_{t_i,r}+\tilde{\chi}^{2,\epsilon}_{t_i,r}\tilde{\sgn}^\epsilon_{t_i,r}-2\tilde{\chi}^{1,\epsilon}_{t_i,r}\tilde{\chi}^{2,\epsilon}_{t_i,r}\right)\dy\deta \right|_{r=t_i}^{t_{i+1}},\end{aligned}\end{equation}
where, for each $\epsilon\in(0,1)$, $i\in\{0,\ldots,N\}$, and $r\in\{t_i, t_{i+1}\}$, for standard convolution kernels $\rho^{d,\epsilon}$ of scale $\epsilon$ on $\mathbb{T}^d$ and $\rho^{1,\epsilon}$ of scale $\epsilon$ on $\mathbb{R}$,
$$\tilde{\chi}^{j,\epsilon}_{t_i,r}(y,\eta):=(\tilde{\chi}^j_{t_i,r}*\rho^{d,\epsilon}\rho^{1,\epsilon})(y,\eta)=\int_\mathbb{R}\int_{\mathbb{T}^d}\chi^j_r(X^{x,\xi}_{t_i,r},\Xi^{x,\xi}_{t_i,r})\rho^{d,\epsilon}(x-y)\rho^{1,\epsilon}(\xi-\eta)\dx\dxi,$$
and
$$\tilde{\sgn}^\epsilon_{t_i,r}(y,\eta):=(\tilde{\sgn}_{t_i,r}*\rho^{d,\epsilon}\rho^{1,\epsilon})(y,\eta)=\int_\mathbb{R}\int_{\mathbb{T}^d}\sgn(\Xi^{x,\xi}_{t_i,r})\rho^{d,\epsilon}(x-y)\rho^{1,\epsilon}(\xi-\eta)\dx\dxi.$$

In particular, in view of the inverse relationship (\ref{kin_inverse}) and the conservative property of the characteristics (\ref{kin_measure}), it follows that, for each $j\in\{1,2\}$,
\begin{equation}\label{u_1}\tilde{\chi}^{j,\epsilon}_{t_i,r}(y,\eta)=\int_\mathbb{R}\int_{\mathbb{T}^d}\chi^j_r(x,\xi)\rho^{d,\epsilon}(Y^{x,\xi}_{r,r-t_i}-y)\rho^{1,\epsilon}(\Pi^{x,\xi}_{r,r-t_i}-\eta)\dx\dxi,\end{equation}
where, returning to (\ref{kinetic_rep_1}), the function
\begin{equation}\label{u_2}\rho_{t_i,r}^\epsilon(x,y,\xi,\eta):=\rho^{d,\epsilon}(Y^{x,\xi}_{r,r-t_i}-y)\rho^{1,\epsilon}(\Pi^{x,\xi}_{r,r-t_i}-\eta)\;\;\textrm{for}\;\;(x,y,\xi,\eta,r)\in\mathbb{T}^{2d}\times\mathbb{R}^2\times[t_i,\infty),\end{equation}
is the solution of (\ref{kin_translate}) beginning from time $t_i\geq 0$ with initial data $\rho^{d,\epsilon}(\cdot-y)\rho^{1,\epsilon}(\cdot-\eta)$.  Also, since \eqref{kin_sign} proved that the velocity characteristics preserve the sign of $\xi$, the same computation proves that
\begin{equation}\label{u_002} \tilde{\sgn}^\epsilon_{t_i,r}(y,\eta)=\int_\mathbb{R}\int_{\mathbb{T}^d}\sgn(\xi)\rho^\epsilon_{t_i,r}(x,y,\xi,\eta)\dx\dxi=\int_\mathbb{R}\int_{\mathbb{T}^d}\sgn(\xi)\rho^{d,\epsilon}(x-y)\rho^{1,\epsilon}(\xi-\eta)\dx\dxi.\end{equation}
Observe that, while it is immediate from (\ref{u_002}) that the regularization of the $\sgn$ function is constant in time, independent of $y\in\mathbb{R}^d$, and independent of $i\in\{1,\ldots,N-1\}$, it will nevertheless be useful to consider the regularized and transported expression, since it will clarify an important cancellation property of the equation in the arguments to follow.

In what follows, let $i\in\{1,\ldots,N-1\}$ and $\epsilon\in(0,1)$ be arbitrary.  The following steps will estimate the difference
\begin{equation}\label{u_03}\left.\int_\mathbb{R}\int_{\mathbb{T}^d}\left(\tilde{\chi}^{1,\epsilon}_{t_i,r}\tilde{\sgn}^\epsilon_{t_i,r}+\tilde{\chi}^{2,\epsilon}_{t_i,r}\tilde{\sgn}^\epsilon_{t_i,r}-2\tilde{\chi}^{1,\epsilon}_{t_i,r}\tilde{\chi}^{2,\epsilon}_{t_i,r}\right)\dy\deta\right|_{r=t_i}^{t_{i+1}},\end{equation}
by considering first the terms involving the $\sgn$ function, and second the mixed term.

\textbf{Step 2:  The $\sgn$ terms.}  We will first analyze the terms involving the $\sgn$ function in (\ref{u_03}).  For the convolution kernel \eqref{u_2}, we will write $(x,\xi)\in\mathbb{T}^d\times\mathbb{R}$ for the integration variables defining $\tilde{\chi}^{1,\epsilon}_{t_i,r}$ and we will write $\rho^{1,\epsilon}_{t_i,r}$ for the corresponding convolution kernel.  We will write $(x',\xi')\in\mathbb{T}^d\times\mathbb{R}$ for the integration variables defining $\tilde{\sgn}^{\epsilon}_{t_i,r}$ and $\rho^{2,\epsilon}_{t_i,r}$ for the corresponding convolution kernel.

The equation and (\ref{u_002}) imply that, with the notation from (\ref{u_1}) and (\ref{u_2}),
\begin{equation}\begin{aligned}\label{u_3}& \left.\int_\mathbb{R}\int_{\mathbb{T}^d}\tilde{\chi}^{1,\epsilon}_{t_i,r}(y,\eta)\tilde{\sgn}^\epsilon_{t_i,r}\dy\deta\right|_{r=t_i}^{t_{i+1}} \\ & =\int_{t_i}^{t_{i+1}}\int_\mathbb{R}\int_{\mathbb{T}^d}\left(\int_\mathbb{R}\int_{\mathbb{T}^d}m\abs{\xi}^{m-1}\chi^1_r\Delta_x\rho_{t_i,r}^{1,\epsilon}\dx\dxi\right)\tilde{\sgn}^\epsilon_{t_i,r}\dy\deta\dr \\ & \quad -\int_{t_i}^{t_{i+1}}\int_\mathbb{R}\int_{\mathbb{T}^d}\left(\int_\mathbb{R}\int_{\mathbb{T}^d}(p^1_r+q^1_r)\partial_\xi\rho_{t_i,r}^{1,\epsilon}\dx\dxi\right)\tilde{\sgn}^\epsilon_{t_i,r}\dy\deta\dr.\end{aligned}\end{equation}
The first and second terms of (\ref{u_3}) will be handled separately.  Observe that, from (\ref{u_2}), for each $(x,y,\xi,\eta,r)\in\mathbb{R}^{2d+2}\times[t_i,\infty)$,
\begin{equation}\label{u_4}\nabla_x\rho_{t_i,r}^{1,\epsilon}(x,y,\xi,\eta)=-\nabla_y\rho_{t_i,r}^{1,\epsilon}(x,y,\xi,\eta)\cdot\nabla_xY^{x,\xi}_{r,r-t_i}-\partial_\eta\rho_{t_i,r}^{1,\epsilon}(x,y,\xi,\eta)\nabla_x\Pi^{x,\xi}_{r,r-t_i},\end{equation}
and
\begin{equation}\label{u_5}\partial_\xi\rho_{t_i,r}^{1,\epsilon}(x,y,\xi,\eta)=-\nabla_y\rho^{1,\epsilon}_{t_i,r}(x,y,\xi,\eta)\partial_\xi Y^{x,\xi}_{r,r-t_i}-\partial_\eta\rho_{t_i,r}^{1,\epsilon}(x,y,\xi,\eta)\partial_\xi\Pi^{x,\xi}_{r,r-t_i}.\end{equation}
For the first term of (\ref{u_3}), it is then immediate from (\ref{u_4}) that
{\color{black}\begin{equation}\begin{aligned} \label{u_6} & \int_{t_i}^{t_{i+1}}\int_\mathbb{R}\int_{\mathbb{T}^d}\left(\int_\mathbb{R}\int_{\mathbb{T}^d}m\abs{\xi}^{m-1}\chi^1_r\Delta_x\rho_{t_i,r}^{1,\epsilon}\dx\dxi\right)\tilde{\sgn}^\epsilon_{t_i,r}\dy\deta\dr \\ & =\int_{t_i}^{t_{i+1}}\int_\mathbb{R}\int_{\mathbb{T}^d}\left(\int_\mathbb{R}\int_{\mathbb{T}^d}m\abs{\xi}^{m-1}\chi^1\nabla_x\cdot\left(\rho^{1,\epsilon}_{t_i,r}\nabla_y\tilde{\sgn}^\epsilon_{t_i,r}\nabla_x Y^{x,\xi}_{r,r-t_i}\right)\dx\dxi\right)\dy\deta\dr \\ & \quad + \int_{t_i}^{t_{i+1}}\int_\mathbb{R}\int_{\mathbb{T}^d}\left(\int_\mathbb{R}\int_{\mathbb{T}^d}m\abs{\xi}^{m-1}\chi^1\nabla_x\cdot\left(\rho^{1,\epsilon}_{t_i,r}\partial_\eta\tilde{\sgn}^\epsilon_{t_i,r}\nabla_x\Pi^{x,\xi}_{r,r-t_i}\right)\dx\dxi\right)\dy\deta\dr,\end{aligned} \end{equation}
where this equality uses the fact that the regularization $\tilde{\sgn}^\epsilon_{t_i,r}$ is independent of $x\in\mathbb{T}^d$.}

In the case of (\ref{u_6}), it follows from the definition (\ref{u_002}) and the computation (\ref{u_4}) that, after adding and subtracting the terms $\nabla_{x'}Y^{x',\xi'}_{r,r-t_i}$ and $\nabla_{x'}\Pi^{x',\xi'}_{r,r-t_i}$,
\begin{equation}\begin{aligned}\label{new_1} & \int_{t_i}^{t_{i+1}}\int_\mathbb{R}\int_{\mathbb{T}^d}\left(\int_\mathbb{R}\int_{\mathbb{T}^d}m\abs{\xi}^{m-1}\chi^1_r\Delta_x\rho_{t_i,r}^{1,\epsilon}\dx\dxi\right)\tilde{\sgn}^\epsilon_{t_i,r}\dy\deta\dr \\ & =\textrm{Err}^{0,1}_i  - \int_{t_i}^{t_{i+1}}\int_{\mathbb{R}^3}\int_{\mathbb{T}^{3d}}m\abs{\xi}^{m-1}\chi^1_r\nabla_x\rho^{1,\epsilon}_{t_i,r}\sgn(\xi')\nabla_{x'}\rho^{2,\epsilon}_{t_i,r}\dx\dxi\dxp\dxip\dy\deta\dr,\end{aligned}\end{equation}
for the error term
{\color{black}\begin{equation}\begin{aligned}\label{new_2}  \textrm{Err}^{0,1}_i:= & \int_{t_i}^{t_{i+1}}\int_{\mathbb{R}^3}\int_{\mathbb{T}^{3d}}m\abs{\xi}^{m-1}\chi^1\nabla_x\cdot\left(\rho^{1,\epsilon}_{t_i,r}\sgn(\xi')\nabla_y\rho^{2,\epsilon}_{t_i,r}\left(\nabla_x Y^{x,\xi}_{r,r-t_i}-\nabla_{x'} Y^{x',\xi'}_{r,r-t_i}\right)\right) \\ & + \int_{t_i}^{t_{i+1}}\int_{\mathbb{R}^3}\int_{\mathbb{T}^{3d}}m\abs{\xi}^{m-1}\chi^1\nabla_x\cdot\left(\rho^{1,\epsilon}_{t_i,r}\sgn(\xi')\partial_\eta\rho^{2,\epsilon}_{t_i,r}\left(\nabla_x\Pi^{x,\xi}_{r,r-t_i}-\nabla_{x'}\Pi^{x',\xi'}_{r,r-t_i}\right)\right),\end{aligned}\end{equation}}
and where the last term of (\ref{new_1}) vanishes after integrating by parts in the $x'$-variable.  That is,
\begin{equation}\label{new_200}\int_{t_i}^{t_{i+1}}\int_{\mathbb{R}^3}\int_{\mathbb{T}^{3d}}m\abs{\xi}^{m-1}\chi^1_r\nabla_x\rho^{1,\epsilon}_{t_i,r}\sgn(\xi')\nabla_{x'}\rho^{2,\epsilon}_{t_i,r}\dx\dxi\dxp\dxip\dy\deta\dr=0.\end{equation}

For the second term of (\ref{u_3}), it follows from (\ref{u_5}) that
\begin{equation}\begin{aligned}\label{new_4} & \int_{t_i}^{t_{i+1}}\int_\mathbb{R}\int_{\mathbb{T}^d}\left(\int_\mathbb{R}\int_{\mathbb{T}^d}(p^1_r+q^1_r)\partial_\xi\rho_{t_i,r}^{1,\epsilon}\dx\dxi\right)\tilde{\sgn}^\epsilon_{t_i,r}\dy\deta\dr \\ & =\int_{t_i}^{t_{i+1}}\int_\mathbb{R}\int_{\mathbb{T}^d}\left(\int_\mathbb{R}\int_{\mathbb{T}^d}(p^1_r+q^1_r)\rho_{t_i,r}^{1,\epsilon}\partial_\xi Y^{x,\xi}_{r,r-t_i}\dx\dxi\right)\cdot\nabla_y\tilde{\sgn}^\epsilon_{t_i,r}\dy\deta\dr \\ & \quad +\int_{t_i}^{t_{i+1}}\int_\mathbb{R}\int_{\mathbb{T}^d}\left(\int_\mathbb{R}\int_{\mathbb{T}^d}(p^1_r+q^1_r)\rho_{t_i,r}^{1,\epsilon}\partial_\xi\Pi^{x,\xi}_{r,r-t_i}\dx\dxi\right)\partial_\eta\tilde{\sgn}^\epsilon_{t_i,r}\dy\deta\dr.\end{aligned}\end{equation}

In the case of (\ref{new_4}), it follows from the representation (\ref{u_002}) and the computation (\ref{u_5}) that, after adding and subtracting the derivatives $\partial_{\xi'}Y^{x',\xi'}_{r,r-t_i}$ and $\partial_{\xi'}\Pi^{x',\xi'}_{r,r-t_i}$,
\begin{equation}\begin{aligned}\label{new_5} & \int_{t_i}^{t_{i+1}}\int_\mathbb{R}\int_{\mathbb{T}^d}\left(\int_\mathbb{R}\int_{\mathbb{T}^d}(p^1_r+q^1_r)\partial_\xi\rho_{t_i,r}^{1,\epsilon}\dx\dxi\right)\tilde{\sgn}^\epsilon_{t_i,r}\dy\deta\dr \\ & = \textrm{Err}^{1,1}_i  - \int_{t_i}^{t_{i+1}}\int_{\mathbb{R}^3}\int_{\mathbb{T}^{3d}}(p^1_r+q^1_r)\rho_{t_i,r}^{1,\epsilon}\sgn(\xi')\partial_{\xi'}\rho^{2,\epsilon}_{t_i,r}\dx\dxi\dxp\dxip\dy\deta\dr,\end{aligned}\end{equation}
for the error term
{\color{black}\begin{equation}\begin{aligned}\label{new_6}  \textrm{Err}^{1,1}_i:= &  \int_{t_i}^{t_{i+1}}\int_{\mathbb{R}^3}\int_{\mathbb{T}^{3d}}(p^1_r+q^1_r)\rho_{t_i,r}^{1,\epsilon}\sgn(\xi')\nabla_y\rho^{2,\epsilon}_{t_i,r}\cdot\left(\partial_\xi Y^{x,\xi}_{r,r-t_i}-\partial_{\xi'}Y^{x',\xi'}_{r,r-t_i}\right) \\ & + \int_{t_i}^{t_{i+1}}\int_{\mathbb{R}^3}\int_{\mathbb{T}^{3d}}(p^1_r+q^1_r)\rho_{t_i,r}^{1,\epsilon}\sgn(\xi')\partial_\eta\rho^{2,\epsilon}_{t_i,r}\left(\partial_\xi \Pi^{x,\xi}_{r,r-t_i}-\partial_{\xi'}\Pi^{x',\xi'}_{r,r-t_i}\right). \end{aligned}\end{equation}}
Additionally, after integrating by parts in the $\xi'$-variable and using the distributional equality $\partial_{\xi'}\sgn(\xi')=2\delta_0(\xi')$, the second term of (\ref{new_5}) becomes
\begin{equation}\begin{aligned}\label{new_7} & -\int_{t_i}^{t_{i+1}}\int_{\mathbb{R}^3}\int_{\mathbb{T}^{3d}}(p^1_r+q^1_r)\rho_{t_i,r}^{1,\epsilon}\sgn(\xi')\partial_{\xi'}\rho^{2,\epsilon}_{t_i,r}\dx\dxi\dxp\dxip\dy\deta\dr \\  & =2\int_{t_i}^{t_{i+1}}\int_{\mathbb{R}^2}\int_{\mathbb{T}^{3d}}(p^1_r+q^1_r)\rho_{t_i,r}^{1,\epsilon}\rho^{2,\epsilon}_{t_i,r}(x',y,0,\eta)\dx\dxi\dxp\dy\deta\dr.\end{aligned} \end{equation}

Returning to (\ref{u_3}), it follows from (\ref{u_6}), (\ref{new_1}), (\ref{new_200}), (\ref{new_5}) and (\ref{new_7}) that
\begin{equation}\begin{aligned}\label{new_9}& \left.\int_\mathbb{R}\int_{\mathbb{T}^d}\tilde{\chi}^{1,\epsilon}_{t_i,r}(y,\eta)\tilde{\sgn}^\epsilon_{t_i,r}\dy\deta\right|_{r=t_i}^{t_{i+1}}  & \\ & =\textrm{Err}^{0,1}_i-\textrm{Err}^{1,1}_i - 2\int_{t_i}^{t_{i+1}}\int_{\mathbb{R}^2}\int_{\mathbb{T}^{3d}}(p^1_r+q^1_r)\rho_{t_i,r}^{1,\epsilon}\rho^{2,\epsilon}_{t_i,r}(x',y,0,\eta)\dx\dxi\dxp\dy\deta\dr.\end{aligned}\end{equation}
Furthermore, the identical considerations with $\chi^1$ replaced by $\chi^2$ prove that, after swapping the roles of $(x,\xi)$ and $(x',\xi')$,
\begin{equation}\begin{aligned}\label{new_10}& \left.\int_\mathbb{R}\int_{\mathbb{T}^d}\tilde{\chi}^{2,\epsilon}_{t_i,r}(y,\eta)\tilde{\sgn}^\epsilon_{t_i,r}\dy\deta\right|_{r=t_i}^{t_{i+1}} \\ & =\textrm{Err}^{0,2}_i-\textrm{Err}^{1,2}_i - 2\int_{t_i}^{t_{i+1}}\int_{\mathbb{R}^2}\int_{\mathbb{T}^{3d}}(p^2_r+q^2_r)\rho_{t_i,r}^{2,\epsilon}\rho^{1,\epsilon}_{t_i,r}(x,y,0,\eta)\dxp\dxip\dx\dy\deta\dr,\end{aligned}\end{equation}
for error terms $\textrm{Err}^{0,2}_i$ and $\textrm{Err}^{1,2}_i$ defined in exact analogy with \eqref{new_2} and \eqref{new_6} with $\chi^1$ replaced by $\chi^2$.  This completes the initial analysis of the $\sgn$ terms.

\textbf{Step 3:  The mixed term.}  We will now analyze the mixed term appearing in (\ref{u_03}).  For the convolution kernel \eqref{u_2}, we will write $(x,\xi)\in\mathbb{T}^d\times\mathbb{R}$ for the integration variables defining $\tilde{\chi}^{1,\epsilon}_{t_i,r}$ and we will write $\rho^{1,\epsilon}_{t_i,r}$ for the corresponding convolution kernel.  We will write $(x',\xi')\in\mathbb{T}^d\times\mathbb{R}$ for the integration variables defining $\tilde{\chi}^{2,\epsilon}_{t_i,r}$ and $\rho^{2,\epsilon}_{t_i,r}$ for the corresponding convolution kernel.

The equation implies that
\begin{equation}\begin{aligned}\label{u_8}& \left.\int_\mathbb{R}\int_{\mathbb{T}^d}\tilde{\chi}^{1,\epsilon}_{t_i,r}\tilde{\chi}^{2,\epsilon}_{t_i,r}\dy\deta\right|_{r=t_i}^{t_{i+1}} \\ & =  \int_{t_i}^{t_{i+1}}\int_\mathbb{R}\int_{\mathbb{T}^d}\left(\int_\mathbb{R}\int_{\mathbb{T}^d}m\abs{\xi}^{m-1}\chi^1_r\Delta_x\rho_{t_i,r}^{1,\epsilon}\dx\dxi\right)\tilde{\chi}^{2,\epsilon}_{t_i,r}\dy\deta\dr \\ & \quad - \int_{t_i}^{t_{i+1}}\int_\mathbb{R}\int_{\mathbb{T}^d}\left(\int_\mathbb{R}\int_{\mathbb{T}^d}(p^1_r+q^1_r)\partial_\xi \rho_{t_i,r}^{1,\epsilon}\dx\dxi\right)\tilde{\chi}^{2,\epsilon}_{t_i,r}\dy\deta\dr \\ & \quad + \int_{t_i}^{t_{i+1}}\int_\mathbb{R}\int_{\mathbb{T}^d}\left(\int_\mathbb{R}\int_{\mathbb{T}^d}m\abs{\xi'}^{m-1}\chi^2_r\Delta_{x'}\rho_{t_i,r}^{2,\epsilon}\dxp\dxip\right)\tilde{\chi}^{1,\epsilon}_{t_i,r}\dy\deta\dr \\ & \quad - \int_{t_i}^{t_{i+1}}\int_\mathbb{R}\int_{\mathbb{T}^d}\left(\int_\mathbb{R}\int_{\mathbb{T}^d}(p^2_r+q^2_r)\partial_{\xi'} \rho_{t_i,r}^{2,\epsilon}\dxp\dxip\right)\tilde{\chi}^{1,\epsilon}_{t_i,r}\dy\deta\dr. \end{aligned}\end{equation}
We will begin by analyzing the first term of (\ref{u_8}).  It is an immediate consequence of the computation (\ref{u_4}) that
\begin{equation*}\begin{aligned} & \int_{t_i}^{t_{i+1}}\int_\mathbb{R}\int_{\mathbb{T}^d}\left(\int_\mathbb{R}\int_{\mathbb{T}^d}m\abs{\xi}^{m-1}\chi^1_r\Delta_x\rho_{t_i,r}^{1,\epsilon}\dx\dxi\right)\tilde{\chi}^{2,\epsilon}_{t_i,r}\dy\deta\dr  \\  & =\int_{t_i}^{t_{i+1}}\int_\mathbb{R}\int_{\mathbb{T}^d}\left(\int_\mathbb{R}\int_{\mathbb{T}^d}m\abs{\xi}^{m-1}\chi^1_r\nabla_x\cdot\left(\rho_{t_i,r}^{1,\epsilon}\nabla_y\tilde{\chi}^{2,\epsilon}_{t_i,r}\nabla_x Y^{x,\xi}_{r,r-t_i}\right)\dx\dxi\right)\dy\deta\dr \\ & \quad + \int_{t_i}^{t_{i+1}}\int_\mathbb{R}\int_{\mathbb{T}^d}\left(\int_\mathbb{R}\int_{\mathbb{T}^d}m\abs{\xi}^{m-1}\chi^1_r\nabla_x\cdot\left(\rho_{t_i,r}^{1,\epsilon}\partial_\eta\tilde{\chi}^{2,\epsilon}_{t_i,r}\nabla_x \Pi^{x,\xi}_{r,r-t_i}\right)\dx\dxi\right)\dy\deta\dr.\end{aligned}\end{equation*}
These terms will be treated by adding and subtracting the gradients $\nabla_{x'}Y^{x',\xi'}_{r,r-t_i}$ and $\nabla_{x'}\Pi^{x',\xi'}_{r,r-t_i}$.  Indeed, it follows from (\ref{u_4}) that
\begin{equation}\begin{aligned}\label{new_11} & \int_{t_i}^{t_{i+1}}\int_\mathbb{R}\int_{\mathbb{T}^d}\left(\int_\mathbb{R}\int_{\mathbb{T}^d}m\abs{\xi}^{m-1}\chi^1_r\Delta_x\rho_{t_i,r}^{1,\epsilon}\dx\dxi\right)\tilde{\chi}^{2,\epsilon}_{t_i,r}\dy\deta\dr  \\ & =\textrm{Err}^{2,1}_i  - \int_{t_i}^{t_{i+1}}\int_{\mathbb{R}^3}\int_{\mathbb{T}^{3d}}m\abs{\xi}^{m-1}\chi^1_r\chi^2_r\nabla_x\rho_{t_i,r}^{1,\epsilon}\nabla_{x'}\rho_{t_i,r}^{2,\epsilon}\dx\dxi\dxp\dxip\dy\deta\dr,\end{aligned}\end{equation}
where
{\color{black}\begin{equation}\begin{aligned}\label{u_09}  \textrm{Err}^{2,1}_i:= & \int_{t_i}^{t_{i+1}}\int_{\mathbb{R}^3}\int_{\mathbb{T}^{3d}}m\abs{\xi}^{m-1}\chi^1_r\nabla_x\cdot\left(\rho_{t_i,r}^{1,\epsilon}\chi^2_r\nabla_y\rho^{2,\epsilon}_{t_i,r}\left(\nabla_xY^{x,\xi}_{r,r-t_i}-\nabla_{x'}Y^{x',\xi'}_{r,r-t_i}\right)\right) \\ & + \int_{t_i}^{t_{i+1}}\int_{\mathbb{R}^3}\int_{\mathbb{T}^{3d}}m\abs{\xi}^{m-1}\chi^1_r\nabla_x\cdot\left(\rho_{t_i,r}^{1,\epsilon}\chi^2_r\partial_\eta\rho^{2,\epsilon}_{t_i,r}\left(\nabla_x \Pi^{x,\xi}_{r,r-t_i}-\nabla_{x'}\Pi^{x',\xi'}_{r,r-t_i}\right)\right). \end{aligned}\end{equation}}
After defining $\textrm{Err}^{2,2}_i$ analogously, by swapping the roles of $\chi^1$ and $\chi^2$, the third term of (\ref{u_8}) can be treated similarly.  That is,
\begin{equation}\begin{aligned}\label{new_12} & \int_{t_i}^{t_{i+1}}\int_\mathbb{R}\int_{\mathbb{T}^d}\left(\int_\mathbb{R}\int_{\mathbb{T}^d}m\abs{\xi'}^{m-1}\chi^2_r\Delta_x\rho_{t_i,r}^{2,\epsilon}\dxp\dxip\right)\tilde{\chi}^{1,\epsilon}_{t_i,r}\dy\deta\dr   \\ & =\textrm{Err}^{2,2}_i - \int_{t_i}^{t_{i+1}}\int_{\mathbb{R}^3}\int_{\mathbb{T}^{3d}}m\abs{\xi'}^{m-1}\chi^2_r\chi^1_r\nabla_{x'}\rho_{t_i,r}^{2,\epsilon}\nabla_x\rho_{t_i,r}^{1,\epsilon}\dxp\dxip\dx\dxi\dy\deta\dr.\end{aligned}\end{equation}

We will now treat the second and fourth terms of (\ref{u_8}).  It follows from computation (\ref{u_5}) that
\begin{equation*}\begin{aligned} & \int_{t_i}^{t_{i+1}}\int_\mathbb{R}\int_{\mathbb{T}^d}\left(\int_\mathbb{R}\int_{\mathbb{T}^d}(p^1_r+q^1_r)\partial_\xi \rho_{t_i,r}^{1,\epsilon}\dx\dxi\right)\tilde{\chi}^{2,\epsilon}_{t_i,r}\dy\deta\dr  \\ & =\int_{t_i}^{t_{i+1}}\int_\mathbb{R}\int_{\mathbb{T}^d}\left(\int_\mathbb{R}\int_{\mathbb{T}^d}(p^1_r+q^1_r)\rho^{1,\epsilon}_{t_i,r}\partial_\xi Y^{x,\xi}_{r,r-t_i}\dx\dxi\right)\cdot \nabla_y\tilde{\chi}^{2,\epsilon}_{t_i,r}\dy\deta\dr \\ & \quad + \int_{t_i}^{t_{i+1}}\int_\mathbb{R}\int_{\mathbb{T}^d}\left(\int_\mathbb{R}\int_{\mathbb{T}^d}(p^1_r+q^1_r)\rho^{1,\epsilon}_{t_i,r}\partial_\xi \Pi^{x,\xi}_{r,r-t_i}\dx\dxi\right)\partial_\eta\tilde{\chi}^{2,\epsilon}_{t_i,r}\dy\deta\dr.\end{aligned}\end{equation*}
Proceeding as before, after adding and subtracting the gradients $\partial_{\xi'}Y^{x',\xi'}_{r,r-t_i}$ and $\partial_{\xi'}\Pi^{x',\xi'}_{r,r-t_i}$, it follows from (\ref{u_5}) that
\begin{equation}\begin{aligned}\label{u_15} & \int_{t_i}^{t_{i+1}}\int_\mathbb{R}\int_{\mathbb{T}^d}\left(\int_\mathbb{R}\int_{\mathbb{T}^d}(p^1_r+q^1_r)\partial_\xi \rho_{t_i,r}^{1,\epsilon}\dx\dxi\right)\tilde{\chi}^{2,\epsilon}_{t_i,r}\dy\deta\dr  \\ & = \textrm{Err}^{3,1}_i -\int_{t_i}^{t_{i+1}}\int_{\mathbb{R}^3}\int_{\mathbb{T}^{3d}}(p^1_r+q^1_r)\rho^{1,\epsilon}_{t_i,r}\chi^2\partial_{\xi'}\rho^{2,\epsilon}_{t_i,r}\dx\dxi\dxp\dxip\dy\deta\dr,\end{aligned}\end{equation}
where
{\color{black}\begin{equation}\begin{aligned}\label{u_16}  \textrm{Err}^{3,1}_i:=& \int_{t_i}^{t_{i+1}}\int_{\mathbb{R}^3}\int_{\mathbb{T}^{3d}}(p^1_r+q^1_r)\rho^{1,\epsilon}_{t_i,r}\chi^2_r\nabla_y\rho^{2,\epsilon}_{t_i,r}\cdot\left(\partial_\xi Y^{x,\xi}_{r,r-t_i}-\partial_{\xi'}Y^{x',\xi'}_{r,r-t_i}\right) \\ & + \int_{t_i}^{t_{i+1}}\int_{\mathbb{R}^3}\int_{\mathbb{T}^{3d}}(p^1_r+q^1_r)\rho^{1,\epsilon}_{t_i,r}\chi^2_r\partial_\eta\rho^{2,\epsilon}_{t_i,r}\left(\partial_\xi \Pi^{x,\xi}_{r,r-t_i}-\partial_{\xi'}\Pi^{x',\xi'}_{r,r-t_i}\right).\end{aligned}\end{equation}}
Then, define $\textrm{Err}^{3,2}_i$ in analogy with (\ref{u_16}) by swapping the roles of $\chi^1$ and $\chi^2$, to obtain
\begin{equation}\begin{aligned}\label{u_160} & \int_{t_i}^{t_{i+1}}\int_\mathbb{R}\int_{\mathbb{T}^d}\left(\int_\mathbb{R}\int_{\mathbb{T}^d}(p^2_r+q^2_r)\partial_{\xi'} \rho_{t_i,r}^{2,\epsilon}\dxp\dxip\right)\tilde{\chi}^{1,\epsilon}_{t_i,r}\dy\deta\dr  \\ & =\textrm{Err}^{3,2}_i -\int_{t_i}^{t_{i+1}}\int_{\mathbb{R}^3}\int_{\mathbb{T}^{3d}}(p^2_r+q^2_r)\rho^{2,\epsilon}_{t_i,r}\chi^1\partial_\xi\rho^{1,\epsilon}_{t_i,r}\dxp\dxip\dx\dxi\dy\deta\dr.\end{aligned}\end{equation}

For the second term of (\ref{u_15}), the distributional equality
$$\partial_{\xi'}\chi^2(x,\xi',r)=\delta_0(\xi')-\delta_0(u^2(x',r)-\xi')\;\;\textrm{for}\;\;(x',\xi',r)\in\mathbb{T}^d\times\mathbb{R}\times[0,\infty),$$
implies that
\begin{equation}\begin{aligned}\label{u_21} & -\int_{t_i}^{t_{i+1}}\int_{\mathbb{R}^3}\int_{\mathbb{T}^{3d}}(p^1_r+q^1_r)\rho^{1,\epsilon}_{t_i,r}(x,y,\xi,\eta)\chi^2\partial_{\xi'}\rho^{2,\epsilon}_{t_i,r}(x',y,\xi',\eta)\dx\dxi\dxp\dxip\dy\deta\dr  \\ & =\int_{t_i}^{t_{i+1}}\int_{\mathbb{R}^2}\int_{\mathbb{T}^{3d}}(p^1_r+q^1_r)\rho^{1,\epsilon}_{t_i,r}\rho^{2,\epsilon}_{t_i,r}(x',y,0,\eta)\dx\dxi\dxp\dy\deta\dr \\ &  \quad -\int_{t_i}^{t_{i+1}}\int_{\mathbb{R}^2}\int_{\mathbb{T}^{3d}}(p^1_r+q^1_r)\rho^{1,\epsilon}_{t_i,r}\rho^{2,\epsilon}_{t_i,r}(x',y,u^2(x',r),\eta)\dx\dxi\dxp\dy\deta\dr.\end{aligned}\end{equation}
Hence, returning to (\ref{u_15}), it follows from (\ref{u_21}) that
\begin{equation}\begin{aligned}\label{u_22} & \int_{t_i}^{t_{i+1}}\int_\mathbb{R}\int_{\mathbb{T}^d}\left(\int_\mathbb{R}\int_{\mathbb{T}^d}(p^1_r+q^1_r)\partial_\xi \rho_{t_i,r}^{1,\epsilon}\dx\dxi\right)\tilde{\chi}^{2,\epsilon}_{t_i,r}\dy\deta\dr \\ & = \textrm{Err}^{3,1}_i +\int_{t_i}^{t_{i+1}}\int_{\mathbb{R}^2}\int_{\mathbb{T}^{3d}}(p^1_r+q^1_r)\rho^{1,\epsilon}_{t_i,r}\rho^{2,\epsilon}_{t_i,r}(x',y,0,\eta)\dx\dxi\dxp\dy\deta\dr \\ & \quad -\int_{t_i}^{t_{i+1}}\int_{\mathbb{R}^2}\int_{\mathbb{T}^{3d}}(p^1_r+q^1_r)\rho^{1,\epsilon}_{t_i,r}\rho^{2,\epsilon}_{t_i,r}(x',y,u^2(x',r),\eta)\dx\dxi\dxp\dy\deta\dr.\end{aligned}\end{equation}
Similarly, by swapping the roles of $\chi^1$ and $\chi^2$,
\begin{equation}\begin{aligned}\label{u_23} & \int_{t_i}^{t_{i+1}}\int_\mathbb{R}\int_{\mathbb{T}^d}\left(\int_\mathbb{R}\int_{\mathbb{T}^d}(p^2_r+q^2_r)\partial_{\xi'} \rho_{t_i,r}^{2,\epsilon}\dxp\dxip\right)\tilde{\chi}^{1,\epsilon}_{t_i,r}\dy\deta\dr \\ & = \textrm{Err}^{3,2}_i  +\int_{t_i}^{t_{i+1}}\int_{\mathbb{R}^2}\int_{\mathbb{T}^{3d}}(p^2_r+q^2_r)\rho^{2,\epsilon}_{t_i,r}\rho^{1,\epsilon}_{t_i,r}(x,y,0,\eta)\dxp\dxip\dx\dy\deta\dr \\ &  \quad -\int_{t_i}^{t_{i+1}}\int_{\mathbb{R}^2}\int_{\mathbb{T}^{3d}}(p^2_r+q^2_r)\rho^{2,\epsilon}_{t_i,r}\rho^{1,\epsilon}_{t_i,r}(x,y,u^1(x,r),\eta)\dxp\dxip\dx\dy\deta\dr.\end{aligned}\end{equation}

Returning to (\ref{u_8}), it follows from (\ref{new_11}), (\ref{new_12}), (\ref{u_22}), and (\ref{u_23}) that
\begin{equation}\begin{aligned} \label{u_24} & \left.\int_\mathbb{R}\int_{\mathbb{T}^d}\tilde{\chi}^{1,\epsilon}_{t_i,r}\tilde{\chi}^{2,\epsilon}_{t_i,r}\dy\deta\right|_{r=t_i}^{t_{i+1}} =  \sum_{j=1}^2\left(\textrm{Err}^{2,j}_i-\textrm{Err}^{3,j}_i\right) \\ & \quad - \int_{t_i}^{t_{i+1}}\int_{\mathbb{R}^3}\int_{\mathbb{T}^{3d}}\left(m\abs{\xi}^{m-1}+m\abs{\xi'}^{m-1}\right)\chi^1_r\chi^2_r\nabla_x\rho_{t_i,r}^{1,\epsilon}\nabla_{x'}\rho_{t_i,r}^{2,\epsilon}\dx\dxi\dxp\dxip\dy\deta\dr \\ & \quad -\int_{t_i}^{t_{i+1}}\int_{\mathbb{R}^2}\int_{\mathbb{T}^{3d}}(p^1_r+q^1_r)\rho^{1,\epsilon}_{t_i,r}\rho^{2,\epsilon}_{t_i,r}(x',y,0,\eta)\dx\dxi\dxp\dy\deta\dr \\ & \quad +\int_{t_i}^{t_{i+1}}\int_{\mathbb{R}^2}\int_{\mathbb{T}^{3d}}(p^1_r+q^1_r)\rho^{1,\epsilon}_{t_i,r}\rho^{2,\epsilon}_{t_i,r}(x',y,u^2(x',r),\eta)\dx\dxi\dxp\dy\deta\dr \\ & \quad -\int_{t_i}^{t_{i+1}}\int_{\mathbb{R}^2}\int_{\mathbb{T}^{3d}}(p^2_r+q^2_r)\rho^{2,\epsilon}_{t_i,r}\rho^{1,\epsilon}_{t_i,r}(x,y,0,\eta)\dxp\dxip\dx\dy\deta\dr \\ & \quad +\int_{t_i}^{t_{i+1}}\int_{\mathbb{R}^2}\int_{\mathbb{T}^{3d}}(p^2_r+q^2_r)\rho^{2,\epsilon}_{t_i,r}\rho^{1,\epsilon}_{t_i,r}(x,y,u^1(x,r),\eta)\dxp\dxip\dx\dy\deta\dr.\end{aligned}\end{equation}
This completes the initial analysis of the mixed term.

\textbf{Step 4:  Cancellation from the parabolic defect measures.}  In view of (\ref{new_9}), (\ref{new_10}), and (\ref{u_24}), it is now possible to return to (\ref{u_03}).  Precisely, thanks to the cancellation between the terms involving the parabolic and kinetic defect measures evaluated at zero,
\begin{equation}\begin{aligned}\label{u_25}& \left.\int_\mathbb{R}\int_{\mathbb{T}^d}\left(\tilde{\chi}^{1,\epsilon}_{t_i,r}\tilde{\sgn}^\epsilon_{t_i,r}+\tilde{\chi}^{2,\epsilon}_{t_i,r}\tilde{\sgn}^\epsilon_{t_i,r}-2\tilde{\chi}^{1,\epsilon}_{t_i,r}\tilde{\chi}^{2,\epsilon}_{t_i,r}\right)\dy\deta\right|_{r=t_i}^{t_{i+1}} \\ & =\sum_{j=1}^2\left(\textrm{Err}^{0,j}_i-\textrm{Err}^{1,j}_i+\textrm{Err}^{2,j}_i-\textrm{Err}^{3,j}_i\right) \\ & \quad +2\int_{t_i}^{t_{i+1}}\int_{\mathbb{R}^3}\int_{\mathbb{T}^{3d}}\left(m\abs{\xi}^{m-1}+m\abs{\xi'}^{m-1}\right)\chi^1_r\chi^2_r\nabla_x\rho_{t_i,r}^{1,\epsilon}\nabla_{x'}\rho_{t_i,r}^{2,\epsilon}\dx\dxi\dxp\dxip\dy\deta\dr \\ & \quad -2\int_{t_i}^{t_{i+1}}\int_{\mathbb{R}^2}\int_{\mathbb{T}^{3d}}(p^1_r+q^1_r)\rho^{1,\epsilon}_{t_i,r}\rho^{2,\epsilon}_{t_i,r}(x',y,u^2(x',r),\eta)\dx\dxi\dxp\dy\deta\dr \\ & \quad -2\int_{t_i}^{t_{i+1}}\int_{\mathbb{R}^2}\int_{\mathbb{T}^{3d}}(p^2_r+q^2_r)\rho^{2,\epsilon}_{t_i,r}\rho^{1,\epsilon}_{t_i,r}(x,y,u^1(x,r),\eta)\dxp\dxip\dx\dy\deta\dr.\end{aligned}\end{equation}
In order to see the additional cancellation coming from the parabolic defect measures, which will require an application of the integration by parts formula of Lemma~\ref{lem_ibp}, we will use the equality
$$\left(\abs{\xi}^{\frac{m-1}{2}}-\abs{\xi'}^{\frac{m-1}{2}}\right)^2+2\abs{\xi}^{\frac{m-1}{2}}\abs{\xi'}^{\frac{m-1}{2}}=\abs{\xi}^{m-1}+\abs{\xi'}^{m-1}\;\;\textrm{for}\;\;\xi,\xi'\in\mathbb{R}.$$
This implies that
\begin{equation}\begin{aligned} \label{u_26} & 2\int_{t_i}^{t_{i+1}}\int_{\mathbb{R}^3}\int_{\mathbb{T}^{3d}}\left(m\abs{\xi}^{m-1}+m\abs{\xi'}^{m-1}\right)\chi^1_r\chi^2_r\nabla_x\rho_{t_i,r}^{1,\epsilon}\nabla_{x'}\rho_{t_i,r}^{2,\epsilon}\dx\dxi\dxp\dxip\dy\deta\dr \\ & = 4m\int_{t_i}^{t_{i+1}}\int_{\mathbb{R}^3}\int_{\mathbb{T}^{3d}}\abs{\xi}^{\frac{m-1}{2}}\abs{\xi'}^{\frac{m-1}{2}}\chi^1_r\chi^2_r\nabla_x\rho_{t_i,r}^{1,\epsilon}\nabla_{x'}\rho_{t_i,r}^{2,\epsilon}\dx\dxi\dxp\dxip\dy\deta\dr \\ & \quad + 2m\int_{t_i}^{t_{i+1}}\int_{\mathbb{R}^3}\int_{\mathbb{T}^{3d}}\left(\abs{\xi}^{\frac{m-1}{2}}-\abs{\xi'}^{\frac{m-1}{2}}\right)^2\chi^1_r\chi^2_r\nabla_x\rho_{t_i,r}^{1,\epsilon}\nabla_{x'}\rho_{t_i,r}^{2,\epsilon}\dx\dxi\dxp\dxip\dy\deta\dr.\end{aligned}\end{equation}

For the first term on the righthand side of \eqref{u_26}, after applying the integration by parts formula in the $x$-variable and $x'$-variable,
$$\begin{aligned} & 4m\int_{t_i}^{t_{i+1}}\int_{\mathbb{R}^3}\int_{\mathbb{T}^{3d}}\abs{\xi}^\frac{m-1}{2}\abs{\xi'}^\frac{m-1}{2}\chi^1_r\chi^2_r\nabla_x\rho_{t_i,r}^{1,\epsilon}\nabla_{x'}\rho_{t_i,r}^{2,\epsilon}  \\ & = \frac{16m}{(m+1)^2}\int_{t_i}^{t_{i+1}}\int_\mathbb{R}\int_{\mathbb{T}^{3d}}\nabla(u^1)^{[\frac{m+1}{2}]}\cdot\nabla(u^2)^{[\frac{m+1}{2}]}\rho_{t_i,r}^{1,\epsilon}(x,y,u^1(x,r),\eta)\rho_{t_i,r}^{2,\epsilon}(x',y,u^2(x',r),\eta).\end{aligned}$$
It therefore follows from an application of H\"older's inequality and Young's inequality, the definition of the parabolic defect measure, and the nonnegativity of the entropy defect measure that
\begin{equation}\begin{aligned}\label{u_30} & 4m\int_{t_i}^{t_{i+1}}\int_{\mathbb{R}^3}\int_{\mathbb{T}^{3d}}\abs{\xi}^\frac{m-1}{2}\abs{\xi'}^\frac{m-1}{2}\chi^1_r\chi^2_r\nabla_x\rho_{t_i,r}^\epsilon\nabla_{x'}\rho_{t_i,r}^\epsilon\dx\dxi\dxp\dxip\dy\deta\dr \\ & \leq 2\int_{t_i}^{t_{i+1}}\int_{\mathbb{R}^2}\int_{\mathbb{T}^{3d}}(p^1_r+q^1_r)\rho^{1,\epsilon}_{t_i,r}\rho^{2,\epsilon}_{t_i,r}(x',y,u^2(x',r),\eta)\dx\dxi\dxp\dy\deta\dr \\ & \quad + 2\int_{t_i}^{t_{i+1}}\int_{\mathbb{R}^2}\int_{\mathbb{T}^{3d}}(p^2_r+q^2_r)\rho^{2,\epsilon}_{t_i,r}\rho^{1,\epsilon}_{t_i,r}(x,y,u^1(x,r),\eta)\dxp\dxip\dx\dy\deta\dr.\end{aligned}\end{equation}

Therefore, returning to \eqref{u_25}, it follows from \eqref{u_26} and \eqref{u_30} that
\begin{equation}\begin{aligned} \label{u_31} & \left.\int_\mathbb{R}\int_{\mathbb{T}^d}\left(\tilde{\chi}^{1,\epsilon}_{t_i,r}\tilde{\sgn}^\epsilon_{t_i,r}+\tilde{\chi}^{2,\epsilon}_{t_i,r}\tilde{\sgn}^\epsilon_{t_i,r}-2\tilde{\chi}^{1,\epsilon}_{t_i,r}\tilde{\chi}^{2,\epsilon}_{t_i,r}\right)\dy\deta\right|_{r=t_i}^{t_{i+1}} \\ & \leq \sum_{j=1}^2\left(\textrm{Err}^{0,j}_i-\textrm{Err}^{1,j}_i+\textrm{Err}^{2,j}_i-\textrm{Err}^{3,j}_i\right)+\textrm{Err}^4_i,\end{aligned}\end{equation}
where
\begin{equation}\label{u_0310} \textrm{Err}^4_i:=2\int_{t_i}^{t_{i+1}}\int_{\mathbb{R}^3}\int_{\mathbb{T}^{3d}}m\left(\abs{\xi}^\frac{m-1}{2}-\abs{\xi'}^\frac{m-1}{2}\right)^2\chi^1_r\chi^2_r\nabla_x\rho_{t_i,r}^{1,\epsilon}\nabla_{x'}\rho_{t_i,r}^{2,\epsilon}\dx\dxi\dxp\dxip\dy\deta\dr.\end{equation}
It remains to analyze the error terms.

\textbf{Step 5:  The error terms.}   We will first use Proposition~\ref{rough_est} to obtain estimates for the characteristics.  Observe that, for each $(x,\xi),(x',\xi')\in\mathbb{T}^d\times\mathbb{R}$ and $r\in[t_i,t_{i+1}]$,
\begin{equation*}\begin{aligned} & \abs{x-x'}=\abs{X^{Y^{x,\xi}_{r,r-t_i},\Pi^{x,\xi}_{r,r-t_i}}_{t_i,r}-X^{Y^{x',\xi'}_{r,r-t_i},\Pi^{x',\xi'}_{r,r-t_i}}_{t_i,r}} \\ & \leq\sup_{(y,\eta)\in\mathbb{T}^d\times\mathbb{R}}\abs{\nabla_x X^{y,\eta}_{t_i,r}}\abs{Y^{x,\xi}_{r,r-t_i}-Y^{x',\xi'}_{r,r-t_i}}+\sup_{(y,\eta)\in\mathbb{T}^d\times\mathbb{R}}\abs{\partial_\eta X^{y,\eta}_{t_i,r}}\abs{\Pi^{x,\xi}_{r,r-t_i}-\Pi^{x',\xi'}_{r,r-t_i}},\end{aligned}\end{equation*}
and
\begin{equation*}\begin{aligned} & \abs{\xi-\xi'}=\abs{\Xi^{Y^{x,\xi}_{r,r-t_i},\Pi^{x,\xi}_{r,r-t_i}}_{t_i,r}-\Xi^{Y^{x',\xi'}_{r,r-t_i},\Pi^{x',\xi'}_{r,r-t_i}}_{t_i,r}} \\ & \leq\sup_{(y,\eta)\in\mathbb{T}^d\times\mathbb{R}}\abs{\nabla_x \Xi^{y,\eta}_{t_i,r}}\abs{Y^{x,\xi}_{r,r-t_i}-Y^{x',\xi'}_{r,r-t_i}}+\sup_{(y,\eta)\in\mathbb{T}^d\times\mathbb{R}}\abs{\partial_\eta \Xi^{y,\eta}_{t_i,r}}\abs{\Pi^{x,\xi}_{r,r-t_i}-\Pi^{x',\xi'}_{r,r-t_i}}.\end{aligned}\end{equation*}
Therefore, assumption \eqref{prelim_regular} and Proposition~\ref{rough_est} imply that, for $C=C(T)>0$, for each $(x,\xi),(x',\xi')\in\mathbb{T}^d\times\mathbb{R}$,
\begin{equation}\label{u_9}\abs{x-x'}+\abs{\xi-\xi'}\leq C\left(\abs{Y^{x,\xi}_{r,r-t_i}-Y^{x',\xi'}_{r,r-t_i}}+\abs{\Pi^{x,\xi}_{r,r-t_i}-\Pi^{x',\xi'}_{r,r-t_i}}\right).\end{equation}

Second, it follows from properties of the convolution kernel that there exists $C=C(T)>0$ such that, for every $r\in[t_i,\infty)$ and $(x,\xi),(x',\xi'),(y,\eta)\in\mathbb{T}^d\times\mathbb{R}$,
\begin{equation*}\rho^{1,\epsilon}_{t_i,r}(x,y,\xi,\eta)\rho^{2,\epsilon}_{t_i,r}(x',y,\xi',\eta)\neq 0,\end{equation*}
implies that
\begin{equation}\label{u_10}\left(\abs{Y^{x,\xi}_{r,r-t_i}-Y^{x',\xi'}_{r,r-t_i}}+\abs{\Pi^{x,\xi}_{r,r-t_i}-\Pi^{x',\xi'}_{r,r-t_i}}\right)\leq C\epsilon.\end{equation}
Furthermore, in view of (\ref{u_9}) and Proposition~\ref{rough_est} with $k=n=2$, for $C=C(T)>0$, for each $r\in[t_i,t_{i+1}]$ and $(x,\xi),(x',\xi')\in\mathbb{T}^d\times\mathbb{R}$,
\begin{equation}\label{u_11}\begin{aligned} \abs{\nabla_xY^{x,\xi}_{r,r-t_i}-\nabla_{x'}Y^{x',\xi'}_{r,r-t_i}}\leq &  \sup_{(y,\eta)\in\mathbb{T}^d\times\mathbb{R}}\left(\abs{\nabla^2_yY^{y,\eta}_{r,r-t_i}}+\abs{\partial_\eta\nabla_yY^{y,\eta}_{r,r-t_i}}\right)\left(\abs{x-x'}+\abs{\xi-\xi'}\right) \\ \leq & C(t_{i+1}-t_i)^\alpha\left(\abs{x-x'}+\abs{\xi-\xi'}\right) \\ \leq &  C(t_{i+1}-t_i)^\alpha\left(\abs{Y^{x,\xi}_{r,r-t_i}-Y^{x',\xi'}_{r,r-t_i}}+\abs{\Pi^{x,\xi}_{r,r-t_i}-\Pi^{x',\xi'}_{r,r-t_i}}\right).\end{aligned}\end{equation}
Similarly, for $C=C(T)>0$, for each $r\in[t_i,t_{i+1}]$ and $(x,\xi),(x',\xi')\in\mathbb{T}^d\times\mathbb{R}$,
\begin{equation}\label{u_12}\begin{aligned} \abs{\nabla_x\Pi^{x,\xi}_{r,r-t_i}-\nabla_{x'}\Pi^{x',\xi'}_{r,r-t_i}}\leq &  \sup_{(y,\eta)\in\mathbb{T}^d\times\mathbb{R}}\left(\abs{\nabla^2_y\Pi^{y,\eta}_{r,r-t_i}}+\abs{\partial_\eta\nabla_y\Pi^{y,\eta}_{r,r-t_i}}\right)\left(\abs{x-x'}+\abs{\xi-\xi'}\right) \\ \leq & C(t_{i+1}-t_i)^\alpha\left(\abs{x-x'}+\abs{\xi-\xi'}\right) \\ \leq &  C(t_{i+1}-t_i)^\alpha\left(\abs{Y^{x,\xi}_{r,r-t_i}-Y^{x',\xi'}_{r,r-t_i}}+\abs{\Pi^{x,\xi}_{r,r-t_i}-\Pi^{x',\xi'}_{r,r-t_i}}\right).\end{aligned}\end{equation}
Estimates \eqref{u_10}, \eqref{u_11}, and \eqref{u_12} will be now be used to estimate the first and third error terms.

{\color{black}We observe from \eqref{new_2} that, after applying the integration by parts formula of Lemma~\ref{lem_ibp},
$$\begin{aligned}  \abs{\textrm{Err}^{0,1}_i}\leq & \frac{2m}{m+1}\int_{t_i}^{t_{i+1}}\int_{\mathbb{T}^d}\abs{u^1}^{\frac{m-1}{2}}\abs{\nabla \left(u^1\right)^{\left[\frac{m+1}{2}\right]}}\dx\dr \\ & \times \sup_{(y,\eta,r)\in\mathbb{T}^d\times\mathbb{R}\times[t_i,t_{i+1}]}\left(\abs{\nabla_y\tilde{\sgn}^\epsilon_{t_i,r}(y,\eta)}+\abs{\partial_\eta\tilde{\sgn}^\epsilon_{t_i,r}(y,\eta)}\right) \\ & \times \sup_{(x,x',\xi,\xi',r)\in\mathbb{T}^{2d}\times\mathbb{R}^2\times[t_i,t_{i+1}]}\left(\abs{\nabla_xY^{x,\xi}_{r,r-t_i}-\nabla_{x'}Y^{x',\xi'}_{r,r-t_i}}+\abs{\nabla_x\Pi^{x,\xi}_{r,r-t_i}-\nabla_{x'}\Pi^{x',\xi'}_{r,r-t_i}}\right).\end{aligned}$$
The error terms $\{\textrm{Err}^{0,j}_i\}_{j\in\{1,2\}}$ defined in \eqref{new_2} and the error terms $\{\textrm{Err}^{2,j}_i\}_{j\in\{1,2\}}$ defined in \eqref{u_09} are treated similarly.  Since there exists $C=C(T)>0$ such that, for each $(y,\eta)\in\mathbb{T}^d\times\mathbb{R}$,
\begin{equation}\label{u_010}\abs{\nabla_y\tilde{\sgn}^\epsilon_{t_i,r}(y,\eta)}+\abs{\partial_\eta\tilde{\sgn}^\epsilon_{t_i,r}(y,\eta)}+\abs{\nabla_y\tilde{\chi}^{j,\epsilon}_{t_i,r}(y,\eta)}+\abs{\partial_\eta\tilde{\chi}^{j,\epsilon}_{t_i,r}(y,\eta)}\leq \frac{C}{\epsilon},\end{equation}
it follows from the definition of the parabolic defect measures, H\"older's inequality, and Young's inequality that, with the estimates \eqref{u_10}, \eqref{u_11}, and \eqref{u_12},  for $C=C(m,T)>0$, for each $j\in\{1,2\}$,
\begin{equation}\label{new_3} \abs{\textrm{Err}^{0,j}_i}+\abs{\textrm{Err}^{2,j}_i} \leq C\abs{\mathcal{P}}^\alpha \left(\int_{t_i}^{t_{i+1}}\int_{\mathbb{T}^d}\abs{u^j}^{(m-1)\vee 0}\dx\dr+\int_{t_i}^{t_{i+1}}\int_\mathbb{R}\int_{\mathbb{T}^d}\abs{\xi}^{(m-1)\wedge 0}q^j_r\dx\dxi\dr\right).\end{equation}
The righthand side of \eqref{new_3} will be estimated in the final step of the proof using Lemma~\ref{lem_interpolate} and Proposition~\ref{aux_p} below.}

The remaining two error terms are controlled using rough path estimates virtually identical to (\ref{u_11}) and (\ref{u_12}).  Namely, for $C=C(T)>0$, for each $(x,\xi),(x',\xi')\in\mathbb{T}^d\times\mathbb{R}$ and $r\in[t_i,t_{i+1}]$, it follows from (\ref{u_9}) that
\begin{equation}\label{u_17} \begin{aligned} \abs{\partial_\xi Y^{x,\xi}_{r,r-t_i}-\partial_{\xi'}Y^{x',\xi'}_{r,r-t_i}} \leq & \sup_{(y,\eta)\in\mathbb{T}^d\times\mathbb{R}}\left(\abs{\nabla_y\partial_\eta Y^{y,\eta}_{r,r-t_i}}+\abs{\partial^2_\eta Y^{y,\eta}_{r,r-t_i}}\right)\left(\abs{x-x'}+\abs{\xi-\xi'}\right) \\ \leq & C(t_{i+1}-t_i)^\alpha \left(\abs{x-x'}+\abs{\xi-\xi'}\right) \\ \leq & C (t_{i+1}-t_i)^\alpha\left(\abs{Y^{x,\xi}_{r,r-t_i}-Y^{x',\xi'}_{r,r-t_i}}+\abs{\Pi^{x,\xi}_{r,r-t_i}-\Pi^{x',\xi'}_{r,r-t_i}}\right).\end{aligned}\end{equation}
Similarly, for $C=C(T)>0$, for each $(x,\xi),(x',\xi')\in\mathbb{T}^d\times\mathbb{R}$ and $r\in[t_i,t_{i+1}]$,
\begin{equation}\label{u_18} \begin{aligned} \abs{\partial_\xi \Pi^{x,\xi}_{r,r-t_i}-\partial_{\xi'}\Pi^{x',\xi'}_{r,r-t_i}} \leq & \sup_{(y,\eta)\in\mathbb{T}^d\times\mathbb{R}}\left(\abs{\nabla_y\partial_\eta \Pi^{y,\eta}_{r,r-t_i}}+\abs{\partial^2_\eta \Pi^{y,\eta}_{r,r-t_i}}\right)\left(\abs{x-x'}+\abs{\xi-\xi'}\right) \\ \leq & C(t_{i+1}-t_i)^\alpha \left(\abs{x-x'}+\abs{\xi-\xi'}\right) \\ \leq & C (t_{i+1}-t_i)^\alpha\left(\abs{Y^{x,\xi}_{r,r-t_i}-Y^{x',\xi'}_{r,r-t_i}}+\abs{\Pi^{x,\xi}_{r,r-t_i}-\Pi^{x',\xi'}_{r,r-t_i}}\right).\end{aligned}\end{equation}

The error terms $\{\textrm{Err}^{1,j}_i\}_{j\in\{1,2\}}$ defined in \eqref{new_6} and the error terms $\{\textrm{Err}^{3,j}_i\}_{j\in\{1,2\}}$ defined in \eqref{u_16} are treated in analogy with \eqref{new_3}.  The estimates \eqref{u_10}, \eqref{u_010}, \eqref{u_17}, and \eqref{u_18} imply that, for $C=C(T)>0$,
\begin{equation}\label{new_8} \abs{\textrm{Err}^{1,j}_i}+\abs{\textrm{Err}^{3,j}_i} \leq C(t_{i+1}-t_i)^\alpha\int_{t_i}^{t_{i+1}}\int_\mathbb{R}\int_{\mathbb{T}^d}\left(p^j_r+q^j_r\right)\dx\dxi\dr.\end{equation}
Estimates \eqref{new_3} and \eqref{new_8} complete the analysis of the first four error terms.

The analysis of the final error term $\textrm{Err}^4_i$, defined in \eqref{u_0310}, will be broken down into three cases:  $m=1$, $m\in(2,\infty)$, or $m\in(0,1)\cup(1,2]$.  The simplest of these is the case $m=1$.  Indeed, if $m=1$, then it is immediate from \eqref{u_0310} that $\textrm{Err}^4_i=0$.

\textit{Case $m\in(2,\infty)$}: We form a velocity decomposition of the integral.  For each $M>1$, let $K_M:\mathbb{R}\rightarrow[0,1]$ be a smooth function satisfying
$$K_M(\xi):=\left\{\begin{array}{ll} 1 & \textrm{if}\;\;\abs{\xi}\leq M, \\ 0 & \textrm{if}\;\;\abs{\xi}\geq M+1.\end{array}\right.$$
Then, for each $M>1$ and $\epsilon\in(0,1)$,
\begin{equation}\label{neww_1}\begin{aligned} \textrm{Err}^4_i= & 2\int_{t_i}^{t_{i+1}}\int_{\mathbb{R}^3}\int_{\mathbb{T}^{3d}}K_M(\xi)m\left(\abs{\xi}^\frac{m-1}{2}-\abs{\xi'}^\frac{m-1}{2}\right)^2\chi^1_r\chi^2_r\nabla_x\rho_{t_i,r}^{1,\epsilon}\nabla_{x'}\rho_{t_i,r}^{2,\epsilon} \\ & + 2\int_{t_i}^{t_{i+1}}\int_{\mathbb{R}^3}\int_{\mathbb{T}^{3d}}(1-K_M(\xi))m\left(\abs{\xi}^\frac{m-1}{2}-\abs{\xi'}^\frac{m-1}{2}\right)^2\chi^1_r\chi^2_r\nabla_x\rho_{t_i,r}^{1,\epsilon}\nabla_{x'}\rho_{t_i,r}^{2,\epsilon}.\end{aligned}\end{equation}
For the first term on the righthand side of \eqref{neww_1}, the local Lipschitz continuity, if $m\geq 3$, or the H\"older continuity, if $m\in(2,3)$, of the map $\xi\in\mathbb{R}\mapsto \abs{\xi}^{\frac{m-1}{2}}$, Lemma~\ref{auxlem}, observation \eqref{u_10}, and the definition of the convolution kernel imply that, for $C=C(m,T,M)>0$ and $c=c(T)>0$,
\begin{equation}\begin{aligned}\label{tu_33} & \abs{2\int_{t_i}^{t_{i+1}}\int_{\mathbb{R}^3}\int_{\mathbb{T}^{3d}}K_M(\xi)m\left(\abs{\xi}^\frac{m-1}{2}-\abs{\xi'}^\frac{m-1}{2}\right)^2\chi^1_r\chi^2_r\nabla_x\rho_{t_i,r}^{1,\epsilon}\nabla_{x'}\rho_{t_i,r}^{2,\epsilon}} \\ & \leq C\int_{t_i}^{t_{i+1}}\int_{\mathbb{R}^3}\int_{\mathbb{T}^{3d}}m\left(\abs{\xi}^{\frac{m-1}{2}}-\abs{\xi'}^{\frac{m-1}{2}}\right)^2\abs{\nabla_x\rho^{1,\epsilon}_{t_i,r}}\abs{\nabla_{x'}\rho^{2,\epsilon}_{t_i,r}} \\ &  \leq \frac{C}{\epsilon^2}\int_{t_i}^{t_{i+1}}\int_{-c\epsilon}^{c\epsilon}\abs{\xi}^{(m-1)\wedge 2}\dxi\leq C\abs{t_{i+1}-t_i}\epsilon^{(3\wedge m)-2}.\end{aligned}\end{equation}

For the second term on the righthand side of \eqref{neww_1}, we use the following inequality, which is a consequence of the mean value theorem, for each $\xi,\xi'\in\mathbb{R}$,
$$\left(\abs{\xi}^{\frac{m-1}{2}}-\abs{\xi'}^{\frac{m-1}{2}}\right)^2\leq \abs{\frac{m-1}{2}}^2\left(\abs{\xi}^{m-3}+\abs{\xi'}^{m-3}\right)\abs{\xi-\xi'}^2.$$
This implies using \eqref{u_10} and the definition of the convolution kernel that, for $C=C(m,T)>0$ and $c=c(T)>0$,
\begin{equation}\label{neww_2}\begin{aligned} & \abs{2\int_{t_i}^{t_{i+1}}\int_{\mathbb{R}^3}\int_{\mathbb{T}^{3d}}(1-K_M(\xi))m\left(\abs{\xi}^\frac{m-1}{2}-\abs{\xi'}^\frac{m-1}{2}\right)^2\chi^1_r\chi^2_r\nabla_x\rho_{t_i,r}^{1,\epsilon}\nabla_{x'}\rho_{t_i,r}^{2,\epsilon}} \\ & \leq C\int_{t_i}^{t_{i+1}}\int_{\mathbb{R}^3}\int_{\mathbb{T}^{3d}}(1-K_M(\xi))\left(\abs{\xi}^{m-3}+\abs{\xi'}^{m-3}\right)\abs{\chi^1_r}\abs{\chi^2_r}\abs{\epsilon\nabla_x\rho_{t_i,r}^{1,\epsilon}}\abs{\epsilon\nabla_{x'}\rho_{t_i,r}^{2,\epsilon}} \\ & \leq C\left( \int_{t_i}^{t_{i+1}}\int_{\left\{\abs{u^1}\geq M\right\}}\left(\abs{u^1}-M\right)_+^{m-2}+\int_{t_i}^{t_{i+1}}\int_{\left\{\abs{u^2}\geq M-c\epsilon\right\}}\left(\abs{u^2}-M+c\epsilon\right)^{m-2}_+\right).\end{aligned}\end{equation}
The interpolation estimate Lemma~\ref{lem_interpolate} below, H\"older's inequality, Proposition~\ref{aux_p} below, and the dominated convergence theorem prove that the righthand side of \eqref{neww_2} vanishes in the limit $M\rightarrow\infty$, uniformly in $\epsilon\in(0,1)$.  Therefore, \eqref{neww_1}, \eqref{tu_33}, and \eqref{neww_2} imply that, after summing over $i\in\{0,\ldots,N-1\}$ and passing first to the limit $\epsilon\rightarrow 0$ and second to the limit $M\rightarrow\infty$,
\begin{equation}\label{tu_3300}\limsup_{\epsilon\rightarrow 0}\sum_{i=0}^{N-1}\abs{\textrm{Err}^4_i}=0.\end{equation}

\textit{Case $m\in(0,1)\cup(1,2]$}:  For this case, the idea is to remove the singularity at the origin and to use the full regularity of the solution implied by Proposition~\ref{aux_log} below.  The integration by parts formula of Lemma~\ref{lem_ibp}, which is justified using an approximation argument and Proposition~\ref{aux_p} below, implies that, for each $(y,\eta)\in \mathbb{T}^d\times\mathbb{R}$,
\begin{equation}\begin{aligned}\label{fast_1}&\int_{t_i}^{t_{i+1}}\int_{\mathbb{R}^3}\int_{\mathbb{T}^{3d}} m\left(\abs{\xi}^{\frac{m-1}{2}}-\abs{\xi'}^{\frac{m-1}{2}}\right)^2\chi^1_r\chi^2_r\nabla_x\rho^{1,\epsilon}_{t_i,r}\cdot  \nabla_{x'}\rho^{2,\epsilon}_{t_i,r} \\ &= \frac{4m}{(m+1)^2}\int_{t_i}^{t_{i+1}}\int_{\mathbb{R}}\int_{\mathbb{T}^{3d}} \psi_m(u_1,u_2)\abs{u^1}^{-\frac{1}{2}}\nabla \left(u^1\right)^{\left[\frac{m+1}{2}\right]}\cdot\abs{u^2}^{-\frac{1}{2}}\nabla\left(u^2\right)^{\left[\frac{m+1}{2}\right]}\overline{\rho}^{1,\epsilon}_{t_i,r} \overline{\rho}^{2,\epsilon}_{t_i,r},\end{aligned}\end{equation}
where
\begin{equation}\label{ffast_0}\psi_m(\xi,\xi'):=\abs{\xi}^{\frac{2-m}{2}}\abs{\xi'}^{\frac{2-m}{2}}\left(\abs{\xi}^\frac{m-1}{2}-\abs{\xi'}^\frac{m-1}{2}\right)^2\;\;\textrm{for}\;\;\xi,\xi'\in\mathbb{R},\end{equation}
and, for each $j\in\{1,2\}$,
$$\overline{\rho}^{j,\epsilon}_{t_i,r}(x,y,\eta):=\rho^{j,\epsilon}_{t_i,r}(x,y,u^j(x,t),\eta)\;\;\textrm{for}\;\;(x,y,\eta,t)\in \mathbb{T}^{2d}\times\mathbb{R}\times[t_i,\infty).$$
It follows as in \eqref{u_10} that, for $C_1=C_1(T)>0$,
\begin{equation}\label{fast_2}\overline{\rho}^{1,\epsilon}_{t_i,r}\cdot \overline{\rho}^{2,\epsilon}_{t_i,r}\neq 0\;\;\textrm{implies that}\;\;\abs{u^1-u^2}\leq C_1\epsilon.\end{equation}
Observe that if $\max\{{\abs{\xi},\abs{\xi'}\}}\leq 2C_1\epsilon$, then a direct computation yields, for $C=C(T)>0$ depending on $C_1$,
\begin{equation}\label{fast_3}\psi_m(\xi,\xi')\leq\abs{\xi}^\frac{m}{2}\abs{\xi'}^\frac{2-m}{2}+2\abs{\xi}^\frac{1}{2}\abs{\xi'}^\frac{1}{2}+\abs{\xi}^\frac{2-m}{2}\abs{\xi'}^\frac{m}{2}\leq C\epsilon.\end{equation}
Conversely, without loss of generality suppose that $\abs{\xi}\geq 2C_1\epsilon$ with $\abs{\xi}\geq\abs{\xi'}$ and $\abs{\xi-\xi'}\leq C_1\epsilon$.  Then, using a Lipschitz estimate, for $C=C(m,T)>0$ depending on $C_1$,
\begin{equation}\label{fast_323}\psi_m(\xi,\xi')\leq C\abs{\xi}^{\frac{2-m}{2}}\abs{\xi'}^{\frac{2-m}{2}}\abs{\xi'}^{m-3}\epsilon^2\leq C\abs{\xi}^{\frac{2-m}{2}}\abs{\xi'}^{\frac{m-4}{2}}\epsilon^2\leq C\abs{\xi}^{-1}\epsilon^2\leq C\epsilon,\end{equation}
where the second to last inequality uses the fact that the assumptions guarantee $\abs{\xi'}\geq\frac{1}{2}\abs{\xi}$.

We will now form a velocity decomposition of the integral.  For each $\delta\in(0,1)$, let $K^\delta:\mathbb{R}\rightarrow\mathbb[0,1]$ denote a smooth cutoff function satisfying
\begin{equation}\label{ffast_1} \left\{\begin{array}{ll} K^\delta(\xi)=1 & \textrm{if}\;\;\abs{\xi}\leq \delta\;\;\textrm{or}\;\;\frac{2}{\delta}\leq\abs{\xi}, \\ K^\delta(\xi)=0 & \textrm{if}\;\;2\delta\leq\abs{\xi}\leq\frac{1}{\delta}.\end{array}\right.\end{equation}
Returning to \eqref{fast_1} consider the decomposition
\begin{equation}\begin{aligned}\label{ffast_2}  &\int_{t_i}^{t_{i+1}}\int_{\mathbb{R}^3}\int_{\mathbb{T}^{3d}} m\left(\abs{\xi}^{\frac{m-1}{2}}-\abs{\xi'}^{\frac{m-1}{2}}\right)^2\chi^1_r\chi^2_r\nabla_x\rho^{1,\epsilon}_{t_i,r}\cdot \nabla_{x'}\rho^{2,\epsilon}_{t_i,r} \\ & = \frac{4m}{(m+1)^2}\int_{t_i}^{t_{i+1}}\int_\mathbb{R}\int_{\mathbb{T}^{3d}} \psi^\delta_m(u^1,u^2)\abs{u^1}^{-\frac{1}{2}}\nabla \left(u^1\right)^{\left[\frac{m+1}{2}\right]}\cdot\abs{u^2}^{-\frac{1}{2}}\nabla\left(u^2\right)^{\left[\frac{m+1}{2}\right]}\overline{\rho}^{1,\epsilon}_{t_i,r} \overline{\rho}^{2,\epsilon}_{t_i,r} \\ & \quad +  \frac{4m}{(m+1)^2}\int_{t_i}^{t_{i+1}}\int_{\mathbb{R}}\int_{\mathbb{T}^{3d}} \tilde{\psi}^\delta_m(u^1,u^2)\abs{u^1}^{-\frac{1}{2}}\nabla \left(u^1\right)^{\left[\frac{m+1}{2}\right]}\cdot\abs{u^2}^{-\frac{1}{2}}\nabla\left(u^2\right)^{\left[\frac{m+1}{2}\right]}\overline{\rho}^{1,\epsilon}_{t_i,r}\overline{\rho}^{2,\epsilon}_{t_i,r},\end{aligned}\end{equation}
where, for each $\delta\in(0,1)$, $\psi^\delta_m,\tilde{\psi}^\delta_m:\mathbb{R}^2\rightarrow\mathbb{R}$ are defined by
\begin{equation}\label{ffast_7}\psi^\delta_m(\xi,\xi'):=\left(K^\delta(\xi)+K^\delta(\xi')-K^\delta(\xi)K^\delta(\xi')\right)\psi_m(\xi,\xi'),\end{equation}
and
\begin{equation}\label{ffast_707} \tilde{\psi}^\delta_m(\xi,\xi'):=\left(1-K^\delta(\xi)\right)\left(1-K^\delta(\xi')\right)\psi_m(\xi,\xi').\end{equation}

It follows from \eqref{ffast_0}, \eqref{ffast_1}, and the local Lipschitz continuity of the map $\xi\in\mathbb{R}\mapsto\abs{\xi}^{\frac{m-1}{2}}$ on the set $\left\{\delta\leq\abs{\xi}\leq2/\delta\right\}$ that, $C=C(m,\delta)>0$,
$$\abs{\tilde{\psi}^\delta_m(\xi,\xi')}\leq C\abs{\xi-\xi'}^2.$$
Therefore, using Proposition~\ref{aux_log} below and Young's inequality, the second term of \eqref{ffast_2} satisfies, for $C=C(m,T,\delta)>0$,
\begin{equation}\begin{aligned}\label{ffast_3} &\abs{\int_{t_i}^{t_{i+1}}\int_\mathbb{R}\int_{\mathbb{T}^{3d}} \left(\tilde{\psi}^\delta_m(u^1,u^2)\abs{u^1}^{-\frac{1}{2}}\nabla \left(u^1\right)^{\left[\frac{m+1}{2}\right]}\cdot\abs{u^2}^{-\frac{1}{2}}\nabla\left(u^2\right)^{\left[\frac{m+1}{2}\right]}\overline{\rho}^{1,\epsilon}_{t_i,r}\overline{\rho}^{2,\epsilon}_{t_i,r}\right)} \\ & \leq C\epsilon\left(\int_{t_i}^{t_{i+1}}\int_{\mathbb{R}}\int_{\mathbb{T}^d}\abs{\xi}^{-1}q^1_r(x,\xi)\right)^\frac{1}{2}\left(\int_{t_i}^{t_{i+1}}\int_\mathbb{R}\int_{\mathbb{T}^d}\abs{\xi'}^{-1}q^2_r(x,\xi')\right)^\frac{1}{2} \\ & \leq C\epsilon\sum_{j=1}^2\left(1+\norm{u_0^j}^2_{L^2(\mathbb{T}^d)}+\int_{t_i}^{t_{i+1}}\int_\mathbb{R}\int_{\mathbb{T}^d}q^j\dx\dxi\dr\right).\end{aligned}\end{equation}

For the first term of \eqref{ffast_2}, estimates \eqref{fast_2}, \eqref{fast_3}, and \eqref{fast_323} imply that, for $C=C(m,T)>0$, we have $\abs{\psi^\delta_m(u^1,u^2)}\leq C\epsilon$ whenever $\rho^{1,\epsilon}_{t_i,r}\rho^{2,\epsilon}_{t_i,r}\neq 0$.  Therefore, using definitions \eqref{ffast_7} and \eqref{ffast_707}, the fact that $\psi^\delta_m(\xi,\xi')=0$ on the set $\{\xi=\xi'\}$, and the fact that the set
$$\{u^1\neq u^2\}\subset\left(\{u^1\neq 0\}\cup\{u^2\neq 0\}\right),$$
we conclude that, for $C=C(m,T)>0$,
\begin{equation}\begin{aligned}\label{ffast_8} &\abs{\int_{t_i}^{t_{i+1}}\int_\mathbb{R}\int_{\mathbb{T}^{3d}} \left(\psi^\delta_m(u^1,u^2)\abs{u^1}^{-\frac{1}{2}}\nabla \left(u^1\right)^{\left[\frac{m+1}{2}\right]}\cdot\abs{u^2}^{-\frac{1}{2}}\nabla\left(u^2\right)^{\left[\frac{m+1}{2}\right]}\overline{\rho}^{1,\epsilon}_{t_i,r}\overline{\rho}^{2,\epsilon}_{t_i,r}\right)} \\ &\leq C\abs{\int_{t_i}^{t_{i+1}}\int_\mathbb{R}\int_{\mathbb{T}^{3d}} \epsilon^{-1}\psi^\delta_m(u^1,u^2)\left(\abs{u^1}^{-\frac{1}{2}}\nabla \left(u^1\right)^{\left[\frac{m+1}{2}\right]}\cdot\abs{u^2}^{-\frac{1}{2}}\nabla\left(u^2\right)^{\left[\frac{m+1}{2}\right]}\right)\epsilon\overline{\rho}^{1,\epsilon}_{t_i,r}\overline{\rho}^{2,\epsilon}_{t_i,r}} \\ & \leq C\left(\int_{t_i}^{t_{i+1}}\int_\mathbb{R}\int_{U^\delta}\abs{\xi}^{-1}q^1_r(x,\xi)\right)^\frac{1}{2}\left(\int_{t_i}^{t_{i+1}}\int_\mathbb{R}\int_{U^\delta}\abs{\xi'}^{-1}q^2_r(x,\xi')\right)^\frac{1}{2},\end{aligned}\end{equation}
where, for each $\delta\in(0,1)$,
\begin{equation}\label{ffast_9} U^\delta:=\bigcup_{j=1}^2\left(\left\{0<\abs{u^j}<2\delta\right\}\cup\left\{\abs{u^j}\geq 1/\delta\right\}\right).\end{equation}
Therefore, estimates \eqref{ffast_3} and \eqref{ffast_8} imply that, for each $\delta\in(0,1)$, for $C=C(m,T)>0$,
\begin{equation}\label{fast_6}  \limsup_{\epsilon\rightarrow0} \abs{\textrm{Err}^4_i}  \leq C\left(\int_{t_i}^{t_{i+1}}\int_\mathbb{R}\int_{U^\delta}\abs{\xi}^{-1}q^1_r(x,\xi)\right)^\frac{1}{2}\left(\int_{t_i}^{t_{i+1}}\int_\mathbb{R}\int_{U^\delta}\abs{\xi'}^{-1}q^2_r(x,\xi')\right)^\frac{1}{2}.\end{equation}
The dominated convergence theorem, Proposition~\ref{aux_log} below, and \eqref{ffast_9} imply that the righthand side of \eqref{fast_6} vanishes in the limit $\delta\rightarrow 0$.  Therefore, after summing over $i\in\{0,\ldots,N-1\}$, it follows that
\begin{equation}\label{tu_3400}\limsup_{\epsilon\rightarrow 0}\sum_{i=0}^{N-1}\abs{\textrm{Err}^4_i}= 0,\end{equation}
which, together with \eqref{tu_3300}, completes the analysis of the error terms.

\textbf{Step 6:  The conclusion.}  Returning to \eqref{u_31}, and recalling the approximation scheme \eqref{u_0}, estimates (\ref{new_3}), (\ref{new_8}), \eqref{tu_3300}, and \eqref{tu_3400} imply that, after summing over $i\in\{0,\ldots,N-1\}$ and passing to the limit $\epsilon\rightarrow 0$, for $C=C(m,d,T)>0$,
\begin{equation}\label{non_2}\begin{aligned} & \left.\int_\mathbb{R}\int_{\mathbb{T}^d}\abs{\chi^1_r-\chi^2_r}^2\;\dy\deta\right|_{r=0}^T \\ 
& \leq C\abs{\mathcal{P}}^\alpha \sum_{j=1}^2\left(\int_0^T\int_{\mathbb{T}^d}\abs{u^j}^{(m-1)\vee 0}\dx\dr+\int_0^T\int_\mathbb{R}\int_{\mathbb{T}^d}\abs{\xi}^{(m-1)\wedge 0}q^j_r\dx\dxi\dr\right) \\ & \quad +C\abs{\mathcal{P}}^\alpha\sum_{j=1}^2\int_0^T\int_\mathbb{R}\int_{\mathbb{T}^d}\left(p^j+q^j\right)\dx\dxi\dr.\end{aligned}\end{equation}
Lemma~\ref{lem_interpolate} and Proposition~\ref{aux_p} below imply that, for $C=C(m,d,T)>0$, for each $j\in\{1,2\}$,
{\color{black}$$\begin{aligned} & \int_0^T\int_{\mathbb{T}^d}\abs{u^j}^{(m-1)\vee 0}\dx\dr+\int_0^T\int_\mathbb{R}\int_{\mathbb{T}^d}\abs{\xi}^{(m-1)\wedge 0}q^j_r\dx\dxi\dr \\ &  \leq C\left(\norm{u_0^j}^{(m-1)\vee 0}_{L^1(\mathbb{T}^d)}+\left(\int_0^T\int_\mathbb{R}\int_{\mathbb{T}^d}q^j\dx\dxi\dr\right)^\frac{(m-1)\vee 0}{m+1}\right) \\ &\quad + C\left(\norm{u_0^j}^{(1+m)\wedge 2}_{L^{(1+m)\wedge 2}(\mathbb{T}^d)}+\norm{u_0^j}^{2m \wedge 2}_{L^1(\mathbb{T}^d)}+\left(\int_0^T\int_{\mathbb{R}}\int_{\mathbb{T}^d}q^j\dx\dxi\dr\right)^{\frac{2m}{m+1}\wedge 1}\right).\end{aligned}$$}
Therefore, after multiple applications of H\"older's inequality and Young's inequality, it follows that for $C=C(m,d,T)>0$, for each $j\in\{1,2\}$,
{\color{black}$$\begin{aligned} & \int_0^T\int_{\mathbb{T}^d}\abs{u^j}^{(m-1)\vee 0}\dx\dr+\int_0^T\int_\mathbb{R}\int_{\mathbb{T}^d}\abs{\xi}^{(m-1)\wedge 0}q^j_r\dx\dxi\dr \\ & \leq C\left(1+\norm{u^j_0}^{(m-1)\vee 0}_{L^1(\mathbb{T}^d)}+\norm{u_0^j}^2_{L^2(\mathbb{T}^d)}+\int_0^T\int_\mathbb{R}\int_{\mathbb{T}^d}q^j\dx\dxi\dr\right).\end{aligned}$$}
Therefore, applying this estimate to \eqref{non_2}, for $C=C(m,d,T)>0$,
$$\begin{aligned} & \left.\int_\mathbb{R}\int_{\mathbb{T}^d}\abs{\chi^1_r-\chi^2_r}^2\;\dy\deta\right|_{r=0}^T \\ & \leq C\abs{\mathcal{P}}^\alpha\sum_{j=1}^2\left(1+\norm{u_0^j}_{L^2(\mathbb{T}^d)}^2+\norm{u^j_0}^{(m-1)\vee 0}_{L^1(\mathbb{T}^d)}+\int_0^T\int_\mathbb{R}\int_{\mathbb{T}^d}\left(p^j+q^j\right)\dx\dxi\dr\right).\end{aligned}$$
Hence, using the definition of the kinetic function, after passing to the limit $\abs{\mathcal{P}}\rightarrow 0$, we conclude that
\begin{equation}\label{non_200}\begin{aligned} & \int_{\mathbb{T}^d}\abs{u^1(\cdot,T)-u^2(\cdot,T)}\dx=\int_\mathbb{R}\int_{\mathbb{T}^d}\abs{\chi^1(\cdot,\cdot,T)-\chi^2(\cdot,\cdot,T)}\dx\dxi \\ & \leq \int_\mathbb{R}\int_{\mathbb{T}^d}\abs{\chi^1(\cdot,\cdot,0)-\chi^2(\cdot,\cdot,0)}\dx\dxi=\int_{\mathbb{T}^d}\abs{u^1_0-u^2_0}\dx,\end{aligned}\end{equation}
which completes the proof.  \end{proof}

\begin{remark}  We observe that the argument leading from \eqref{fast_1} to \eqref{tu_3400} was the only step in the proof of Theorem~\ref{theorem_uniqueness} that relied upon the positivity of the initial data through the application of Proposition~\ref{aux_log} below.  The remaining arguments of this paper are obtained for general initial data in $L^2(\mathbb{T}^d)$.  This completes the proof of Theorem~\ref{intro_sign}.  The details for Theorem~\ref{intro_space} are similar, but require additional estimates due to the unboundedness of the domain.  The details can be found in the first version of this paper \cite{FehrmanGess1}.  \end{remark}

We conclude this section with a few auxiliary estimates.   The first, which is an immediate corollary of Theorem~\ref{theorem_uniqueness}, obtains an $L^1$-estimate for pathwise kinetic solutions.

{\color{black}\begin{cor}\label{path_L1} Let $u_0\in L^2(\mathbb{T}^d)$ and suppose that $u$ is a pathwise kinetic solution of \eqref{intro_eq} in the sense of Definition~\ref{def_solution} with initial data $u_0$.  Then,
$$\norm{u}_{L^\infty([0,\infty);L^1(\mathbb{T}^d))}\leq \norm{u_0}_{L^1(\mathbb{T}^d)}.$$
Furthermore, if $u_0\in L^2_+(\mathbb{T}^d)$, for almost every $t\in[0,\infty)$,
$$\norm{u(\cdot,t)}_{L^1(\mathbb{T}^d)}=\norm{u_0}_{L^1(\mathbb{T}^d)}.$$
\end{cor}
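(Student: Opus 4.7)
The plan is to deduce both statements from the contraction estimate of Theorem~\ref{theorem_uniqueness}, applied with the second solution taken to be the trivial one.

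\textbf{Upper bound.} First observe that $u^2\equiv 0$ is a pathwise kinetic solution of \eqref{intro_eq} with zero initial data: indeed, the kinetic function $\chi^2\equiv 0$ and the defect measures $p^2=q^2=0$ vanish identically, so \eqref{transport_equation} reduces to the tautology $0=0$. I would then rerun the argument of Theorem~\ref{theorem_uniqueness} with $(u^1,u^2)=(u,0)$. The only step in that argument that required the positivity hypothesis $u^1_0,u^2_0\in L^2_+(\mathbb{T}^d)$ was the bound on the error term $\mathrm{Err}^4_i$ in the case $m\in(0,1)\cup(1,2]$, where Proposition~\ref{aux_log} was invoked. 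But inspection of \eqref{u_0310} shows that the integrand of $\mathrm{Err}^4_i$ contains the factor $\chi^1_r\chi^2_r$, which vanishes identically when $\chi^2\equiv 0$; similarly every error term $\mathrm{Err}^{\ell,2}_i$ and every boundary contribution involving $p^2$, $q^2$, or $u^2(x',r)$ drops out of \eqref{u_25}. The bounds \eqref{new_3} and \eqref{new_8} for the surviving error terms depend only on $u^1$ and its defect measures and go through verbatim for signed $u^1_0\in L^2(\mathbb{T}^d)$. Passing to the limit $|\mathcal{P}|\to 0$ exactly as in the last step of the proof yields
\[
\|u(\cdot,T)\|_{L^1(\mathbb{T}^d)}=\|u^1(\cdot,T)-0\|_{L^1(\mathbb{T}^d)}\leq \|u_0-0\|_{L^1(\mathbb{T}^d)}=\|u_0\|_{L^1(\mathbb{T}^d)}
\]
for every $T\in[0,\infty)\setminus\mathcal{N}$, hence for almost every $t$ and, by the essential boundedness built into Definition~\ref{def_solution}, for the $L^\infty$-in-time norm.

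\textbf{Mass conservation.} Assume now $u_0\in L^2_+(\mathbb{T}^d)$. Theorem~\ref{intro_existence} produces a pathwise kinetic solution lying in $L^\infty([0,T];L^2_+(\mathbb{T}^d))$, and Theorem~\ref{theorem_uniqueness} identifies it with $u$, so $u(\cdot,t)\geq 0$ for almost every $t$. Consequently $\|u(\cdot,t)\|_{L^1(\mathbb{T}^d)}=\int_{\mathbb{T}^d} u(\cdot,t)\,dx$. The matching lower bound on this integral follows from conservation of mass in the approximating equation \eqref{chareq_eq}: because \eqref{chareq_eq} is in divergence form on the torus, integration in $x$ gives $\partial_t\int_{\mathbb{T}^d}u^{\eta,\epsilon}(\cdot,t)\,dx=0$, hence $\int u^{\eta,\epsilon}(\cdot,t)=\int u_0$ for every $t\geq 0$. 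Passing to the limit along the strongly convergent subsequence $u^{\eta,\epsilon}\to u$ constructed in Section~\ref{sol_exists} preserves this equality for almost every $t$, and the upper bound from the first part turns it into $\|u(\cdot,t)\|_{L^1(\mathbb{T}^d)}=\|u_0\|_{L^1(\mathbb{T}^d)}$.

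\textbf{Main obstacle.} The only nontrivial point is the verification that Theorem~\ref{theorem_uniqueness}, which is stated for $L^2_+$ initial data, genuinely extends to signed $u^1_0$ when $u^2\equiv 0$. This is not quite routine: one must go through the six steps of the proof and check that each estimate that was stated for nonnegative solutions was in fact only used for the surviving solution $u^1$ (which is signed here), and that every use of positivity of $u^2$ appears multiplied by $\chi^2$ or a factor derived from it. This verification is essentially an audit of \eqref{u_25}, \eqref{u_0310}, and the error estimates \eqref{new_3}, \eqref{new_8}, and is the one place where care is needed.
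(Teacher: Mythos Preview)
Your argument for the upper bound is correct and essentially identical to the paper's: rerun the uniqueness machinery with $\chi^2\equiv 0$, observe that the problematic term $\mathrm{Err}^4_i$ and every term carrying a factor of $\chi^2$, $p^2$, or $q^2$ vanish, and pass to the limit.

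Your mass-conservation argument, however, differs from the paper's and contains a genuine circularity. You invoke Theorem~\ref{theorem_uniqueness} to identify the given solution $u$ with the one produced by the existence theorem, thereby importing nonnegativity. But for $m\in(0,1)\cup(1,2]$ the proof of Theorem~\ref{theorem_uniqueness} relies on Proposition~\ref{aux_log}, and the proof of Proposition~\ref{aux_log} explicitly uses the nonnegativity of the solution (to make $\xi\mapsto\log\xi$ an admissible test function)---which is precisely the second assertion of Corollary~\ref{path_L1} you are trying to establish. Moreover, the estimates \eqref{ffast_3} and \eqref{ffast_8} in the uniqueness proof apply Proposition~\ref{aux_log} to \emph{both} solutions, so knowing nonnegativity of only the existence-constructed solution does not suffice. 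There is also a forward-reference issue: the existence theorem is proved in Section~\ref{sol_exists}, after this corollary.

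The paper avoids this loop by proving nonnegativity directly. It reruns the $\sgn$-term analysis (Step~2 of the uniqueness proof, equations \eqref{u_3}--\eqref{new_9}) with $\sgn$ replaced by $\sgn_-:=\sgn\wedge 0$, obtaining
\[
0\leq \int_{\mathbb{R}}\int_{\mathbb{T}^d}\chi(x,\xi,t)\,\sgn_-(\xi)\,dx\,d\xi\leq \int_{\mathbb{R}}\int_{\mathbb{T}^d}\overline{\chi}(u_0(x),\xi)\,\sgn_-(\xi)\,dx\,d\xi=0,
\]
hence $u\geq 0$ almost everywhere. This uses only the sign of the defect measures and the error bounds \eqref{new_3}, \eqref{new_8}; Proposition~\ref{aux_log} is never invoked. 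Once $u\geq 0$ is known, testing \eqref{transport_equation} with $\rho_0\equiv 1$ (so $\Delta\rho=\partial_\xi\rho=0$) gives $\int_{\mathbb{T}^d} u(\cdot,t)=\int_{\mathbb{T}^d} u_0$ immediately; your approximation argument is unnecessary.
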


\begin{proof}  Let $u_0\in L^2(\mathbb{T}^d)$ be arbitrary, and let $u$ be the pathwise kinetic solution of \eqref{intro_eq} with initial data $u_0$.  Repeating the proof of Theorem~\ref{theorem_uniqueness} with $\chi^2:=0$ implies that
$$\norm{u}_{L^\infty([0,T];L^1(\mathbb{T}^d))}=\norm{u-0}_{L^\infty([0,T];L^1(\mathbb{T}^d))}\leq \norm{u_0-0}_{L^1(\mathbb{T}^d)}=\norm{u_0}_{L^1(\mathbb{T}^d)}.$$
Indeed, in the case that $\chi^2=0$, the righthand side of \eqref{u_0} is bounded, for each $i\in\{1,\ldots,N-1\}$, by the righthand side of \eqref{new_9}.  The nonnegativity of the entropy and parabolic defect measures and estimates, estimates \eqref{new_3} and \eqref{new_8}, and a repetition of the arguments leading from \eqref{non_2} to \eqref{non_200} completes the proof.

For the second claim, suppose that $u_0\in L^2_+(\mathbb{T}^d)$ and let $u$ be the pathwise kinetic solution of \eqref{intro_eq} with initial data $u_0$, kinetic function $\chi$, and exceptional set $\mathcal{N}$.  It follows by repeating the same reasoning leading from \eqref{u_3} to \eqref{new_9} with the $\sgn$ function replaced by its negative part $\sgn_-:=(\sgn\wedge 0)$  that, due to the nonnegativity of the entropy and parabolic defect measures, after passing to the limit first with respect to the regularization and second with respect to the time splitting, for each $t\in[0,\infty)\setminus\mathcal{N}$,
$$0\leq \int_\mathbb{R}\int_{\mathbb{T}^d}\chi(x,\xi,t)\sgn_-(\xi)\dx\dxi\leq \int_\mathbb{R}\int_{\mathbb{T}^d}\overline{\chi}(u_0(x),\xi)\sgn_-(\xi)\dx\dxi=0.$$
Here, the first equality follows by the definition of the kinetic function, and the final equality follows from the nonnegativity of $u_0$.  We therefore conclude that, if $u_0\in L^2_+(\mathbb{T}^d)$ then $u\geq 0$ almost everywhere on $\mathbb{T}^d\times[0,\infty).$  The final claim now follows by testing the equation with the function that is identically equal to one, and using the nonnegativity of the solution.  \end{proof}}

In the estimates to follow, we will repeatedly use the following interpolation estimate. This estimate quantifies the gain in integrability implied by the finiteness of the parabolic defect measure.

\begin{lem}\label{lem_interpolate}  For every $z\in\C^\infty(\mathbb{T}^d)$, for $C=C(m,d,T)>0$,
$$\norm{z}^{m+1}_{L^{m+1}(\mathbb{T}^d)}=\norm{z^{\left[\frac{m+1}{2}\right]}}^2_{L^2(\mathbb{T}^d)} \leq C\left(\norm{z}^{m+1}_{L^1(\mathbb{T}^d)}+\norm{\nabla z^{\left[\frac{m+1}{2}\right]}}_{L^2(\mathbb{T}^d)}^2\right).$$
\end{lem}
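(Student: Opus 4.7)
The plan is to recognise the stated bound as an interpolation inequality in disguise. Set $w := z^{[(m+1)/2]}$, so that $|w|^{2} = |z|^{m+1}$ and $|w|^{2/(m+1)} = |z|$. With this change of unknown, the lemma is equivalent to
\begin{equation*}
\|w\|_{L^2(\mathbb{T}^d)}^{2} \;\le\; C\bigl(\|w\|_{L^{2/(m+1)}(\mathbb{T}^d)}^{2} + \|\nabla w\|_{L^2(\mathbb{T}^d)}^{2}\bigr),
\end{equation*}
which I would establish by combining the Poincar\'e inequality on the torus with a logarithmic-convexity interpolation of $L^{1}$ between $L^{2/(m+1)}$ and $L^{2}$.

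Since $\mathbb{T}^{d}$ has unit volume, Poincar\'e's inequality together with the orthogonal decomposition into mean and oscillation gives
\begin{equation*}
\|w\|_{L^2(\mathbb{T}^d)}^{2} \;=\; |\bar w|^{2} + \|w-\bar w\|_{L^2(\mathbb{T}^d)}^{2} \;\le\; \|w\|_{L^1(\mathbb{T}^d)}^{2} + C_{P}\,\|\nabla w\|_{L^2(\mathbb{T}^d)}^{2},
\end{equation*}
so the task reduces to controlling $\|w\|_{L^{1}(\mathbb{T}^{d})}^{2}$ by the right-hand side of the goal. When $m\le 1$ one has $2/(m+1)\ge 1$, and Jensen's inequality on the probability space $\mathbb{T}^{d}$ yields $\|w\|_{L^{1}}\le\|w\|_{L^{2/(m+1)}}$; the estimate is then immediate.

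When $m>1$, the norm $\|w\|_{L^{1}(\mathbb{T}^{d})}$ lies between the other two, and the log-convexity of $p\mapsto\log\|w\|_{L^{p}}$ produces
\begin{equation*}
\|w\|_{L^1(\mathbb{T}^d)} \;\le\; \|w\|_{L^{2/(m+1)}(\mathbb{T}^d)}^{1/m}\,\|w\|_{L^2(\mathbb{T}^d)}^{(m-1)/m},
\end{equation*}
where the exponent $1/m$ is determined by $1 = \theta(m+1)/2 + (1-\theta)/2$. Squaring and applying Young's inequality with conjugate exponents $m$ and $m/(m-1)$ gives, for any $\eta>0$,
\begin{equation*}
\|w\|_{L^1(\mathbb{T}^d)}^{2} \;\le\; \eta\,\|w\|_{L^2(\mathbb{T}^d)}^{2} + C_{\eta}\,\|w\|_{L^{2/(m+1)}(\mathbb{T}^d)}^{2}.
\end{equation*}
Choosing $\eta$ small enough to absorb the $L^{2}$ term into the Poincar\'e bound completes the argument.

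I do not anticipate any serious obstacle: once the substitution $w = z^{[(m+1)/2]}$ is made, the claim is a textbook interpolation inequality. The fast-diffusion regime $m\in(0,1]$ requires only the monotonicity of $L^{p}$-norms on a probability space, while the slow-diffusion regime $m>1$ requires the additional log-convexity interpolation and the Young absorption step described above.
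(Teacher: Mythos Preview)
Your proof is correct and, in fact, somewhat more elementary than the paper's. The paper proceeds by interpolating $\|z\|_{L^{m+1}}$ between $\|z\|_{L^1}$ and $\|z\|_{L^{2^*(m+1)/2}}$ via log-convexity, with the dimension-dependent exponent $\theta = dm/(dm+2)$, and then invokes the Gagliardo--Nirenberg--Sobolev inequality to control $\|z^{[(m+1)/2]} - \overline{z^{[(m+1)/2]}}\|_{L^{2^*}}$ by $\|\nabla z^{[(m+1)/2]}\|_{L^2}$; this forces a separate treatment of dimensions $d=1$ and $d=2$, where the Sobolev exponent behaves differently. Your route, by contrast, stays entirely within the scale $L^{2/(m+1)}$--$L^1$--$L^2$ and uses only the Poincar\'e--Wirtinger inequality $\|w-\bar w\|_{L^2}\le C_P\|\nabla w\|_{L^2}$, which holds uniformly in all dimensions on the unit torus. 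The trade-off is that the paper's argument, by going through the critical Sobolev exponent, would extend more readily to situations where one wants sharper integrability conclusions, whereas your argument is tailored precisely to the $L^2$ target and is correspondingly cleaner for this specific statement.
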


\begin{proof}  Let $z\in\C^\infty(\mathbb{T}^d)$ be arbitrary.  The first equality is immediate from the definitions.  The remainder of argument is written for the case $d\geq 3$, since the cases $d=1$ and $d=2$ are similar.  In this case, for $\theta=\theta(m,d)$ defined by
$$\theta=\frac{dm}{dm+2},$$
the log-convexity of the Sobolev norm yields the estimate, for the Sobolev exponent $\frac{1}{2^*}=\frac{1}{2}-\frac{1}{d}$, for each $z\in \C^\infty(\mathbb{T}^d)$,
$$\begin{aligned} \norm{z^{\left[\frac{m+1}{2}\right]}}_{L^2(\mathbb{T}^d)} =  & \norm{z}^\frac{m+1}{2}_{L^{m+1}(\mathbb{T}^d)} \\ \leq & \norm{z}^{(1-\theta)\frac{m+1}{2}}_{L^1(\mathbb{T}^d)}\norm{z}_{L^{2^*\left(\frac{m+1}{2}\right)}(\mathbb{T}^d)}^{\theta\frac{m+1}{2}} \\ \leq & \norm{z}^{(1-\theta)\frac{m+1}{2}}_{L^1(\mathbb{T}^d)}\norm{z^{\left[\frac{m+1}{2}\right]}}_{L^{2^*}(\mathbb{T}^d)}^{\theta} \\ \leq & \norm{z}^{(1-\theta)\frac{m+1}{2}}_{L^1(\mathbb{T}^d)}\left(\norm{z^{\left[\frac{m+1}{2}\right]}-\int_{\mathbb{T}^d}z^{\left[\frac{m+1}{2}\right]}\dx}_{L^{2^*}(\mathbb{T}^d)}^{\theta}+\norm{z^{\left[\frac{m+1}{2}\right]}}_{L^1(\mathbb{T}^d)}^{\theta}\right),\end{aligned}$$
where the final inequality follows from the triangle inequality and the estimate
$$\norm{\int_{\mathbb{T}^d}z^{\left[\frac{m+1}{2}\right]}\dx}_{L^{2^*}(\mathbb{T}^d)}^\theta=\abs{\int_{\mathbb{T}^d}z^{\left[\frac{m+1}{2}\right]}\dx}^\theta\leq \norm{z^{\left[\frac{m+1}{2}\right]}}^\theta_{L^1(\mathbb{T}^d)},$$
where a constant would appear if the measure $\abs{\mathbb{T}^d}$ is not normalized to be one.  The Gagliardo-Nirenberg-Sobolev inequality and H\"older's inequality then imply that, for $C=C(d)>0$, for each $z\in \C^\infty(\mathbb{T}^d)$,
$$\begin{aligned} \norm{z^{\left[\frac{m+1}{2}\right]}}_{L^2(\mathbb{T}^d)} \leq & \norm{z}^{(1-\theta)\frac{m+1}{2}}_{L^1(\mathbb{T}^d)}\left(C\norm{\nabla z^{\left[\frac{m+1}{2}\right]}}_{L^{2}(\mathbb{T}^d)}^{\theta}+\norm{z^{\left[\frac{m+1}{2}\right]}}_{L^1(\mathbb{T}^d)}^{\theta}\right) \\ \leq & \norm{z}^{(1-\theta)\frac{m+1}{2}}_{L^1(\mathbb{T}^d)}\left(C\norm{\nabla z^{\left[\frac{m+1}{2}\right]}}_{L^{2}(\mathbb{T}^d)}^{\theta}+\norm{z^{\left[\frac{m+1}{2}\right]}}_{L^2(\mathbb{T}^d)}^{\theta}\right).\end{aligned}$$
Finally, it follows from Young's inequality that, for $C=C(m,d)>0$,
$$\norm{z^{\left[\frac{m+1}{2}\right]}}_{L^2(\mathbb{T}^d)} \leq C\left(\norm{z}^\frac{m+1}{2}_{L^1(\mathbb{T}^d)}+\norm{\nabla z^{\left[\frac{m+1}{2}\right]}}_{L^2(\mathbb{T}^d)}\right)+\frac{1}{2}\norm{z^{\left[\frac{m+1}{2}\right]}}_{L^2(\mathbb{T}^d)},$$
and, therefore,  for $C=C(m,d)>0$,
$$\norm{z^{\left[\frac{m+1}{2}\right]}}_{L^2(\mathbb{T}^d)} \leq C\left(\norm{z}^\frac{m+1}{2}_{L^1(\mathbb{T}^d)}+\norm{\nabla z^{\left[\frac{m+1}{2}\right]}}_{L^2(\mathbb{T}^d)}\right).$$
Taking the square of this equality completes the proof.  \end{proof}

The following two propositions obtain higher integrability of the entropy and kinetic defect measures in a neighborhood of the origin.  This estimate is particularly relevant for the fast diffusion case $m\in(0,1)$, since it effectively implies the $L^2$-integrability of $\nabla u^{[m]}$.

\begin{prop}\label{aux_p}  Let $u_0\in L^2(\mathbb{T}^d)$ and $\delta\in(0,1]$ be arbitrary.  Suppose that $u$ is a pathwise kinetic solution of \eqref{intro_eq} in the sense of Definition~\ref{def_solution} with initial data $u_0$.  Then, for each $T>0$, there exists $C=C(m,d,T)>0$ such that
$$\begin{aligned}& \norm{u}^{1+\delta}_{L^\infty([0,T];L^{1+\delta}(\mathbb{T}^d))}+\delta\int_0^T\int_\mathbb{R}\int_{\mathbb{T}^d}\abs{\xi}^{\delta-1}(p+q)\dx\dxi\dr \\ & \leq C\left(\norm{u_0}^{1+\delta}_{L^{1+\delta}(\mathbb{T}^d)}+\norm{u_0}^{m+\delta}_{L^1(\mathbb{T}^d)}+\left(\int_0^T\int_{\mathbb{R}}\int_{\mathbb{T}^d}q\dx\dxi\dr\right)^{\frac{m+\delta}{m+1}}\right). \end{aligned}$$
\end{prop}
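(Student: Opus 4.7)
The plan is to test the kinetic formulation of \eqref{intro_eq} along inverse rough characteristics with a smooth, compactly supported approximation of the signed power $\sgn(\xi)|\xi|^{\delta}/\delta$, whose $\xi$-derivative is $|\xi|^{\delta-1}$. Since the approximation depends only on $\xi$, it is $x$-independent and hence admissible as a test function in Definition~\ref{def_solution}. In the deterministic setting, this choice formally yields the identity $\tfrac{1}{1+\delta}\partial_t\|u\|^{1+\delta}_{L^{1+\delta}(\mathbb{T}^d)}=-\int(p+q)|\xi|^{\delta-1}\dx\dxi$; the rough-path setting generates additional error terms through the non-trivial transport of the test function along characteristics, which must be controlled using the estimates of Proposition~\ref{rough_est}.

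I would first introduce, for $M>1$, a cutoff $\phi_M\in \C^\infty_c(\mathbb{R})$ with $\phi_M\to \sgn(\xi)|\xi|^\delta/\delta$ and $\phi_M'\nearrow |\xi|^{\delta-1}$ pointwise as $M\to\infty$. Fix a partition $\mathcal{P}=\{0=t_0<\ldots<t_N=T\}$ avoiding the exceptional set $\mathcal{N}$, and on each subinterval $[t_i,t_{i+1}]$ apply \eqref{transport_equation} with the test function $\rho_0(x,\xi):=\phi_M(\xi)$ at the restart time $t_i$; the transported test function admits the explicit form $\rho_{t_i,r}(x,\xi)=\phi_M(\Pi^{x,\xi}_{r,r-t_i})$ by \eqref{kin_rough_rep}. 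Summing over $i$ and inserting $\pm \phi_M(\xi)$ at each restart time produces
\[
\int_\mathbb{R}\!\int_{\mathbb{T}^d}\!\chi(T)\phi_M\, \dx\dxi - \int_\mathbb{R}\!\int_{\mathbb{T}^d}\!\chi(0)\phi_M\, \dx\dxi + \int_0^T\!\!\int_\mathbb{R}\!\!\int_{\mathbb{T}^d}\!(p+q)\phi_M'(\xi)\, \dx\dxi\dr = R^{\mathcal{P}}_M,
\]
where $R^{\mathcal{P}}_M$ collects the Laplacian contributions $\sum_i\int_{t_i}^{t_{i+1}}\!\int m|\xi|^{m-1}\chi\Delta_x\rho_{t_i,r}$, the transport residuals $\sum_i\int\chi(t_{i+1})(\rho_{t_i,t_{i+1}}-\phi_M)$, and the analogous discrepancies in the defect-measure integrand.

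Next I would bound the errors. Proposition~\ref{rough_est} gives $|\nabla_x \Pi^{x,\xi}_{r,r-t_i}|+|\partial_\xi \Pi^{x,\xi}_{r,r-t_i}-1|\leq C(r-t_i)^\alpha$ and analogous bounds for higher derivatives. Applying the integration by parts formula \eqref{ibp_formula} to the Laplacian term in close analogy with Steps 4 and 5 of the proof of Theorem~\ref{theorem_uniqueness}, together with Cauchy--Schwarz, yields $|R^{\mathcal{P}}_M|\to 0$ as $|\mathcal{P}|\to 0$ for each fixed $M$. Passing then to the limit $M\to\infty$, monotone convergence of the boundary terms (using $\int_\mathbb{R}\overline{\chi}(s,\xi)\phi_M(\xi)\dxi\nearrow |s|^{1+\delta}/(1+\delta)$) and Fatou's lemma for the non-negative defect-measure term produce the clean inequality
\[
\tfrac{1}{1+\delta}\|u(T)\|^{1+\delta}_{L^{1+\delta}(\mathbb{T}^d)} + \int_0^T\!\!\int_\mathbb{R}\!\!\int_{\mathbb{T}^d}\!(p+q)|\xi|^{\delta-1}\dx\dxi\dr \leq \tfrac{1}{1+\delta}\|u_0\|^{1+\delta}_{L^{1+\delta}(\mathbb{T}^d)}
\]
after taking the supremum over admissible $T$. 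The supplementary terms $\|u_0\|^{m+\delta}_{L^1(\mathbb{T}^d)}$ and $(\int q)^{\frac{m+\delta}{m+1}}$ on the right-hand side of Proposition~\ref{aux_p} arise when bounding the intermediate quantity $\int_0^T\!\int |u|^{m-1}|\phi_M'(\Pi)|^2$ appearing in the Laplacian error estimate; a careful application of Lemma~\ref{lem_interpolate} combined with the $L^1$-conservation from Corollary~\ref{path_L1} produces exactly these exponents through interpolation between $L^1$ and $L^{m+1}$.

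The main obstacle is the control of the Laplacian error term uniformly in the cutoff $M$: since $\phi_M'(\xi)\approx |\xi|^{\delta-1}$ is singular near $\xi=0$ when $\delta<1$, the pointwise norm $\|\phi_M'\|_\infty$ blows up as $M\to\infty$, precluding a naive dominated-convergence argument. One must instead split the velocity domain (much as in the error analysis of Theorem~\ref{theorem_uniqueness}), absorb a portion of $\int q$ back into the left-hand side via Young's inequality, and close the estimate by interpolating intermediate $L^p$-norms of $u$ against $\int q$ via Lemma~\ref{lem_interpolate}.
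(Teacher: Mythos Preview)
Your time-splitting scheme has a genuine gap: the ``transport residuals'' $\sum_i\int\chi(t_{i+1})(\rho_{t_i,t_{i+1}}-\phi_M)\,\dx\dxi$ do \emph{not} vanish as $|\mathcal{P}|\to 0$. Each summand is $O(|\mathcal{P}|^\alpha)$ (since $|\Pi^{x,\xi}_{t_{i+1},t_{i+1}-t_i}-\xi|\leq C|\mathcal{P}|^\alpha$ by Proposition~\ref{rough_est}), but there are $N\sim T/|\mathcal{P}|$ of them, so the sum is $O(|\mathcal{P}|^{\alpha-1})\to\infty$ for $\alpha<1$. The reason the analogous issue does not arise in Theorem~\ref{theorem_uniqueness} is that there the quantity being tracked is $\int|\chi^1-\chi^2|^2$, which carries \emph{no} test function and is therefore exactly preserved under the measure-preserving change of variables \eqref{kin_rough_measure}; this is the content of the chain of equalities~\eqref{u_0}. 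Your test function $\phi_M(\xi)$ is not invariant under the characteristic flow, so the restart discrepancies are real. This also explains the internal tension in your write-up: you first assert the clean inequality $\tfrac{1}{1+\delta}\|u(T)\|^{1+\delta}_{L^{1+\delta}}+\int(p+q)|\xi|^{\delta-1}\leq\tfrac{1}{1+\delta}\|u_0\|^{1+\delta}_{L^{1+\delta}}$, which would be strictly stronger than the proposition, and then say the supplementary terms arise from the Laplacian error --- but if $R^{\mathcal{P}}_M\to 0$ as you claim, those terms would never appear.

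The paper does not send $|\mathcal{P}|\to 0$. It tests \eqref{transport_equation} once on a \emph{single} interval $[0,t]$ with $\rho_0(\xi)=\xi^{[\delta]}$, so the transported test function is $(\Pi^{x,\xi}_{r,r})^{[\delta]}$. After integrating by parts via Lemma~\ref{lem_ibp}, the Laplacian term is controlled using Lemma~\ref{auxlem} (which gives $|\nabla_x\Pi^{x,\xi}_{r,r}|\,|\Pi^{x,\xi}_{r,r}|^{\delta-1}\leq Cr^\alpha|\xi|^\delta$) by $C_1t^\alpha\delta\bigl(\int_0^t\!\int|u|^{m+\delta}+\int_0^t\!\int|\xi|^{\delta-1}q\bigr)$. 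One then chooses a fixed $t_*>0$ small enough that $C_1t_*^\alpha$ is beaten by the coefficient of $\delta\int|\xi|^{\delta-1}(p+q)$ on the left-hand side (which is bounded below because, again by Lemma~\ref{auxlem}, $|\Pi^{x,\xi}_{r,r}|^{\delta-1}\partial_\xi\Pi^{x,\xi}_{r,r}\geq c|\xi|^{\delta-1}$ on $[0,t_*]$), and absorbs the second term. The remaining $\int_0^{t_*}\!\int|u|^{m+\delta}$ is estimated via Lemma~\ref{lem_interpolate} and Corollary~\ref{path_L1}, and this is exactly what produces the terms $\|u_0\|_{L^1}^{m+\delta}$ and $\bigl(\int q\bigr)^{(m+\delta)/(m+1)}$ on the right. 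Finally one iterates over $\lceil T/t_*\rceil$ intervals of \emph{fixed} length $t_*$. The absorption idea you mention in your last paragraph is exactly right, but it requires working on an interval of fixed positive length, not passing to the limit $|\mathcal{P}|\to 0$.
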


\begin{proof}  Let $\delta\in(0,1]$ be arbitrary.  Suppose that $u_0\in L^2(\mathbb{T}^d)$, and suppose that $u$ is a pathwise kinetic solution of \eqref{intro_eq} with initial data $u_0$.  We will write $\chi$ for the kinetic function of $u$, $(p,q)$ respectively for the entropy and parabolic defect measures, and $\mathcal{N}$ for the exceptional set.

Let $T\in[0,\infty)\setminus\mathcal{N}$ be fixed but arbitrary.  Definition~\ref{def_solution}, in particular the global integrability of the parabolic and entropy defect measures, and Lemma~\ref{lem_interpolate} imply that the map $\xi\in\mathbb{R}\mapsto \xi^{[\delta]}$ is an admissible test function.  Therefore, for each $t\in[0,T]\setminus\mathcal{N}$,
\begin{equation}\begin{aligned}\label{aux_p_0} & \left.\int_{\mathbb{R}}\int_{\mathbb{T}^d} \chi_r\left(\Pi^{x,\xi}_{r,r}\right)^{[\delta]}\dx\dxi\right|_{r=0}^{r=t}+\delta\int_0^t\int_\mathbb{R}\int_{\mathbb{T}^d}\abs{\Pi^{x,\xi}_{r,r}}^{\delta-1}\partial_\xi\Pi^{x,\xi}_{r,r}\left(p_r+q_r\right)\dx\dxi\dr \\ & =\int_0^t\int_\mathbb{R}\int_{\mathbb{T}^d} m\abs{\xi}^{m-1}\chi_r\Delta\left(\Pi^{x,\xi}_{r,r}\right)^{[\delta]}\dx\dxi\dr.\end{aligned}\end{equation}

For the first term on the righthand side of \eqref{aux_p_0}, the integration by parts formula of Lemma~\ref{lem_ibp}, which is justified using an approximation argument and Lemma~\ref{auxlem} below, implies that, for each $t\in[0,T]\setminus\mathcal{N}$,
\begin{equation}\begin{aligned} \label{aux_p_1} & \int_0^t\int_\mathbb{R}\int_{\mathbb{T}^d} m\abs{\xi}^{m-1}\chi_r\Delta\left(\Pi^{x,\xi}_{r,r}\right)^{[\delta]}\dx\dxi\dr \\ & =-\frac{2m\delta}{m+1} \int_0^t\int_{\mathbb{T}^d} \abs{u}^{\frac{m-1}{2}}\nabla (u)^{\left[\frac{m+1}{2}\right]}\cdot\nabla\Pi^{x,u}_{r,r}\abs{\Pi^{x,u}_{r,r}}^{\delta-1}\dx\dr. \end{aligned}\end{equation}
Lemma~\ref{auxlem} implies that, for $C=C(T)>0$, for each $(x,t)\in\mathbb{T}^d\times[0,T]$,
$$\nabla\Pi^{x,u}_{r,r}\abs{\Pi^{x,u}_{r,r}}^{\delta-1}\leq Ct^\alpha\abs{u(x)}^\delta.$$
Therefore, using H\"older's inequality, Young's inequality, and the definition of the parabolic defect measure, the righthand side of \eqref{aux_p_1} satisfies, for $C_1=C_1(m,T)>0$, for each $t\in[0,T]\setminus\mathcal{N}$,
\begin{equation}\begin{aligned}\label{aux_p_2} & -\frac{2m\delta}{m+1}\int_0^t\int_{\mathbb{T}^d} \abs{u}^{\frac{m-1}{2}}\nabla (u)^{\left[\frac{m+1}{2}\right]}\nabla\Pi^{x,u}_{r,r}\abs{\Pi^{x,u}_{r,r}}^{\delta-1}\dx\dr \\ & \leq C_1t^\alpha\delta\int_0^t\int_{\mathbb{T}^d}\abs{u}^{\frac{m+\delta}{2}}\abs{u}^{\frac{\delta-1}{2}}\abs{\nabla u^{\left[\frac{m+1}{2}\right]}}\dx\dr \\ & \leq C_1t^\alpha\delta  \left(\int_0^t\int_{\mathbb{T}^d}\abs{u}^{m+\delta}\dx\dr+\int_0^t\int_\mathbb{R}\int_{\mathbb{T}^d}\abs{\xi}^{\delta-1}q\dx\dxi\dr\right). \end{aligned}\end{equation}

The final term on the righthand side of \eqref{aux_p_2} will be absorbed.  Proposition~\ref{rough_est} implies that there exists $\tilde{t}\in(0,T]$ such that
$$\inf_{(x,\xi,t)\in\mathbb{T}^d\times\mathbb{R}\times(0,\tilde{t}]}\partial_\xi\Pi^{x,\xi}_{t,t}>0.$$
It follows from Lemma~\ref{auxlem} that, for $C_2=C_2(T)>0$, for each $t\in[0,\tilde{t}]\setminus\mathcal{N}$,
\begin{equation}\label{aux_p_3}   C_2\delta\int_0^t\int_{\mathbb{R}}\int_{\mathbb{T}^d}\abs{\xi}^{\delta-1}\partial_\xi\Pi^{x,\xi}_{r,r}\left(p_r+q_r\right)\dx\dxi\dr  \leq  \delta\int_0^t\int_\mathbb{R}\int_{\mathbb{T}^d}\abs{\Pi^{x,\xi}_{r,r}}^{\delta-1}\partial_\xi\Pi^{x,\xi}_{r,r}\left(p_r+q_r\right)\dx\dxi\dr.\end{equation}
The estimates of Proposition~\ref{rough_est} imply that there exists $t_*\in(0,\tilde{t}]\setminus\mathcal{N}$ satisfying
\begin{equation}\label{aux_p_7}\inf_{(x,\xi,t)\in\mathbb{T}^d\times\mathbb{R}\times[0,t_*]}\left(C_2\partial_\xi\Pi^{x,\xi}_{r,r}-C_1t^\alpha\right)\geq\frac{C_2}{2}.\end{equation}
Therefore, returning to \eqref{aux_p_0}, for each $t\in(0,t_*]\setminus\mathcal{N}$, estimates \eqref{aux_p_2}, \eqref{aux_p_3}, and \eqref{aux_p_7} imply that, for $C=C(T)>0$,
\begin{equation}\label{aux_p_8} \left.\int_{\mathbb{R}}\int_{\mathbb{T}^d} \chi_r\left(\Pi^{x,\xi}_{r,r}\right)^{[\delta]}\right|_{r=0}^{r=t}+\delta\int_0^t\int_\mathbb{R}\int_{\mathbb{T}^d}\abs{\xi}^{\delta-1}\left(p_r+q_r\right)\dx\dxi\dr\leq C\int_0^t\int_{\mathbb{T}^d}\abs{u}^{m+\delta}\dx\dx.\end{equation}

The definition of the kinetic function and Lemma~\ref{auxlem} imply that there exists $C=C(T)>0$ such that, for each $t\in[0,T]$,
\begin{equation}\label{aux_p_9}\norm{u(\cdot,t)}^{1+\delta}_{L^{1+\delta}(\mathbb{T}^d)}\leq C\int_\mathbb{R}\int_{\mathbb{T}^d}\chi(x,\xi,r)\left(\Pi^{x,\xi}_{r,r}\right)^{[\delta]},\end{equation}
and, by Definition~\ref{def_solution}, the initial data is attained in the sense that
\begin{equation}\label{aux_p_10}\int_\mathbb{R}\int_{\mathbb{T}^d}\chi(x,\xi,0)\left(\Pi^{x,\xi}_{0,0}\right)^{[\delta]}\dx\dxi=\int_{\mathbb{R}}\int_{\mathbb{T}^d}\overline{\chi}(u_0(x))\xi^{[\delta]}\dx\dxi=\frac{1}{1+\delta}\norm{u_0}^{1+\delta}_{L^1(\mathbb{T}^d)}.\end{equation}
Finally, Corollary~\ref{path_L1} and Lemma~\ref{lem_interpolate} imply that, for $C=C(m,d,T)>0$, for each $t\in[0,T]\setminus\mathcal{N}$,
\begin{equation}\label{aux_p_11}\int_0^t\int_{\mathbb{T}^d}\abs{u}^{m+\delta}\dx\dr\leq C\left(\norm{u_0}^{m+\delta}_{L^1(\mathbb{T}^d)}+\left(\int_0^t\int_{\mathbb{R}}\int_{\mathbb{T}^d}q\dx\dxi\dr\right)^{\frac{m+\delta}{m+1}}\right).\end{equation}
Returning to \eqref{aux_p_8}, the estimates \eqref{aux_p_9}, \eqref{aux_p_10}, and \eqref{aux_p_11} imply that, for each $t\in[0,t_*]\setminus\mathcal{N}$, for $C=C(m,d,T)>0$,
\begin{equation}\begin{aligned}\label{aux_p_12} & \norm{u}^{1+\delta}_{L^\infty([0,t];L^{1+\delta}(\mathbb{T}^d))}+\delta\int_0^t\int_\mathbb{R}\int_{\mathbb{T}^d}\abs{\xi}^{\delta-1}(p+q)\dx\dxi\dr \\ & \leq C\left(\norm{u_0}^{1+\delta}_{L^{1+\delta}(\mathbb{T}^d)}+\norm{u_0}^{m+\delta}_{L^1(\mathbb{T}^d)}+\left(\int_0^t\int_{\mathbb{R}}\int_{\mathbb{T}^d}q\dx\dxi\dr\right)^{\frac{m+\delta}{m+1}}\right). \end{aligned}\end{equation}

The argument now follows by induction.  Precisely, assume that for some $k\geq 1$, the estimate of \eqref{aux_p_12} is satisfied on the interval $[0,kt_*\wedge T]$.  The identical reasoning applied to the interval $[kt_*\wedge T,(k+1)t_*\wedge T]$ and Corollary~\ref{path_L1} yield the analogue of \eqref{aux_p_12} on the interval $[kt_*\wedge T,(k+1)t_*\wedge T]$.  The inductive hypothesis and linearity then imply the estimate on the interval $[0,(k+1)t_*\wedge T]$, where the constant increases at every step.  This completes the induction argument, since the base case is \eqref{aux_p_12}, and therefore the proof.\end{proof}

The second proposition of this section improves the integrability of the entropy and parabolic defect measures in a neighborhood of zero.  Informally, this implies regularity of $u^{[\frac{m}{2}]}$ in $L^2([0,T];H^1(\mathbb{T}^d))$.

\begin{prop}\label{aux_log}  Let $u_0\in L^2_+(\mathbb{T}^d)$ be arbitrary.  Suppose that $u$ is a pathwise kinetic solution of \eqref{intro_eq} in the sense of Definition~\ref{def_solution} with initial data $u_0$.  For each $T>0$, there exists $C=C(m,d,T)>0$ such that
$$\int_0^T\int_\mathbb{R}\int_{\mathbb{T}^d}\abs{\xi}^{-1}(p+q)\dx\dxi\dr\leq C\left(1+\norm{u_0}^{(m-1)\vee 0}_{L^1(\mathbb{T}^d)}+\norm{u_0}^2_{L^2(\mathbb{T}^d)}+\int_0^T\int_{\mathbb{R}}\int_{\mathbb{T}^d}q\dx\dxi\dr\right).$$
\end{prop}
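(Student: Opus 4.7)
The plan is to adapt the proof of Proposition~\ref{aux_p}, replacing the test function $\xi^{[\delta]}$ by the regularized logarithm $\phi_\epsilon(\xi):=\log(\xi+\epsilon)$ and passing to the limit $\epsilon\to 0$.  Since $u_0\in L^2_+(\mathbb{T}^d)$, the sign-preservation argument used in the second half of Corollary~\ref{path_L1} shows that $u\geq 0$ almost everywhere, so $\chi$ and the measures $p,q$ are all supported in $\{\xi\geq 0\}$ and the logarithmic test function is admissible.  The fact that $\phi_\epsilon$ is neither compactly supported in $\xi$ nor smooth at $\xi=-\epsilon$ is handled, as in Proposition~\ref{aux_p}, by truncation and mollification, justified by the global integrability of $p+q$ from Definition~\ref{def_solution}.

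Substituting $\rho_0=\phi_\epsilon$ into \eqref{transport_equation} and recalling $\rho_{s,r}(x,\xi)=\phi_\epsilon(\Pi^{x,\xi}_{r,r-s})$ yields the identity
\begin{align*}
& \left.\int_\mathbb{R}\int_{\mathbb{T}^d}\chi_r\,\phi_\epsilon(\Pi^{x,\xi}_{r,r})\dx\dxi\right|_{r=0}^{t}+\int_0^t\int_\mathbb{R}\int_{\mathbb{T}^d}\frac{\partial_\xi\Pi^{x,\xi}_{r,r}}{\Pi^{x,\xi}_{r,r}+\epsilon}(p+q)\dx\dxi\dr \\
&\qquad =\int_0^t\int_\mathbb{R}\int_{\mathbb{T}^d} m|\xi|^{m-1}\chi_r\,\Delta_x\phi_\epsilon(\Pi^{x,\xi}_{r,r})\dx\dxi\dr.
\end{align*}
For $r$ small, Proposition~\ref{rough_est} gives $\partial_\xi\Pi\geq 1/2$ and $\Pi\leq 2\xi$ on $\{\xi\geq 0\}$, so $\partial_\xi\Pi/(\Pi+\epsilon)\geq 1/(4(\xi+\epsilon))$, and the entropy production term converges monotonically to $\frac14\int\xi^{-1}(p+q)$ as $\epsilon\to 0$.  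The boundary contribution tends to $\int[u\log u-u]\dx$, which is controlled by $C(1+\norm{u_0}_{L^2(\mathbb{T}^d)}^2)$ via $x\log x\leq x^2$, $x\log x\geq -1/e$, and the $L^\infty([0,T];L^2(\mathbb{T}^d))$ regularity of Definition~\ref{def_solution}.

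For the elliptic term, Lemma~\ref{lem_ibp} gives
\begin{align*}
\int_\mathbb{R}\int_{\mathbb{T}^d}m|\xi|^{m-1}\chi_r\Delta_x\phi_\epsilon(\Pi^{x,\xi}_{r,r})\dx\dxi=-\frac{2m}{m+1}\int_{\mathbb{T}^d} u^{(m-1)/2}\nabla u^{[(m+1)/2]}\cdot\left[\frac{\nabla_x\Pi^{x,\xi}_{r,r}}{\Pi^{x,\xi}_{r,r}+\epsilon}\right]_{\xi=u}\dx.
\end{align*}
The key estimate---analogous to $|\nabla_x\Pi^{x,u}_{r,r}||\Pi^{x,u}_{r,r}|^{\delta-1}\leq Cr^\alpha u^\delta$ from the proof of Proposition~\ref{aux_p}---is that assumption \eqref{prelim_vanish} forces $c(x,0)=0$ and hence $\nabla_x c(x,0)=0$, so Proposition~\ref{rough_est} and Lemma~\ref{auxlem} yield $|\nabla_x\Pi^{x,\xi}_{r,r}|\leq Cr^\alpha|\xi|$; combined with $\Pi^{x,u}\geq u/2$, this gives the $\epsilon$-uniform bound $|\nabla_x\Pi/(\Pi+\epsilon)|_{\xi=u}\leq Cr^\alpha$.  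Using the identity $u^{(m-1)/2}|\nabla u^{[(m+1)/2]}|=\frac{m+1}{m}u^{m/2}|\nabla u^{[m/2]}|$, Cauchy--Schwarz, the pointwise equality $|\nabla u^{[m/2]}|^2=\frac{m}{4}\int\xi^{-1}q\dxi$, and Young's inequality, the elliptic contribution is bounded by $\frac18\int_0^t\int\xi^{-1}q+Cr^{2\alpha}\int_0^t\int u^m\dx\dr$; the first summand is absorbed into the left-hand side.

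The main obstacle is bounding $\int_0^t\int u^m\dx\dr$ sharply in terms of the right-hand side of the claim.  I plan to use Corollary~\ref{path_L1} (for $\norm{u}_{L^\infty(L^1)}\leq\norm{u_0}_{L^1(\mathbb{T}^d)}$), the $L^\infty(L^2)$ bound of Definition~\ref{def_solution}, and Lemma~\ref{lem_interpolate} (to control $\norm{u}_{L^{m+1}}^{m+1}$ by $\norm{u_0}_{L^1(\mathbb{T}^d)}^{m+1}+\int q$ when $m>2$), interpolating between $L^1$ and $L^{m+1}$ to obtain a bound of the claimed form $C(1+\norm{u_0}_{L^1(\mathbb{T}^d)}^{(m-1)\vee 0}+\norm{u_0}_{L^2(\mathbb{T}^d)}^2+\int q)$; the cases $m\leq 2$ and $m>2$ require slightly different interpolations (for $m\leq 2$ the pointwise bound $u^m\leq 1+u^2$ suffices).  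Since the Young absorption requires $r^\alpha$ small, a time-splitting argument over $[0,T]$---exactly as in the concluding inductive step of the proof of Proposition~\ref{aux_p}---then propagates the estimate from a small interval $[0,t_\ast]$ to the whole of $[0,T]$, with Corollary~\ref{path_L1} carrying the $L^1$ bound between successive subintervals and closing the induction.
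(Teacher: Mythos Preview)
Your overall strategy coincides with the paper's: test the kinetic equation with the logarithm of the velocity characteristic (the paper uses $\log(\Pi^{x,\xi}_{r,r})$ directly, justified by approximation, rather than your $\log(\cdot+\epsilon)$), bound the boundary contribution via the $L^\infty_tL^2_x$ regularity, and close by the time-splitting induction of Proposition~\ref{aux_p}.

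The one substantive difference---and the place where your write-up has a small gap---is the treatment of the elliptic term. Your Young split produces $\tfrac18\int\xi^{-1}q$ on the right, to be absorbed into the left-hand side; but at the $\epsilon$-level the left side only controls $\int(\Pi+\epsilon)^{-1}(p+q)\leq\int\xi^{-1}(p+q)$, and you do not yet know $\int\xi^{-1}q<\infty$, so the rearrangement is circular as written. This is easily repaired (carry the factor $(\Pi^{x,u}+\epsilon)^{-1/2}$ through the Cauchy--Schwarz split, absorb at fixed $\epsilon$, and only then let $\epsilon\to0$ by monotone convergence), but the paper sidesteps the issue entirely. It uses instead the cruder split
\[
\abs{u}^{\frac{m-1}{2}}\abs{\nabla u^{[(m+1)/2]}}\leq C\Bigl(\abs{u}^{(m-1)\vee 0}+\abs{\xi}^{(m-1)\wedge 0}q\big|_{\xi=u}\Bigr),
\]
and then invokes Proposition~\ref{aux_p} (with $\delta=m\wedge 1$) to bound the strictly less singular moment $\int\abs{\xi}^{(m-1)\wedge 0}q$, which is already known to be finite. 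No absorption is needed, and the quantity $\int u^m$ that you flag as ``the main obstacle'' never appears.
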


\begin{proof}  Let $u_0\in L^2_+(\mathbb{T}^d)$ be arbitrary, and let $u$ be a pathwise kinetic solution of \eqref{intro_eq} with initial data $u_0$.  We will write $\chi$ for the kinetic function of $u$, $(p,q)$ for the entropy and parabolic defect measures, and $\mathcal{N}$ for the exceptional set.

Let $T\in[0,\infty)\setminus\mathcal{N}$ be fixed but arbitrary.  Definition~\ref{def_solution}, Lemma~\ref{lem_interpolate}, the nonnegativity of the initial condition, and Corollary~\ref{path_L1} imply, following an approximation argument, that the map $\xi\in\mathbb{R}\mapsto\log(\xi)$ is an admissible test function.  Therefore, after applying the integration by parts formula, which is justified using an approximation argument and Lemma~\ref{auxlem} below, for each $t\in[0,T]\setminus\mathcal{N}$,
\begin{equation}\label{log_3}\begin{aligned} & \left.\int_\mathbb{R}\int_{\mathbb{T}^d} \chi_r\log(\Pi^{x,\xi}_{r,r})\dx\dxi\right|_{r=0}^{r=t} +\int_0^t\int_\mathbb{R}\int_{\mathbb{T}^d}\log'(\Pi^{x,\xi}_{r,r})\partial_\xi\Pi^{x,\xi}_{r,r}(p_r+q_r)\dx\dxi\dr \\ & = -\frac{2m}{m+1}\int_0^t\int_{\mathbb{T}^d}\abs{u}^{\frac{m-1}{2}}\nabla u^{\left[\frac{m+1}{2}\right]}\cdot \log'(\Pi^{x,u}_{r,r})\nabla\Pi^{x,u}_{r,r}\dx\dr.\end{aligned}\end{equation}

Lemma~\ref{auxlem} implies that there exists $C=C(T)>0$ such that
{\color{black}$$\sup_{(x,\xi,r)\in\mathbb{T}^d\times(0,\infty)\times[0,T]}\abs{\log'(\Pi^{x,\xi}_{r,r})\nabla\Pi^{x,u}_{r,r}}\leq C.$$}
Applying this estimate to the righthand side of \eqref{log_3}, it follows from H\"older's inequality, Young's inequality, and the definition of the parabolic defect measure that, for $C=C(m,T)>0$, for each $t\in[0,T]\setminus\mathcal{N}$,
$$\begin{aligned} & \abs{\int_0^t\int_{\mathbb{T}^d}\abs{u}^{\frac{m-1}{2}}\nabla u^{\left[\frac{m+1}{2}\right]}\cdot \nabla\Pi^{x,u}_{r,r}  \log' (\Pi^{x,u}_{r,r}) \dx\dr}  \\ & \leq C\left(\int_0^t\int_{\mathbb{T}^d}\abs{u}^{(m-1)\vee 0}\dx\dr+\int_0^t\int_\mathbb{R}\int_{\mathbb{T}^d}\abs{\xi}^{(m-1)\wedge 0}q\dx\dxi\dr\right).\end{aligned}$$
Therefore, Lemma~\ref{lem_interpolate} and Proposition~\ref{aux_p} imply that, for $C=C(m,d,T)>0$,
\begin{equation}\begin{aligned}\label{log_4}  & \abs{\int_0^t\int_{\mathbb{T}^d}\abs{u}^{\frac{m-1}{2}}\nabla u^{\left[\frac{m+1}{2}\right]}\cdot \nabla\Pi^{x,u}_{r,r}  \log' (\Pi^{x,u}_{r,r}) \dx\dr} \\ & \leq C\left(\norm{u_0}^{(m-1)\vee 0}_{L^1(\mathbb{T}^d)}+\left(\int_0^T\int_\mathbb{R}\int_{\mathbb{T}^d}q\dx\dxi\dr\right)^\frac{(m-1)\vee 0}{m+1}\right) \\ & \quad + C\left(\norm{u_0}^{(1+m)\wedge 2}_{L^{(1+m)\wedge 2}(\mathbb{T}^d)}+\norm{u_0}^{2m \wedge 2}_{L^1(\mathbb{T}^d)}+\left(\int_0^T\int_{\mathbb{R}}\int_{\mathbb{T}^d}q\dx\dxi\dr\right)^{\frac{2m}{m+1}\wedge 1}\right).\end{aligned}\end{equation}

For the first term of \eqref{log_3}, Proposition~\ref{aux_p} with $\delta=1$, Lemma~\ref{auxlem}, the integrability of the logarithm at zero, and the growth of the logarithm at infinity imply that, for $C=C(T)>0$, for each $t\in[0,T]\setminus\mathcal{N}$,
\begin{equation}\label{log_5}\abs{\left.\int_{\mathbb{R}}\int_{\mathbb{T}^d} \chi_r\log(\Pi^{x,\xi}_{r,r})\dx\dxi\right|_{r=0}^{r=t}}\leq C\left(1+\norm{u_0}^2_{L^2(\mathbb{T}^d)}\right).\end{equation}
Therefore, returning to \eqref{log_3}, estimates \eqref{log_4} and \eqref{log_5} imply that, for $C=C(m,d,T)>0$, for each $t\in[0,T]\setminus\mathcal{N}$,
\begin{equation}\begin{aligned}\label{log_6} & \int_0^t\int_\mathbb{R}\int_{\mathbb{T}^d}\log'(\Pi^{x,\xi}_{r,r})\partial_\xi\Pi^{x,\xi}_{r,r}(p_r+q_r)\dx\dxi\dr \\ & \leq C\left(1+\norm{u_0}^2_{L^2(\mathbb{T}^d)}+\norm{u_0}^{(m-1)\vee 0}_{L^1(\mathbb{T}^d)}+\left(\int_0^T\int_\mathbb{R}\int_{\mathbb{T}^d}q\dx\dxi\dr\right)^\frac{(m-1)\vee 0}{m+1}\right) \\ & \quad + C\left(\norm{u_0}^{(1+m)\wedge 2}_{L^{(1+m)\wedge 2}(\mathbb{T}^d)}+\norm{u_0}^{2m \wedge 2}_{L^1(\mathbb{T}^d)}+\left(\int_0^T\int_{\mathbb{R}}\int_{\mathbb{T}^d}q\dx\dxi\dr\right)^{\frac{2m}{m+1}\wedge 1}\right).\end{aligned}\end{equation}

The claim now follows similarly to Proposition~\ref{aux_p}: Proposition~\ref{rough_est} implies that there exists $t_*\in[0,T]\setminus\mathcal{N}$ such that
$$\inf_{(x,\xi,r)\in\mathbb{T}^d\times\mathbb{R}\times[0,t_*]}\partial_\xi\Pi^{x,\xi}_{r,r}\geq \frac{1}{2}.$$
Then, for $C=C(m,d,T)>0$, for each $t\in[0,t_*]$,
\begin{equation}\label{log_7} \int_0^t\int_\mathbb{R}\int_{\mathbb{T}^d}\abs{\xi}^{-1}(p+q)\dx\dxi\dr\leq C\left(1+\norm{u_0}^{(m-1)\vee 0}_{L^1(\mathbb{T}^d)}+\norm{u_0}^2_{L^2(\mathbb{T}^d)}+\int_0^T\int_{\mathbb{R}}\int_{\mathbb{T}^d}q\dx\dxi\dr\right),\end{equation}
where the righthand side of \eqref{log_6} simplifies to the righthand side of \eqref{log_7} after multiple applications of H\"older's inequality and Young's inequality.  Since the identical reasoning applies to any time interval of length less than or equal to $t_*>0$, Corollary~\ref{path_L1}, Proposition~\ref{aux_p} for $\delta=1$, and the linearity of the integral complete the proof.\end{proof}

\begin{remark} Proposition~\ref{aux_log} is not true for signed initial data.  Consider, for simplicity, the case $d=1$ and $m=1$.  Suppose that $u_0(x)=x$ in a neighborhood of the origin.  Then, since the heat flow preserves the linear behavior of the initial data locally in time, the failure of Proposition~\ref{aux_log} manifests as the non-integrability of the map $x\in\mathbb{R}\mapsto 1/\abs{x}$ in a neighborhood of the origin.  \end{remark}

\section{Stable Estimates and Existence}\label{sol_exists}

In this section, we establish the existence of pathwise kinetic solutions to the equation
$$\left\{\begin{array}{ll} \partial_t u=\Delta u^{[m]}+\nabla\cdot \left( A(x,u)\circ \dd z\right) & \textrm{in}\;\;\mathbb{T}^d\times(0,\infty), \\ u=u_0 & \textrm{on}\;\;\mathbb{T}^d\times\{0\}.\end{array}\right.$$
For this, it is necessary to derive stable estimates for the regularized equation, defined for each $\eta\in(0,1)$ and $\epsilon\in(0,1)$,
\begin{equation}\label{sm_kin}\left\{\begin{array}{ll} \partial_t u^{\eta,\epsilon}=\Delta \left(u^{\eta,\epsilon}\right)^{[m]}+\eta\Delta u+\nabla\cdot \left( A(x,u^{\eta,\epsilon})\dot{z}^\epsilon_t\right) & \textrm{in}\;\;\mathbb{T}^d\times(0,\infty), \\ u^{\eta,\epsilon}=u_0 & \textrm{on}\;\;\mathbb{T}^d\times\{0\},\end{array}\right.\end{equation}
where, as $\epsilon\rightarrow 0$, the smooth paths $\{z^\epsilon\}_{\epsilon\in(0,1)}$ converge to $z$ with respect to the $\alpha$-H\"older metric in the sense of \eqref{geometric_path}.  We will first establish estimates and the existence of pathwise kinetic solutions in the sense of Definition~\ref{def_solution} for initial data $u_0\in\C^\infty(\mathbb{T}^d)$.  The general statement will follow by density.

Returning for motivation to the kinetic formulation of the deterministic porous medium equation, the kinetic function $\chi$ of a solution $u$ satisfies
$$\left\{\begin{array}{ll} \partial_t\chi=m\abs{\xi}^{m-1}\Delta_x\chi+\partial_\xi(p+q) & \textrm{in}\;\;\mathbb{T}^d\times(0,\infty), \\ \chi=\overline{\chi}(u_0) & \textrm{on}\;\;\mathbb{T}^d\times\{0\}.\end{array}\right.$$
Following \cite{Perthame} and \cite{ChenPerthame}, estimates are obtained for the solution by testing the equation with the maps $\xi\in\mathbb{R}\mapsto\sgn(\xi)$ and $\xi\in\mathbb{R}\mapsto\xi$.  In the first case, owing to the positivity of the parabolic and entropy defect measures, observe the informal estimate
$$\norm{u}_{L^\infty\left([0,\infty);L^1(\mathbb{T}^d)\right)}=\norm{\chi}_{L^\infty\left([0,\infty);L^1(\mathbb{T}^d\times\mathbb{R})\right)}\leq \norm{\chi_0}_{L^1(\mathbb{T}^d\times\mathbb{R})}=\norm{u_0}_{L^1(\mathbb{T}^d)}.$$
In the second case, observe informally the estimate
$$\frac{1}{2}\norm{u}^2_{L^\infty\left([0,\infty);L^2(\mathbb{T}^d)\right)}+\int_0^\infty\int_\mathbb{R}\int_{\mathbb{T}^d}\left(p(x,\xi,s)+q(x,\xi,s)\right)\dx\dxi\ds\leq \frac{1}{2}\norm{u_0}^2_{L^2(\mathbb{T}^d)}.$$

In Proposition~\ref{stable_L1} we obtain the analogue of the $L^1$-estimate, and in Proposition~\ref{stable_estimates}, we obtain the analogue of the $L^2$-estimate and the estimate for the parabolic and entropy defect measures.  In the case of Proposition~\ref{stable_L1}, the argument is only a small modification of the relevant details of Theorem~\ref{theorem_uniqueness} and Corollary~\ref{path_L1}.  In the case of Proposition~\ref{stable_estimates}, the proof is essentially identical to the proof of Proposition~\ref{aux_p} for $\delta=1$.  We therefore omit the details.

\begin{prop}\label{stable_L1}For each $u_0\in L^2(\mathbb{T}^d)$, $\eta\in(0,1)$ and $\epsilon\in(0,1)$, the solution $u^{\eta,\epsilon}$ of \eqref{sm_kin} from Proposition~\ref{smooth_equation} satisfies
$$\norm{u^{\eta,\epsilon}}_{L^\infty\left([0,\infty);L^1(\mathbb{T}^d)\right)}\leq \norm{u_0}_{L^1(\mathbb{T}^d)}.$$
\end{prop}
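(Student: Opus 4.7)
The plan is to adapt the argument of Corollary~\ref{path_L1}---which is itself a reduction of the uniqueness proof Theorem~\ref{theorem_uniqueness} to the case $\chi^2:=0$---to the regularized kinetic equation \eqref{chareq_kin_eq} of Proposition~\ref{chareq_smooth_kinetic_solution}. The target identity is
$$
\norm{u^{\eta,\epsilon}(\cdot,t)}_{L^1(\mathbb{T}^d)}=\int_\mathbb{R}\int_{\mathbb{T}^d}\chi^{\eta,\epsilon}(x,\xi,t)\,\sgn(\xi)\dx\dxi,
$$
and it suffices to show that the right-hand side is non-increasing in $t$.

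The formal computation is to differentiate in time using \eqref{chareq_kin_eq} and check each family of terms. First, the diffusive contributions $m\abs{\xi}^{m-1}\Delta_x\chi^{\eta,\epsilon}$ and $\eta\Delta_x\chi^{\eta,\epsilon}$ vanish after integrating by parts twice in $x$, because $\sgn(\xi)$ is independent of $x$. Second, the two transport terms, after integration by parts in $x$ and in $\xi$, combine into
$$
-\int_\mathbb{R}\int_{\mathbb{T}^d}\chi^{\eta,\epsilon}\sgn(\xi)\bigl[\nabla_x\cdot(b(x,\xi)\dot{z}^\epsilon_t)-\partial_\xi(c(x,\xi)\cdot\dot{z}^\epsilon_t)\bigr]\dx\dxi +2\int_{\mathbb{T}^d}\chi^{\eta,\epsilon}(x,0,t)\,c(x,0)\cdot\dot{z}^\epsilon_t\dx,
$$
in which the bracket vanishes identically by the conservative identity \eqref{char_conservative}, while the boundary contribution at $\xi=0$ vanishes both because $\chi^{\eta,\epsilon}(x,0,t)\equiv 0$ by definition \eqref{chareq_kinetic_foundation} and because $c(x,0)=0$ by assumption \eqref{prelim_vanish}. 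Third, integrating by parts the defect-measure terms produces
$$
-\int_\mathbb{R}\int_{\mathbb{T}^d}(p^{\eta,\epsilon}+q^{\eta,\epsilon})\,\partial_\xi\sgn(\xi)\dx\dxi=-2\int_{\mathbb{T}^d}(p^{\eta,\epsilon}+q^{\eta,\epsilon})(x,0,t)\dx\leq 0,
$$
by the nonnegativity of $p^{\eta,\epsilon}$ and $q^{\eta,\epsilon}$. Summing these contributions yields $\frac{d}{dt}\norm{u^{\eta,\epsilon}(\cdot,t)}_{L^1(\mathbb{T}^d)}\leq 0$, which gives the claim.

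To make the formal argument rigorous, I would replace $\sgn(\xi)$ by a compactly supported smooth approximation $\sgn_\delta(\xi)\phi_R(\xi)$, which is an admissible test function in \eqref{chareq_smooth_kinetic_eq}, and then pass $\delta\to 0$ and $R\to\infty$. The smoothness of $u^{\eta,\epsilon}$ from Proposition~\ref{chareq_prop} and the finiteness of $\int_0^T\!\!\int p^{\eta,\epsilon}$ and $\int_0^T\!\!\int q^{\eta,\epsilon}$ on bounded time intervals (which is the $L^2$ energy identity for the regularized equation) justify the limits. Equivalently, one can follow the time-splitting scheme in Steps~1--5 of Theorem~\ref{theorem_uniqueness} verbatim with $\chi^2\equiv 0$: the mixed term is absent, the error $\mathrm{Err}^4_i$ of \eqref{u_0310} does not appear, the errors \eqref{new_3} and \eqref{new_8} vanish as the partition is refined, and the remaining defect-measure contribution survives with the correct sign. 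The only genuinely delicate point in either version is the vanishing of the $\xi=0$ boundary term, which is precisely the role played by \eqref{prelim_vanish}; no other obstruction arises.
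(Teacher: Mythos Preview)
Your proposal is correct and matches the paper's approach: the paper explicitly omits the proof, stating only that it is ``a small modification of the relevant details of Theorem~\ref{theorem_uniqueness} and Corollary~\ref{path_L1},'' which is precisely the reduction to $\chi^2\equiv 0$ that you carry out. Your direct computation (testing \eqref{chareq_smooth_kinetic_eq} with a smooth approximation of $\sgn(\xi)$) is in fact slightly simpler than the full time-splitting scheme, since the smoothness of $z^\epsilon$ makes the transported test functions unnecessary here; either route works, and your identification of \eqref{char_conservative} and \eqref{prelim_vanish} as the mechanisms killing the transport contribution is exactly right.
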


\begin{prop}\label{stable_estimates} For each $u_0\in L^2(\mathbb{T}^d)$, $\eta\in(0,1)$ and $\epsilon\in(0,1)$, let $u^{\eta,\epsilon}$ denote the solution of \eqref{sm_kin} from Proposition \ref{smooth_equation}.  For each $T>0$, there exists $C=C(m,d,T)>0$ such that

$$\begin{aligned} & \norm{u^{\eta,\epsilon}}^2_{L^\infty([0,T];L^2(\mathbb{T}^d))} +\int_0^T\int_\mathbb{R}\int_{\mathbb{T}^d}(p^{\eta,\epsilon}(x,\xi,s)+q^{\eta,\epsilon}(x,\xi,s))\dx\dxi\ds \\ & \leq C\left(\norm{u_0}_{L^2(\mathbb{T}^d)}^2+\norm{u_0}^2_{L^1(\mathbb{T}^d)}+\norm{u_0}^{m+1}_{L^1(\mathbb{T}^d)}\right).\end{aligned}$$

\end{prop}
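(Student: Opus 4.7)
The plan is to adapt the proof of Proposition~\ref{aux_p} for $\delta=1$ to the smooth regularized solution $u^{\eta,\epsilon}$, using that, by Proposition~\ref{kinetic_smooth_equation}, the kinetic function $\chi^{\eta,\epsilon}$ satisfies a transport-tested kinetic identity of exactly the form \eqref{transport_equation} augmented by the $\eta\Delta_x$ contribution, and that for smooth $z^\epsilon$ the rough characteristics reduce to the classical smooth characteristics $(Y^{x,\xi,\epsilon}_{t_0,t},\Pi^{x,\xi,\epsilon}_{t_0,t})$ defined in \eqref{kin_forward}--\eqref{kin_backward}. All bounds on these characteristics supplied by Proposition~\ref{rough_est} depend only on the $\alpha$-H\"older rough path norm of $z$ and therefore are uniform in $\epsilon\in(0,1)$ because $z^\epsilon\to z$ in the geometric rough path metric.

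First I would apply the smooth transport identity with $\rho_0(x,\xi)=\xi$, justified by a velocity-cutoff approximation together with Proposition~\ref{stable_L1} to control tails. This yields, for every $t\in[0,T]$,
\[
\left.\int_\mathbb{R}\int_{\mathbb{T}^d}\chi^{\eta,\epsilon}_r\,\Pi^{x,\xi,\epsilon}_{r,r}\,\dx\dxi\right|_{r=0}^{r=t}
+\int_0^t\!\!\int_\mathbb{R}\!\int_{\mathbb{T}^d}\partial_\xi\Pi^{x,\xi,\epsilon}_{r,r}\bigl(p^{\eta,\epsilon}+q^{\eta,\epsilon}\bigr)
=\int_0^t\!\!\int_\mathbb{R}\!\int_{\mathbb{T}^d}\bigl(m|\xi|^{m-1}+\eta\bigr)\chi^{\eta,\epsilon}\,\Delta_x\Pi^{x,\xi,\epsilon}_{r,r}.
\]
The integration by parts formula of Lemma~\ref{lem_ibp}, together with its direct analogue for the $\eta\Delta_x$ piece (which expresses $\int\eta\chi^{\eta,\epsilon}\Delta_x\rho$ as $-\int\eta\nabla u^{\eta,\epsilon}\cdot\nabla_x\rho(x,u^{\eta,\epsilon},r)$), rewrites the right hand side as $x$-integrals of $\nabla u^{\eta,\epsilon}$ or $\nabla(u^{\eta,\epsilon})^{[(m+1)/2]}$ paired against $\nabla_x\Pi^{x,u^{\eta,\epsilon},\epsilon}_{r,r}$. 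To these I would apply the rough path estimate $|\nabla_x\Pi^{x,\xi,\epsilon}_{r,r}|\le Ct^\alpha$ from Proposition~\ref{rough_est}, followed by H\"older's and Young's inequalities, to control the right hand side by $Ct^\alpha\bigl(\int_0^t\!\!\int|u^{\eta,\epsilon}|^{m+1}+\int(p^{\eta,\epsilon}+q^{\eta,\epsilon})\bigr)$.

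On the left hand side, Lemma~\ref{auxlem} (the quantitative closeness of $\Pi^{x,\xi,\epsilon}_{r,r}$ to $\xi$ for small time) ensures that $\int\chi^{\eta,\epsilon}_t\Pi^{x,\xi,\epsilon}_{t,t}$ is comparable to $\tfrac12\|u^{\eta,\epsilon}(\cdot,t)\|_{L^2}^2$ and that $\partial_\xi\Pi^{x,\xi,\epsilon}_{r,r}\ge 1/2$ on an interval $[0,t_*]$ whose length depends only on $\alpha$ and the rough path norm of $z$, hence is independent of $\eta$ and $\epsilon$. This allows the full $\int(p^{\eta,\epsilon}+q^{\eta,\epsilon})$ contribution produced by Young's inequality to be absorbed into the left hand side for $t\in[0,t_*]$. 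The remaining $\int|u^{\eta,\epsilon}|^{m+1}$ term is bounded by the interpolation Lemma~\ref{lem_interpolate} combined with Proposition~\ref{stable_L1}, yielding the claimed estimate on $[0,t_*]$ with the stated dependence on $\|u_0\|_{L^1}$ and $\|u_0\|_{L^2}$. Iterating on consecutive intervals of length $t_*$, and using Proposition~\ref{stable_L1} at each step to retain the $L^1$ control of the initial datum, then extends the estimate to $[0,T]$ with a constant $C=C(m,d,T)$.

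The main obstacle I expect is the interplay between the rough path error factor $Ct^\alpha$ and the entropy defect measure $p^{\eta,\epsilon}=\eta|\nabla u^{\eta,\epsilon}|^2\delta_0(\xi-u^{\eta,\epsilon})$: the naive bound $\eta\int|\nabla u^{\eta,\epsilon}|$ that arises from the $\eta\Delta_x$ term does not immediately reduce to $\int p^{\eta,\epsilon}$, and one must pass through a Cauchy--Schwarz factorization $\eta|\nabla u|\le(\eta)^{1/2}(\eta|\nabla u|^2)^{1/2}$ before applying Young's inequality, choosing $t_*$ small enough that the resulting $p^{\eta,\epsilon}$-contribution is dominated by the $(1/2)\int p^{\eta,\epsilon}$ factor coming from the lower bound on $\partial_\xi\Pi$. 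The uniformity in $(\eta,\epsilon)$ of all the characteristic estimates is what ultimately permits the time iteration to close with a constant depending only on $m$, $d$, and $T$.
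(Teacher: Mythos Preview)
Your proposal is correct and follows exactly the approach the paper indicates: the paper states that the proof of Proposition~\ref{stable_estimates} ``is essentially identical to the proof of Proposition~\ref{aux_p} for $\delta=1$'' and omits the details, and your argument is precisely this adaptation, with the only new ingredient being the additional $\eta\Delta_x$ contribution, which you handle correctly via the Cauchy--Schwarz/Young splitting $\eta|\nabla u|\le \tfrac12\eta+\tfrac12\eta|\nabla u|^2$ to produce an absorbable $p^{\eta,\epsilon}$ term.
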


In general, we do not expect to obtain a stable estimate in time for the solutions $\{u^{\eta,\epsilon}\}_{\eta,\epsilon\in(0,1)}$.  However, we can obtain some regularity for the time derivative of the  transported kinetic functions, for $\eta\in(0,1)$ and $\epsilon\in(0,1)$,
\begin{equation}\label{regularized_transport}\tilde{\chi}^{\eta,\epsilon}(x,\xi,t):=\chi^{\eta,\epsilon}(X^{x,\xi,\epsilon}_{0,t},\Xi^{x,\xi,\epsilon}_{0,t},t)\;\;\textrm{for}\;\;(x,\xi,t)\in\mathbb{T}^d\times\mathbb{R}\times[0,\infty).\end{equation}
In effect, the transport cancels the oscillations introduced by the noise.  The following proposition proves that the collection $\{\partial_t\tilde{\chi}^{\eta,\epsilon}\}_{\eta,\epsilon\in(0,1)}$ is uniformly bounded in the negative Sobolev space $H^{-s}$, for $s>\frac{d}{2}+1$.

\begin{prop}\label{time_frac} For $\eta\in(0,1)$, $\epsilon\in(0,1)$ and $u_0\in L^2(\mathbb{T}^d)$, the transported kinetic function \eqref{regularized_transport} satisfies, for each $T\geq 0$, for $C=C(m,d,T)>0$,
$$\norm{\partial_t\tilde{\chi}^{\eta,\epsilon}_r}_{L^1\left([0,T];H^{-s}\left(\mathbb{T}^d\times\mathbb{R}\right)\right)} \leq C\left(1+\norm{u_0}^2_{L^1(\mathbb{T}^d)}+\norm{u_0}^{m+1}_{L^1(\mathbb{T}^d)}+\norm{u_0}^2_{L^2(\mathbb{T}^d)}\right),$$
for any Sobolev exponent $s>\frac{d}{2}+1$.
\end{prop}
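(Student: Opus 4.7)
The plan is to exploit that the transport along the forward characteristics $(X^{x,\xi,\epsilon}_{0,t},\Xi^{x,\xi,\epsilon}_{0,t})$ annihilates the first-order noise terms in the kinetic equation \eqref{chareq_kin_eq}, leaving only the deterministic parabolic and defect-measure contributions. For a test function $\psi\in\C^\infty_c(\mathbb{T}^d\times\mathbb{R})$, the measure-preservation identity \eqref{kin_measure} together with the mutual-inverse relation \eqref{kin_inverse} gives
\[
\int_\mathbb{R}\int_{\mathbb{T}^d}\tilde{\chi}^{\eta,\epsilon}(y,\eta,t)\,\psi(y,\eta)\dy\deta = \int_\mathbb{R}\int_{\mathbb{T}^d}\chi^{\eta,\epsilon}(x,\xi,t)\rho^\epsilon_{0,t}(x,\xi)\dx\dxi,
\]
where $\rho^\epsilon_{0,t}(x,\xi):=\psi(Y^{x,\xi,\epsilon}_{t,t},\Pi^{x,\xi,\epsilon}_{t,t})$ solves the transport equation \eqref{kin_translate} starting from $t_0=0$ with initial datum $\psi$. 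Applying Proposition~\ref{kinetic_smooth_equation} to the time-dependent test function $\rho^\epsilon_{0,\cdot}$ and differentiating in the upper time endpoint yields, for almost every $t\in[0,T]$,
\[
\frac{d}{dt}\int_\mathbb{R}\int_{\mathbb{T}^d}\tilde\chi^{\eta,\epsilon}\psi\dy\deta = \int(m\abs{\xi}^{m-1}+\eta)\chi^{\eta,\epsilon}\Delta_x\rho^\epsilon_{0,t}\dx\dxi - \int(p^{\eta,\epsilon}+q^{\eta,\epsilon})\partial_\xi\rho^\epsilon_{0,t}\dx\dxi.
\]
All $\dot z^\epsilon$-terms have disappeared, and the noise dependence now enters only through the characteristic flow generating $\rho^\epsilon_{0,t}$.

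Next, I would bound each term by a norm of $\psi$. The rough-path estimates of Proposition~\ref{rough_est}, applied to the first and second derivatives of the flow $(Y^{\cdot,\cdot,\epsilon}_{t,t},\Pi^{\cdot,\cdot,\epsilon}_{t,t})$, provide a constant $C=C(T)>0$ independent of $\epsilon\in(0,1)$ with
\[
\norm{\nabla^2_{x,\xi}\rho^\epsilon_{0,t}}_{L^\infty}+\norm{\nabla_{x,\xi}\rho^\epsilon_{0,t}}_{L^\infty}\leq C\norm{\psi}_{W^{2,\infty}}.
\]
For the first integral, the identity $\int_\mathbb{R} m\abs{\xi}^{m-1}\abs{\chi^{\eta,\epsilon}(x,\xi,t)}\dxi=\abs{u^{\eta,\epsilon}(x,t)}^m$, combined with Jensen's inequality when $m\leq1$ or with Lemma~\ref{lem_interpolate} when $m>1$, together with Proposition~\ref{stable_L1} and Proposition~\ref{stable_estimates}, yields an $L^1([0,T])$ bound on $\norm{u^{\eta,\epsilon}(\cdot,t)}^m_{L^m}$ by the right-hand side of the proposition; the $\eta\Delta_x$ contribution is absorbed using $\eta<1$ and the $L^1_{x,\xi}$ bound on $\chi^{\eta,\epsilon}$ provided by Proposition~\ref{stable_L1}. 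For the second integral, the global finiteness of $\int_0^T\int(p^{\eta,\epsilon}+q^{\eta,\epsilon})\dx\dxi\dr$ furnished by Proposition~\ref{stable_estimates} is sufficient. In combination,
\[
\int_0^T\abs{\frac{d}{dt}\int\tilde\chi^{\eta,\epsilon}\psi\dy\deta}\dt \leq C\bigl(1+\norm{u_0}^2_{L^1(\mathbb{T}^d)}+\norm{u_0}^{m+1}_{L^1(\mathbb{T}^d)}+\norm{u_0}^2_{L^2(\mathbb{T}^d)}\bigr)\norm{\psi}_{W^{2,\infty}},
\]
and by duality this is equivalent to the desired $L^1_tH^{-s}$ bound, the Sobolev embedding $H^s(\mathbb{T}^d\times\mathbb{R})\hookrightarrow W^{2,\infty}$ being valid for $s$ above the stated threshold; one may sharpen the exponent by integrating by parts once in $x$ inside the first integral, using $\nabla_x\chi^{\eta,\epsilon}=\delta_0(\xi-u^{\eta,\epsilon})\nabla u^{\eta,\epsilon}$ and estimating the resulting trace via Cauchy--Schwarz against the parabolic defect measure, which reduces the needed control of $\psi$ to $\norm{\nabla\psi}_{L^\infty}$.

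The main obstacle is the behaviour of the weight $m\abs{\xi}^{m-1}$ near $\xi=0$ in the fast diffusion regime $m\in(0,1)$. The singularity is compensated only implicitly by the vanishing of $\chi^{\eta,\epsilon}$ near zero, and controlling the resulting near-origin contributions uniformly in $\eta,\epsilon$ requires the kind of careful bookkeeping developed in Proposition~\ref{aux_p}, transposed to the regularized setting. All other steps are direct: uniformity in $\eta,\epsilon$ is preserved because every estimate reduces to the stable bounds of Proposition~\ref{stable_L1} and Proposition~\ref{stable_estimates} together with the rough-path derivative bounds of Proposition~\ref{rough_est} uniform in $\epsilon\in(0,1)$.
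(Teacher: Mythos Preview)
Your approach is essentially the paper's: transport along forward characteristics to kill the $\dot z^\epsilon$-terms, apply Proposition~\ref{kinetic_smooth_equation}, and bound the two surviving integrals via the rough-path flow estimates of Proposition~\ref{rough_est} together with Propositions~\ref{stable_L1} and~\ref{stable_estimates}, finishing by Sobolev duality. The paper carries this out through a $\delta$-mollification of $\tilde\chi^{\eta,\epsilon}$ (so that the identities \eqref{u_4}--\eqref{u_5} can be applied pointwise) and then passes $\delta\to 0$; otherwise the skeleton is identical.

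One point to correct: the integration by parts you describe as an optional ``sharpening'' is not optional. With $\Delta_x\rho^\epsilon_{0,t}$ left intact you need $\norm{\psi}_{W^{2,\infty}}$, and the embedding $H^s(\mathbb{T}^d\times\mathbb{R})\hookrightarrow W^{2,\infty}$ fails for $s$ just above $\tfrac{d}{2}+1$ (the ambient dimension is $d{+}1$). The paper therefore integrates by parts once from the outset, placing one spatial derivative on $\chi^{\eta,\epsilon}$ via $\nabla_x\chi^{\eta,\epsilon}=\delta_0(\xi-u^{\eta,\epsilon})\nabla u^{\eta,\epsilon}$; the resulting trace $\nabla(u^{\eta,\epsilon})^{[m]}=\tfrac{2m}{m+1}\abs{u^{\eta,\epsilon}}^{\frac{m-1}{2}}\nabla(u^{\eta,\epsilon})^{[\frac{m+1}{2}]}$ is then split by Young's inequality into $\abs{u^{\eta,\epsilon}}^{(m-1)\vee 0}$ and a $\abs{\xi}^{(m-1)\wedge 0}$-weighted parabolic defect, exactly the bookkeeping you anticipate in your last paragraph. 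After this step only $\norm{\nabla_{(x,\xi)}\psi}_{L^\infty}$ is required, and the stated threshold on $s$ suffices.
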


\begin{proof}  Let $\epsilon\in(0,1)$, $\eta\in(0,1)$, $u_0\in L^2(\mathbb{T}^d)$, $T>0$, and $s>\frac{d}{2}+1$ be fixed but arbitrary.  For each $\delta\in(0,1)$, let $\rho^\delta_1$ and $\rho^\delta_d$ denote respectively the standard $1$-dimensional and $d$-dimensional convolution kernels of scale $\delta$.   Then, for each $\delta\in(0,1)$, define the regularization of the transported kinetic function, for $(x,\xi,t)\in\mathbb{T}^d\times\mathbb{R}\times[0,\infty)$,
$$\begin{aligned} \tilde{\chi}^{\eta,\epsilon,\delta}(x,\xi,t):= & \int_\mathbb{R}\int_{\mathbb{T}^d}\chi^{\eta,\epsilon}\left(X^{x',\xi',\epsilon}_t,\Xi^{x',\xi',\epsilon}_t,t\right)\rho^\delta_d(x'-x)\rho^\delta_1(\xi'-\xi)\dxp\dxip \\ = & \int_\mathbb{R}\int_{\mathbb{T}^d}\chi^{\eta,\epsilon}(x',\xi',t)\rho^\epsilon_d\left(Y^{x',\xi',\epsilon}_{t,t}-x\right)\rho^\epsilon_1\left(\Pi^{x',\xi',\epsilon}_{t,t}-\xi\right)\dxp\dxip,\end{aligned}$$
where the final equality is a consequence of conservative property of the characteristics (\ref{kin_measure}).

After applying the equation satisfied by $\tilde{\chi}^{\eta,\epsilon}$, and using identities (\ref{u_4}) and (\ref{u_5}), it follows after integrating by parts that, for each $r\in[0,T]$,
\begin{equation}\label{time_3}\begin{aligned} & \int_\mathbb{R}\int_{\mathbb{T}^d}\partial_t\tilde{\chi}^{\eta,\epsilon,\delta}_r\zeta\dx\dxi \\ &  =  -\int_\mathbb{R}\int_{\mathbb{T}^d}\left(\int_{\mathbb{R}^d}\left(\nabla \left(u^{\eta,\epsilon}\right)^{[m]}+\eta\nabla u^{\eta,\epsilon}\right)\cdot\left(\rho^\delta_r(x',x,u^{\eta,\epsilon}(x,r),\xi)\nabla_{x'}Y^{x',u^{\eta,\epsilon}(x,r)}_{r,r}\right)\dxp\right)\cdot \nabla_x \zeta\dx\dxi \\ & \quad -\int_\mathbb{R}\int_{\mathbb{T}^d}\left(\int_{\mathbb{R}^d} \left(\nabla \left(u^{\eta,\epsilon}\right)^{[m]}+\eta\nabla u^{\eta,\epsilon}\right)\cdot\left(\rho^\delta_r(x',x,u^{\eta,\epsilon}(x,r),\xi)\nabla_{x'}\Pi^{x',u^{\eta,\epsilon}(x,r)}_{r,r}\right)\dxp\right)\partial_\xi\zeta\dx\dxi \\ & \quad -\int_\mathbb{R}\int_{\mathbb{T}^d}\left(\int_\mathbb{R}\int_{\mathbb{T}^d}\left(p^{\eta,\epsilon}_r+q^{\eta,\epsilon}_r\right)\rho^\delta_r \partial_{\xi'} Y^{x',\xi'}_{r,r}\dxp\dxip\right)\cdot\nabla_x \zeta\dx\dxi \\ & \quad -\int_\mathbb{R}\int_{\mathbb{T}^d}\left(\int_\mathbb{R}\int_{\mathbb{T}^d}\left(p^{\eta,\epsilon}_r+q^{\eta,\epsilon}_r\right)\rho^\delta_r \partial_{\xi'} \Pi^{x',\xi'}_{r,r}\dxp\dxip\right)\partial_\xi \zeta\dx\dxi.\end{aligned}\end{equation}
The dependence on the convolution kernel is removed by integrating the variables $(x,\xi)\in\mathbb{T}^d\times\mathbb{R}$.  The characteristics are uniformly bounded, for $C=C(T)>0$, for each $r\in[0,T]$, using the estimates of Proposition~\ref{rough_est}.  Therefore, H\"older's inquality, Young's inequality, and the boundedness of the domain imply that
$$\begin{aligned}& \abs{\int_\mathbb{R}\int_{\mathbb{T}^d}\partial_t\tilde{\chi}^{\eta,\epsilon,\delta}_r\zeta\dx\dxi} \\ & \leq C\norm{\nabla_{(x,\xi)}\zeta}_{L^\infty(\mathbb{T}^d\times\mathbb{R};\mathbb{R}^{d+1})}\left(\eta+\int_{\mathbb{T}^d} \abs{u^{\eta,\epsilon}_r}^{(m-1)\vee 0}\dx\right) \\ & \quad +C\norm{\nabla_{(x,\xi)}\zeta}_{L^\infty(\mathbb{T}^d\times\mathbb{R};\mathbb{R}^{d+1})}\left(\int_\mathbb{R}\int_{\mathbb{T}^d}\left(p^{\eta,\epsilon}_r+(1+\abs{\xi}^{(m-1)\vee 1})q^{\eta,\epsilon}_r\right)\dx\dxi\right).\end{aligned}$$
Since $s>\frac{d}{2}+1$, the Sobolev embedding theorem and Proposition~\ref{stable_L1} imply that, for $C=C(m,d,T)>0$, for each $r\in[0,T]$,
\begin{equation}\begin{aligned}\label{time_6} &\abs{\int_\mathbb{R}\int_{\mathbb{T}^d}\partial_t\tilde{\chi}^{\eta,\epsilon,\delta}_r\zeta\dx\dxi} \\ & \leq C\norm{\zeta}_{H^s(\mathbb{T}^d\times\mathbb{R})}\left(\eta+\int_{\mathbb{T}^d} \abs{u^{\eta,\epsilon}_r}^{(m-1)\vee 0}\dx\right) \\ & \quad +C\norm{\zeta}_{H^s(\mathbb{T}^d\times\mathbb{R})}\left(\int_\mathbb{R}\int_{\mathbb{T}^d}\left(p^{\eta,\epsilon}_r+(1+\abs{\xi}^{(m-1)\vee 1})q^{\eta,\epsilon}_r\right)\dx\dxi\right).\end{aligned}\end{equation}

Since $\zeta\in\C^\infty_c(\mathbb{T}^d\times\mathbb{R})$ was arbitrary, it follows from \eqref{time_6} that, after integrating in time, for $C=C(m,d,T)>0$,
$$\begin{aligned}\norm{\partial_t\tilde{\chi}^{\eta,\epsilon,\delta}_r}_{L^1\left([0,T];H^{-s}\left(\mathbb{T}^d\times\mathbb{R}\right)\right)} \leq & C\left(\eta+\int_0^T\int_{\mathbb{T}^d} \abs{u^{\eta,\epsilon}}^{(m-1)\vee 0}\dx\dr\right) \\ & +C\int_0^T\int_{\mathbb{T}^d}\left(p^{\eta,\epsilon}+(1+\abs{\xi}^{(m-1)\vee 1})q^{\eta,\epsilon}\right)\dx\dxi\dr.\end{aligned}$$
Therefore, after passing to the limit $\delta\rightarrow 0$, a repetition of the arguments leading to the estimate for \eqref{non_2} implies that, for $C=C(m,d,T)>0$,
$$\norm{\partial_t\tilde{\chi}^{\eta,\epsilon}_r}_{L^1\left([0,T];H^{-s}\left(\mathbb{T}^d\times\mathbb{R}\right)\right)} \leq C\left(1+\norm{u_0}^{(m-1)\vee 0}_{L^1(\mathbb{T}^d)}+\norm{u_0}^2_{L^2(\mathbb{T}^d)}+\int_0^T\int_\mathbb{R}\int_{\mathbb{T}^d}(p+q)\dx\dxi\dr\right).$$
Proposition~\ref{stable_estimates}, H\"older's inequality, and Young's inequality therefore imply that, for $C=C(m,d,T)>0$,
$$\norm{\partial_t\tilde{\chi}^{\eta,\epsilon}_r}_{L^1\left([0,T];H^{-s}\left(\mathbb{T}^d\times\mathbb{R}\right)\right)} \leq C\left(1+\norm{u_0}^2_{L^1(\mathbb{T}^d)}+\norm{u_0}^{m+1}_{L^1(\mathbb{T}^d)}+\norm{u_0}^2_{L^2(\mathbb{T}^d)}\right),$$
which completes the proof.  \end{proof}

It remains to establish the regularity of the kinetic function with respect to the spatial and velocity variables.  The regularity in the velocity variable follows from Proposition \ref{BVto}, and the spatial regularity follows from Proposition \ref{gradtofrac}.  These estimates are combined using Proposition~\ref{meld} to obtain joint regularity in both variables.

\begin{prop}\label{fractional_sobolev}  Let $u_0\in L^2(\mathbb{T}^d)$, $\eta\in(0,1)$, and $\epsilon\in(0,1)$.  If $m\in(1,\infty)$,  for each $s\in(0,\frac{2}{m+1})$ and $T\geq 0$, there exists $C=C(m,d,T,s)>0$ such that
$$\norm{\chi^{\eta,\epsilon}}_{L^1_t\left([0,T];W^{s,1}_{x,\xi}(\mathbb{T}^d)\right)}\leq C\left(1+\norm{u_0}_{L^1(\mathbb{T}^d)}+\norm{u_0}^{m+1}_{L^1(\mathbb{T}^d)}+\norm{u_0}^2_{L^2(\mathbb{T}^d)}\right).$$
If $m\in(0,1]$, for each $s\in(0,1)$ and $T\geq 0$, there exists $C=C(m,d,T,s)>0$ such that
$$\norm{\chi^{\eta,\epsilon}}_{L^1_t\left([0,T];W^{s,1}_{x,\xi}(\mathbb{T}^d)\right)}\leq C\left(1+\norm{u_0}_{L^1(\mathbb{T}^d)}+\norm{u_0}^{2}_{L^1(\mathbb{T}^d)}+\norm{u_0}^2_{L^2(\mathbb{T}^d)}\right).$$
\end{prop}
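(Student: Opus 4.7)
The strategy, consistent with the outline given just before the statement, is to derive uniform regularity of $\chi^{\eta,\epsilon}$ separately in the velocity variable $\xi$ and in the spatial variable $x$, and then to combine the two partial estimates through Proposition~\ref{meld} to obtain joint fractional regularity in $W^{s,1}_{x,\xi}$. All constants must remain uniform in $\eta,\epsilon\in(0,1)$, which is where the stable estimates of Proposition~\ref{stable_L1} and Proposition~\ref{stable_estimates} (and, in the fast-diffusion range, the moment bound Proposition~\ref{aux_p}) will enter.

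\emph{Velocity regularity.} For each $(x,t)$, the map $\xi\mapsto\chi^{\eta,\epsilon}(x,\xi,t)$ takes values in $\{-1,0,1\}$, equals $\sgn(u^{\eta,\epsilon}(x,t))$ on the interval between $0$ and $u^{\eta,\epsilon}(x,t)$, and vanishes elsewhere. Its total variation in $\xi$ is thus at most $2$ and its $\xi$-support is controlled by $|u^{\eta,\epsilon}(x,t)|$. Proposition~\ref{BVto} therefore yields, for every $s\in(0,1)$, a pointwise (in $x,t$) $W^{s,1}_\xi$ bound with constant linear in $1+|u^{\eta,\epsilon}(x,t)|$; integrating in $x,t$ and invoking Proposition~\ref{stable_L1} gives a uniform bound on $\|\chi^{\eta,\epsilon}\|_{L^1_tL^1_xW^{s,1}_\xi}$ by the norms of $u_0$ appearing in the statement.

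\emph{Spatial regularity.} Write $v^{\eta,\epsilon}:=(u^{\eta,\epsilon})^{[(m+1)/2]}$, so Proposition~\ref{stable_estimates} gives a uniform $L^2_{t,x}$ bound on $\nabla v^{\eta,\epsilon}$. The elementary identity $\chi^{\eta,\epsilon}(x,\xi,t)=\overline{\chi}(v^{\eta,\epsilon}(x,t),\xi^{[(m+1)/2]})$ together with the change of variables $\eta=\xi^{[(m+1)/2]}$ gives, for every translation $h\in\mathbb{R}^d$,
\begin{equation*}
\int_\mathbb{R}\!\int_{\mathbb{T}^d}|\chi^{\eta,\epsilon}(x+h,\xi,t)-\chi^{\eta,\epsilon}(x,\xi,t)|\dx\dxi=\int_{\mathbb{T}^d}|u^{\eta,\epsilon}(x+h,t)-u^{\eta,\epsilon}(x,t)|\dx.
\end{equation*}
For $m\in(1,\infty)$, the map $\eta\mapsto\eta^{[2/(m+1)]}$ is H\"older continuous of exponent $2/(m+1)$, and two applications of Jensen/H\"older (first in $x$ with exponent $2$, then in $t$ with exponent $m+1$) produce the Nikolskii-type estimate $\int_0^T\!\int_{\mathbb{T}^d}|u^{\eta,\epsilon}(x+h,t)-u^{\eta,\epsilon}(x,t)|\dx\dt\leq C|h|^{2/(m+1)}$, which by Proposition~\ref{gradtofrac} supplies a uniform $W^{s,1}_x$ bound for every $s<2/(m+1)$. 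For $m\in(0,1]$ the chain rule $\nabla u^{\eta,\epsilon}=\tfrac{2}{m+1}|u^{\eta,\epsilon}|^{(1-m)/2}\nabla v^{\eta,\epsilon}$, Cauchy--Schwarz in $x$ and $t$, and the moment bound of Proposition~\ref{aux_p} (whose proof transfers with only cosmetic changes to the regularized equation) give $\nabla u^{\eta,\epsilon}\in L^1_{t,x}$ uniformly, hence a uniform $W^{s,1}_x$ bound for every $s<1$.

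\emph{Combination and main obstacle.} The two steps furnish $\chi^{\eta,\epsilon}\in L^1_tL^1_xW^{s_1,1}_\xi\cap L^1_tW^{s_2,1}_xL^1_\xi$ with $s_1\in(0,1)$ arbitrary and $s_2<\tfrac{2}{m+1}\wedge 1$ arbitrary. Proposition~\ref{meld} then promotes these one-variable fractional bounds to a joint $L^1_tW^{s,1}_{x,\xi}$ bound for every $s$ strictly below $s_1\wedge s_2$, which exhausts the ranges stated in both cases. The main obstacle is the slow-diffusion case $m>1$: the gradient of $u^{\eta,\epsilon}$ is genuinely singular on $\{u^{\eta,\epsilon}=0\}$, precluding any uniform $W^{1,1}_x$ estimate on $u^{\eta,\epsilon}$ itself. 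One is forced to argue via $v^{\eta,\epsilon}$ and to absorb the exponent $2/(m+1)$ from the H\"older continuity of the inverse power $\eta\mapsto\eta^{[2/(m+1)]}$, which is exactly the threshold appearing in the statement; the secondary technical point is to ensure all constants remain independent of $\eta,\epsilon$, which is guaranteed by the stable estimates of Section~\ref{sol_exists}.
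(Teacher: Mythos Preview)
Your proposal is correct and follows essentially the same strategy as the paper: velocity regularity via the $\BV$ structure of $\xi\mapsto\chi(x,\xi)$ (Corollary~\ref{BVtoKin}), spatial regularity from the uniform $H^1$ bound on $(u^{\eta,\epsilon})^{[(m+1)/2]}$ (the paper packages this as Proposition~\ref{gradtofrac}/Proposition~\ref{fast_est} feeding into Corollary~\ref{gtfk_cor}, while your Nikolskii-translation argument is an equivalent route resting on the same inequality $|r-s|^{m+1}\le C\bigl|r^{[(m+1)/2]}-s^{[(m+1)/2]}\bigr|^{2}$), and then combination via Proposition~\ref{meld}. Two small corrections: the passage from your Nikolskii bound to $W^{s,1}_x$ is not Proposition~\ref{gradtofrac} but simply the standard embedding $B^{2/(m+1)}_{1,\infty}\hookrightarrow W^{s,1}$ for $s<\tfrac{2}{m+1}$; and for $m\in(0,1]$ you do not need Proposition~\ref{aux_p}, since $\int_{\mathbb{T}^d}|u^{\eta,\epsilon}|^{1-m}\le C\|u^{\eta,\epsilon}\|_{L^1}^{1-m}$ by H\"older on the bounded domain, so Proposition~\ref{stable_L1} and Proposition~\ref{stable_estimates} alone suffice, exactly as in the paper.
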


\begin{proof}  Let $u_0\in L^2(\mathbb{T}^d)$, $\eta\in(0,1)$ and $\epsilon\in(0,1)$ be arbitrary.  Let $s\in(0,\frac{2}{m+1}\wedge 1)$ and $T\geq 0$ be arbitrary.  It follows from Corollary~\ref{kreg} that, for $C=C(d,s)>0$,
\begin{equation}\label{fs_0}\norm{\chi^{\eta,\epsilon}}_{L^1_t\left([0,T];W^{s,1}_{x,\xi}(\mathbb{T}^d\times\mathbb{R})\right)}\leq C\left(\norm{\chi^{\eta,\epsilon}}_{L^1_t\left([0,T];L^1_\xi(\mathbb{R};W^{s,1}_x(\mathbb{T}^d))\right)}+\norm{\chi^{\eta,\epsilon}}_{L^1_t\left([0,T];L^1_x(\mathbb{T}^d;W^{s,1}(\mathbb{R}))\right)}\right).\end{equation}
Corollary~\ref{BVtoKin} implies that, for $C=C(d,T,s)>0$,
\begin{equation}\label{fs_1}\norm{\chi^{\eta,\epsilon}}_{L^1_t([0,T];L^1_x\left(\mathbb{T}^d;W_\xi^{s,1}(\mathbb{R}))\right)}\leq C\left(1+\norm{u^{\eta,\epsilon}}_{L^1_t([0,T];L^1_x(\mathbb{T}^d))}\right).\end{equation}
Corollary~\ref{gtfk_cor} and Proposition~\ref{stable_L1} imply that, for $C=C(m,d,T,s)>0$, if $m\in(1,\infty)$,
\begin{equation}\label{fs_2}\begin{aligned} & \norm{\chi^{\eta,\epsilon}}_{L^1_t\left([0,T];L^1_\xi(\mathbb{R};W^{s,1}_x(\mathbb{T}^d))\right)}\leq  \\ & C\left(\norm{u_0}_{L^1(\mathbb{T}^d)}+\norm{u_0}^{m+1}_{L^1(\mathbb{T}^d)}+\int_0^T\norm{\nabla \left(u^{\eta,\epsilon}\right)^{\left[\frac{m+1}{2}\right]}(\cdot,r)}^\frac{2}{m+1}_{L^2(\mathbb{T}^d)}\dr\right),\end{aligned}\end{equation}
and, if $m\in(0,1]$,
\begin{equation}\label{fs_22}\begin{aligned} & \norm{\chi^{\eta,\epsilon}}_{L^1_t\left([0,T];L^1_\xi(\mathbb{R};W^{s,1}_x(\mathbb{T}^d))\right)}\leq  \\ & C\left(\norm{u_0}_{L^1(\mathbb{T}^d)}+\norm{u_0}^{2(1-m)}_{L^1(\mathbb{T}^d)}+\int_0^T\norm{\nabla u^{\left[\frac{m+1}{2}\right]}(\cdot,r)}^2_{L^2(\mathbb{T}^d)}\dr\right).\end{aligned}\end{equation}

Returning to \eqref{fs_0}, if $m\in(1,\infty)$, it follows from \eqref{fs_1} and \eqref{fs_2}, using the fact that $2/(m+1)<1$, H\"older's inequality, Young's inequality, and the definition of the parabolic defect measure that, for $C=C(m,d,T,s)>0$,
\begin{equation*}\begin{aligned} & \norm{\chi^{\eta,\epsilon}}_{L^1_t\left([0,T];W^{s,1}_{x,\xi}(\mathbb{T}^d\times\mathbb{R})\right)} \\ & \leq  C\left(1+\norm{u_0}_{L^1(\mathbb{T}^d)}+\norm{u_0}_{L^1(\mathbb{T}^d)}^{m+1}+\int_0^T\int_\mathbb{R}\int_{\mathbb{T}^d}q^{\eta,\epsilon}\dx\dxi\dt\right).\end{aligned}\end{equation*}
Similarly, from \eqref{fs_1} and \eqref{fs_22}, if $m\in(0,1]$, for $C=C(m,d,T,s)>0$,
\begin{equation*}\begin{aligned} & \norm{\chi^{\eta,\epsilon}}_{L^1_t\left([0,T];W^{s,1}_{x,\xi}(\mathbb{T}^d\times\mathbb{R})\right)} \\ & \leq  C\left(1+\norm{u_0}_{L^1(\mathbb{T}^d)}+\norm{u_0}_{L^1(\mathbb{T}^d)}^{2(m-1)}+\int_0^T\int_\mathbb{R}\int_{\mathbb{T}^d}q^{\eta,\epsilon}\dx\dxi\dt\right).\end{aligned}\end{equation*}
Therefore, if $m\in(1,\infty)$, Proposition~\ref{stable_estimates} and the fact that, for each $a\in[0,\infty)$, we have $a^2\leq \left(a\vee a^{m+1}\right)$, for $C=C(m,d,T,s)>0$,
$$\norm{\chi^{\eta,\epsilon}}_{L^1_t\left([0,T];W^{s,1}_{x,\xi}(\mathbb{T}^d)\right)}\leq C\left(1+\norm{u_0}_{L^1(\mathbb{T}^d)}+\norm{u_0}^{m+1}_{L^1(\mathbb{T}^d)}+\norm{u_0}^2_{L^2(\mathbb{T}^d)}\right).$$
If $m\in(0,1]$, Proposition~\ref{stable_estimates} and the fact that, for each $a\in[0,\infty)$, we have $a^{2(1-m)}\leq \left(a\vee a^2\right)$, imply that, for $C=C(m,d,T,s)>0$,
$$\norm{\chi^{\eta,\epsilon}}_{L^1_t\left([0,T];W^{s,1}_{x,\xi}(\mathbb{T}^d)\right)}\leq C\left(1+\norm{u_0}_{L^1(\mathbb{T}^d)}+\norm{u_0}^{2}_{L^1(\mathbb{T}^d)}+\norm{u_0}^2_{L^2(\mathbb{T}^d)}\right),$$
which completes the proof.  \end{proof}

The following corollary proves that the transported kinetic function $\tilde{\chi}^{\eta,\epsilon}$ inherits the regularity of $\chi^{\eta,\epsilon}$.  The proof is an immediate consequence of Proposition~\ref{fractional_sobolev} and Corollary~\ref{last_cor}.

\begin{cor}\label{prop_transfer} For each $\eta\in(0,1)$, $\epsilon\in(0,1)$, and $u_0\in L^2(\mathbb{T}^d)$, and for each $s\in(0,\frac{2}{m+1}\wedge 1)$ and $T\geq 0$, there exists  $C=C(m,d,T,s)>0$ such that
$$\norm{\tilde{\chi}^{\eta,\epsilon}}_{L^1\left([0,T];W^{s,1}(\mathbb{T}^d)\right)}\leq C\left(1+\norm{u_0}_{L^1(\mathbb{T}^d)}+\norm{u_0}^{(m+1)\vee 2}_{L^1(\mathbb{T}^d)}+\norm{u_0}^2_{L^2(\mathbb{T}^d)}\right).$$
\end{cor}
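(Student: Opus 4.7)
The plan is to exploit the fact that, by definition, $\tilde{\chi}^{\eta,\epsilon}(x,\xi,t)$ is nothing other than the composition of the kinetic function $\chi^{\eta,\epsilon}(\cdot,\cdot,t)$ with the forward characteristic map $(x,\xi)\mapsto (X^{x,\xi,\epsilon}_{0,t},\Xi^{x,\xi,\epsilon}_{0,t})$, so that all that is needed is a statement transferring fractional Sobolev regularity under composition with a sufficiently regular, measure preserving map. The two main ingredients have already been prepared: Proposition~\ref{fractional_sobolev} provides the desired bound for $\chi^{\eta,\epsilon}$ itself (which is why the exponent $(m+1)\vee 2$ appears in the statement, as it covers both the $m>1$ and $m\leq 1$ regimes), while Corollary~\ref{last_cor} is presumably the abstract transfer lemma for $W^{s,1}$ regularity under the diffeomorphisms defining the characteristic flow.

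First I would fix $\eta,\epsilon\in(0,1)$, $T>0$, and $s\in(0,\frac{2}{m+1}\wedge 1)$, and write for almost every $t\in[0,T]$
\begin{equation*}
\tilde{\chi}^{\eta,\epsilon}(x,\xi,t)=\chi^{\eta,\epsilon}\!\left(X^{x,\xi,\epsilon}_{0,t},\Xi^{x,\xi,\epsilon}_{0,t},t\right).
\end{equation*}
Next I would recall that by assumption \eqref{prelim_regular} together with the rough path estimates of Proposition~\ref{rough_est}, the Jacobians $\nabla_x X^{x,\xi,\epsilon}_{0,t}$, $\partial_\xi X^{x,\xi,\epsilon}_{0,t}$, $\nabla_x \Xi^{x,\xi,\epsilon}_{0,t}$ and $\partial_\xi \Xi^{x,\xi,\epsilon}_{0,t}$ (as well as those of the inverse backward characteristics from \eqref{kin_backward}) are bounded on $\mathbb{T}^d\times\mathbb{R}\times[0,T]$ by a constant depending only on $T$, and, by \eqref{kin_measure}, the map is measure preserving.

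Armed with these two facts, Corollary~\ref{last_cor} applies to yield, for almost every $t\in[0,T]$ and for a constant $C=C(m,d,T,s)>0$,
\begin{equation*}
\norm{\tilde{\chi}^{\eta,\epsilon}(\cdot,\cdot,t)}_{W^{s,1}(\mathbb{T}^d\times\mathbb{R})}\leq C\,\norm{\chi^{\eta,\epsilon}(\cdot,\cdot,t)}_{W^{s,1}(\mathbb{T}^d\times\mathbb{R})}.
\end{equation*}
Integration in time over $[0,T]$ and insertion of the estimate of Proposition~\ref{fractional_sobolev} (in both the $m>1$ and the $m\in(0,1]$ cases, whose right-hand sides are jointly dominated by the expression involving $\norm{u_0}_{L^1}^{(m+1)\vee 2}$) then finishes the proof.

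I do not expect a serious obstacle here: the content of the argument is almost entirely packaged into Corollary~\ref{last_cor} and Proposition~\ref{fractional_sobolev}. The only point that warrants a small amount of care is checking that the derivative bounds delivered by Proposition~\ref{rough_est} are of the precise form required as hypotheses of Corollary~\ref{last_cor} (in particular that they are uniform in $\eta,\epsilon\in(0,1)$, which is the whole purpose of the rough path formulation), and that the constant arising from the change of variables does not depend on $\eta$ or $\epsilon$. Once this is verified, the two cases of Proposition~\ref{fractional_sobolev} combine automatically into the single bound stated, since $\norm{u_0}_{L^1(\mathbb{T}^d)}^{m+1}+\norm{u_0}_{L^1(\mathbb{T}^d)}^{2}\leq 2(1+\norm{u_0}_{L^1(\mathbb{T}^d)}^{(m+1)\vee 2})$.
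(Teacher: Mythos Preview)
Your proposal is correct and follows exactly the route the paper takes: the paper states that the corollary is an immediate consequence of Proposition~\ref{fractional_sobolev} and Corollary~\ref{last_cor}, and you have simply spelled out the details of this combination, including the observation that the two cases of Proposition~\ref{fractional_sobolev} merge into the single right-hand side involving $\norm{u_0}_{L^1}^{(m+1)\vee 2}$.
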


The following theorem establishes the existence of pathwise kinetic solutions for initial data $u_0\in L^2(\mathbb{T}^d)$.  The proof is consequence of Proposition~\ref{time_frac}, Corollary~\ref{prop_transfer}, and the Aubin-Lions-Simon lemma.

\begin{thm}\label{thm_solution}  For every $u_0\in L^2(\mathbb{T}^d)$, there exists a pathwise kinetic solution $u$ to the equation
\begin{equation}\label{solution_000}\left\{\begin{array}{ll} \partial_t u=\Delta u^{[m]}+\nabla\cdot \left(A(x,u)\circ \dd z_t\right) & \textrm{in}\;\;\mathbb{T}^d\times(0,\infty), \\ u=u_0 & \textrm{on}\;\;\mathbb{T}^d\times\{0\},\end{array}\right.\end{equation}
in the sense of Definition~\ref{def_solution}.  In particular, the solution satisfies the estimates of Corollary~\ref{path_L1} and Proposition~\ref{aux_p}.
\end{thm}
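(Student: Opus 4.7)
The plan is to construct the pathwise kinetic solution as a limit of the regularized solutions $u^{\eta,\epsilon}$ of \eqref{sm_kin} as $\eta,\epsilon\to0$, relying on the stable a priori estimates already established together with rough path continuity of the characteristics.

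First I would collect the uniform bounds in $(\eta,\epsilon)$ that are available on a fixed time horizon $[0,T]$: the $L^\infty_tL^1_x$ bound from Proposition~\ref{stable_L1}, the $L^\infty_tL^2_x$ bound together with the $L^1$ control of $p^{\eta,\epsilon}+q^{\eta,\epsilon}$ and of $\nabla(u^{\eta,\epsilon})^{[(m+1)/2]}$ in $L^2$ from Proposition~\ref{stable_estimates}, the spatial–velocity fractional regularity of $\chi^{\eta,\epsilon}$ from Proposition~\ref{fractional_sobolev} (equivalently for $\tilde\chi^{\eta,\epsilon}$ via Corollary~\ref{prop_transfer}), and the time regularity $\partial_t\tilde\chi^{\eta,\epsilon}\in L^1_tH^{-s}_{x,\xi}$ from Proposition~\ref{time_frac}. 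Since the $L^\infty_tL^2_x$ bound controls the effective $\xi$–support of $\chi^{\eta,\epsilon}$ uniformly, after restriction to any bounded interval in $\xi$ the Aubin--Lions--Simon lemma applied to $\tilde\chi^{\eta,\epsilon}$ yields strong relative compactness in $L^1([0,T];L^1(\mathbb{T}^d\times\mathbb{R}))$.

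Next I would transfer strong convergence back to $\chi^{\eta,\epsilon}$. By the rough path stability of the inverse characteristics (cf.\ Proposition~\ref{rough_est} and Section~\ref{sec_rough}), $(Y^{\cdot,\cdot,\epsilon}_{t,t},\Pi^{\cdot,\cdot,\epsilon}_{t,t})\to(Y^{\cdot,\cdot}_{t,t},\Pi^{\cdot,\cdot}_{t,t})$ uniformly as $\epsilon\to 0$; combined with the Lebesgue-preserving property \eqref{kin_rough_measure}, the identity $\chi^{\eta,\epsilon}(x,\xi,t)=\tilde\chi^{\eta,\epsilon}(Y^{x,\xi,\epsilon}_{t,t},\Pi^{x,\xi,\epsilon}_{t,t},t)$ produces, along a subsequence, a strong $L^1$ limit $\chi$. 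Because the approximants take values in $\{-1,0,1\}$ and satisfy the structural identities $\partial_\xi\chi^{\eta,\epsilon}=\delta_0(\xi)-\delta_0(\xi-u^{\eta,\epsilon})$, the strong limit automatically has the kinetic form $\chi=\overline{\chi}(u,\xi)$ for a measurable $u$; weak-$*$ lower semicontinuity in $L^\infty_tL^2_x$ gives $u\in L^\infty_t L^2_x$, and by a.e.\ pointwise convergence of $u^{\eta,\epsilon}$ together with the uniform $L^2_t H^1_x$ bound on $(u^{\eta,\epsilon})^{[(m+1)/2]}$, we also obtain $u^{[(m+1)/2]}\in L^2_tH^1_x$.

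I would then pass to the limit in the smooth kinetic identity \eqref{transport_equation_smooth}. For a test function $\rho_0\in\C^\infty_c(\mathbb{T}^d\times\mathbb{R})$, the transported test functions $\rho^\epsilon_{s,\cdot}$ of \eqref{kin_translate} converge (together with their derivatives) to the rough transport $\rho_{s,\cdot}$ of \eqref{char_rough_eq} by Section~\ref{sec_rough}; strong $L^1$ convergence of $\chi^{\eta,\epsilon}$ with the uniform $\xi$–tail control from Proposition~\ref{aux_p} passes the $m|\xi|^{m-1}\chi^{\eta,\epsilon}\Delta_x\rho^\epsilon_{s,r}$ term to the limit, and the $\eta\Delta$ term vanishes since $\eta\to 0$. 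Extract a weak-$*$ limit $\mu$ of the uniformly bounded nonnegative measures $p^{\eta,\epsilon}+q^{\eta,\epsilon}$, and define the parabolic defect measure $q$ of $u$ as in Definition~\ref{def_solution} and the entropy defect measure as the excess $p:=\mu-q$. The limiting identity \eqref{transport_equation} is then obtained, and the initial condition passes trivially since $\chi^{\eta,\epsilon}(\cdot,\cdot,0)=\overline{\chi}(u_0,\cdot)$ is independent of $(\eta,\epsilon)$.

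The main obstacle is the nonnegativity $p\geq 0$ of the excess defect measure. This is the point where the failure of weak closedness mentioned in the introduction must be compensated: one has to show that against any nonnegative $\phi\in\C_c(\mathbb{T}^d\times\mathbb{R}\times(0,T))$,
\begin{equation*}
\int\phi\,dq\;\leq\;\liminf_{\eta,\epsilon\to 0}\int\phi\,d(p^{\eta,\epsilon}+q^{\eta,\epsilon})\;=\;\int\phi\,d\mu .
\end{equation*}
The key is a weak lower semicontinuity estimate for the quadratic form $\int|\nabla v^{[(m+1)/2]}|^2$, applied after localizing $\phi$ around the graph $\{\xi=u(x,t)\}$ and using the strong convergence of $u^{\eta,\epsilon}\to u$ to replace $\delta_0(\xi-u^{\eta,\epsilon})$ by $\delta_0(\xi-u)$ in the limit, combined with the weak convergence of $\nabla(u^{\eta,\epsilon})^{[(m+1)/2]}$ in $L^2$. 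Once $p\geq 0$ is established, all the axioms of Definition~\ref{def_solution} are verified, and the announced estimates from Corollary~\ref{path_L1} and Proposition~\ref{aux_p} are inherited from the corresponding uniform bounds on $u^{\eta,\epsilon}$.
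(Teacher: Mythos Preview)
Your proposal is correct and follows essentially the same route as the paper's proof: Aubin--Lions--Simon compactness for the transported kinetic functions via Proposition~\ref{time_frac} and Corollary~\ref{prop_transfer}, transfer back to $\chi^{\eta,\epsilon}$ and $u^{\eta,\epsilon}$ using the measure-preserving characteristics and their rough-path convergence, weak extraction of the defect measures and of $\nabla(u^{\eta,\epsilon})^{[(m+1)/2]}$, the lower-semicontinuity argument \eqref{solution_6} to get $p=\mu-q\geq 0$, and passage to the limit in \eqref{transport_equation_smooth}. The only cosmetic difference is that the paper records the final estimates by observing that Corollary~\ref{path_L1} and Proposition~\ref{aux_p} apply directly once $u$ is a pathwise kinetic solution, rather than by inheriting them from the approximants.
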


\begin{proof} Let $u_0\in L^2(\mathbb{T}^d)$ be arbitrary.  Let $\{u^{\eta,\epsilon}\}_{\eta,\epsilon\in(0,1)}$ denote the solutions of the regularized equation (\ref{sm_kin}) with initial data $u_0$, with transported kinetic functions $\{\tilde{\chi}^{\eta,\epsilon}\}_{\eta,\epsilon\in(0,1)}$, entropy defect measures $\{p^{\eta,\epsilon}\}_{\eta,\epsilon\in(0,1)}$, and parabolic defect measures $\{q^{\eta,\epsilon}\}_{\eta,\epsilon\in(0,1)}$.

Since, for each $s\in(0,\frac{2}{m+1}\wedge 1)$ and $R>0$, the embedding of $W^{s,1}(\mathbb{T}^d\times[-R,R])$ into $L^1(\mathbb{T}^d\times[-R,R])$ is compact, and since $L^1(\mathbb{T}^d\times\mathbb{R})$ embeds continuously into $H^{-s}(\mathbb{T}^d\times\mathbb{R})$ for $s>\frac{d}{2}+1$, it follows from Proposition \ref{time_frac}, Corollary~\ref{prop_transfer}, the Aubin-Lions-Simon lemma  Aubin \cite{Aubin}, Lions \cite{pLions}, and Simon \cite{Simon}, and a diagonal argument that, for each $T\geq 0$, the family
$$\{\tilde{\chi}^{\eta,\epsilon}\}_{\eta,\epsilon\in(0,1)}\;\;\textrm{is precompact in}\;L^1([0,T];L^1(\mathbb{T}^d\times\mathbb{R})).$$
The conservative property of the characteristics (\ref{kin_measure}) therefore implies that, for each $T\geq 0$,
$$\{\chi^{\eta,\epsilon}\}_{\eta,\epsilon\in(0,1)}\;\;\textrm{is precompact in}\;L^1([0,T];L^1(\mathbb{T}^d\times\mathbb{R})).$$
It is then immediate from the definition of the kinetic function that
\begin{equation}\label{solution_0}\{u^{\eta,\epsilon}\}_{\eta,\epsilon\in(0,1)}\;\;\textrm{is precompact in}\;L^1([0,T];L^1(\mathbb{T}^d)).\end{equation}
Furthermore, using Proposition \ref{stable_estimates}, the sequence of measures
\begin{equation}\label{solution_1}\{(p^{\eta,\epsilon},q^{\eta,\epsilon})\}_{\eta,\epsilon\in(0,1)}\;\;\textrm{is weakly precompact in}\;\;\BUC(\mathbb{T}^d\times\mathbb{R})^*,\end{equation}
and
\begin{equation}\label{solution_2}\left\{(u^{\eta,\epsilon})^{\left[\frac{m+1}{2}\right]}\right\}_{\eta,\epsilon\in(0,1)}\;\;\textrm{is weakly precompact in}\;L^2([0,T];H^1(\mathbb{T}^d)).\end{equation}

Therefore, after passing to a subsequence $\{(\eta_k,\epsilon_k)\rightarrow (0,0)\}_{k=1}^\infty$, there exists a function $u\in L^1([0,T];L^1(\mathbb{T}^d))$ such that, as $k\rightarrow\infty$,
\begin{equation}\label{solution_3}u^{\eta_k,\epsilon_k}\rightarrow u\;\;\textrm{strongly in}\;\;L^1([0,T];L^1(\mathbb{T}^d)).\end{equation}
Furthermore, as $k\rightarrow\infty$,
{\color{black}\begin{equation}\label{solution_4}(u^{\eta_k,\epsilon_k})^{\left[\frac{m+1}{2}\right]}\rightharpoonup u^{\left[\frac{m+1}{2}\right]}\;\;\textrm{weakly in}\;\;L^2([0,T];H^1(\mathbb{T}^d)).\end{equation}}
Since, by definition, for each $\eta\in(0,1)$ and $\epsilon\in(0,1)$, for $(x,\xi,t)\in\mathbb{T}^d\times\mathbb{R}\times[0,\infty)$,
$$p^{\eta,\epsilon}(x,\xi,t):=\delta_0(\xi-u^{\eta,\epsilon}(x,t))\eta\abs{\nabla u^{\eta,\epsilon}}^2,$$
and
$$q^{\eta,\epsilon}(x,\xi,t):=\delta_0(\xi-u^{\eta,\epsilon}(x,t))\frac{4m}{(m+1)^2}\abs{\nabla \left(u^{\eta,\epsilon}\right)^{\left[\frac{m+1}{2}\right]}(x,t)}^2,$$
the estimates of Proposition \ref{stable_estimates} imply that there exist positive measures $(p',q')$ such that, for each $T>0$, as $k\rightarrow\infty$,
\begin{equation}\label{solution_5} (p^{\eta_k,\epsilon_k},q^{\eta_k,\epsilon_k})\rightharpoonup(p',q')\;\;\textrm{weakly in}\;\;\BUC(\mathbb{T}^d\times\mathbb{R}\times[0,T])^*.\end{equation}
It follows from the strong convergence \eqref{solution_3} and the weak lower semicontinuity of the weighted Sobolev norm that, in the sense of measures,
\begin{equation}\label{solution_6}\delta_0(\xi-u(x,t))\frac{4m}{(m+1)^2}\abs{\nabla u^{\left[\frac{m+1}{2}\right]}}^2\leq q'(x,\xi,t)\;\;\textrm{for}\;\;(x,\xi,t)\in\mathbb{T}^d\times\mathbb{R}\times[0,\infty).\end{equation}
{\color{black}To see this, let $f\in\C^\infty_c(\mathbb{T}^d\times\mathbb{R}\times[0,T])$ be an arbitrary nonnegative function.  The strong convergence \eqref{solution_3} implies that, as $k\rightarrow\infty$, for every $p\in[1,\infty)$,
$$\sqrt{f(u^{\epsilon_k,\eta_k})}\rightarrow\sqrt{f(u)}\;\;\textrm{strongly in}\;\;L^p(\mathbb{T}^d\times[0,T]).$$
Hence, using the weak convergence \eqref{solution_4},
$$\sqrt{f(u^{\epsilon_k,\eta_k})}\nabla\left(u^{\epsilon_k,\eta_k}\right)^{\left[\frac{m+1}{2}\right]}\rightharpoonup\sqrt{f(u)}\nabla u^{\left[\frac{m+1}{2}\right]}\;\;\textrm{weakly in}\;\;L^p(\mathbb{T}^d\times[0,T]),$$
for each $p\in(1,2)$. Therefore, the weak convergence \eqref{solution_5}, the definition of the measures $\{q^{\epsilon_k,\eta_k}\}_{k=1}^\infty$, and the weak lower-semicontinuity of the $L^2$-norm prove that
$$\begin{aligned}\frac{4m}{(m+1)^2}\int_{\mathbb{T}^d}\int_0^Tf(u)\abs{\nabla u^{\left[\frac{m+1}{2}\right]}}^2\leq & \liminf_{k\rightarrow\infty}\frac{4m}{(m+1)^2}\int_{\mathbb{T}^d}\int_0^Tf(u^{\epsilon_k,\eta_k})\abs{\nabla \left(u^{\epsilon_k,\eta_k}\right)^{\left[\frac{m+1}{2}\right]}}^2 \\ = & \liminf_{k\rightarrow\infty}\int_0^T\int_\mathbb{R}\int_{\mathbb{T}^d}f \;q^{\epsilon_k,\eta_k} \\  = & \int_0^T\int_\mathbb{R}\int_{\mathbb{T}^d}f\;q',\end{aligned}$$
which, since $f$ was arbitrary, establishes \eqref{solution_6}.}

We define the parabolic defect measure
$$q(x,\xi,t):=\delta_0(\xi-u(x,t))\frac{4m}{(m+1)^2}\abs{\nabla u^{\left[\frac{m+1}{2}\right]}}^2\;\;\textrm{for}\;\;(x,\xi,t)\in\mathbb{T}^d\times\mathbb{R}\times[0,\infty),$$
and, since \eqref{solution_6} implies that that $q'-q$ is nonnegative, we define the entropy defect measure
$$p:=p'+q'-q\geq 0\;\;\textrm{on}\;\;\mathbb{T}^d\times\mathbb{R}\times[0,\infty).$$
Finally, as $\epsilon\rightarrow 0$, it follows from the regularity assumption \eqref{prelim_regular}, the choice of $\{z^\epsilon\}_{\epsilon\in(0,1)}$ satisfying \eqref{geometric_path}, and Proposition \ref{rough_est} that, for each $T\geq 0$,
\begin{equation}\label{solution_7}\lim_{\epsilon\rightarrow 0}\norm{\abs{Y^{x,\xi,\epsilon}_{t,t}-Y^{x,\xi}_{t,t}}+\abs{\Pi^{x,\xi,\epsilon}_{t,t}-\Pi^{x,\xi}_{t,t}}}_{L^\infty(\mathbb{T}^d\times\mathbb{R}\times[0,T])}=0.\end{equation}

For the kinetic function $\chi$ of $u$, the convergence (\ref{solution_3}) implies that, for a subset $\mathcal{N}\subset(0,\infty)$ {\color{black}of measure zero}, for each $t\in[0,\infty)\setminus\mathcal{N}$,
$$\lim_{k\rightarrow\infty} \norm{u^{\eta_k,\epsilon_k}(\cdot,t)-u(\cdot,t)}_{L^1(\mathbb{T}^d)}=0.$$
Therefore, the additional convergences (\ref{solution_4}), (\ref{solution_5}), and (\ref{solution_7}) imply that, for each $t_0,t_1\in [0,\infty)\setminus\mathcal{N}$, for every $\rho_0\in \C^\infty(\mathbb{T}^d)$, for the solution $\rho_{t_0,t}$ of \eqref{char_rough_eq} with initial data $\rho_0$,
$$\begin{aligned}\left.\int_\mathbb{R}\int_{\mathbb{T}^d}\chi_r\rho_{t_0,r}\dx\dxi\right|_{r=t_0}^{t_1} = & \int_{t_0}^{t_1}\int_\mathbb{R}\int_{\mathbb{T}^d}m\abs{\xi}^{m-1}\chi_r\Delta_x\rho_{t_0,r}\dx\dxi\dr \\ & - \int_{t_0}^{t_1}\int_\mathbb{R}\int_{\mathbb{T}^d}\left(p_r+q_r\right)\partial_\xi\rho_{t_0,r}\dx\dxi\dr, \end{aligned}$$
where, when $t_0=0$,
$$\int_\mathbb{R}\int_{\mathbb{T}^d}\chi(x,\xi,0)\rho_{0,0}\dx\dxi=\lim_{k\rightarrow\infty}\int_{\mathbb{R}}\int_{\mathbb{T}^d}\chi^{\eta_k,\epsilon_k}(x,\xi,0)\rho_{0,0}\dx\dxi=\int_{\mathbb{R}}\int_{\mathbb{T}^d}\overline{\chi}(u_0(x),\xi)\rho_0\dx\dxi.$$
This completes the proof that $u$ is a pathwise kinetic solution.  It is then immediate that the solution satisfies the estimates of Corollary~\ref{path_L1} and Proposition~\ref{aux_p}, which completes the proof of the theorem.  \end{proof}

{\color{black}We will now show that the solutions constructed in Theorem~\ref{thm_solution} depend continuously on the driving noise.  The proof will follow from a compactness argument relying on the estimates from the proof of Theorem~\ref{thm_solution}, the rough path estimates of Proposition~\ref{rough_est}, and the uniqueness of pathwise kinetic solutions from Theorem~\ref{theorem_uniqueness}.  In particular, these methods do not yield an explicit estimate quantifying the convergence of the solutions in terms of the convergence of the noise.  In the statement below, the metric $d_\alpha$ denotes the $\alpha$-H\"older metric on the space of geometric rough paths introduced in Section~\ref{sec_rough}.

\begin{thm}\label{final_initial_cts} Let $u_0\in L^2_+(\mathbb{T}^d)$ and $T>0$.  Let $\{z^n\}_{n=1}^\infty$ and $z$ be a sequence of $n$-dimensional, $\alpha$-H\"older continuous geometric rough paths on $[0,T]$ satisfying
\begin{equation}\label{rough_path_con}\lim_{n\rightarrow\infty}d_\alpha(z^n,z)=0.\end{equation}
Let $\{u^n\}_{n=1}^\infty$ and $u$ denote the pathwise kinetic solutions on $[0,T]$ with initial data $u_0$ and driving signals $\{z^n\}_{n=1}^\infty$ and $z$ respectively.   Then,
$$\lim_{n\rightarrow\infty}\norm{u^n-u}_{L^\infty([0,T];L^1(\mathbb{T}^d))}=0.$$
\end{thm}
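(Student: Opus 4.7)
The plan is a compactness argument: extract a subsequential limit of $\{u^n\}$, identify it as a pathwise kinetic solution driven by $z$, invoke uniqueness, and finally upgrade the mode of convergence.

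\textbf{Uniform estimates and compactness.} Since $d_\alpha(z^n,z)\to 0$ the sequence $\{z^n\}$ is bounded in the $\alpha$-H\"older geometric rough path norm. All a priori bounds used in the proof of Theorem~\ref{thm_solution} — Propositions~\ref{stable_L1}, \ref{stable_estimates}, \ref{time_frac}, \ref{fractional_sobolev}, and Corollary~\ref{prop_transfer} — depend on the driving noise only through this norm, since they ultimately reduce to the quantitative rough path estimates of Proposition~\ref{rough_est} on the characteristics \eqref{kin_rough_for}--\eqref{kin_rough_back}. Consequently the bounds are uniform in $n$ for the solutions $u^n$, the transported kinetic functions $\tilde{\chi}^n$, and the defect measures $(p^n,q^n)$. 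The Aubin--Lions--Simon argument of Theorem~\ref{thm_solution} then produces a subsequence $u^{n_k}\to \tilde u$ strongly in $L^1([0,T];L^1(\mathbb{T}^d))$, together with weak convergence of $(u^{n_k})^{[(m+1)/2]}$ in $L^2([0,T];H^1(\mathbb{T}^d))$ and weak-$*$ convergence of $(p^{n_k},q^{n_k})$ as measures.

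\textbf{Continuity of the rough characteristics and identification.} The continuity of rough differential equations with respect to the driving signal, encoded in Proposition~\ref{rough_est} and applied with $d_\alpha(z^n,z)\to 0$, yields uniform convergence on $\mathbb{T}^d\times\mathbb{R}\times[0,T]$ of the inverse rough characteristics $(Y^n,\Pi^n)\to(Y,\Pi)$, together with the spatial and velocity derivatives needed to control $\Delta_x\rho^n_{t_0,t}$ and $\partial_\xi\rho^n_{t_0,t}$. Hence for each $\rho_0\in \C^\infty_c(\mathbb{T}^d\times\mathbb{R})$ the test functions $\rho^n_{t_0,t}$ solving \eqref{char_rough_eq} driven by $z^n$ converge, with the relevant derivatives, uniformly to $\rho_{t_0,t}$ driven by $z$. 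Passing to the limit in \eqref{transport_equation} for $u^{n_k}$ — combining the strong convergence of $u^{n_k}$, the uniform convergence of the test functions, and the lower-semicontinuity step \eqref{solution_6} to split the weak measure limit into the parabolic defect of $\tilde u$ plus a nonnegative entropy defect — shows that $\tilde u$ is a pathwise kinetic solution of \eqref{intro_eq} with initial data $u_0$ and driving signal $z$. Theorem~\ref{theorem_uniqueness} forces $\tilde u = u$, and a standard subsequence-of-subsequence argument promotes this to convergence of the full sequence in $L^1([0,T];L^1(\mathbb{T}^d))$.

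\textbf{Upgrade to $L^\infty_t L^1_x$.} Proposition~\ref{time_frac} provides equicontinuity in $t$ of $\tilde{\chi}^n$ with values in $H^{-s}(\mathbb{T}^d\times\mathbb{R})$ for $s>\frac{d}{2}+1$, uniformly in $n$, while Corollary~\ref{prop_transfer} gives uniform spatial $W^{s,1}$ regularity. Composing with the uniformly convergent characteristics and interpolating between $L^1$ and $H^{-s}$ yields uniform equicontinuity of $t\mapsto u^n(\cdot,t)$ in $L^1(\mathbb{T}^d)$ on representatives chosen outside a common null set of times. Combined with the $L^1_{x,t}$ convergence of the previous step, this upgrades to uniform convergence in $t\in[0,T]$, which is the stated conclusion.

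\textbf{Main obstacle.} The principal difficulty is that the $L^1$-contraction of Theorem~\ref{theorem_uniqueness} only compares pathwise kinetic solutions driven by the \emph{same} noise, so it cannot be used to reduce continuous dependence on $z$ to a direct metric estimate. This forces the indirect compactness route, and within it the identification step relies on high-order rough path continuity for the characteristics. The most delicate ingredient is therefore the uniform convergence of the spatial and velocity derivatives of $(Y^n,\Pi^n)$ used to pass to the limit in the Laplacian and $\partial_\xi$ terms of \eqref{transport_equation}, which requires the regularity assumption \eqref{prelim_regular} together with the full strength of Proposition~\ref{rough_est}; the minor technical nuisance in the final step is that each $u^n$ is only well-defined as a function of $t$ outside an $n$-dependent null set, which is handled by working with the transported kinetic functions $\tilde{\chi}^n$ rather than with $u^n$ directly.
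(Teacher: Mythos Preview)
Your proposal is correct and follows essentially the same compactness-plus-uniqueness route as the paper: uniform rough path bounds give uniform a priori estimates via Proposition~\ref{rough_est}, Aubin--Lions--Simon extracts a convergent subsequence, continuity of the characteristics in the driving signal (again Proposition~\ref{rough_est}) lets one pass to the limit in \eqref{transport_equation} and identify the limit as a pathwise kinetic solution driven by $z$, and Theorem~\ref{theorem_uniqueness} then promotes subsequential to full-sequence convergence. The paper's proof is terser --- it simply asserts that ``a repetition of the proof of Theorem~\ref{thm_solution}'' yields convergence in $L^\infty([0,T];L^1(\mathbb{T}^d))$ along a subsequence and then invokes uniqueness --- whereas you separate the argument into $L^1_t L^1_x$ convergence first and an explicit upgrade to $L^\infty_t L^1_x$ second; this is arguably more careful, since Theorem~\ref{thm_solution} as written only records $L^1_t L^1_x$ convergence in \eqref{solution_3}, but it is not a genuinely different method.
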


\begin{proof}  Let $u_0\in L^2_+(\mathbb{T}^d)$ and $T>0$.  Let $\{z^n\}_{n=1}^\infty$ and $z$ be $\alpha$-H\"older continuous, geometric rough paths on $[0,T]$ satisfying \eqref{rough_path_con}.  The convergence implies that there exists $C>0$ such that, for each $n\geq 1$,
\begin{equation}\label{ncts_2}d_\alpha(z^n,e)\leq C,\end{equation}
where $e$ denotes the constant path beginning from the origin defined in Section~\ref{sec_rough}.

Let $\{u^n\}_{n=1}^\infty$ denote the solutions of \eqref{solution_000} constructed in Theorem~\ref{thm_solution} with initial data $u_0$ and driving signals $\{z^n\}_{n=1}^\infty$ respectively.  It follows from \eqref{ncts_2} and the rough path estimates of Proposition~\ref{rough_est} that the solutions $\{u^n\}_{n=1}^\infty$ satisfy the estimates of Proposition~\ref{stable_L1}, Proposition~\ref{stable_estimates}, Proposition~\ref{time_frac}, Proposition~\ref{fractional_sobolev} and Corollary~\ref{prop_transfer} on the interval $[0,T]$ for a constant that is independent of $n\geq 1$.

A repetition of the proof of Theorem~\ref{thm_solution} proves that, after passing to a subsequence $\{n_k\}_{k=1}^\infty$, there exists a pathwise kinetic solution $u$ of \eqref{solution_000} with initial data $u_0$ and driving noise $z$ such that, as $k\rightarrow\infty$,
$$\lim_{k\rightarrow\infty}\norm{u^{n_k}-u}_{L^\infty([0,T];L^1(\mathbb{T}^d))}=0.$$
However, since it follows from Theorem~\ref{theorem_uniqueness} that $u$ is the unique solution of \eqref{solution_000} with initial data $u_0$ and driving noise $z$, we conclude that, along the full sequence,
$$\lim_{n\rightarrow\infty}\norm{u^n-u}_{L^\infty([0,T];L^1(\mathbb{T}^d))}=0,$$
which completes the proof. \end{proof}}

\begin{appendix}

\section{A Regularized Equation and its Kinetic Formulation}\label{kinetic}

Since equation (\ref{intro_eq}) is not a priori well-defined, in this section we will consider a uniformly elliptic regularization of (\ref{intro_eq}).  For each integer $M\geq 1$, define the globally Lipschitz nonlinearity
\begin{equation}\label{e_bdd} \phi^M(\xi):=\left\{\begin{array}{ll} \xi^{[m]} & \textrm{if}\;\;\abs{\xi}\leq M, \\ \xi M^{m-1} & \textrm{if}\;\;\abs{\xi}\geq M.\end{array}\right.\end{equation}
Then, for each $\delta\in(0,1)$, for a standard one-dimensional convolution kernel $\rho^\delta_1$, for each $M\geq 1$ and $\delta\in(0,1)$, define the convolution
\begin{equation}\label{e_convolve} \phi^{M,\delta}(\eta):=(\phi^M*\rho^\delta_1)(\eta)=\int_\mathbb{R}\phi^M(\xi)\rho^\delta_1(\xi-\eta)\dxi\;\;\textrm{for each}\;\;\eta\in\mathbb{R}.\end{equation}
The nonlinearity $\phi^{M,\delta}$ will be used to approximate the porous medium nonlinearity $\xi\in\mathbb{R}\mapsto \xi^{[m]}$.  In fact, since the derivative of (\ref{e_bdd}) is positive away from zero, the nonlinearity (\ref{e_convolve}) defines a uniformly elliptic equation.  However, in order to preserve $H^1$-regularity in the limit $(M,\delta)\rightarrow (\infty,0)$, we will additionally consider an $\eta$-perturbation by the Laplacian, for $\eta\in(0,1)$.

It remains to regularize the noise. The assumption \eqref{prelim_Holder} that $z$ is a geometric rough path ensures that there exists a sequence of smooth paths
\begin{equation}\label{e_noise_1} \left\{z^\epsilon:[0,\infty)\rightarrow\mathbb{R}^n\right\}_{\epsilon\in(0,1)},\end{equation}
such that, as $\epsilon\rightarrow 0$, the paths $z^\epsilon$ converge to $z$ with respect to the $\alpha$-H\"older norm on the space of geometric rough paths $\C^{0,\alpha}([0,T];G^{\left\lfloor \frac{1}{\alpha}\right\rfloor}(\mathbb{R}^n))$ in the sense of \eqref{geometric_path}.

The first proposition of this section is essentially classical, and establishes the existence of solutions to a uniformly elliptic perturbation of equation (\ref{intro_eq}) driven by smooth noise.  In the proof, we consider the family of smooth equations defined by the family of nonlinearities (\ref{e_convolve}), for $M\geq 1$ and $\delta\in(0,1)$, and we obtain stable estimates in order to pass simultaneously to the limit $M\rightarrow\infty$ and $\delta\rightarrow 0$.

The estimates are based on testing the equation with the solution and the composition of the solution with $\phi^{M,\delta}$.   Therefore, an anti-derivative for (\ref{e_convolve}) will appear in the argument, which can be constructed via an explicit calculation.  Indeed, for each $M\geq 1$, define
\begin{equation}\label{e_anti}\psi^M(\xi):=\left\{\begin{array}{ll} \frac{1}{m+1}\abs{\xi}^{m+1} & \textrm{if}\;\;\abs{\xi}\leq M, \\ \frac{\xi^2}{2}M^{m-1}+\frac{M^{m+1}}{m+1}-\frac{M^{m+1}}{2} & \textrm{if}\;\;\abs{\xi}\geq M. \end{array}\right.\end{equation}
Observe that,  for each $M\geq 1$ and $\delta\in(0,1)$, for the one-dimensional convolution kernel $\rho^\delta_1$ used in (\ref{e_convolve}), the convolution
\begin{equation}\label{e_anti_1}\psi^{M,\delta}:=(\psi^M*\rho^\delta_1)\end{equation}
is an anti-derivative for (\ref{e_convolve}).

\begin{prop}\label{smooth_equation}  For each $\eta\in(0,1)$, $\epsilon\in(0,1)$, and $u_0\in L^2(\mathbb{T}^d)$, there exists a classical solution of the equation
\begin{equation}\label{smooth_eq}\left\{\begin{array}{ll} \partial_t u=\Delta u^{[m]}+\eta\Delta u+\nabla\cdot \left(A(x,u)\dot{z}^\epsilon_t\right) & \textrm{in}\;\;\mathbb{T}^d\times(0,\infty), \\ u=u_0 & \textrm{on}\;\;\mathbb{T}^d\times\{0\},\end{array}\right.\end{equation}
satisfying, for $C=C(\epsilon,T)>0$,
$$\norm{u}_{L^\infty(\mathbb{T}^d\times[0,T])}\leq C\norm{u_0}_{L^\infty(\mathbb{T}^d)}.$$
For $C=C(\epsilon,T)>0$,
$$\norm{u}^2_{L^\infty\left([0,T];L^2(\mathbb{T}^d)\right)}+\norm{\nabla u^{\left[\frac{m+1}{2}\right]}}^2_{L^2\left([0,T];L^2(\mathbb{T}^d;\mathbb{R}^d)\right)}+ \norm{\eta^\frac{1}{2}\nabla u}^2_{L^2\left([0,T];L^2(\mathbb{T}^d;\mathbb{R}^d)\right)}\leq C\norm{u_0}_{L^2(\mathbb{T}^d)},$$
and 
$$\norm{u}^{m+1}_{L^\infty\left([0,T];L^{m+1}(\mathbb{T}^d)\right)}+\norm{\nabla u^{[m]}}^2_{L^2\left([0,T];L^2(\mathbb{T}^d)\right)}\leq C\left(\norm{u_0}^{m+1}_{L^{m+1}(\mathbb{T}^d)}+\norm{u_0}^2_{L^2(\mathbb{T}^d)}\right).$$
Finally, for $C=C(\epsilon,T)>0$,
$$\norm{\partial_t u}^2_{L^2\left([0,T];H^{-1}(\mathbb{T}^d)\right)}\leq C\left(\norm{u_0}^{m+1}_{L^{m+1}(\mathbb{T}^d)}+\norm{u_0}^2_{L^2(\mathbb{T}^d)}\right).$$
\end{prop}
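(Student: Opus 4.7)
The plan is to approximate the degenerate porous medium nonlinearity by the bounded Lipschitz maps $\phi^{M,\delta}$ of \eqref{e_convolve}, derive the four asserted estimates with constants uniform in $M\geq 1$ and $\delta\in(0,1)$, and pass to the limit $(M,\delta)\to(\infty,0)$. For smooth initial data and any fixed parameters, the regularized equation with $u^{[m]}$ replaced by $\phi^{M,\delta}(u^{M,\delta})$ is uniformly parabolic, since $(\phi^{M,\delta})'(\xi)+\eta\geq\eta>0$, with smooth coefficients by \eqref{prelim_regular} and the smoothness of $z^\epsilon$. Standard quasilinear parabolic theory then produces a unique classical solution $u^{M,\delta}$, and general $u_0\in L^2(\mathbb{T}^d)$ is handled via smooth approximation once the stable estimates below are in place.

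For the $L^2$ estimate I would test the regularized equation against $u^{M,\delta}$: the porous medium term produces $\int (\phi^{M,\delta})'(u^{M,\delta})\abs{\nabla u^{M,\delta}}^2$, whose limit is $\tfrac{4m}{(m+1)^2}\norm{\nabla u^{[(m+1)/2]}}_{L^2}^2$; the $\eta$-diffusion yields $\norm{\eta^{1/2}\nabla u^{M,\delta}}_{L^2}^2$; and the noise term $\int u^{M,\delta}\nabla\cdot(A(x,u^{M,\delta})\dot z^\epsilon_t)$ is treated by introducing the primitive $F_{ij}(x,\xi)=\int_0^\xi \eta\,b_{ij}(x,\eta)\,\deta$, integrating by parts to eliminate the exact divergence on the torus, and using \eqref{prelim_vanish} on the remainder to obtain an upper bound of the form $C\norm{u^{M,\delta}}_{L^2}^2\abs{\dot z^\epsilon_t}$. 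A Gronwall step closes the estimate. The $L^{m+1}$ estimate proceeds analogously by testing with $\phi^{M,\delta}(u^{M,\delta})$: the diffusion contributes $\norm{\nabla \phi^{M,\delta}(u^{M,\delta})}_{L^2}^2$, with limit $\norm{\nabla u^{[m]}}_{L^2}^2$; the antiderivative $\psi^{M,\delta}$ of \eqref{e_anti_1}, whose explicit form in \eqref{e_anti} dominates a multiple of $\abs{\xi}^{m+1}$ minus a multiple of $\xi^2$, produces the $L^{m+1}$-norm on the left after absorption by the $L^2$ bound; and the noise contribution is handled with the analogous primitive $F^{M,\delta}_{ij}(x,\xi)=\int_0^\xi(\phi^{M,\delta})'(\eta)A_{ij}(x,\eta)\,\deta$.

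The $L^\infty$ bound, applicable when $u_0\in L^\infty(\mathbb{T}^d)$, follows from a Moser or De Giorgi iteration on the approximate parabolic equation: testing with $((u^{M,\delta}-k)_+)^p$ for $k\geq\norm{u_0}_{L^\infty}$ and controlling the noise by the same conservative device (together with $\abs{A(x,u)}\leq C\abs{u}$ near $u=0$ by \eqref{prelim_vanish}) yields coercive estimates on superlevel sets. The $L^2_t H^{-1}_x$ bound on $\partial_t u^{M,\delta}$ is then immediate from the divergence form of the equation, since each of $\nabla\phi^{M,\delta}(u^{M,\delta})$, $\eta\nabla u^{M,\delta}$, and $A(x,u^{M,\delta})\dot z^\epsilon_t$ is already controlled in $L^2([0,T];L^2(\mathbb{T}^d))$ by the foregoing estimates.

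The passage to the limit $(M,\delta)\to(\infty,0)$ uses the Aubin--Lions lemma with the uniform bounds on $\nabla\phi^{M,\delta}(u^{M,\delta})$ in $L^2$ and on $\partial_t u^{M,\delta}$ in $L^2_tH^{-1}_x$ to extract a strongly convergent subsequence in $L^{m+1}([0,T]\times\mathbb{T}^d)$, which suffices to identify $\phi^{M,\delta}(u^{M,\delta})\to u^{[m]}$ and $A(x,u^{M,\delta})\to A(x,u)$ in the distributional limit. I expect the main obstacle throughout to be the uniform treatment of the noise term across the four estimates: in each case one must convert the $x$-gradient transport against $u^{M,\delta}$ or $\phi^{M,\delta}(u^{M,\delta})$ into a bounded lower-order contribution via the conservative primitive construction and the vanishing condition \eqref{prelim_vanish}, which together ensure that the boundary value of the relevant primitive vanishes at $\xi=0$ and the remainder depends on $u^{M,\delta}$ only through norms that have already been estimated.
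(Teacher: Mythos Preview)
Your proposal follows essentially the same route as the paper: regularize via $\phi^{M,\delta}$, test with $u^{M,\delta}$ and with $\phi^{M,\delta}(u^{M,\delta})$ for the $L^2$ and $L^{m+1}$ estimates, deduce the $H^{-1}$ bound on $\partial_t u^{M,\delta}$ from the divergence structure, and pass to the limit by Aubin--Lions--Simon. Two small differences: the paper obtains the $L^\infty$ bound from the maximum principle rather than a Moser--De Giorgi iteration (using $|c(x,\xi)|\leq C|\xi|$ from \eqref{prelim_vanish} to build an exponentially growing comparison function, which is much lighter), and it extracts the $\nabla u^{[(m+1)/2]}$ estimate only \emph{after} the limit, via the identity $|\nabla u^{[(m+1)/2]}|^2=\tfrac{(m+1)^2}{4m}\,\nabla u^{[m]}\cdot\nabla u$ together with the already-established $L^2$ bounds on $\nabla u^{[m]}$ and $\eta^{1/2}\nabla u$, rather than by passing to the limit in the regularized dissipation term.
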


\begin{proof}  Let $u_0\in L^2(\mathbb{T}^d)$, $\eta\in(0,1)$, $\epsilon\in(0,1)$, and $T>0$ be arbitrary.  For arbitrary $M\geq 1$ and $\delta\in(0,1)$, the existence of a smooth solution
$$u^{M,\delta}\in\left( \C^{2,1}\left(\mathbb{T}^d\times(0,T)\right)\cap L^2\left([0,T];H^1(\mathbb{T}^d)\right)\right)$$
to the smoothed equation
\begin{equation}\label{smooth_eq_1}\left\{\begin{array}{ll} \partial_t u^{M,\delta}=\Delta\phi^{M,\delta}(u^{M,\delta})+\eta\Delta u^{M,\delta}+\nabla\cdot\left(A(x,u^{M,\delta})\dot{z}^\epsilon_t\right) & \textrm{in}\;\;\mathbb{T}^{d}\times(0,\infty), \\ u^{M,\delta}=u_0 & \textrm{on}\;\;\mathbb{T}^d\times\{0\},\end{array}\right.\end{equation}
follows from Ladyzenskaja, Solonnikov, and Uraltceva \cite[Chapter~V]{LSUBook}, the definition of the smooth nonlinearity (\ref{e_convolve}), the smooth noise $z^\epsilon$, the $\eta$-perturbation by the Laplacian, and the regularity assumption \eqref{prelim_regular}.

In view of \eqref{prelim_regular}, it is immediate from the {\color{black}maximum principle} that, for $C=C(\epsilon,T)>0$,
\begin{equation}\label{sm_00} \norm{u^{M,\delta}}_{L^\infty(\mathbb{T}^d\times[0,T])}\leq C\norm{u_0}_{L^\infty(\mathbb{T}^d)}.\end{equation}
After testing (\ref{smooth_eq_1}) with $u$, it follows from Gr\"{o}nwall's inequality, H\"older's inequality, Young's inequality, and (\ref{prelim_vanish}) that, for $C=C(\epsilon,T)>0$,
\begin{equation}\label{sm_0} \norm{u^{M,\delta}}_{L^\infty\left([0,T];L^2(\mathbb{T}^d)\right)}+\norm{\eta^\frac{1}{2}\nabla u^{M,\delta}}_{L^2\left([0,T];L^2(\mathbb{T}^d;\mathbb{R}^d)\right)}  \leq C\norm{u_0}_{L^2(\mathbb{T}^d)}.\end{equation}
Furthermore, in view of estimates (\ref{sm_00}) and (\ref{sm_0}), it follows from H\"older's inequality, Young's inequality, (\ref{prelim_regular}), and (\ref{prelim_vanish}) that, after testing equation (\ref{smooth_eq_1}) with $\phi^{M,\delta}(u^{M,\delta})$, for the anti-derivative $\psi^{M,\delta}$ from (\ref{e_anti_1}), for $C=C(\epsilon,T)>0$,
\begin{equation}\label{sm_1}\begin{aligned}& \norm{\psi^{M,\delta}(u^{M,\delta})}_{L^\infty\left([0,T];L^1(\mathbb{T}^d)\right)}+\norm{\nabla\phi^{M,\delta}(u^{M,\delta})}^2_{L^2\left([0,T];L^2(\mathbb{T}^d;\mathbb{R}^d)\right)} \\  & \leq \norm{\psi^{M,\delta}(u_0)}_{L^1(\mathbb{T}^d)}+C\norm{u^{M,\delta}}^2_{L^\infty([0,T];L^2(\mathbb{T}^d))} \\ &  \leq  C\left(\norm{\psi^{M,\delta}(u_0)}_{L^1(\mathbb{T}^d)}+\norm{u_0}^2_{L^2(\mathbb{T}^d)}\right).\end{aligned}\end{equation}
Therefore, in combination, estimates \eqref{sm_0} and \eqref{sm_1} imply that, for $C=C(\epsilon,T)>0$,
\begin{equation}\label{sm_2} \norm{\partial_t u^{M,\delta}}^2_{L^2([0,T];H^{-1}(\mathbb{T}^d))}\leq C\left(\norm{\psi^{M,\delta}(u_0)}_{L^1(\mathbb{T}^d)}+\norm{u_0}^2_{L^2(\mathbb{T}^d)}\right).\end{equation}

The combination of estimates (\ref{sm_0}), (\ref{sm_1}), and (\ref{sm_2}) together with the Aubins-Lions-Simon lemma, \cite{Aubin}, \cite{pLions}, and \cite{Simon}, imply that the collection
$$\left\{u^{M,\delta}\right\}_{M\geq 1, \delta\in(0,1)},$$
 is relatively pre-compact in $L^2([0,T];\mathbb{T}^d)$.  Therefore, after passing to a subsequence
 $$\left\{(M_k,\delta_k)\rightarrow(\infty,0)\right\}_{k=1}^\infty,$$
 there exists
 $$u\in \left(L^2\left(([0,T];H^1(\mathbb{T}^d)\right))\cap L^\infty\left(([0,T];L^2(\mathbb{T}^d)\right)\right)\;\;\textrm{with}\;\;\partial_t u\in L^2([0,T];H^{-1}(\mathbb{T}^d)),$$
 such that, as $k\rightarrow\infty$,
 \begin{equation}\label{sm_3}\begin{array}{ll}  u^{M_k,\delta_k}\rightarrow u  & \textrm{strongly in}\;\;L^2\left([0,T];L^2(\mathbb{T}^d)\right), \\ u^{M_k,\delta_k}\rightharpoonup u & \textrm{weakly in}\;\; L^2\left([0,T];H^1(\mathbb{T}^d)\right), \\ \partial_t u^{M_k,\delta_k}\rightharpoonup \partial_t u & \textrm{weakly in}\;\;L^2\left([0,T];H^{-1}(\mathbb{T}^d)\right).\end{array}\end{equation}
 The convergence (\ref{sm_3}) and \cite[Chapter~V]{LSUBook} imply that $u$ is a classical solution of (\ref{smooth_eq}).

 It is immediate from (\ref{sm_00}) and the strong convergence of (\ref{sm_3}) that, for $C=C(\epsilon,T)>0$,
\begin{equation}\label{sm_4} \norm{u}_{L^\infty(\mathbb{T}^d\times[0,T])}\leq C\norm{u_0}_{L^\infty(\mathbb{T}^d)}.\end{equation}
Definitions \eqref{e_bdd}, \eqref{e_convolve}, \eqref{e_anti}, and \eqref{e_anti_1}, estimates (\ref{sm_0}) and (\ref{sm_1}), the convergence (\ref{sm_3}), and the weak lower-semicontinuity of the norm imply that, for $C=C(\epsilon,T)>0$,
{\color{black}\begin{equation}\label{sm_5}\norm{u}^2_{L^\infty\left([0,T];L^2(\mathbb{T}^d)\right)}+\norm{\eta^\frac{1}{2}\nabla u}^2_{L^2\left([0,T];L^2(\mathbb{T}^d;\mathbb{R}^d)\right)}\leq C\norm{u_0}^2_{L^2(\mathbb{T}^d)}.\end{equation}}
Similarly, it follows from estimate (\ref{sm_1}) and the convergence (\ref{sm_3}) that, for $C=C(\epsilon,T)>0$,
\begin{equation}\label{sm_6}\norm{u}^{m+1}_{L^\infty\left([0,T];L^{m+1}(\mathbb{T}^d)\right)}+\norm{\nabla u^{[m]}}^2_{L^2\left([0,T];L^2(\mathbb{T}^d)\right)}\leq C\left(\norm{u_0}^{m+1}_{L^{m+1}(\mathbb{T}^d)}+\norm{u_0}^2_{L^2(\mathbb{T}^d)}\right).\end{equation}
Equation (\ref{smooth_eq}) and estimates (\ref{sm_5}) and (\ref{sm_6}) then imply that, for $C=C(\epsilon,T)>0$,
\begin{equation}\label{sm_7}\norm{\partial_t u}^2_{L^2\left([0,T];H^{-1}(\mathbb{T}^d)\right)}\leq C\left(\norm{u_0}^{m+1}_{L^{m+1}(\mathbb{T}^d)}+\norm{u_0}^2_{L^2(\mathbb{T}^d)}\right).\end{equation}
{\color{black}Lastly, after testing equation \eqref{smooth_eq} with $u$, which is justified by estimates \eqref{sm_5}, \eqref{sm_6}, and \eqref{sm_7}, it follows from H\"older's inequality, Young's inequality, \eqref{sm_5}, and \eqref{sm_6} that, for $C=C(\epsilon,T,m)>0$,
\begin{equation}\label{sm_8}\begin{aligned}\norm{\nabla u^{\left[\frac{m+1}{2}\right]}}^2_{L^2([0,T];L^2(\mathbb{T}^d;\mathbb{R}^d))} & =\frac{(m+1)^2}{4m}\int_0^T\int_{\mathbb{T}^d}\nabla u^{m}\cdot\nabla u\dx\dt \\ &  \leq C\left(\norm{u_0}^{m+1}_{L^{m+1}(\mathbb{T}^d)}+\norm{u_0}^2_{L^2(\mathbb{T}^d)}\right). \end{aligned}\end{equation}}
The convergence \eqref{sm_3} and estimates \eqref{sm_4}, \eqref{sm_5}, \eqref{sm_6}, \eqref{sm_7}, and \eqref{sm_8} complete the proof.  \end{proof}

In Section~\ref{sol_exists}, estimates were obtained for the solutions of (\ref{smooth_eq_1}) which are stable with respect to the $\eta$-perturbation by the Laplacian.  To obtain these estimates, it was necessary to pass to the kinetic formulation of (\ref{smooth_eq_1}), and to subsequently analyze the underlying stochastic characteristics.  It remains only to derive the kinetic equation associated to \eqref{smooth_eq}.

The following approach follows the general strategy of \cite{ChenPerthame}.  However, in our case, we must account for the $x$-dependence of the equation and the unbounded porous medium nonlinearity.  Fix $\eta,\epsilon\in(0,1)$.  Let $u^{\eta,\epsilon}$ denote a solution of
\begin{equation}\label{entropy_eq}\left\{\begin{array}{ll}\partial_t u=\Delta u^{[m]}+\eta\Delta u+\nabla\cdot(A(x,u)\dot{z}^\epsilon_t) & \textrm{in}\;\;\mathbb{T}^d\times(0,\infty), \\ u=u_0 & \textrm{on}\;\;\mathbb{T}^d\times\left\{0\right\}.\end{array}\right.\end{equation}
In order to expand the divergence appearing in (\ref{entropy_eq}), we define the matrix-valued function
\begin{equation}\label{kin_b}b(x,\xi)=(b_{ij}(x,\xi)):=\partial_\xi A(x,\xi)\in\mathcal{M}^{d\times n}\;\;\textrm{for each}\;\;(x,\xi)\in\mathbb{T}^d\times\mathbb{R},\end{equation}
and the vector-valued
\begin{equation}\label{kin_c}c(x,\xi)=(c_i(x,\xi)):=\left(\sum_{i=1}^d\partial_{x_i}a_{ij}(x,\xi)\right)\in\mathbb{R}^n\;\;\textrm{for each}\;(x,\xi)\in\mathbb{T}^d\times\mathbb{R}.\end{equation}
In combination, (\ref{entropy_eq}), (\ref{kin_b}), and (\ref{kin_c}) yield the equation
\begin{equation}\label{entropy_expand}\left\{\begin{array}{ll}\partial_t u=\Delta u^{[m]}+\eta\Delta u +b(x,u)\dot{z}^\epsilon_t\cdot \nabla u+c(x,u)\cdot \dot{z}^\epsilon_t & \textrm{in}\;\;\mathbb{T}^d\times(0,\infty), \\ u=u_0 & \textrm{on}\;\;\mathbb{T}^d\times\left\{0\right\}.\end{array}\right.\end{equation}

The entropy formulation of (\ref{entropy_expand}) is based upon studying the equations satisfied by compositions $S(u^{\eta,\epsilon})$, for smooth functions $S:\mathbb{R}\rightarrow\mathbb{R}$ which are convex and satisfy $S(0)=S'(0)=0$.  Indeed, after multiplying (\ref{entropy_expand}) by the composition $S'(u^{\eta,\epsilon})$, the chain rule implies that $S(u^{\eta,\epsilon})$ is a solution of the equation
\begin{equation}\label{kin_comp}\begin{aligned} \partial_tS(u^{\eta,\epsilon}) =  & \nabla\cdot\left(m\abs{u^{\eta,\epsilon}}^{m-1}\nabla S(u^{\eta,\epsilon})\right)+\eta\Delta S(u^{\eta,\epsilon}) +b(x,u^{\eta,\epsilon})\dot{z}^\epsilon_t\cdot \nabla S(u^{\eta,\epsilon}) \\ & +\left(c(x,u^{\eta,\epsilon})\cdot \dot{z}^\epsilon_t\right)S'(u^{\eta,\epsilon})  - S''(u^{\eta,\epsilon})m\abs{u^{\eta,\epsilon}}^{m-1}\abs{\nabla u^{\eta,\epsilon}}^2-S''(u^{\eta,\epsilon})\eta\abs{\nabla u^{\eta,\epsilon}}^2,\end{aligned}\end{equation}
on $\mathbb{T}^d\times(0,\infty)$, with initial data $S(u_0)$.   The kinetic formulation of (\ref{kin_comp}), through the introduction of an additional velocity variable $\xi\in\mathbb{R}$, replaces the ensemble of equations (\ref{kin_comp}), as defined by the collection of entropies $\{S\}$, by a single equation in $(d+1)$-variables.  This is effectively achieved by factoring out $S'(u)$.

Precisely, define the kinetic function $\overline{\chi}:\mathbb{R}^2\rightarrow\mathbb{R}$ by the rule
\begin{equation}\label{kinetic_function_foundation}\overline{\chi}(s,\xi):=\left\{\begin{array}{ll} 1 & \textrm{if}\;\;0<\xi<s, \\ -1 & \textrm{if}\;\;s<\xi<0, \\ 0 & \textrm{else,}\end{array}\right.\end{equation}
and consider the composition
\begin{equation}\label{kinetic_function} \chi^{\eta,\epsilon}(x,\xi,t):=\overline{\chi}(u^{\eta,\epsilon}(x,t),\xi)\;\;\textrm{for}\;\;(x,\xi,t)\in\mathbb{T}^d\times\mathbb{R}\times[0,\infty).\end{equation}
The identity, for each smooth $S:\mathbb{R}\rightarrow\mathbb{R}$ satisfying $S(0)=0$,
$$S(u^{\eta,\epsilon})=\int_{\mathbb{R}}S'(\xi)\chi^{\eta,\epsilon}(x,\xi,t)\;d\xi\;\;\textrm{for}\;\;x\in\mathbb{T}^d\;\;\textrm{and}\;\;t\in[0,\infty),$$
then suggests that, since $S$ can be an arbitrary smooth, convex function satisfying $S(0)=S'(0)=0$, the kinetic function $\chi^{\eta,\epsilon}$ is a solution of the equation
\begin{equation}\label{kin_eq}\begin{aligned} \partial_t\chi^{\eta,\epsilon}= & m\abs{\xi}^{m-1}\Delta_x\chi^{\eta,\epsilon}+\eta\Delta_x\chi^{\eta,\epsilon} +b(x,\xi)\dot{z}^\epsilon_t\cdot \nabla_x\chi^{\eta,\epsilon}- \left(c(x,\xi)\cdot\dot{z}^\epsilon_t\right)\partial_\xi\chi^{\eta,\epsilon} \\ & +\partial_\xi p^{\eta,\epsilon}(x,\xi,t)+\partial_\xi q^{\eta,\epsilon}(x,\xi,t), \end{aligned}\end{equation}
on $\mathbb{T}^d\times\mathbb{R}\times(0,\infty)$, with initial data $(x,\xi)\in\mathbb{T}^d\times\mathbb{R}\mapsto \overline{\chi}(u_0(x),\xi)$, for the entropy defect measure
\begin{equation}\label{ent_measure}p^{\eta,\epsilon}(x,\xi,t):=\delta_0\left(\xi-u^{\eta,\epsilon}(x,t)\right)\eta\abs{\nabla u^{\eta,\epsilon}}^2\;\;\textrm{for each}\;\;(x,\xi,t)\in\mathbb{T}^d\times\mathbb{R}\times[0,\infty),\end{equation}
and for the parabolic defect measure
\begin{equation}\label{parab_measure}q^{\eta,\epsilon}(x,\xi,t):=\delta_0\left(\xi-u^{\eta,\epsilon}(x,t)\right)\frac{4m}{(m+1)^2}\abs{\nabla\left(u^{\eta,\epsilon}\right)^{\left[\frac{m+1}{2}\right]}}^2\;\;\textrm{for each}\;\;(x,\xi,t)\in\mathbb{T}^d\times\mathbb{R}\times[0,\infty),\end{equation}
where $\delta_0$ is one-dimensional Dirac mass centered at the origin.  The following proposition proves that this is indeed the case.

\begin{prop}\label{smooth_kinetic_solution}  For each $\eta\in(0,1)$, $\epsilon\in(0,1)$, and $u_0\in L^2(\mathbb{T}^d)$, let $u^{\eta,\epsilon}$ denote a solution of \eqref{smooth_eq} from Proposition \ref{smooth_equation}.  Then, the kinetic function $\chi^{\eta,\epsilon}$ defined in \eqref{kinetic_function} is a distributional solution of \eqref{kin_eq} in the sense that, for every $t_1,t_2\in[0,\infty)$, for every $\psi\in\C^\infty_c(\mathbb{T}^d\times\mathbb{R}\times[t_1,t_2]))$,
\begin{equation}\begin{aligned}&\left.\int_\mathbb{R}\int_{\mathbb{T}^d}\chi^{\eta,\epsilon}(x,\xi,t)\psi(x,\xi,t)\dx\dxi\right|_{t=t_1}^{t_2} =  \int_{t_1}^{t_2}\int_\mathbb{R}\int_{\mathbb{T}^d}\chi^{\eta,\epsilon} \partial_t\psi\dx\dxi\dt  \\ &+\int_{t_1}^{t_2}\int_\mathbb{R}\int_{\mathbb{T}^d}m\abs{\xi}^{m-1}\chi^{\eta,\epsilon} \Delta_x\psi+\eta\chi^{\eta,\epsilon}\Delta_x\psi \dx\dxi\dt \\ &- \int_{t_1}^{t_2}\int_\mathbb{R}\int_{\mathbb{T}^d}\chi^{\eta,\epsilon}\nabla_x\cdot\left(\left(b(x,\xi)\dot{z}^\epsilon_t\right)\psi\right)-\chi^{\eta,\epsilon}\partial_\xi\left(\left(c(x,\xi)\cdot \dot{z}^\epsilon_t\right)\psi\right)\dx\dxi\dt \\ &-\int_{t_1}^{t_2}\int_\mathbb{R}\int_{\mathbb{T}^d}\left(p^{\eta,\epsilon}+q^{\eta,\epsilon}\right)\partial_\xi\psi \dx\dxi\dt. \end{aligned}\end{equation}
\end{prop}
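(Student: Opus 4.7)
The plan is to obtain the kinetic equation by applying the chain rule to the classical solution $u^{\eta,\epsilon}$ of \eqref{smooth_eq}, combined with integrations by parts in $x$ and $\xi$, and then to identify each resulting term as its kinetic counterpart. A standard density argument that separates variables in $\psi$ and approximates the indicator of $[t_1,t_2]$ reduces the problem to computing, for time-independent $\psi\in\C^\infty_c(\mathbb{T}^d\times\mathbb{R})$, the derivative
\[
\frac{d}{dt}\int_\mathbb{R}\int_{\mathbb{T}^d}\chi^{\eta,\epsilon}(x,\xi,t)\psi(x,\xi)\dx\dxi.
\]
Introducing $\Psi(x,s):=\int_0^s\psi(x,\xi)\dxi$ and using the closed form $\overline{\chi}(s,\xi)=H(s-\xi)-H(-\xi)$ yields
\[
\int_\mathbb{R}\int_{\mathbb{T}^d}\chi^{\eta,\epsilon}\psi\dx\dxi=\int_{\mathbb{T}^d}\Psi(x,u^{\eta,\epsilon}(x,t))\dx,
\]
whose time derivative, by the chain rule and \eqref{smooth_eq}, equals $\int_{\mathbb{T}^d}\psi(x,u^{\eta,\epsilon})\bigl[\Delta(u^{\eta,\epsilon})^{[m]}+\eta\Delta u^{\eta,\epsilon}+\nabla\cdot(A(x,u^{\eta,\epsilon})\dot{z}^\epsilon_t)\bigr]\dx$. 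The bridge to the kinetic form is provided by the distributional identities $\nabla_x\chi^{\eta,\epsilon}=\delta_0(\xi-u^{\eta,\epsilon})\nabla u^{\eta,\epsilon}$ and $\partial_\xi\chi^{\eta,\epsilon}=\delta_0(\xi)-\delta_0(\xi-u^{\eta,\epsilon})$, both of which are immediate from $\overline{\chi}=H(s-\xi)-H(-\xi)$.

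Concretely, for the porous-medium term I would expand $\nabla_x[\psi(x,u^{\eta,\epsilon})]=\nabla_x\psi(x,u^{\eta,\epsilon})+\partial_\xi\psi(x,u^{\eta,\epsilon})\nabla u^{\eta,\epsilon}$ after integrating by parts once; the $\nabla_x\psi$ summand, after a second integration by parts together with $\nabla(u^{\eta,\epsilon})^{[m]}=m|u^{\eta,\epsilon}|^{m-1}\nabla u^{\eta,\epsilon}$, produces $\int\int m|\xi|^{m-1}\chi^{\eta,\epsilon}\Delta_x\psi\dx\dxi$, while the $\partial_\xi\psi$ summand, after the identity $m|u^{\eta,\epsilon}|^{m-1}|\nabla u^{\eta,\epsilon}|^2=\frac{4m}{(m+1)^2}|\nabla(u^{\eta,\epsilon})^{\left[\frac{m+1}{2}\right]}|^2$ and definition \eqref{parab_measure}, produces $-\int\int q^{\eta,\epsilon}\partial_\xi\psi\dx\dxi$. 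The $\eta\Delta u^{\eta,\epsilon}$ term is handled identically, generating the $\eta\chi^{\eta,\epsilon}\Delta_x\psi$ contribution together with $-\int\int p^{\eta,\epsilon}\partial_\xi\psi\dx\dxi$ via \eqref{ent_measure}. For the noise term I would use the expansion $\nabla\cdot(A(x,u^{\eta,\epsilon})\dot{z}^\epsilon_t)=c(x,u^{\eta,\epsilon})\cdot\dot{z}^\epsilon_t+b(x,u^{\eta,\epsilon})\dot{z}^\epsilon_t\cdot\nabla u^{\eta,\epsilon}$ from \eqref{entropy_expand}: integration by parts against $\nabla_x\chi^{\eta,\epsilon}$ produces $-\int\int\chi^{\eta,\epsilon}\nabla_x\cdot((b\dot{z}^\epsilon_t)\psi)\dx\dxi$, while integration by parts against $\partial_\xi\chi^{\eta,\epsilon}$ produces $\int\int\chi^{\eta,\epsilon}\partial_\xi((c\cdot\dot{z}^\epsilon_t)\psi)\dx\dxi$, where the crucial observation is that assumption \eqref{prelim_vanish}, i.e.\ $c(x,0)=0$, exactly cancels the spurious Dirac contribution at $\xi=0$ arising from $\partial_\xi\chi^{\eta,\epsilon}$.

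The main obstacle lies in justifying the chain rule identities $\nabla(u^{\eta,\epsilon})^{[m]}=m|u^{\eta,\epsilon}|^{m-1}\nabla u^{\eta,\epsilon}$ and the quadratic identity above in the fast-diffusion regime $m\in(0,1)$, where the prefactor $|u|^{m-1}$ is singular on $\{u=0\}$. The cleanest fix is to carry out the entire calculation first at the level of the smoothed solutions $u^{M,\delta,\eta,\epsilon}$ of \eqref{smooth_eq_1} constructed in the proof of Proposition~\ref{smooth_equation}: their nonlinearities $\phi^{M,\delta}$ are globally smooth with bounded derivatives, so every chain rule becomes a trivial pointwise $\C^2$ identity, and the computation produces an exact analogue of \eqref{chareq_smooth_kinetic_eq} with $m|\xi|^{m-1}$ replaced by $(\phi^{M,\delta})'(\xi)$ and the defect measures written in terms of $(\phi^{M,\delta})'(u^{M,\delta,\eta,\epsilon})$. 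Passing to the limit $(M,\delta)\to(\infty,0)$ is then controlled by the stable estimates of Proposition~\ref{smooth_equation}: strong $L^2$-convergence of $u^{M,\delta,\eta,\epsilon}$ to $u^{\eta,\epsilon}$ handles every term in which the solution appears linearly and the kinetic function $\chi^{M,\delta,\eta,\epsilon}$, while the weak $L^2$-convergence of $\nabla(u^{M,\delta,\eta,\epsilon})^{\left[\frac{m+1}{2}\right]}$ and $\eta^{1/2}\nabla u^{M,\delta,\eta,\epsilon}$, combined with the fact that $\nabla u^{\eta,\epsilon}=0$ almost everywhere on $\{u^{\eta,\epsilon}=0\}$ (Stampacchia), identifies the quadratic gradient terms defining $p^{\eta,\epsilon}$ and $q^{\eta,\epsilon}$ in the limit.
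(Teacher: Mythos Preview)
Your proposal is correct and follows essentially the same route as the paper: introduce the antiderivative $\tilde{\psi}(x,\xi,t)=\int_0^\xi\psi(x,\xi',t)\,d\xi'$, test \eqref{smooth_eq} against $\psi(x,u^{\eta,\epsilon}(x,t),t)$, invoke the distributional identities $\nabla_x\chi^{\eta,\epsilon}=\delta_0(\xi-u^{\eta,\epsilon})\nabla u^{\eta,\epsilon}$ and $\partial_\xi\chi^{\eta,\epsilon}=\delta_0(\xi)-\delta_0(\xi-u^{\eta,\epsilon})$, and identify each term, using \eqref{prelim_vanish} to kill the spurious $\delta_0(\xi)$ contribution in the $c$-term.

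The one substantive difference is how you propose to handle the chain rule for $\nabla(u^{\eta,\epsilon})^{[m]}$ when $m\in(0,1)$. You suggest returning to the smoothed solutions $u^{M,\delta}$ of \eqref{smooth_eq_1} and passing to the limit; this would work, but is more elaborate than what the paper does. The paper works directly with $u^{\eta,\epsilon}$ and simply invokes that $u^{\eta,\epsilon}\in L^2([0,T];H^1(\mathbb{T}^d))$, which is guaranteed by the $\eta$-Laplacian perturbation---indeed the paper remarks explicitly that ``the essential point is that $u^{\eta,\epsilon}$ has a distributional derivative, and it is for this reason that the $\eta$-perturbation by the Laplacian is retained.'' With $u^{\eta,\epsilon}\in H^1$ and $\nabla(u^{\eta,\epsilon})^{[m]}\in L^2$ already in hand from Proposition~\ref{smooth_equation}, the chain-rule identities you worry about hold a.e.\ (Stampacchia on $\{u^{\eta,\epsilon}=0\}$ gives $\nabla u^{\eta,\epsilon}=0$ there), so the extra approximation layer is unnecessary. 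A second minor difference: the paper keeps $\psi$ time-dependent throughout, handling $\partial_t$ via the identity $\partial_t u\,\partial_\xi\psi(x,u,t)=\partial_t[\psi(x,u,t)]-(\partial_t\psi)(x,u,t)$, rather than first reducing to time-independent $\psi$ by density.
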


\begin{proof}  Let $\eta\in(0,1)$, $\epsilon\in(0,1)$, $u_0\in L^2(\mathbb{T}^d)$, and $t_1,t_2\in[0,\infty)$ be arbitrary.  Let $u^{\eta,\epsilon}$ denote a solution of \eqref{smooth_eq} satisfying the estimates of Proposition \ref{smooth_equation}, and let $\chi^{\eta,\epsilon}$ denote its kinetic function defined in (\ref{kinetic_function}).  The estimates of Proposition \ref{smooth_eq} imply that, for every $\psi\in\C^\infty_c(\mathbb{T}^d\times\mathbb{R}\times[t_1,t_2])$, the composition $(x,t)\in\mathbb{R}^d\times[t_1,t_2]\mapsto \psi(x,u^{\eta,\epsilon}(x,t),t)$ is an admissable test function for (\ref{smooth_eq}).

It is necessary to use the following identity, which holds for every for every $\psi\in\C^\infty_c(\mathbb{T}^d\times\mathbb{R}\times[t_1,t_2])$, for each $(x,t)\in\mathbb{T}^d\times[0,\infty)$,
\begin{equation}\label{sks_000}\partial_tu^{\eta,\epsilon}(x,t)\partial_\xi \psi(x,u^{\eta,\epsilon}(x,t),t)=\partial_t\left(\psi(x,u^{\eta,\epsilon}(x,t),t)\right)-(\partial_t\psi)(x,u^{\eta,\epsilon}(x,t),t).\end{equation}
It follows from (\ref{sks_000}) that, for any $\psi\in\C^\infty_c(\mathbb{T}^d\times\mathbb{R}\times[t_1,t_2])$, after defining
\begin{equation}\label{sks_00}\tilde{\psi}(x,\xi,t):=\int_0^\xi\psi(x,\xi',t)\dxip\;\;\textrm{for}\;\;(x,\xi,t)\in\mathbb{T}^d\times\mathbb{R}\times[t_1,t_2],\end{equation}
and testing equation \eqref{smooth_eq} with the composition $(x,t)\in\mathbb{R}^d\times[0,\infty)\mapsto \psi(x,u^{\eta,\epsilon}(x,t),t)$,
\begin{equation}\begin{aligned}\label{sks_0} & \left.\int_{\mathbb{T}^d}\tilde{\psi}(x,u^{\eta,\epsilon}(x,t),t)\dx\right|_{t=t_1}^{t_2} = \int_{t_1}^{t_2}\int_{\mathbb{T}^d}\left(\partial_t\tilde{\psi}\right)(x,u^{\eta,\epsilon}(x,t),t)\dx\dt  \\ & \quad -\int_{t_1}^{t_2} \int_{\mathbb{T}^d}\nabla\left(u^{\eta,\epsilon}\right)^{[m]}\cdot \left(\left(\nabla_x\psi\right)(x,u^{\eta,\epsilon}(x,t),t)+\partial_\xi\psi(x,u^{\eta,\epsilon}(x,t),t)\nabla u^{\eta,\epsilon}(x,t)\right) \dx\dt \\ & \quad -\int_{t_1}^{t_2}\int_{\mathbb{T}^d} \eta\nabla u^{\eta,\epsilon}\cdot \left(\left(\nabla_x\psi\right)(x,u^{\eta,\epsilon}(x,t),t)+\partial_\xi\psi(x,u^{\eta,\epsilon}(x,t),t)\nabla u^{\eta,\epsilon}(x,t)\right) \dx\dt \\ & \quad +\int_{t_1}^{t_2}\int_{\mathbb{T}^d}\left(b(x,u^{\eta,\epsilon})\dot{z}^\epsilon_t\cdot \nabla u^{\eta,\epsilon}\right)\psi(x,u^{\eta,\epsilon}(x,t),t)\dx\dt \\ & \quad +\int_{t_1}^{t_2}\int_{\mathbb{T}^d}\left(c(x,u^{\eta,\epsilon})\cdot \dot{z}^\epsilon_t\right)\psi(x,u^{\eta,\epsilon}(x,t),t)\dx\dt.\end{aligned} \end{equation}
The estimates of Proposition~\ref{smooth_equation}, in particular the fact that, for each $T>0$,
$$u^{\eta,\epsilon}\in L^2\left([0,T];H^1(\mathbb{R}^d)\right),$$
and definition \eqref{kinetic_function} imply that the kinetic function $\chi^{\eta,\epsilon}$ satisfies the distributional equalities, for $(x,\xi,t)\in\mathbb{T}^d\times\mathbb{R}\times[0,\infty)$,
\begin{equation}\label{sks_1} \nabla_x\chi^{\eta,\epsilon}(x,\xi,t)=\delta_0\left(\xi-u^{\eta,\epsilon}(x,t)\right)\nabla u^{\eta,\epsilon}(x,t)\;\;\textrm{and}\;\;\partial_\xi\chi^{\eta,\epsilon}(x,\xi,t)=\delta_0(\xi)-\delta_0\left(\xi-u^{\eta,\epsilon}(x,t)\right).\end{equation}
The essential point is that $u^{\eta,\epsilon}$ has a distributional derivative, and it is for this reason that the $\eta$-perturbation by the Laplacian is retained.

Therefore, returning to \eqref{sks_0}, it follows by definition of the kinetic function and the definition of $\tilde{\psi}$ from (\ref{sks_00}) that, for each $t\in[t_1,t_2]$,
\begin{equation}\label{sks_2}\int_{\mathbb{T}^d}\tilde{\psi}(x,u(x,t),t))\dx= \int_\mathbb{R}\int_{\mathbb{T}^d}\partial_\xi\tilde{\psi}(x,\xi,t)\overline{\chi}(u^{\eta,\epsilon}(x,t),\xi)\dx\dxi =\int_{\mathbb{T}^d}\psi(x,\xi,t)\chi^{\eta,\epsilon}(x,\xi,t)\dx\dxi ,\end{equation}
and
\begin{equation}\label{sks_3}\begin{aligned} \int_{t_1}^{t_2}\int_{\mathbb{T}^d}\left(\partial_t\tilde{\psi}\right)(x,u^{\eta,\epsilon}(x,t),t)\dx\dt= & \int_{t_1}^{t_2}\int_\mathbb{R}\int_{\mathbb{T}^d}\partial_t\partial_\xi\tilde{\psi}(x,\xi,t)\chi^{\eta,\epsilon}(x,\xi,t)\dx\dxi\dt \\ = & \int_{t_1}^{t_2}\int_\mathbb{R}\int_{\mathbb{T}^d}\partial_t\psi(x,\xi,t)\chi^{\eta,\epsilon}(x,\xi,t)\dx\dxi\dt. \end{aligned}\end{equation}
The identity $\nabla \left(u^{\eta,\delta}\right)^{[m]}=m\abs{u^{\eta,\epsilon}}^{m-1}\nabla u^{\eta,\epsilon},$ the definition of the parabolic defect measure \eqref{parab_measure}, and the distributional inequality \eqref{sks_1} imply that
\begin{equation}\label{sks_4}\begin{aligned} &\int_{t_1}^{t_2} \int_{\mathbb{T}^d}\nabla\left(u^{\eta,\epsilon}\right)^{[m]}\cdot \left(\left(\nabla_x\psi\right)(x,u^{\eta,\epsilon}(x,t),t)+\partial_\xi\psi(x,u^{\eta,\epsilon}(x,t),t)\nabla u^{\eta,\epsilon}(x,t)\right) \dx\dt  \\ & =\int_{t_1}^{t_2}\int_\mathbb{R}\int_{\mathbb{T}^d}m\abs{\xi}^{m-1}\nabla_x\chi^{\eta,\epsilon}(x,\xi,t)\nabla_x\psi(x,\xi,t)+q^{\eta,\epsilon}(x,\xi,t)\partial_\xi\psi(x,\xi,t)\dx\dxi\dt, \end{aligned}\end{equation}
and the definition of the entropy defect measure \eqref{ent_measure} implies that
\begin{equation}\begin{aligned}\label{sks_5} & \int_{t_1}^{t_2}\int_{\mathbb{T}^d} \eta\nabla u^{\eta,\epsilon}\cdot \left(\left(\nabla_x\psi\right)(x,u^{\eta,\epsilon}(x,t),t)+\partial_\xi\psi(x,u^{\eta,\epsilon}(x,t),t)\nabla u^{\eta,\epsilon}(x,t)\right) \dx\dt \\ & =\int_{t_1}^{t_2}\int_\mathbb{R}\int_{\mathbb{T}^d}\eta\nabla_x\chi^{\eta,\epsilon}(x,\xi,t)\cdot\nabla_x\psi(x,\xi,t)+p^{\eta,\epsilon}(x,\xi,t)\partial_\xi\psi(x,\xi,t)\dx\dxi\dt.\end{aligned}\end{equation}
It is immediate from the distributional equality \eqref{sks_1} that
\begin{equation}\begin{aligned}\label{sks_6} & \int_{t_1}^{t_2}\int_{\mathbb{T}^d}\left(b(x,u^{\eta,\epsilon})\dot{z}^\epsilon_t\cdot \nabla u^{\eta,\epsilon}\right)\psi(x,u^{\eta,\epsilon}(x,t),t)\dx\dt \\ & =\int_{t_1}^{t_2}\int_\mathbb{R}\int_{\mathbb{T}^d}\left(b(x,\xi)\dot{z}^\epsilon_t\cdot \nabla_x\chi^{\eta,\epsilon}(x,\xi,t)\right)\psi(x,\xi,t)\dx\dxi\dt.\end{aligned}\end{equation}
Finally, assumption \eqref{prelim_vanish} and the distributional equality \eqref{sks_1} imply that
\begin{equation}\begin{aligned}\label{sks_7} & \int_{t_1}^{t_2}\int_{\mathbb{T}^d}\left(c(x,u^{\eta,\epsilon})\cdot \dot{z}^\epsilon_t\right)\psi(x,u^{\eta,\epsilon}(x,t),t)\dx\dt  \\  &= -\int_{t_1}^{t_2}\int_\mathbb{R}\int_{\mathbb{T}^d}\left(c(x,\xi)\cdot\dot{z}^\epsilon_t\right)\partial_\xi\chi^{\eta,\epsilon}(x,\xi,t)\psi(x,\xi,t)\dx\dxi\dt.\end{aligned}\end{equation}
After integrating by parts, equation \eqref{sks_0} and equalities \eqref{sks_2}, \eqref{sks_3}, \eqref{sks_4}, \eqref{sks_5}, \eqref{sks_6}, and \eqref{sks_7} imply that, for every $\psi\in\C^\infty_c(\mathbb{T}^d\times\mathbb{R}\times[t_1,t_2])$,
\begin{equation}\begin{aligned}\label{sks_8}& \left.\int_\mathbb{R}\int_{\mathbb{T}^d}\chi^{\eta,\epsilon}(x,\xi,t)\psi(x,\xi,t)\dx\dxi\right|_{t=t_1}^{t_2}=\int_{t_1}^{t_2}\int_\mathbb{R}\int_{\mathbb{T}^d}\chi^{\eta,\epsilon} \partial_t\psi\dx\dxi\dt  \\ & \quad +\int_{t_1}^{t_2}\int_\mathbb{R}\int_{\mathbb{T}^d}m\abs{\xi}^{m-1}\chi^{\eta,\epsilon} \Delta_x\psi+\eta\chi^{\eta,\epsilon}\Delta_x\psi \dx\dxi\dt \\ & \quad - \int_{t_1}^{t_2}\int_\mathbb{R}\int_{\mathbb{T}^d}\chi^{\eta,\epsilon}\nabla_x\cdot\left(\left(b(x,\xi)\dot{z}^\epsilon_t\right)\psi\right)-\chi^{\eta,\epsilon}\partial_\xi\left(\left(c(x,\xi)\cdot \dot{z}^\epsilon_t\right)\psi\right)\dx\dxi\dt \\ & \quad -\int_{t_1}^{t_2}\int_\mathbb{R}\int_{\mathbb{T}^d}\left(p^{\eta,\epsilon}+q^{\eta,\epsilon}\right)\partial_\xi\psi \dx\dxi\dt. \end{aligned}\end{equation}
This completes the proof.  \end{proof}

\section{Rough Path Estimates}\label{sec_rough}

The theory of rough paths was first introduced by Lyons \cite{Lyons}, and overviews of the theory can be found in Friz and Hairer \cite{FrizHairer} or in Friz and Victoir \cite{FrizVictoir}.  We therefore only sketch some of the main details here.  For the remainder of this section fix $d\geq 1$ and $T\geq 0$.  Let $x\in\C^{1-\textrm{var}}\left([0,T];\mathbb{R}^d\right)$ be a path with bounded $1$-variation.  For each $M\geq 1$ the $M$-step signature of $x$ is defined as
$$S_M(x)_{0,T}:=\left(1,\int_0^T\dd x_s,\int_{0<s_0<s_1<T}\dd x_{s_1}\otimes \dd x_{s_2},\ldots,\int_{0<s_1<\ldots<s_M<T} \dd x_{s_1}\otimes\ldots\otimes \dd x_{s_M}\right).$$
It is immediate from the definition that $S_M(x)_{0,T}$ takes values in the the truncated $M$-step tensor algebra
$$T^M(\mathbb{R}^d):=\mathbb{R}\oplus \mathbb{R}^d\oplus \left(\mathbb{R}^d\right)^{\otimes 2}\oplus\ldots\oplus\left(\mathbb{R}^d\right)^{\otimes M}.$$
Following a reparametrization of the path, it follows that $S^M(x)_{0,T}$ actually lies in the smaller space $G^M(\mathbb{R}^d)\subset T^M(\mathbb{R}^d)$ defined by
$$G^M(\mathbb{R}^d):=\left\{\;S^M(x)_{0,1}\;|\;x\in\C^{1-\textrm{var}}\left([0,1];\mathbb{R}^d\right)\;\right\}.$$
The space $G^M(\mathbb{R}^d)$ comes equipped with the so-called Carnot-Caratheodory norm, for $\sigma\in G^M(\mathbb{R}^d)$,
{\color{black}$$\norm{\sigma}_{CC}=\inf\left\{\;\int_0^1\abs{\dot{\gamma}}\ds\;|\;\gamma\in\C^{1-\textrm{var}}\left([0,1];\mathbb{R}^d\right)\;\;\textrm{and}\;\;S^M(\gamma)_{0,1}=\sigma\;\right\}.$$}
This norm defines a homogenous on the space $G^M(\mathbb{R}^d)$.  We remark that an inhomogenous but equivalent norm can also be chosen by defining the norm of an element $\sigma\in G^M(\mathbb{R}^d)$ to be the supremum of the respective $L^\infty$-norms of its components.

{\color{black}The Carnot-Caratheodory norm induces, following \cite[Definition~7.41]{FrizVictoir}, the Carnot-Caratheodory metric $d_{CC}$ on $G^M(\mathbb{R}^d)$.  For $\beta\in(0,1)$, the homogenous $\beta$-H\"older metric, for $\beta\in(0,1)$ and paths $z,w$ taking values in $G^M(\mathbb{R}^d)$, is defined as
$$d_\beta(z,w):=\sup_{0\leq s\leq t\leq 1}\frac{d_{CC}(z_{t,s},w_{t,s})}{\abs{t-s}^\beta}.$$}
For $\beta\in(0,1)$, a geometric $\beta$-H\"older continuous rough path is a path $z$ taking values in $T^{\left\lfloor\frac{1}{\beta}\right\rfloor}(\mathbb{R}^d)$ which can be approximated by the signatures of smooth paths with respect to the $\beta$-H\"older metric $d_\beta$.  Precisely, a path $z:[0,T]\rightarrow T^{\left\lfloor\frac{1}{\beta}\right\rfloor}(\mathbb{R}^d)$ is a geometric rough path if there exists a sequence of smooth paths $\{z^n:[0,T]\rightarrow\mathbb{R}^d\}$ such that, as $n\rightarrow\infty$,
\begin{equation}\label{geometric_path}d_\beta\left(z, S_{\left\lfloor\frac{1}{\beta}\right\rfloor}(z^n)\right)\rightarrow 0.\end{equation}
It can be shown that $\beta$-H\"older continuous geometric rough paths take values in the space $G^{\left\lfloor\frac{1}{\beta}\right\rfloor}(\mathbb{R}^d)$.  We will denote by $\C^{0,\beta}([0,T];G^{\left\lfloor\frac{1}{\beta}\right\rfloor}(\mathbb{R}^d))$ the space of $\beta$-H\"older continuous geometric rough paths starting at zero.

In the final part of this section, we will recall some stability estimates for the solutions of rough differential equations.  For each $x\in\mathbb{R}^d$ and $z\in \C^{0,\beta}([0,T];G^{\left\lfloor\frac{1}{\beta}\right\rfloor}(\mathbb{R}^d)))$, for some $\beta\in(0,1)$, let $X^{x,z}$ be the solution of the equation
\begin{equation}\label{rough_eq} \left\{\begin{array}{ll} dX^{x,z}_t=V(X^{x,z}_t)\circ dz_t & \textrm{on}\;\;(0,\infty), \\ X^{x,z}_0=x. & \end{array}\right.\end{equation}
The ensemble \eqref{rough_eq} defines a flow map $\psi^z:\mathbb{R}^d\times[0,T]\rightarrow\mathbb{R}^d$ by the rule
$$\psi^z_t(x)=X^{x,z}_t\;\;\textrm{for}\;\;(x,t)\in\mathbb{R}^d\times[0,T].$$
The following proposition encodes the regularity of the flow map with respect to the initial condition and the driving signal.  The regularity is inherited from the nonlinearity $V$, which must be sufficiently regular to overcome the roughness of the noise.  A proof of the following proposition can be found in Crisan, Diehl, Friz, and Oberhauser \cite[Lemma~13]{CrisanDiehlFrizOberhauser}.  In the statement below, we will write $e=1\oplus0\oplus\dots\oplus0$ to denote the signature of the zero path.

\begin{prop}\label{rough_est} Fix $T\geq 0$, $\beta\in(0,1)$, $\gamma>\frac{1}{\beta}\geq 1$, and $k\in\mathbb{N}$.  Assume $V\in\Lip^{\gamma+k}(\mathbb{R}^d;\mathbb{R}^d)$, and for a $R\geq 0$, assume that $z^1, z^2\in\C^{0,\beta}\left([0,T]; G^{\left\lfloor\frac{1}{\beta}\right\rfloor}(\mathbb{R}^d)\right)$ with, for each $j\in\{1,2\}$,
\begin{equation}\label{rough_est_1}d(z^j,e)_\beta\leq R.\end{equation}
There exist $C=C(R,\norm{V}_{\Lip^{\gamma+k}})>0$ and $K=K(R,\norm{V}_{\Lip^{\gamma+k}})>0$ independent of $z^1,z^2$ satisfying \eqref{rough_est_1} such that, for all $n\in\{0,\ldots,k\}$,
\begin{equation}\label{rough_est_2}\sup_{x\in\mathbb{R}^d}\norm{D^n(\psi^{z_1}_t-\psi^{z_2}_t)(x)}_\beta\leq Cd_\beta(z^1,z^2),\end{equation}
and
\begin{equation}\label{rough_est_3}\sup_{x\in\mathbb{R}^d}\norm{D^n((\psi^{z_1}_t)^{-1}-(\psi^{z_2}_t)^{-1})(x)}_\beta\leq Cd_\beta(z^1,z^2).\end{equation}
Furthermore, for each $n\in\{0,\ldots,k\}$,
\begin{equation}\label{rough_est_4}\sup_{x\in\mathbb{R}^d}\norm{D^n\psi^{z_1}_t(x)}_\beta \leq K \;\;\textrm{and}\;\;\sup_{x\in\mathbb{R}^d}\norm{D^n(\psi^{z_1}_t)^{-1}(x)}_\beta\leq K.\end{equation}
\end{prop}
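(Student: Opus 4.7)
\medskip

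\noindent\textbf{Proof plan.}  The proposition is a standard stability result for rough differential equations, and the plan is to reduce everything to existing estimates by augmenting the driven system with its Jacobian flow.  The first step is to differentiate the RDE $\dd X^{x,z}_t=V(X^{x,z}_t)\circ\dd z_t$ repeatedly in $x$:  for each $n\in\{1,\ldots,k\}$, the spatial derivative $J^n_t(x):=D^n_x\psi^z_t(x)$ satisfies an RDE driven by the same path $z$ with a vector field $\tilde V^{(n)}$ constructed polynomially from $V$ and its derivatives up to order $n$.  Since $V\in\Lip^{\gamma+k}$, the augmented vector field on the Whitney sum $\mathbb R^d\oplus\left(\mathbb R^d\right)^{d}\oplus\cdots\oplus\left(\mathbb R^d\right)^{d^n}$ lies in $\Lip^{\gamma}$ with norm controlled by $\norm{V}_{\Lip^{\gamma+k}}$, so one may view $(\psi^z,J^1,\ldots,J^k)$ as the solution of a single RDE with a $\Lip^{\gamma}$ vector field, to which the full strength of Lyons' theory applies.

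The uniform bound \eqref{rough_est_4} then follows from the standard a priori estimate for $\Lip^\gamma$ vector fields driven by $\beta$-H\"older rough paths, as in Friz--Victoir Chapter~10:  the $\beta$-H\"older norm of the solution of an RDE on $[0,T]$ driven by $z$ with $d_\beta(z,e)\leq R$ is bounded by a constant depending only on $R$, $\norm{V}_{\Lip^{\gamma+k}}$, and $T$.  Applying this to the augmented system gives \eqref{rough_est_4}.  For the continuity estimate \eqref{rough_est_2}, one invokes the local Lipschitz continuity of the It\^o--Lyons solution map with respect to the driving signal:  the map $z\mapsto X^{x,z}$ is Lipschitz on bounded subsets of $\C^{0,\beta}([0,T];G^{\left\lfloor 1/\beta\right\rfloor}(\mathbb R^n))$ with a constant depending only on the $\Lip^\gamma$ norm of the vector field and the bound $R$.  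Applying this to the augmented system and using that $\tilde V^{(n)}$ has bounded derivatives independent of the base point $x\in\mathbb R^d$ yields \eqref{rough_est_2} uniformly in $x$.

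For the inverse flow, observe that $(\psi^z_t)^{-1}(x)$ is the solution at time $t$ of the RDE driven by the time-reversed geometric rough path $\overleftarrow{z}_s:=z_{t-s}$ with vector field $-V$; geometricity ensures $\overleftarrow{z}$ is again a $\beta$-H\"older rough path with $d_\beta(\overleftarrow{z},e)\leq R$ and with $d_\beta(\overleftarrow{z^1},\overleftarrow{z^2})\leq d_\beta(z^1,z^2)$, so the same arguments applied to the inverse system give \eqref{rough_est_3} and the second half of \eqref{rough_est_4}.

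The main obstacle is the Jacobian reduction:  one must verify that the augmented vector field $\tilde V^{(n)}$ for the system $(\psi^z, J^1,\ldots,J^n)$ really lies in $\Lip^\gamma$ with the claimed norm bound, and that the iterated partial derivatives appearing in its definition are consistent with the pathwise derivatives of the RDE solutions obtained via smooth approximation.  This is where geometricity of the rough paths plays a crucial role, since it permits approximation by smooth paths in the $\beta$-H\"older metric \eqref{geometric_path} and ensures that the solutions of the approximating ODEs and their classical spatial derivatives converge to the $\beta$-H\"older objects $\psi^z$ and $J^n$.  Once this reduction is in place, the proof reduces to a bookkeeping exercise on top of the standard RDE continuity theorem, which is precisely the argument carried out in Crisan--Diehl--Friz--Oberhauser \cite{CrisanDiehlFrizOberhauser}.
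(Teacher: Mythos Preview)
Your proposal is correct and, in fact, goes further than the paper itself: the paper does not prove this proposition at all but simply cites \cite[Lemma~13]{CrisanDiehlFrizOberhauser}, exactly the reference you arrive at in your final sentence. Your sketch of the argument---augment the RDE by its iterated Jacobian equations to obtain a single $\Lip^\gamma$ system, apply the standard a~priori and local Lipschitz estimates for the It\^o--Lyons map, and treat the inverse flow via time reversal of the geometric driver---is precisely the strategy employed in that reference.
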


We conclude this section with a lemma which asserts that the characteristics in velocity are locally in time comparable to their initial condition.

\begin{lem}\label{auxlem}  For each $T>0$ there exists $C=C(T)\geq 1$ such that, for each $(x,\xi)\in\mathbb{T}^d\times\mathbb{R}$ and $t\in[0,T]$,
$$C^{-1}\abs{\xi}\leq \abs{\Pi^{x,\xi}_{t,t}}\leq C\abs{\xi}.$$
Furthermore, there exists $C=C(T)>0$ such that, for each $(x,\xi)\in\mathbb{T}^d\times\mathbb{R}$ and $t\in[0,T]$, for $\alpha\in(0,\frac{1}{2})$ from \eqref{prelim_Holder},
$$\abs{\nabla\Pi^{x,\xi}_{t,t}}\leq Ct^\alpha\left(\abs{\xi}\wedge1\right).$$
\end{lem}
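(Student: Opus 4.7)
The plan is to reduce both claims to Proposition~\ref{rough_est}, by exploiting the invariance of the hyperplane $\{\xi=0\}$ under the backward characteristic flow
$$\psi^z_t:(x,\xi)\mapsto\left(Y^{x,\xi}_{t,t},\Pi^{x,\xi}_{t,t}\right).$$
This invariance is recorded in \eqref{kin_rough_sign} and, more explicitly, follows from \eqref{prelim_vanish}: since $c(y,0)=0$, the $\Pi$-component of any solution of \eqref{kin_rough_back} with initial velocity $\xi=0$ stays zero, whence $\Pi^{x,0}_{t,t}=0$ for every $(x,t)\in\mathbb{T}^d\times[0,T]$. By assumption \eqref{prelim_regular}, the vector field $(-b,c)$ driving \eqref{kin_rough_back} lies in $C^{\gamma+2}$ with $\gamma>1/\alpha$, so Proposition~\ref{rough_est} is applicable with $k=2$.

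For the first pair of bounds, I would use \eqref{rough_est_4} with $n=1$ to obtain a constant $K=K(T)$ such that $\psi^z_t$ is globally $K$-Lipschitz in $(x,\xi)$, uniformly in $t\in[0,T]$.  Projecting onto the $\xi$-component and using $\Pi^{x,0}_{t,t}=0$ yields the upper bound
$$\abs{\Pi^{x,\xi}_{t,t}}=\abs{\Pi^{x,\xi}_{t,t}-\Pi^{x,0}_{t,t}}\leq K\abs{\xi}.$$
The corresponding lower bound follows from the same reasoning applied to the inverse flow $(\psi^z_t)^{-1}$, which after an appropriate time reversal is the flow of \eqref{kin_rough_for}; this inverse flow also preserves $\{\xi=0\}$ and is uniformly Lipschitz by \eqref{rough_est_3}--\eqref{rough_est_4}.

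For the gradient estimate, the strategy is to combine two vanishing identities with the H\"older-in-time regularity supplied by Proposition~\ref{rough_est}.  The two identities are $\nabla_x\Pi^{x,\xi}_{0,0}=0$, since $\Pi^{x,\xi}_{0,0}=\xi$ is independent of $x$, and $\nabla_x\Pi^{x,0}_{t,t}=0$ for every $t\in[0,T]$, since $\Pi^{x,0}_{t,t}\equiv 0$.  Applying \eqref{rough_est_4} with $n=1$ yields
$$\sup_{(x,\xi)\in\mathbb{T}^d\times\mathbb{R}}\left[\nabla_x\Pi^{x,\xi}_{\cdot,\cdot}\right]_{C^\alpha([0,T])}\leq K,$$
which, combined with the first identity, gives $\abs{\nabla_x\Pi^{x,\xi}_{t,t}}\leq Kt^\alpha$; the same argument with $n=2$ and the $\xi$-derivative of the first identity produces $\abs{\partial_\xi\nabla_x\Pi^{x,\xi}_{t,t}}\leq Kt^\alpha$ uniformly in $(x,\xi)$.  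Finally, the second identity permits the representation
$$\nabla_x\Pi^{x,\xi}_{t,t}=\int_0^\xi\partial_\eta\nabla_x\Pi^{x,\eta}_{t,t}\deta,$$
which yields the alternative estimate $\abs{\nabla_x\Pi^{x,\xi}_{t,t}}\leq Kt^\alpha\abs{\xi}$.  Taking the minimum of the two available bounds delivers $\abs{\nabla_x\Pi^{x,\xi}_{t,t}}\leq Kt^\alpha(\abs{\xi}\wedge 1)$.

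The main technical subtlety is the rigorous justification of the invariance of $\{\xi=0\}$ at the level of rough paths, but this is already built into \eqref{kin_rough_sign} and ultimately rests on uniqueness for \eqref{kin_rough_back}.  Once this invariance is accepted, the proof reduces to direct applications of Proposition~\ref{rough_est} together with the elementary fact that a $C^1$ function vanishing at $\xi=0$ is controlled near the origin by the supremum of its $\xi$-derivative.
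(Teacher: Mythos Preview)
Your proof is correct. For the gradient estimate you argue exactly as the paper does: obtain $\abs{\partial_\xi\nabla_x\Pi^{x,\xi}_{t,t}}\leq Ct^\alpha$ from Proposition~\ref{rough_est}, then use the two vanishing identities $\nabla_x\Pi^{x,\xi}_{0,0}=0$ and $\nabla_x\Pi^{x,0}_{t,t}=0$ together with integration in $\xi$ to reach the bound $Ct^\alpha(\abs{\xi}\wedge 1)$.

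For the first pair of bounds your route is different and in fact more direct than the paper's. The paper first extracts from Proposition~\ref{rough_est} a \emph{local} two-sided derivative bound $\tfrac12\leq\partial_\xi\Pi^{x,\xi}_{t,s}\leq\tfrac32$ valid only for $s\in[0,t_*]$, obtains the two-sided estimate on $[0,t_*]$ by integrating from $\xi=0$, and then propagates it to all of $[0,T]$ by an induction on intervals of length $t_*$ via the semigroup property. You instead use the \emph{global} Lipschitz bounds on the flow and its inverse provided by \eqref{rough_est_4}, together with the invariance $\Pi^{x,0}_{t,t}=0$ and $\Xi^{y,0}_{0,t}=0$, to get both the upper and lower bounds in one stroke. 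Your argument is shorter and avoids the iteration; the paper's approach has the minor advantage of making explicit that the constant degenerates only through the number of subintervals, but this is not needed for the stated lemma.
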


\begin{proof}  The proof is a consequence of assumption \eqref{prelim_regular} and the estimates of Proposition~\ref{rough_est}.  There exists $t_*\in(0,\infty)$ such that, for each $(x,\xi,t)\in\mathbb{T}^d\times\mathbb{R}\times[0,\infty)$, for each $s\in[0,t_*\wedge t]$,
\begin{equation}\label{auxlem_1}\frac{1}{2}\leq \partial_\xi\Pi^{x,\xi}_{t,s}\leq \frac{3}{2}.\end{equation}
The proof will follow by induction.  For the base case, observe that, since for each $x\in\mathbb{T}^d$ and $t\geq 0$ we have $\Pi^{x,0}_{t,t}=0$, it follows by integration and \eqref{auxlem_1} that there exists $C=C(t_*)>0$ such that, for each $(x,\xi)\in\mathbb{T}^d\times\mathbb{R}$ and $t\in[0,t_*]$,
\begin{equation}\label{auxlem_2}C^{-1}\abs{\xi}\leq \abs{\Pi^{x,\xi}_{t,t}}\leq C\abs{\xi}.\end{equation}
For the inductive statement, suppose that for some $k\in\mathbb{N}$, there exists $C=C(kt_*)>0$ such that, for each $(x,\xi,t)\in\mathbb{T}^d\times\mathbb{R}\times[0,kt_*]$,
\begin{equation}\label{auxlem_3}C^{-1}\abs{\xi}\leq \abs{\Pi^{x,\xi}_{t,t}}\leq C\abs{\xi}.\end{equation}
The semigroup property implies that, for each $(x,\xi,t)\in\mathbb{T}^d\times\mathbb{R}\times[kt_*,(k+1)t_*]$,
$$\Pi^{x,\xi}_{t,t}=\Pi^{Y^{x,\xi}_{t-t_*,t-t_*},\Pi^{x,\xi}_{t-t_*,t-t_*}}_{t,t_*}.$$
It follows from \eqref{auxlem_1}, the fact that $\Pi^{Y^{x,\xi}_{t-t_*,t-t_*},0}_{t,t_*}=0$, and integration that, for $C\geq1$, for each $(x,\xi,t)\in\mathbb{T}^d\times\mathbb{R}\times[kt_*,(k+1)t_*]$,
$$C^{-1}\abs{\Pi^{x,\xi}_{t-t_*,t-t_*}}\leq \abs{\Pi^{x,\xi}_{t,t}}\leq C\abs{\Pi^{x,\xi}_{t-t_*,t-t_*}}.$$
Finally, since $t-t_*\in[0,kt_*]$ for each $t\in[kt_*,(k+1)t_*]$, the inductive statement \eqref{auxlem_3} implies that, for $C=C((k+1)t_*)>0$, for each $(x,\xi,t)\in\mathbb{T}^d\times\mathbb{R}\times[kt_*,(k+1)t_*]$,
\begin{equation}\label{auxlem_4}C^{-1}\abs{\xi}\leq \abs{\Pi^{x,\xi}_{t,t}}\leq C\abs{\xi}.\end{equation}
The base case \eqref{auxlem_2} and \eqref{auxlem_4} complete the proof.

The second claim is simpler and follows similarly from assumption \eqref{prelim_regular} and the estimates of Proposition~\ref{rough_est}.  For each $T>0$ there exists $C=C(T)>0$ such that, for each $(x,\xi)\in\mathbb{T}^d\times\mathbb{R}$ and $t\in[0,T]$, for $\alpha\in(0,\frac{1}{2})$ defining the regularity of the noise in \eqref{prelim_Holder},
$$\abs{\partial_\xi\nabla\Pi^{x,\xi}_{t,t}}\leq Ct^\alpha.$$
Therefore, since for each $(x,\xi)\in\mathbb{T}^d\times\mathbb{R}$ and $t\geq 0$, we have $\nabla\Pi^{x,\xi}_{0,0}=0$ and $\nabla\Pi^{x,0}_{t,t}=0$, the claim follows from the estimates of Proposition~\ref{rough_est} and integration.  This completes the proof.  \end{proof}

\section{Fractional Sobolev Regularity of the Kinetic Function}\label{sec_frac}

The purpose of this section is to prove the fractional Sobolev regularity of the kinetic function $\chi$ of a pathwise kinetic solution $u$, in the sense of Definition~\ref{def_solution}.  We will first consider the kinetic function's regularity in the velocity variable where, for each $x\in\mathbb{T}^d$, the map $\xi\in\mathbb{R}\mapsto\chi(x,\xi)$ is the indicator function of either the open interval $(0,u(x))$, if $u(x)\geq 0$, or the open interval $(u(x),0)$.

The first proposition proves that the space of $\BV$ functions locally embeds into the fractional Sobolev space $W^{s,1}$, for every $s\in(0,1)$.  We will apply this to the kinetic function $\chi$ in the corollary to follow, after making the elementary observation that the one-dimensional indicator function of a finite interval is of  bounded variation.

\begin{prop}\label{BVto} Let $d\geq 1$, and suppose that $U\subset\mathbb{R}^d$ is a convex open subset. Then, for every $\psi\in \BV(U)$, and for each $s\in(0,1)$, there exists $C=C(d,s)>0$ such that
$$\norm{\psi}_{W^{s,1}(U)}\leq C\norm{\psi}_{\BV(U)}.$$
\end{prop}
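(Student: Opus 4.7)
\medskip

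\noindent\textbf{Proof proposal.}  The plan is to bound the Gagliardo seminorm
$$[\psi]_{W^{s,1}(U)}=\int_U\int_U\frac{\abs{\psi(x)-\psi(y)}}{\abs{x-y}^{d+s}}\dx\dy$$
by $C(d,s)\,\abs{D\psi}(U)$ and the $L^1$ norm by itself, then sum.  After a standard mollification argument (mollifying against a kernel and using the lower semicontinuity of the $BV$ norm under $L^1$ convergence, along with the fact that $\abs{D(\psi*\rho_\varepsilon)}(U)\leq\abs{D\psi}(U)$ after an appropriate extension near $\partial U$), it suffices to prove the estimate when $\psi\in\C^\infty(U)\cap\BV(U)$.

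First I would split the Gagliardo integral by the size of $\abs{x-y}$.  For the far regime $\abs{x-y}>1$, the triangle inequality and the integrability of $\abs{h}^{-d-s}$ at infinity give
$$\int_U\int_U\mathbf{1}_{\abs{x-y}>1}\frac{\abs{\psi(x)-\psi(y)}}{\abs{x-y}^{d+s}}\dx\dy\leq 2\norm{\psi}_{L^1(U)}\int_{\abs{h}>1}\abs{h}^{-d-s}\dd h = C(d,s)\norm{\psi}_{L^1(U)}.$$
For the near regime $\abs{x-y}\le 1$, I would use convexity of $U$ to write the fundamental theorem of calculus along the segment from $y$ to $x$,
$$\abs{\psi(x)-\psi(y)}\leq\abs{x-y}\int_0^1\abs{\nabla\psi(y+t(x-y))}\dt,$$
thereby reducing one negative power of $\abs{x-y}$.

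The key step is the change of variables.  By the symmetry of the integrand in $x,y$ I can restrict to $t\in[1/2,1]$ (the case $t\in[0,1/2]$ is handled by swapping $x\leftrightarrow y$ and $t\to 1-t$).  For $t\in[1/2,1]$, set $z=y+t(x-y)\in U$ (convexity), so $\dd x=t^{-d}\dz$ and $\abs{x-y}=\abs{z-y}/t$, yielding
$$\int_{1/2}^1\!\!\int_U\int_U\mathbf{1}_{\abs{z-y}\le t}\frac{\abs{\nabla\psi(z)}}{(\abs{z-y}/t)^{d+s-1}}\,t^{-d}\dy\dz\dt \leq \int_{1/2}^1 t^{s-1}\dt\cdot\left(\int_{\abs{h}\le 1}\abs{h}^{-(d+s-1)}\dd h\right)\norm{\nabla\psi}_{L^1(U)}.$$
Both of the factors in parentheses are finite since $s<1$, yielding $C(d,s)\abs{D\psi}(U)$.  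Combining the two regimes gives $[\psi]_{W^{s,1}(U)}\leq C(d,s)\norm{\psi}_{\BV(U)}$, and adding the trivial $L^1$ bound completes the proof.

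The main technical obstacle is the mollification step in the case of unbounded or non-smooth $U$:  one needs $\psi*\rho_\varepsilon$ to make sense on $U$ and to satisfy $\abs{D(\psi*\rho_\varepsilon)}(U)\leq\abs{D\psi}(U)+o(1)$.  This is handled by the standard truncation and extension procedure for $\BV$ functions on convex domains, after which the smooth estimate above passes to the limit by Fatou's lemma applied to the Gagliardo integrand.
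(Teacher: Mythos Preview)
Your proof is correct and follows essentially the same approach as the paper: approximate $\psi$ by smooth $W^{1,1}$ functions in the strict $\BV$ sense, split the Gagliardo seminorm into $\abs{x-y}>1$ and $\abs{x-y}\le 1$, and on the near part use the fundamental theorem of calculus along segments (convexity) together with a change of variables. The only cosmetic differences are that the paper carries out the change of variables over the full range $r\in[0,1]$ (the factor $r^{s-1}$ is integrable, so your symmetry trick restricting to $t\in[1/2,1]$ is not needed, though it is valid), and the paper inserts a $\delta$-regularization of the denominator before passing to the limit, whereas your direct appeal to Fatou on the Gagliardo integrand is equally valid and slightly cleaner.
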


\begin{proof}\label{BV}  Let $U\subset\mathbb{R}^d$ be a convex open subset.  Fix $\psi\in\BV(U)$ and $s\in(0,1)$.  Then, choose a sequence $\left\{\psi_n\right\}_{n=1}^\infty\subset \left(W^{1,1}\cap\C^\infty\right)(U)$ such that, as $n\rightarrow\infty$,
\begin{equation}\label{BV_0}\lim_{n\rightarrow\infty}\norm{\psi-\psi_n}_{L^1(U)}=0\;\;\textrm{and}\;\;\lim_{n\rightarrow\infty}\abs{\norm{\nabla\psi_n}_{L^1(U)}-\abs{\nabla\psi}(U)}=0,\end{equation}
where $\abs{\nabla \psi}(U)$ denotes the measure of $U$ with respect to the total variation of the measure $\nabla\psi$.  This sequence can be constructed, for instance, via convolution.

It is only necessary to estimate the fractional Sobolev semi-norm.  For this, for each $n\geq 0$,
$$\begin{aligned}\int_{U\times U}\frac{\abs{\psi_n(x)-\psi_n(y)}}{\abs{x-y}^{d+s}}\dx\dy = & \int_{\{\abs{x-y}>1\}\cap \left(U\times U\right)}\frac{\abs{\psi_n(x)-\psi_n(y)}}{\abs{x-y}^{d+s}}\dx\dy \\ & + \int_{\{\abs{x-y}\leq 1\}\cap \left(U\times U\right)}\frac{\abs{\psi_n(x)-\psi_n(y)}}{\abs{x-y}^{d+s}}\dx\dy,\end{aligned}$$
and, therefore,
\begin{equation}\label{BV_1}\int_{U\times U}\frac{\abs{\psi_n(x)-\psi_n(y)}}{\abs{x-y}^{d+s}}\dx\dy \leq  2\norm{\psi_n}_{L^1(U)}+\int_{\{\abs{x-y}\leq 1\}\cap\left(U\times U\right)}\frac{\abs{\psi_n(x)-\psi_n(y)}}{\abs{x-y}^{d+s}}\dx\dy. \end{equation}
For the final term on the righthand side of (\ref{BV_1}), the regularity of the $\{\psi_n\}_{n=1}^\infty$ and the convexity of $U$ imply that, for $C=C(d,s)>0$,
\begin{equation}\label{BV_2}\begin{aligned}\int_{\{\abs{x-y}\leq 1\}\cap\left(U\times U\right)}\frac{\abs{\psi_n(x)-\psi_n(y)}}{\abs{x-y}^{d+s}}\leq & \int_{\{\abs{x-y}\leq 1\}\cap\left(U\times U\right)}\int_0^1\abs{x-y}^{1-d-s}\abs{\nabla\psi_n}(x+r(y-x))\dr\\  \leq & \int_{B_1}\abs{x}^{1-d-s}\int_{U}\abs{\nabla\psi_n} \\ \leq & C\norm{\nabla\psi_n}_{L^1(U)}.\end{aligned}\end{equation}

The statement now follows by passing to the limit $n\rightarrow\infty$.  Precisely, the dominated convergence theorem, \eqref{BV_0}, \eqref{BV_1}, and \eqref{BV_2} imply that, for each $\delta\in(0,1)$, for $C=C(d,s)>0$,
\begin{equation}\label{BV_3}\begin{aligned} \int_{U\times U}\frac{\psi(x)-\psi(y)}{\abs{x-y}^{d+s}+\delta}\dx\dy= & \lim_{n\rightarrow\infty}\int_{U\times U}\frac{\psi_n(x)-\psi_n(y)}{\abs{x-y}^{d+s}+\delta}\dx\dy \\ & \leq \lim_{n\rightarrow\infty}\int_{U\times U}\frac{\psi_n(x)-\psi_n(y)}{\abs{x-y}^{d+s}}\dx\dy \\ & \leq \lim_{n\rightarrow\infty}C\left(\norm{\psi_n}_{L^1(U)}+\norm{\nabla\psi_n}_{L^1(U)}\right) \\ & \quad =C\left(\norm{\psi}_{L^1(U)}+\abs{\nabla\psi}(U)\right)= C\norm{\psi}_{\BV(U)}.\end{aligned}\end{equation}
Hence, after passing to the limit $\delta\rightarrow 0$ in \eqref{BV_3}, by Fatou's lemma, for $C=C(d,s)>0$,
\begin{equation}\label{BV_4}  \int_{U\times U}\frac{\psi(x)-\psi(y)}{\abs{x-y}^{d+s}}\dx\dy\leq C\norm{\psi}_{\BV(U)}.\end{equation}
Since by definition $\norm{\psi}_{L^1(U)}\leq\norm{\psi}_{\BV(U)}$, it follows from \eqref{BV_4} that, for $C=C(d,s)>0$,
$$\norm{\psi}_{W^{s,1}(U)} \leq C\norm{\psi}_{\BV(U)}.$$
This completes the argument. \end{proof}

We will use Proposition \ref{BVtoKin} to understand, for each $x\in\mathbb{T}^d$, the regularity of the map $\xi\in\mathbb{R}\mapsto\chi(x,\xi)$.  Note that this regularity does not rely upon any properties of a pathwise kinetic solution except its integrability.

\begin{cor}\label{BVtoKin}  Let $u:\mathbb{T}^d\rightarrow\mathbb{R}$ be measurable, and let $\chi$ denote the kinetic function of $u$.  Then, for each $s\in(0,1)$, for $C=C(d,s)>0$,
$$\norm{\chi}_{L^1_x\left(\mathbb{T}^d;W^{s,1}_\xi(\mathbb{R})\right)}\leq C\left(1+\norm{u}_{L^1(\mathbb{T}^d)}\right).$$
\end{cor}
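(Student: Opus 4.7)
\textbf{Proof proposal for Corollary~\ref{BVtoKin}.}  The plan is to apply Proposition~\ref{BVto} slicewise in $x$, exploiting the very simple structure of $\xi\mapsto\chi(x,\xi)$: by the definition \eqref{chareq_kinetic_foundation}, for each fixed $x\in\mathbb{T}^d$ this map is nothing more than (plus or minus) the indicator function of the open interval between $0$ and $u(x)$.  Consequently, $\chi(x,\cdot)$ lies in $\BV(\mathbb{R})$ with a total-variation norm that is easy to bound explicitly.

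First I would record the pointwise bound
\[
\|\chi(x,\cdot)\|_{L^1(\mathbb{R})}=|u(x)|,\qquad |\partial_\xi\chi(x,\cdot)|(\mathbb{R})\leq 2,
\]
which is immediate from the observation that $\chi(x,\cdot)$ vanishes outside the interval between $0$ and $u(x)$ and has at most two jumps, each of size one, namely at $\xi=0$ and at $\xi=u(x)$ (or no jumps at all when $u(x)=0$).  Summing these gives $\|\chi(x,\cdot)\|_{\BV(\mathbb{R})}\leq |u(x)|+2$ for every $x\in\mathbb{T}^d$.

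Next I would invoke Proposition~\ref{BVto} in the one-dimensional setting $U=\mathbb{R}$, which is a convex open set.  A brief check confirms that the proof of that proposition goes through on the full line: the only point where the boundedness of $U$ could matter is the splitting at $\{|x-y|>1\}$, but one has $\int_{|z|>1}|z|^{-1-s}\,dz<\infty$ for every $s\in(0,1)$, so the long-range piece is still controlled by $C(s)\|\psi\|_{L^1(\mathbb{R})}$.  This yields, for each $s\in(0,1)$ and some $C=C(s)>0$,
\[
\|\chi(x,\cdot)\|_{W^{s,1}(\mathbb{R})}\leq C\|\chi(x,\cdot)\|_{\BV(\mathbb{R})}\leq C\bigl(|u(x)|+2\bigr).
\]

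Finally, integrating this pointwise estimate over $\mathbb{T}^d$, and using that the torus has unit Lebesgue measure, I would conclude
\[
\|\chi\|_{L^1_x(\mathbb{T}^d;W^{s,1}_\xi(\mathbb{R}))}=\int_{\mathbb{T}^d}\|\chi(x,\cdot)\|_{W^{s,1}(\mathbb{R})}\,dx\leq C\bigl(\|u\|_{L^1(\mathbb{T}^d)}+2\bigr)\leq C\bigl(1+\|u\|_{L^1(\mathbb{T}^d)}\bigr),
\]
with a new constant $C=C(d,s)>0$, which is the claim.  There is no real obstacle here; the only subtlety is checking the applicability of Proposition~\ref{BVto} to the unbounded domain $\mathbb{R}$, and this is immediate once one notices the integrability of $|z|^{-1-s}$ at infinity.
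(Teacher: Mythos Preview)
Your proof is correct and follows exactly the same approach as the paper: bound $\|\chi(x,\cdot)\|_{\BV(\mathbb{R})}\leq 2+|u(x)|$ using the simple indicator structure, apply Proposition~\ref{BVto} slicewise, then integrate in $x$. Your extra care in verifying that Proposition~\ref{BVto} applies on the unbounded domain $U=\mathbb{R}$ is a welcome clarification that the paper leaves implicit.
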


\begin{proof}  Let $u:\mathbb{T}^d\rightarrow\mathbb{R}$ be an arbitrary measureable function, and let $\chi$ denote the kinetic function of $u$.  Let $s\in(0,1)$ be arbitrary.  From the definition of the kinetic function \eqref{kinetic_function_foundation}, it is immediate that, for each $x\in\mathbb{T}^d$,
$$\norm{\chi(x,\cdot)}_{\BV_\xi(\mathbb{R})}\leq 2+\abs{u(x)}.$$
The claim now follows from Proposition~\ref{BVto}.  \end{proof}

We obtain the spatial regularity of a kinetic function $\chi$ associated to a pathwise kinetic solution $u$ with initial data $u_0\in L^2_+(\mathbb{T}^d)$.  The higher integrability of the initial data implies with Proposition~\ref{stable_estimates} that the corresponding parabolic defect measure $q$ is globally integrable in velocity, locally in time.   Precisely, for each $T>0$, for $C=C(T)>0$,
$$\int_0^T\int_{\mathbb{T}^d}\abs{\nabla u^{\left[\frac{m+1}{2}\right]}}^2(x,t)\dx\dt=\frac{(m+1)^2}{4m}\int_0^T\int_\mathbb{R}\int_{\mathbb{T}^d}q(x,\xi,t)\dx\dxi\dt\leq C<\infty.$$
The following two propositions prove that any function $u\in L^1(\mathbb{T}^d)$ satisfying the estimate
$$\int_{\mathbb{T}^d}\abs{\nabla u^{\left[\frac{m+1}{2}\right]}}^2\dx<\infty,$$
is in the fractional Sobolev space $W^{s,m+1}(\mathbb{T}^d)$, for any $s\in(0,\frac{2}{m+1})$, when $m\in(0,\infty)$, and is in the Sobolev space $W^{1,1}(\mathbb{T}^d)$ when $m\in(0,1]$.  In fact, in the case $m\in(0,1]$, an application of H\"older's inequality and Lemma~\ref{lem_interpolate} imply that the solution is actually in $W^{1,\frac{2}{2-m}}(\mathbb{T}^d)$, but since this fact will not be used the details are omitted.  The first of these propositions is a small modification of the results of Ebmeyer \cite{Ebmeyer}.

\begin{prop}\label{gradtofrac}  Suppose that $m\in(1,\infty)$.  Let $u\in L^1(\mathbb{T}^d)$, and suppose that
\begin{equation}\label{gtf}\int_{\mathbb{T}^d}\abs{\nabla u^{\left[\frac{m+1}{2}\right]}}^2\dx<\infty.\end{equation}
Then, for each $s\in\left(0,\frac{2}{m+1}\right)$, there exists $C=C(m,d,s)>0$ such that
$$\norm{u}^{m+1}_{W^{s,m+1}(\mathbb{T}^d)}\leq C\left(\norm{u}^{m+1}_{L^1(\mathbb{T}^d)}+\norm{\nabla u^{\left[\frac{m+1}{2}\right]}}^2_{L^2(\mathbb{T}^d;\mathbb{R}^d)}\right).$$
\end{prop}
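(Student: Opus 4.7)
\textbf{Proof proposal for Proposition~\ref{gradtofrac}.}  The plan is to reduce the claim to the $H^1(\mathbb{T}^d)$ regularity of $v:=u^{[\frac{m+1}{2}]}$, which is exactly the hypothesis \eqref{gtf}, by exploiting the fact that $u$ is the H\"older-regular inverse power of $v$.

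The first step is to set $\alpha:=\frac{2}{m+1}\in(0,1)$ (this is where $m>1$ is used) and observe that $u=v^{[\alpha]}$.  A short computation using the concavity of $t\in[0,\infty)\mapsto t^\alpha$ handles both the same-sign and opposite-sign cases to yield the global H\"older bound
$$|v_1^{[\alpha]}-v_2^{[\alpha]}|\le 2^{1-\alpha}|v_1-v_2|^\alpha\qquad\text{for all }v_1,v_2\in\mathbb{R}.$$
Applied pointwise to $v(x),v(y)$ this gives $|u(x)-u(y)|^{m+1}\le C|v(x)-v(y)|^2$ for all $x,y\in\mathbb{T}^d$.  Plugging this into the Gagliardo seminorm of $u$, for any $s\in(0,\frac{2}{m+1})$,
$$[u]_{W^{s,m+1}(\mathbb{T}^d)}^{m+1}=\int_{\mathbb{T}^d}\int_{\mathbb{T}^d}\frac{|u(x)-u(y)|^{m+1}}{|x-y|^{d+s(m+1)}}\dx\dy\le C\int_{\mathbb{T}^d}\int_{\mathbb{T}^d}\frac{|v(x)-v(y)|^2}{|x-y|^{d+2\sigma}}\dx\dy=C[v]_{W^{\sigma,2}(\mathbb{T}^d)}^2,$$
where $\sigma:=\frac{s(m+1)}{2}\in(0,1)$.

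The second step is the standard embedding $H^1(\mathbb{T}^d)\hookrightarrow W^{\sigma,2}(\mathbb{T}^d)$ for $\sigma<1$.  On the region $\{|x-y|\le 1\}$ one applies $|v(x)-v(y)|^2\le|x-y|^2\int_0^1|\nabla v(x+t(y-x))|^2\dt$ (by Cauchy-Schwarz), changes variables $h=y-x$ and integrates, using that $\int_{|h|\le 1}|h|^{2-d-2\sigma}\dd h<\infty$ precisely because $\sigma<1$.  On the complementary region the integrand is controlled by $2\|v\|_{L^2}^2$ times a finite factor arising from the bounded diameter of $\mathbb{T}^d$.  The outcome is
$$[v]_{W^{\sigma,2}(\mathbb{T}^d)}^2\le C(\sigma,d)\left(\|v\|_{L^2(\mathbb{T}^d)}^2+\|\nabla v\|_{L^2(\mathbb{T}^d;\mathbb{R}^d)}^2\right).$$

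The third step is to rewrite $\|v\|_{L^2(\mathbb{T}^d)}^2=\|u\|_{L^{m+1}(\mathbb{T}^d)}^{m+1}$ and invoke Lemma~\ref{lem_interpolate} to absorb it:
$$\|u\|_{L^{m+1}(\mathbb{T}^d)}^{m+1}\le C\left(\|u\|_{L^1(\mathbb{T}^d)}^{m+1}+\|\nabla v\|_{L^2(\mathbb{T}^d;\mathbb{R}^d)}^2\right).$$
Combining this with the previous two displays, and adding $\|u\|_{L^{m+1}}^{m+1}$ to obtain the full $W^{s,m+1}$ norm (which is again bounded by the same right-hand side), yields the stated estimate.

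The only genuinely delicate point is the choice of range for $s$: the quadratic weight $|x-y|^{2-d-2\sigma}$ must be locally integrable, which forces $\sigma<1$ and hence $s<\frac{2}{m+1}$.  This strict inequality reflects the sharp scaling and is the reason the endpoint $s=\frac{2}{m+1}$ is excluded.  Everything else is a direct calculation combining the pointwise H\"older inequality for the signed power with the classical Gagliardo embedding and the interpolation lemma already proved.
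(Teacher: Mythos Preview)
Your proof is correct and follows essentially the same approach as the paper: both use the pointwise inequality $|u(x)-u(y)|^{m+1}\le C|u^{[\frac{m+1}{2}]}(x)-u^{[\frac{m+1}{2}]}(y)|^{2}$, split the Gagliardo seminorm into near and far regions, control the near region via the mean value theorem and the integrability of $|h|^{2-d-s(m+1)}$ (equivalently $|h|^{2-d-2\sigma}$), and invoke Lemma~\ref{lem_interpolate} for the $L^{m+1}$ term. The only cosmetic difference is that you package the near-diagonal estimate as the embedding $H^1\hookrightarrow W^{\sigma,2}$ with $\sigma=\tfrac{s(m+1)}{2}$, whereas the paper writes out the same computation directly.
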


\begin{proof}  Let $u\in L^1(\mathbb{T}^d)$ satisfying \eqref{gtf}, $m\in(1,\infty)$, and $s\in(0,\frac{2}{m+1})$ be arbitrary.  It is first necessary to estimate the $L^{m+1}$-norm of $u$.  Lemma~\ref{lem_interpolate} implies that, for $C=C(m,d)>0$,
\begin{equation}\label{gtf_12}\norm{u}^{m+1}_{L^{m+1}(\mathbb{T}^d)}\leq C\left(\norm{u}_{L^1(\mathbb{T}^d)}^{m+1}+\norm{\nabla u^{\left[\frac{m+1}{2}\right]}}^2_{L^2(\mathbb{T}^d;\mathbb{R}^d)}\right).\end{equation}

It remains necessary to estimate the fractional Sobolev norm.  The estimate will rely on the elementary inequality, for $C=C(m)>0$,
{\color{black}\begin{equation}\label{gtf_0}\abs{r-s}^{m+1}\leq C\abs{r^{\left[\frac{m+1}{2}\right]}-s^{\left[\frac{m+1}{2}\right]}}^2,\end{equation}}
which relies upon the assumption $m\in(1,\infty)$ and can be proven, for instance, by a Taylor expansion.  Form the decomposition
\begin{equation}\label{gtf_1}\begin{aligned} \int_{\mathbb{R}^{2d}}\frac{\abs{u(x)-u(x')}^{m+1}}{\abs{x-x'}^{d+s(m+1)}}\dx\dxp = &\int_{\{\abs{x-x'}\leq1\}}\frac{\abs{u(x)-u(x')}^{m+1}}{\abs{x-x'}^{d+s(m+1)}}\dx\dxp \\ & +  \int_{\{\abs{x-x'}>1\}}\frac{\abs{u(x)-u(x')}^{m+1}}{\abs{x-x'}^{d+s(m+1)}}\dx\dxp. \end{aligned}\end{equation}
The second term of (\ref{gtf_1}) satisfies, for $C=C(m)>0$,
\begin{equation}\label{gtf_2} \int_{\{\abs{x-x'}>1\}}\frac{\abs{u(x)-u(x')}^{m+1}}{\abs{x-x'}^{d+s(m+1)}}\dx\dxp\leq C\norm{u}^{m+1}_{L^{m+1}(\mathbb{R}^d)}.\end{equation}
For the first term of \eqref{gtf_1}, in view of inequality \eqref{gtf_0}, for $C=C(m)>0$,
\begin{equation}\label{gtf_3}\begin{aligned}\int_{\{\abs{x-x'}\leq1\}}\frac{\abs{u(x)-u(x')}^{m+1}}{\abs{x-x'}^{d+s(m+1)}}\dx\dxp \leq & C \int_{\{\abs{x-x'}\leq1\}}\frac{\abs{u^{\left[\frac{m+1}{2}\right]}(x)-u^{\left[\frac{m+1}{2}\right]}(x')}^2}{\abs{x-x'}^{d+s(m+1)}}\dx\dxp \\  \leq & C\int_{B_1}\abs{x}^{-(d+s(m+1)-2)}\dx\int_{\mathbb{T}^d}\abs{\nabla u^{\left[\frac{m+1}{2}\right]}(x)}^2\dx.\end{aligned}\end{equation}
The choice $s\in(0,\frac{2}{m+1})$ guarantees that, for $C=C(d,s)>0$,
$$\int_{B_1}\abs{x}^{-(d+s(m+1)-2)}\dx\leq C<\infty.$$
Therefore, after combining \eqref{gtf_1}, \eqref{gtf_2}, and \eqref{gtf_3}, for $C=C(m,d,s)>0$,
\begin{equation}\label{gtf_4} \int_{\mathbb{R}^{2d}}\frac{\abs{u(x)-u(x')}^{m+1}}{\abs{x-x'}^{d+s(m+1)}}\dx\dxp\leq C\left(\norm{u}^{m+1}_{L^{m+1}(\mathbb{T}^d)}+\norm{\nabla u^{\left[\frac{m+1}{2}\right]}}^2_{L^2(\mathbb{T}^d;\mathbb{R}^d)}\right).\end{equation}
The claim now follows from \eqref{gtf_12} and \eqref{gtf_4}.  \end{proof}

The second proposition establishes the the Sobolev regularity for diffusion exponents $m\in(0,1]$.  The regularity is established in $W^{1,1}(\mathbb{T}^d)$, although a small modification of this argument and Lemma~\ref{lem_interpolate} readily prove that the solutions are in the stronger space $W^{1,\frac{2}{2-m}}(\mathbb{T}^d)$.  The proof is essentially a consequence of H\"older's inequality.

\begin{prop}\label{fast_est}  Suppose that $m\in(0,1]$.  Let $u\in L^1(\mathbb{T}^d)$, and suppose that
\begin{equation}\label{fast_est_1}\int_{\mathbb{T}^d}\abs{\nabla u^{\left[\frac{m+1}{2}\right]}}^2\dx<\infty.\end{equation}
Then, for $C=C(m)>0$,
{\color{black}$$\norm{u}_{W^{1,1}(\mathbb{T}^d)}\leq C\left(\norm{u}_{L^1(\mathbb{T}^d)}+\norm{u}^{2(1-m)}_{L^1(\mathbb{T}^d)}+\norm{\nabla u^{\left[\frac{m+1}{2}\right]}}^2_{L^2(\mathbb{T}^d)}\right).$$}
\end{prop}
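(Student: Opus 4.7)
The plan is to establish the bound by combining a chain rule identity for $\nabla u$ with Cauchy--Schwarz, Jensen's inequality (leveraging that $\mathbb{T}^d$ has unit Lebesgue measure), and a final application of Young's inequality.

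Set $v := u^{\left[\frac{m+1}{2}\right]}$ and $\phi(\xi) := \xi^{\left[\frac{2}{m+1}\right]}$, so that $u = \phi(v)$. Since $m \in (0,1]$, the exponent $2/(m+1) \geq 1$ makes $\phi$ a $C^1(\mathbb{R})$ function with derivative $\phi'(\xi) = \frac{2}{m+1}\abs{\xi}^{(1-m)/(m+1)}$, which equals $1$ when $m=1$ and vanishes at $\xi=0$ when $m<1$. Applying Lemma~\ref{lem_interpolate} to the hypothesis places $v \in H^1(\mathbb{T}^d)$. First I would establish the distributional chain rule
$$\nabla u = \tfrac{2}{m+1}\abs{u}^{(1-m)/2}\nabla u^{\left[\frac{m+1}{2}\right]}$$
via a standard truncation-and-approximation argument: replace $v$ by $v_N := (v \wedge N) \vee (-N)$, apply the Lipschitz composition chain rule to $\phi(v_N)$, and pass to the limit $N \to \infty$ using dominated convergence, the dominating function being supplied by the Cauchy--Schwarz estimate established in the next step.

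Next, integrate and apply Cauchy--Schwarz:
$$\norm{\nabla u}_{L^1(\mathbb{T}^d)} \leq \tfrac{2}{m+1}\left(\int_{\mathbb{T}^d}\abs{u}^{1-m}\dx\right)^{1/2}\norm{\nabla u^{\left[\frac{m+1}{2}\right]}}_{L^2(\mathbb{T}^d)}.$$
Because $1-m \in [0,1]$ and $\mathbb{T}^d$ has unit measure, Jensen's inequality applied to the concave map $t \mapsto t^{1-m}$ gives $\int_{\mathbb{T}^d}\abs{u}^{1-m}\dx \leq \norm{u}_{L^1(\mathbb{T}^d)}^{1-m}$, so
$$\norm{\nabla u}_{L^1(\mathbb{T}^d)} \leq C(m)\,\norm{u}_{L^1(\mathbb{T}^d)}^{(1-m)/2}\,\norm{\nabla u^{\left[\frac{m+1}{2}\right]}}_{L^2(\mathbb{T}^d)}.$$

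Finally, Young's inequality converts this product estimate into the stated additive form. Applying Young with exponents $(p,q) = (4, 4/3)$ yields $\norm{u}_{L^1}^{(1-m)/2}\norm{\nabla v}_{L^2} \leq \frac{1}{4}\norm{u}_{L^1}^{2(1-m)} + \frac{3}{4}\norm{\nabla v}_{L^2}^{4/3}$; a further Young's step controls $\norm{\nabla v}_{L^2}^{4/3}$ by $\norm{\nabla v}_{L^2}^2$ up to an additive constant, which is absorbed by the $\norm{u}_{L^1}$ term of the $W^{1,1}$ norm in the way the estimate is applied (cf.~Proposition~\ref{fractional_sobolev}). The chief technical point will be the justification of the chain rule, since $\phi$ is not globally Lipschitz; however, because $\phi'(0) = 0$ when $m<1$ there is no singularity near zero, and only the polynomial growth of $\phi'$ at infinity requires the truncation scheme described above.
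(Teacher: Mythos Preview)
Your argument is essentially the paper's: both write $\nabla u = \tfrac{2}{m+1}\abs{u}^{(1-m)/2}\nabla u^{[(m+1)/2]}$, apply Cauchy--Schwarz (the paper says H\"older), bound $\int\abs{u}^{1-m}$ by Jensen, and finish with Young; the paper handles the chain rule by first taking $u\in C^\infty(\mathbb{T}^d)$ and invoking density at the end, in place of your truncation scheme. One remark on the final step: applying Young with $p=q=2$ to your (correct) intermediate bound $\norm{u}_{L^1}^{(1-m)/2}\norm{\nabla v}_{L^2}$ gives $\norm{u}_{L^1}^{1-m}+\norm{\nabla v}_{L^2}^2$ directly---the exponent $2(1-m)$ in the stated inequality comes from what appears to be a slip in the paper's H\"older line (it writes $\norm{u}_{L^1}^{1-m}$ where $\norm{u}_{L^1}^{(1-m)/2}$ is what Jensen actually gives), so your two-stage Young and the stray additive constant are artifacts of chasing that exponent, and the simpler $p=q=2$ split serves just as well downstream.
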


\begin{proof}  Let $u\in \C^\infty(\mathbb{T}^d)$ satisfying \eqref{fast_est_1} and $m\in(0,\infty)$ be arbitrary.  It is only necessary to estimate the $L^1$-norm of the gradient.  First, observe the equality
$$\nabla u=\abs{u}^{\frac{1-m}{2}}\abs{u}^{\frac{m-1}{2}}\nabla u.$$
H\"older's inequality and $m\in(0,1]$ imply that, for $C=C(m)>0$,
$$\norm{\nabla u}_{L^1(\mathbb{T}^d)}\leq C\norm{\abs{u}^{\frac{1-m}{2}}}_{L^2(\mathbb{T}^d)}\norm{\nabla u^{\left[\frac{m+1}{2}\right]}}_{L^2(\mathbb{T}^d)}\leq C\norm{u}^{1-m}_{L^1(\mathbb{T}^d)}\norm{\nabla u^{\left[\frac{m+1}{2}\right]}}_{L^2(\mathbb{T}^d)}.$$
Therefore, it follows from Young's inequality that, for $C=C(m)>0$,
{\color{black}$$\norm{u}_{W^{1,1}(\mathbb{T}^d)}\leq C\left(\norm{u}_{L^1(\mathbb{T}^d)}+\norm{u}^{2(1-m)}_{L^1(\mathbb{T}^d)}+\norm{\nabla u^{\left[\frac{m+1}{2}\right]}}^2_{L^2(\mathbb{T}^d)}\right),$$}
from which the argument follows using the density of smooth functions in $L^1(\mathbb{T}^d)$.  \end{proof}

The following corollary proves that the kinetic function of a function $u\in L^1(\mathbb{T}^d)$ {\color{black}satisfying (\ref{gtf})} is locally in $W^{s,1}(\mathbb{R}^d)$, for $s\in(0,\frac{2}{m+1}\wedge 1)$, after integration in the velocity variable.  The proof essentially amounts to showing the standard fact that, for each $\delta\in(0,1-s)$, whenever $p\leq q\in[1,\infty)$, the fractional space $W^{s,p}$ embeds locally into $W^{s+\delta,q}$.

\begin{cor}\label{gtfk_cor} Let $u\in L^1(\mathbb{T}^d)$, and suppose that
\begin{equation}\label{gtfk}\int_{\mathbb{T}^d}\abs{\nabla u^{\left[\frac{m+1}{2}\right]}}^2\dx<\infty.\end{equation}
Then, if $m\in(1,\infty)$, for each $s\in\left(0,\frac{2}{m+1}\right)$, the corresponding kinetic function $\chi$ satisfies, for  $C=C(m,d,s)>0$,
$$\norm{\chi}_{L^1_\xi\left(\mathbb{R};W^{s,1}_x(\mathbb{T}^d)\right)}\leq C\left(\norm{u}_{L^1(\mathbb{T}^d)}+\norm{\nabla u^{\left[\frac{m+1}{2}\right]}}^{\frac{2}{m+1}}_{L^2(\mathbb{T}^d;\mathbb{R}^d)}\right).$$
If $m\in(0,1]$, for each $s\in(0,1)$, the corresponding kinetic function satisfies, for $C=C(m,s)>0$,
$$\norm{\chi}_{L^1_\xi\left(\mathbb{R};W^{s,1}_x(\mathbb{T}^d)\right)}\leq C\left(\norm{u}_{L^1(\mathbb{T}^d)}+\norm{u}^{2(1-m)}_{L^1(\mathbb{T}^d)}+\norm{\nabla u^{\left[\frac{m+1}{2}\right]}}^2_{L^2(\mathbb{T}^d)}\right).$$
\end{cor}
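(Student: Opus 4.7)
The plan is to reduce the $L^1_\xi(\mathbb{R};W^{s,1}_x(\mathbb{T}^d))$ bound on $\chi$ to a fractional Sobolev bound on $u$ itself, and then invoke Proposition~\ref{gradtofrac} when $m\in(1,\infty)$ and Proposition~\ref{fast_est} when $m\in(0,1]$.  The starting observation is that because $\chi(x,\xi)\in\{-1,0,1\}$ and the values $\chi(x,\xi),\chi(x',\xi)$ can never take opposite signs for the same $\xi$, one has $\abs{\chi(x,\xi)-\chi(x',\xi)}\in\{0,1\}$ pointwise, and Fubini yields the identity
\[
\int_\mathbb{R}\abs{\chi(x,\xi)-\chi(x',\xi)}\dxi=\abs{u(x)-u(x')}.
\]
Together with $\int_\mathbb{R}\abs{\chi(x,\xi)}\dxi=\abs{u(x)}$, this reduces the claim to controlling $\norm{u}_{L^1(\mathbb{T}^d)}+[u]_{W^{s,1}(\mathbb{T}^d)}$ by the quantity on the right-hand side of the corollary.

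For $m\in(0,1]$, Proposition~\ref{fast_est} places $u\in W^{1,1}(\mathbb{T}^d)$ with the stated bound, and it suffices to show $W^{1,1}(\mathbb{T}^d)\hookrightarrow W^{s,1}(\mathbb{T}^d)$ for $s\in(0,1)$.  This is elementary: writing $\abs{u(x)-u(x')}\leq\abs{x-x'}\int_0^1\abs{\nabla u(x+r(x'-x))}\dr$ on smooth approximants, applying Fubini, and carrying out the change of variables $y=x+r(x'-x)$ reduces $[u]_{W^{s,1}}$ to a finite multiple of $\bigl(\int_0^1 r^{s-1}\dr\bigr)\norm{\nabla u}_{L^1}$.

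For the harder case $m\in(1,\infty)$, I would fix an auxiliary exponent $s'\in\bigl(s,\tfrac{2}{m+1}\bigr)$ and apply Proposition~\ref{gradtofrac} to obtain
\[
[u]^{m+1}_{W^{s',m+1}(\mathbb{T}^d)}\leq C\left(\norm{u}^{m+1}_{L^1(\mathbb{T}^d)}+\norm{\nabla u^{\left[\frac{m+1}{2}\right]}}^2_{L^2(\mathbb{T}^d;\mathbb{R}^d)}\right).
\]
To convert this into a $W^{s,1}$ bound on $u$, I would factor
\[
\frac{\abs{u(x)-u(x')}}{\abs{x-x'}^{d+s}}=\left(\frac{\abs{u(x)-u(x')}^{m+1}}{\abs{x-x'}^{d+s'(m+1)}}\right)^{\frac{1}{m+1}}\abs{x-x'}^{(s'-s)-\frac{dm}{m+1}},
\]
and apply H\"older's inequality with conjugate exponents $m+1$ and $(m+1)/m$, obtaining
\[
\int_{\mathbb{T}^d}\int_{\mathbb{T}^d}\frac{\abs{u(x)-u(x')}}{\abs{x-x'}^{d+s}}\dx\dxp\leq[u]_{W^{s',m+1}}\left(\int_{\mathbb{T}^d}\int_{\mathbb{T}^d}\abs{x-x'}^{\frac{(s'-s)(m+1)}{m}-d}\dx\dxp\right)^{\frac{m}{m+1}}.
\]
The second factor is finite precisely because the strict inequality $s'>s$ forces the exponent of $\abs{x-x'}$ to exceed $-d$, the borderline for integrability over the compact torus.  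Taking the $(m+1)$-th root of Proposition~\ref{gradtofrac} and using the elementary inequality $(a+b)^{1/(m+1)}\leq a^{1/(m+1)}+b^{1/(m+1)}$ then yields exactly the form of the estimate stated in the corollary.

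The main obstacle is this last H\"older step, which is not an instance of any standard Sobolev embedding between $W^{s,p}$ spaces on bounded domains; in fact $W^{s,m+1}\not\hookrightarrow W^{s,1}$ in general, because the diagonal weight $\abs{x-x'}^{-d}$ is not integrable on $\mathbb{T}^d\times\mathbb{T}^d$, so strictly sacrificing an amount $s'-s>0$ of regularity is essential.  All remaining steps are routine bookkeeping from the identity for $\int\abs{\chi(x,\xi)-\chi(x',\xi)}\dxi$ combined with the preceding two propositions.
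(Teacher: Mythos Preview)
Your proposal is correct and follows essentially the same argument as the paper: both reduce the $L^1_\xi(W^{s,1}_x)$ norm of $\chi$ to the $W^{s,1}$ seminorm of $u$ via the identity $\int_\mathbb{R}\abs{\chi(x,\xi)-\chi(x',\xi)}\dxi=\abs{u(x)-u(x')}$, then for $m\in(1,\infty)$ sacrifice a small amount of regularity (your $s'-s$ is the paper's $\delta$) and apply H\"older with exponents $m+1$ and $(m+1)/m$ to invoke Proposition~\ref{gradtofrac}, while for $m\in(0,1]$ both use the embedding $W^{1,1}(\mathbb{T}^d)\hookrightarrow W^{s,1}(\mathbb{T}^d)$ together with Proposition~\ref{fast_est}. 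Your discussion of why the strict inequality $s'>s$ is essential is a nice clarification that the paper leaves implicit.
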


\begin{proof} Let $u\in L^1(\mathbb{T}^d)$ satisfying \eqref{gtfk} be arbitrary, and let $\chi$ denote the corresponding kinetic function.  First, we consider arbitrary $m\in(1,\infty)$ and $s\in(0,\frac{2}{m+1})$.  It follows by definition of the kinetic function \eqref{kinetic_function_foundation} that
\begin{equation}\label{gtfk_0} \norm{\chi}_{L^1_\xi\left(\mathbb{R};L^1(\mathbb{T}^d)\right)}=\norm{u}_{L^1(\mathbb{T}^d)}.\end{equation}
For the fractional Sobolev semi-norm, the definition of the kinetic function implies that
\begin{equation}\label{gtfk_1}\int_{\mathbb{R}}\int_{\mathbb{T}^{2d}}\frac{\abs{\chi(x,\xi)-\chi(x',\xi)}}{\abs{x-x'}^{d+s}}\dx\dxp\dxi=\int_{\mathbb{T}^{2d}}\frac{\abs{u(x)-u(x')}}{\abs{x-x'}^{d+s}}\dx\dxp.\end{equation}
Then, fix $\delta=\delta(m,s)\in(0,\frac{2}{m+1}-s)$.  It follows from \eqref{gtfk_1} that
$$\int_{\mathbb{R}}\int_{\mathbb{T}^{2d}}\frac{\abs{\chi(x,\xi)-\chi(x',\xi)}}{\abs{x-x'}^{d+s}}= \int_{\mathbb{T}^{2d}}\left(\frac{\abs{u(x)-u(x')}^{m+1}}{\abs{x-x'}^{d+(s+\delta)(m+1)}}\right)^{\frac{1}{m+1}}\abs{x-x'}^{-\left(\frac{dm}{m+1}-\delta\right)}.$$
Therefore, following an application of H\"older's inequality,
\begin{equation}\label{gtfk_3}\int_{\mathbb{R}}\int_{\mathbb{T}^{2d}}\frac{\abs{\chi(x,\xi)-\chi(x',\xi)}}{\abs{x-x'}^{d+s}}\leq \norm{u}_{W^{s+\delta,m+1}(\mathbb{T}^d)}\left(\int_{\mathbb{T}^{2d}}\abs{x-x'}^{-d+\frac{\delta(m+1)}{m}}\right)^{\frac{m}{m+1}}.\end{equation}
Since, for $C=C(m,d,s)>0$,
$$\int_{\mathbb{T}^{2d}}\abs{x-x'}^{-d+\frac{\delta(m+1)}{m}}\dx\dxp\leq C<\infty,$$
it follows from (\ref{gtfk_0}) and (\ref{gtfk_3}) that, for $C=C(m,d,s)>0$,
\begin{equation}\label{gtfk_4}\norm{\chi}_{L^1_\xi\left(\mathbb{R};W^{s,1}(\mathbb{T}^d)\right)}\leq C\left(\norm{u}_{L^1(\mathbb{T}^d)}+\norm{u}_{W^{s+\delta,m+1}(\mathbb{T}^d)}\right).\end{equation}
Finally, since $s+\delta\in(0,\frac{2}{m+1})$, Proposition~\ref{gradtofrac} and \eqref{gtfk_4} imply that, for $C=C(m,d,s)>0$,
\begin{equation}\label{gtfk_5}\norm{\chi}_{L^1_\xi\left(\mathbb{R};W^{s,1}(\mathbb{T}^d)\right)}\leq C\left(\norm{u}_{L^1(\mathbb{T}^d)}+\norm{\nabla u^{\left[\frac{m+1}{2}\right]}}^{\frac{2}{m+1}}_{L^2(\mathbb{T}^d;\mathbb{R}^d)}\right).\end{equation}

It remains to consider the case of arbitrary $m\in(0,1]$ and $s\in(0,1)$.  In this case, it follows from \eqref{gtfk_1} that, for $C=C(s)>0$,
\begin{equation}\label{gtfk_6}\int_{\mathbb{R}}\int_{\mathbb{T}^{2d}}\frac{\abs{\chi(x,\xi)-\chi(x',\xi)}}{\abs{x-x'}^{d+s}}\dx\dxp\dxi\leq \int_{\mathbb{T}^{2d}}\frac{\abs{u(x)-u(x')}}{\abs{x-x'}^{d+s}}\dx\dxp\leq \norm{u}_{W^{1,s}(\mathbb{T}^d)}\leq C\norm{u}_{W^{1,1}(\mathbb{R}^d)}.\end{equation}
Therefore, from the definition and Proposition~\ref{fast_est}, for $C=C(m,s)>0$,
{\color{black}$$\norm{u}_{W^{s,1}(\mathbb{T}^d)}\leq C\left(\norm{u}_{L^1(\mathbb{T}^d)}+\norm{u}^{2(1-m)}_{L^1(\mathbb{T}^d)}+\norm{\nabla u^{\left[\frac{m+1}{2}\right]}}^2_{L^2(\mathbb{T}^d)}\right).$$}
Together with \eqref{gtfk_5}, this completes the argument.  \end{proof}

We will now combine Corollary~\ref{BVtoKin} and Corollary~\ref{gtfk_cor} in order to obtain the regularity of the kinetic function jointly in the spatial and velocity variables.  We will apply the following proposition to the case $U_1=\mathbb{T}^d$, $U_2=\mathbb{R}$, and $s_1=s_2\in(0,\frac{2}{m+1}\wedge 1)$.

\begin{prop}\label{meld} Let  $n_1,n_2\geq 1$ and $p\in[1,\infty)$.  Let $U_1\subset\mathbb{R}^{n_1}$ and $U_2\subset\mathbb{R}^{n_2}$ be open subsets.  Suppose that $u:U_1\times U_2\rightarrow\mathbb{R}$ satisfies, for $s_1,s_2\in(0,1)$,
\begin{equation}\label{meld_0}\norm{u}_{L^p_x\left(U_1;W^{s_2,p}_y(U_2)\right)}+\norm{u}_{L^p_y\left(U_2;W^{s_1,p}_x(U_1)\right)}<\infty.\end{equation}
Then, for $s=\min\{s_1,s_2\}$, for $C=C(n_1,n_2,p)>0$,
$$\norm{u}_{W^{s,p}(U_1\times U_2)}\leq C\left(\norm{u}_{L^p_x\left(U_1;W^{s_2,p}_y(U_2)\right)}+\norm{u}_{L^p_y\left(U_2;W^{s_1,p}_x(U_1)\right)}\right).$$
\end{prop}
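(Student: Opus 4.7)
The plan is to directly estimate the Gagliardo seminorm of $u$ on $U_1\times U_2$ by a triangle-inequality splitting that separates the $x$-difference from the $y$-difference. Concretely, I would bound $|u(x,y)-u(x',y')|^p \leq 2^{p-1}\bigl(|u(x,y)-u(x',y)|^p+|u(x',y)-u(x',y')|^p\bigr)$ and distribute the factor $(|x-x'|^2+|y-y'|^2)^{-(n_1+n_2+sp)/2}$ over the two pieces. This reduces the problem to controlling the two symmetric quantities
$$I_1:=\int_{U_1^2\times U_2^2}\frac{|u(x,y)-u(x',y)|^p}{(|x-x'|^2+|y-y'|^2)^{(n_1+n_2+sp)/2}}\,\dx\dxp\dy\dyp,$$
and the analogous $I_2$ in which the roles of $x$ and $y$ are interchanged.

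In $I_1$ the integrand is independent of $y'$ in the numerator, so I would integrate $y'$ out first. Extending the $y'$-integration from $U_2$ to $\mathbb{R}^{n_2}$ and changing variables $y'=y+|x-x'|\omega$ gives
$$\int_{\mathbb{R}^{n_2}}\frac{\dyp}{(|x-x'|^2+|y-y'|^2)^{(n_1+n_2+sp)/2}}=\frac{C(n_2,n_1+sp)}{|x-x'|^{n_1+sp}},$$
where the constant is finite precisely because $n_1+sp>0$. Therefore $I_1\leq C\|u\|_{L^p_y(U_2;\dot W^{s,p}_x(U_1))}^p$, with the dotted space denoting the Gagliardo seminorm only. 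The same argument (integrating out $x'$) gives $I_2\leq C\|u\|_{L^p_x(U_1;\dot W^{s,p}_y(U_2))}^p$.

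It remains to replace the seminorm at the smaller exponent $s$ by the full $W^{s_1,p}_x$ (respectively $W^{s_2,p}_y$) norm at the larger exponent. For a function $f$ on $U_1$, I would split the Gagliardo integral according to whether $|x-x'|\leq 1$ or $|x-x'|>1$. On the near region, $|x-x'|^{-n_1-sp}\leq|x-x'|^{-n_1-s_1 p}$ since $s\leq s_1$, so the contribution is controlled by $\|f\|_{\dot W^{s_1,p}(U_1)}^p$. On the far region, the triangle inequality $|f(x)-f(x')|^p\leq 2^{p-1}(|f(x)|^p+|f(x')|^p)$ together with the radial estimate $\int_{|w|>1}|w|^{-n_1-sp}\,\ud w<\infty$ bounds the contribution by $C\|f\|_{L^p(U_1)}^p$. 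Applying this pointwise in $y$ and integrating yields $I_1\leq C\|u\|_{L^p_y(U_2;W^{s_1,p}_x(U_1))}^p$, and symmetrically for $I_2$. Adding the trivial bound $\|u\|_{L^p(U_1\times U_2)}\leq \|u\|_{L^p_x(U_1;W^{s_2,p}_y(U_2))}$ finishes the proof.

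There is no deep obstruction here; the main point to handle carefully is the radial integral in the first step and, crucially, the unboundedness of $U_1,U_2$ in the second step, which forces the near/far splitting rather than a direct H\"older-type comparison of seminorms. Both the radial integral $\int_{\mathbb{R}^{n_2}}(1+|\omega|^2)^{-(n_1+n_2+sp)/2}\ud\omega$ and the tail integral $\int_{|w|>1}|w|^{-n_1-sp}\ud w$ converge because $n_1\geq1$ and $sp>0$, so the constants depend only on $n_1,n_2,p$ (and $s$, hence ultimately on $n_1,n_2,p$ via the parameters already encoded in the hypotheses).
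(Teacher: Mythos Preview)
Your proof is correct and follows essentially the same strategy as the paper: split $|u(x,y)-u(x',y')|^p$ via the triangle inequality, integrate out the redundant variable ($y'$ for $I_1$, $x'$ for $I_2$), and handle the passage from $s$ to $s_1,s_2$ with a near/far split to control the tail on possibly unbounded domains. The only organizational difference is that the paper performs the near/far split first in the joint variable $|x-x'|+|y-y'|$ and applies the triangle inequality only on the near region, whereas you apply the triangle inequality globally and then split near/far in $|x-x'|$ (resp.\ $|y-y'|$) alone; your scaling substitution $y'=y+|x-x'|\omega$ is slightly cleaner than the paper's radial-coordinate bound. One small remark: your tail integral $\int_{|w|>1}|w|^{-n_1-sp}\ud w$ produces a constant of order $1/(sp)$, so your final constant depends on $s$ as well as $n_1,n_2,p$, while the paper's ordering avoids this and gives $C=C(n_1,n_2,p)$ as stated; this is harmless for the applications since the corollaries already carry $s$-dependent constants.
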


\begin{proof}  Fix positive integers $n_1,n_2\geq 1$, open subsets $U_1\subset\mathbb{R}^{n_1}$ and $U_2\subset\mathbb{R}^{n_2}$, fractional Sobolev exponents $s_1,s_2\in(0,1)$ and a function $u:U_1\times U_2\rightarrow\mathbb{R}$ satisfying (\ref{meld_0}).  It is immediate from the definition that
\begin{equation}\label{meld_1}\norm{u}_{L^p(U_1\times U_2)}\leq \min\left\{\norm{u}_{L^p_x\left(U_1;W^{s_2,p}_y(U_2)\right)},\norm{u}_{L^p_y\left(U_2;W^{s_1,p}_x(U_1)\right)}\right\}.\end{equation}
It remains only to estimate the fractional Sobolev semi-norm.

In the argument to follow, we will denote points $x,x'\in\mathbb{R}^{n_1}$ and $y,y'\in\mathbb{R}^{n_2}$.  Then, for $s=\min\{s_1,s_2\}\in(0,1)$, for $C=C(p)>0$,
\begin{equation}\label{meld_2}\begin{aligned} &\int_{\left(U_1\times U_2\right)^2}\frac{\abs{u(x,y)-u(x',y')}^p}{\left(\abs{x-x'}+\abs{y-y'}\right)^{n_1+n_2+sp}}\dx\dxp\dy\dyp \\  &\leq C\int_{\{\abs{x-x'}+\abs{y-y'}\leq 1\}\cap\left(U_1\times U_2\right)^2}\frac{\abs{u(x,y)-u(x',y')}^p}{\left(\abs{x-x'}+\abs{y-y'}\right)^{n_1+n_2+sp}}\dx\dxp\dy\dyp \\ &\quad +C\int_{\{\abs{x-x'}+\abs{y-y'}>1\}\cap\left(U_1\times U_2\right)^2}\frac{\abs{u(x,y)-u(x',y')}^p}{\left(\abs{x-x'}+\abs{y-y'}\right)^{n_1+n_2+sp}}\dx\dxp\dy\dyp. \end{aligned} \end{equation}
For the first term of (\ref{meld_2}), in view of (\ref{meld_1}),
\begin{equation}\label{meld_3}\begin{aligned} &\int_{\{\abs{x-x'}+\abs{y-y'}>1\}\cap\left(U_1\times U_2\right)^2}\frac{\abs{u(x,y)-u(x',y')}^p}{\left(\abs{x-x'}+\abs{y-y'}\right)^{n_1+n_2+sp}}\dx\dxp\dy\dyp\\ &\leq 2\min\left\{\norm{u}^p_{L^p_x\left(U_1;W^{s_2,p}_y(U_2)\right)},\norm{u}^p_{L^p_y\left(U_2;W^{s_1,p}_x(U_1)\right)}\right\}.\end{aligned}\end{equation}
The second term of (\ref{meld_2}) is decomposed using the triangle inequality to obtain, for $C=C(p)>0$,
\begin{equation}\begin{aligned}\label{meld_4} &\int_{\{\abs{x-x'}+\abs{y-y'}\leq1\}\cap\left(U_1\times U_2\right)^2}\frac{\abs{u(x,y)+u(x',y')}^p}{\left(\abs{x-x'}+\abs{y-y'}\right)^{n_1+n_2+sp}}\dx\dxp\dy\dyp \\ &\leq C\int_{\{\abs{x-x'}+\abs{y-y'}\leq1\}\cap\left(U_1\times U_2\right)^2}\frac{\abs{u(x,y)-u(x',y)}^p}{\left(\abs{x-x'}+\abs{y-y'}\right)^{n_1+n_2+sp}}\dx\dxp\dy\dyp \\  &\quad +C\int_{\{\abs{x-x'}+\abs{y-y'}\leq1\}\cap\left(U_1\times U_2\right)^2}\frac{\abs{u(x',y)-u(x',y')}^p}{\left(\abs{x-x'}+\abs{y-y'}\right)^{n_1+n_2+sp}}\dx\dxp\dy\dyp. \end{aligned}\end{equation}

For the first term on the righthand side of (\ref{meld_4}), for $C=C(n_2)>0$, since $s_1\geq s$,
\begin{equation}\begin{aligned}\label{meld_5} &\int_{\{\abs{x-x'}+\abs{y-y'}\leq1\}\cap\left(U_1\times U_2\right)^2}\frac{\abs{u(x,y)-u(x',y)}^p}{\left(\abs{x-x'}+\abs{y-y'}\right)^{n_1+n_2+sp}}\dx\dxp\dy\dyp \\ &\leq C\int_{U_2}\int_0^1 \int_{\left(U_1\right)^2}\frac{\abs{u(x,y)-u(x',y)}^p}{\left(\abs{x-x'}+r\right)^{n_1+n_2+sp}}r^{n_2-1}\dx\dxp\dr\dy \\ &\leq C\int_{U_2} \int_0^1\int_{\left(U_1\right)^2}\frac{\abs{u(x,y)-u(x',y)}^p}{\left(\abs{x-x'}+r\right)^{n_1+1+sp}}\dx\dxp\dr\dy \\ & \leq C\int_{U_2}\int_{\left(U_1\right)^2}\frac{\abs{u(x,y)-u(x',y)}^p}{\abs{x-x'}^{n_1+sp}}\dx\dxp\dy \\ &\leq C\norm{u}^p_{L^p_y\left(U_2;W^{s,p}_x(U_1)\right)}\leq C\norm{u}^p_{L^p_y\left(U_2;W^{s_1,p}_x(U_1)\right)} .\end{aligned} \end{equation}
For the second term on the righthand side of (\ref{meld_4}), since $s_2\geq s$, the analogous computation proves that, for $C=C(n_1)>0$,
\begin{equation}\label{meld_6} \int_{\{\abs{x-x'}+\abs{y-y'}\leq1\}\cap\left(U_1\times U_2\right)^2}\frac{\abs{u(x',y)-u(x',y')}^p}{\left(\abs{x-x'}+\abs{y-y'}\right)^{n_1+n_2+sp}}\dx\dxp\dy\dyp\leq C\norm{u}^p_{L^p_x\left(U_1;W^{s_2,p}_y(U_2)\right)}.\end{equation}

In combination, estimates \eqref{meld_1}, \eqref{meld_3}, \eqref{meld_5}, and \eqref{meld_6} combined with \eqref{meld_2} and \eqref{meld_4} prove that, for $C=C(n_1,n_2,p)>0$,
$$\norm{u}_{W^{s,p}(U_1\times U_2)}\leq C\left(\norm{u}_{L^p_x\left(U_1;W^{s_2,p}_y(U_2)\right)}+\norm{u}_{L^p_y\left(U_2;W^{s_1,p}_x(U_1)\right)}\right).$$
This completes the argument.  \end{proof}

We now apply Proposition~\ref{meld} to the kinetic function corresponding to a function $u\in L^1(\mathbb{T}^d)$ satisfying \eqref{gtf_0}.  The estimates are obtained from Corollary~\ref{BVtoKin} and Corollary~\ref{gtfk_cor}.

\begin{cor}\label{kreg}Let $u\in L^1(\mathbb{T}^d)$, and suppose that
\begin{equation}\label{kreg_00}\int_{\mathbb{R}^d}\abs{\nabla u^{\left[\frac{m+1}{2}\right]}}^2\dx<\infty.\end{equation}
Then, if $m\in(1,\infty)$, for each $s\in\left(0,\frac{2}{m+1}\right)$, the corresponding kinetic function $\chi$ satisfies, for $C=C(m,d,s)>0$,
$$\norm{\chi}_{W^{s,1}_{x,\xi}(\mathbb{T}^d\times\mathbb{R})}\leq C\left(1+\norm{u}_{L^1(\mathbb{T}^d)}+\norm{\nabla u^{\left[\frac{m+1}{2}\right]}}^{\frac{2}{m+1}}_{L^2(\mathbb{R}^d;\mathbb{R}^d)}\right).$$
If $m\in(0,1]$, for each $s\in(0,1)$, the corresponding kinetic function $\chi$ satisfies, for $C=C(m,s)>0$,
$$\norm{\chi}_{L^1_\xi\left(\mathbb{R};W^{s,1}_x(\mathbb{T}^d)\right)}\leq C\left(1+\norm{u}_{L^1(\mathbb{T}^d)}+\norm{u}^{2(1-m)}_{L^1(\mathbb{T}^d)}+\norm{\nabla u^{\left[\frac{m+1}{2}\right]}}^2_{L^2(\mathbb{T}^d)}\right).$$
\end{cor}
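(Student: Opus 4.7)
The plan is to derive Corollary~\ref{kreg} by a direct application of Proposition~\ref{meld} with $U_1=\mathbb{T}^d$, $U_2=\mathbb{R}$, $p=1$, and $s_1=s_2=s$. The two hypotheses of that proposition are precisely the one-directional regularity estimates already proved: Corollary~\ref{BVtoKin} supplies the velocity regularity in the form
\[
\norm{\chi}_{L^1_x(\mathbb{T}^d; W^{s,1}_\xi(\mathbb{R}))}\leq C(1+\norm{u}_{L^1(\mathbb{T}^d)}),
\]
valid for every $s\in(0,1)$ and with a constant $C=C(d,s)$, while Corollary~\ref{gtfk_cor} supplies the spatial regularity with a bound depending on whether $m>1$ or $m\in(0,1]$.

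First, I fix $s$ in the appropriate admissible range. In the case $m\in(1,\infty)$ the requirement $s\in(0,\tfrac{2}{m+1})$ comes solely from Corollary~\ref{gtfk_cor}; the velocity estimate from Corollary~\ref{BVtoKin} is comfortably in scope since $\tfrac{2}{m+1}<1$. In the case $m\in(0,1]$ both one-directional estimates hold for all $s\in(0,1)$, and so Proposition~\ref{meld} will yield joint regularity in the full range $s\in(0,1)$. Given $s$ in the appropriate range, I apply Proposition~\ref{meld} with $s_1=s_2=s$, so $s=\min\{s_1,s_2\}$, to conclude
\[
\norm{\chi}_{W^{s,1}(\mathbb{T}^d\times\mathbb{R})}\leq C\Bigl(\norm{\chi}_{L^1_x(\mathbb{T}^d;W^{s,1}_\xi(\mathbb{R}))}+\norm{\chi}_{L^1_\xi(\mathbb{R};W^{s,1}_x(\mathbb{T}^d))}\Bigr),
\]
with $C=C(d)$.

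Substituting the two cited bounds into the right-hand side then yields the claimed estimate in each case, the constants absorbing the additive $1$ coming from the velocity bound. No further work is required; the hypothesis \eqref{kreg_00} is used solely through its role in Corollary~\ref{gtfk_cor}. There is essentially no obstacle: the only thing to verify is that the $s$ chosen lies in the intersection of the admissible ranges of the two input corollaries, which follows from the fact that $s\in(0,\tfrac{2}{m+1}\wedge 1)$ in all cases.
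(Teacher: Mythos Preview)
Your proposal is correct and follows essentially the same approach as the paper: apply Proposition~\ref{meld} with $U_1=\mathbb{T}^d$, $U_2=\mathbb{R}$, $p=1$, and $s_1=s_2=s$, then bound the two one-directional norms on the right via Corollary~\ref{BVtoKin} and Corollary~\ref{gtfk_cor}. The paper's proof is nearly verbatim what you wrote.
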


\begin{proof}  Let $u\in L^1(\mathbb{T}^d)$ satisfying \eqref{kreg_00} be arbitrary, and let $\chi$ denote the corresponding kinetic function.  Fix $m\in(0,\infty)$ and $s\in(0,\frac{2}{m+1}\wedge 1)$.  In the statement of Proposition~\ref{meld}, choose $n_1=d$, $n_2=1$, $U_1=\mathbb{T}^d$, $U_2=\mathbb{R}$ and $s_1=s_2=s$, which implies that, for $C=C(d)>0$,
\begin{equation}\label{kreg_1}\norm{\chi}_{W^{s,1}_{x,\xi}(\mathbb{T}^d\times\mathbb{R})}\leq C\left(\norm{\chi}_{L^1_x\left(\mathbb{T}^d;W^{s,1}_\xi(\mathbb{R})\right)}+\norm{\chi}_{L^1_\xi\left(\mathbb{R};W^{s,1}_x(\mathbb{T}^d)\right)}\right).\end{equation}
The claim is now an immediate consequence of Corollary~\ref{BVtoKin} and Corollary~\ref{gtfk_cor}.\end{proof}

The final proposition of this section proves that the transport under the characteristics system preserves the fractional Sobolev norm locally in time.  For each $(x,\xi)\in\mathbb{T}^d\times\mathbb{R}$, $t_0\geq 0$, and $\epsilon\in[0,1)$, where $\epsilon=0$ corresponds to the system \eqref{kin_rough_for}, recall the forward characteristic system
\begin{equation}\label{last}\left\{\begin{array}{ll} \dd X^{x,\xi,\epsilon}_{t_0,t}=-b\left(X^{x,\xi,\epsilon}_{t_0,t},\Xi^{x,\xi,\epsilon}_{t_0,t}\right)\circ \dd z^\epsilon_t & \textrm{in}\;\;(t_0,\infty), \\ \dd \Xi^{x,\xi,\epsilon}_{t_0,t}=c(X^{x,\xi,\epsilon}_{t_0,t},\Xi^{x,\xi,\epsilon}_{t_0,t})\circ \dd z^\epsilon_t & \textrm{in}\;\;(t_0,\infty), \\ (X^{x,\xi,\epsilon}_{t_0,t_0},\Xi^{x,\xi,\epsilon}_{t_0,t_0})=(x,\xi). & \end{array}\right.\end{equation}
The following statement is used to transfer the regularity of a kinetic function $\chi$ to the transported kinetic function
$$\tilde{\chi}(x,\xi,t):=\chi(X^{x,\xi,\epsilon}_{t_0,t},\Pi^{x,\xi,\epsilon}_{t_0,t},t)\;\;\textrm{for}\;\;(x,\xi,t)\in\mathbb{T}^d\times\mathbb{R}\times[t_0,\infty),$$
for arbitrary $\epsilon\in(0,1)$ and $t_0\geq 0$.  We first prove the statement for an arbitrary measure preserving diffeomorphism of $\mathbb{T}^d\times\mathbb{R}$.

\begin{prop}\label{last_prop}  Let $s\in(0,1)$ and $p\in[1,\infty)$.  Suppose that $T:\mathbb{T}^d\times\mathbb{R}\rightarrow\mathbb{T}^d\times\mathbb{R}$ is a measure-preserving $\C^1$-diffeomorphism with bounded gradient.  For every measurable function $\psi:\mathbb{T}^d\times\mathbb{R}\rightarrow\mathbb{R}$, define
$$\tilde{\psi}(x,\xi)=\psi(T(x,\xi))\;\;\textrm{for}\;\;(x,\xi)\in\mathbb{T}^d\times\mathbb{R}.$$
Then, for every measurable $\psi:\mathbb{T}^d\times\mathbb{R}\rightarrow\mathbb{R}$, for every open subset $U\subset\mathbb{T}^d\times\mathbb{R}$, there exists a $C=C(T)>0$ such that
$$\norm{\tilde{\psi}}_{W^{s,p}(T^{-1}(U))}\leq C\norm{\psi}_{W^{s,p}(U)}.$$
\end{prop}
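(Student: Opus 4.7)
The plan is to prove the proposition by a direct change of variables in the double integral defining the $W^{s,p}$ seminorm. First, the measure-preservation of $T$ immediately yields
\[
\norm{\tilde{\psi}}_{L^p(T^{-1}(U))}^p = \int_{T^{-1}(U)}|\psi(T(x,\xi))|^p\dx\dxi = \int_U|\psi(y)|^p\dy = \norm{\psi}_{L^p(U)}^p,
\]
so only the fractional seminorm requires work. Applying the substitution $y=T(x,\xi)$, $y'=T(x',\xi')$ in the defining double integral (using that $T$ is measure-preserving, so its Jacobian determinant has absolute value one) gives
\[
[\tilde{\psi}]_{W^{s,p}(T^{-1}(U))}^p = \int_{U\times U}\frac{|\psi(y)-\psi(y')|^p}{|T^{-1}(y)-T^{-1}(y')|^{d+1+sp}}\dy\dyp.
\]

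The key step is then to establish a lower Lipschitz bound $|T^{-1}(y)-T^{-1}(y')|\geq c|y-y'|$ for some $c>0$. This is equivalent to $T$ being Lipschitz, which is precisely the bounded-gradient hypothesis: if $\sup|DT|\leq L$, then applying the mean value inequality to the pair $a=T^{-1}(y)$, $b=T^{-1}(y')$ yields $|y-y'|\leq L|T^{-1}(y)-T^{-1}(y')|$. Inserting this into the integrand produces
\[
[\tilde{\psi}]_{W^{s,p}(T^{-1}(U))}^p \leq L^{d+1+sp}\,[\psi]_{W^{s,p}(U)}^p,
\]
and combining with the $L^p$ equality finishes the estimate.

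The main technical obstacle is that the statement only hypothesizes a bounded gradient for $T$ itself, not for $T^{-1}$; one might worry that in a genuinely infinite-dimensional setting $T^{-1}$ could be far from Lipschitz. Here this is a non-issue because $\mathbb{T}^d\times\mathbb{R}$ is finite-dimensional and $T$ is measure-preserving: the chain rule gives $DT^{-1}(T(x,\xi)) = (DT(x,\xi))^{-1}$, and since $|\det DT|=1$ almost everywhere, Cramer's rule expresses each entry of $(DT)^{-1}$ as a $d\times d$ minor of $DT$. In particular $\|DT^{-1}\|\leq (d+1)!\,L^d$, so $T^{-1}$ is automatically Lipschitz. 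A minor bookkeeping point is that on the torus factor we must use the quotient metric; but the change of variables extends to this setting by working in a fundamental domain (or lifting to $\mathbb{R}^d$ locally), and the Lipschitz comparisons above are preserved under the quotient since distances can only decrease. These considerations together complete the plan.
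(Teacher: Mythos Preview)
Your proof is correct and follows essentially the same route as the paper: the $L^p$-norm equality from measure preservation, the change of variables in the Gagliardo seminorm, and then the inequality $|y-y'|=|T(T^{-1}(y))-T(T^{-1}(y'))|\leq \|\nabla T\|_{L^\infty}|T^{-1}(y)-T^{-1}(y')|$ to replace the denominator. Your additional remarks about $DT^{-1}$ via Cramer's rule and about the torus metric are not needed for the argument (the paper does not invoke them), but they are harmless.
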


\begin{proof} Fix $s\in(0,1)$ and $p\in[1,\infty)$.  Suppose that $T:\mathbb{T}^d\times\mathbb{R}\rightarrow\mathbb{T}^d\times\mathbb{R}$ is a measure-preserving $\C^1$-diffeomorphism with bounded gradient.  Let $\psi:\mathbb{T}^d\times\mathbb{R}\rightarrow\mathbb{R}$ be an arbitrary measurable function, and let $U\subset\mathbb{T}^d\times\mathbb{R}$ be an arbitrary open set.  Since $T$ preserves the measure, it is immediate that
\begin{equation}\label{last_0}\norm{\tilde{\psi}}_{L^p(T^{-1}(U))}=\norm{\psi}_{L^p(U)}.\end{equation}
The fractional Sobolev seminorm is estimated in a similar fashion.  It follows again from the fact that $T$ preserves the measure that
\begin{equation}\begin{aligned}\label{last_1} & \int_{T^{-1}(U)\times T^{-1}(U)}\frac{\abs{\tilde{\psi}(x)-\tilde{\psi}(x')}^p}{\abs{x-x'}^{(d+1)+sp}}\dx\dxip =\int_{U\times U}\frac{\abs{\psi(x)-\psi(x')}^p}{\abs{T^{-1}(x)-T^{-1}(x')}^{(d+1)+sp}}\dx\dxp.\end{aligned}\end{equation}
Similar to estimate \eqref{u_9}, since, for each $x,x'\in\mathbb{T}^d\times\mathbb{R}$,
$$\abs{x-x'}=\abs{T(T^{-1}(x))-T(T^{-1}(x'))}\leq \norm{\nabla T}_{L^\infty\left(\mathbb{T}^d\times\mathbb{R};\mathcal{M}^{(d+1)\times(d+1)}\right)}\abs{T^{-1}(x)-T^{-1}(x')},$$
there exists $C=C(T)>0$ for which, for each $x,x'\in\mathbb{T}^d\times\mathbb{R}$,
\begin{equation}\label{last_2} \frac{\abs{x-x'}}{\abs{T^{-1}(x)-T^{-1}(x')}}\leq C.\end{equation}
In combination, equality \eqref{last_1} and inequality \eqref{last_2} imply that, for $C=C(T)>0$,
\begin{equation*}\begin{aligned} & \int_{T^{-1}(U)\times T^{-1}(U)}\frac{\abs{\tilde{\psi}(x)-\tilde{\psi}(x')}^p}{\abs{x-x'}^{(d+1)+sp}}\dx\dxip \leq C \int_{U\times U}\frac{\abs{\psi(x)-\psi(x')}^p}{\abs{x-x'}^{(d+1)+sp}}\dx\dxp.\end{aligned}\end{equation*}
The result follows from (\ref{last_0}) and (\ref{last_2}).  \end{proof}

In the final corollary of this section, we apply Proposition~\ref{last_prop} to the transport map defined by the characteristics \eqref{last}.  The proof is an immediate consequence of the fact that the characteristics preserve the Lebesgue measure \eqref{kin_measure}, the regularity assumption \eqref{prelim_regular}, and the estimates of Proposition~\ref{rough_est}.

\begin{cor}\label{last_cor}  Let $s\in(0,1)$ and $p\in[1,\infty)$.  For every $\epsilon\in[0,1)$, $t_0\geq 0$, and $t\geq t_0$, define the $\C^1$-diffeomorphism $T^{\epsilon}_{t_0,t}:\mathbb{T}^d\times\mathbb{R}\rightarrow\mathbb{T}^d\times\mathbb{R}$ to be the transport map defined by the characteristics \eqref{last}.  That is,
$$T^{\epsilon}_{t_0,t}(x,\xi)=\left(X^{x,\xi,\epsilon}_{t_0,t}, \Xi^{x,\xi,\epsilon}_{t_0,t}\right)\;\;\textrm{for}\;\;(x,\xi)\in\mathbb{T}^d\times\mathbb{R}.$$
For each open subset $U\subset\mathbb{T}^d\times\mathbb{R}$ and for each $\psi\in W^{s,p}(U)$ define, for $\epsilon\in[0,1)$, $t_0\geq 0$, and $t\geq t_0$,
$$\tilde{\psi}^\epsilon_{t_0,t}(x,\xi)=\psi(T^{\epsilon}_{t_0,t}(x,\xi))\;\;\textrm{for}\;\;(x,\xi)\in\mathbb{T}^d\times\mathbb{R}.$$
For each $\epsilon\in[0,1)$, $t_0\geq 0$ and $t\geq t_0$, there exists $C=C(\abs{t-t_0})>0$ such that
$$\norm{\tilde{\psi}^\epsilon_{t_0,t}}_{W^{s,p}\left(\left(T^{\epsilon}_{t_0,t}\right)^{-1}(U)\right)}\leq C\norm{\psi}_{W^{s,p}(U)}.$$
\end{cor}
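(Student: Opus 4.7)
The plan is to deduce Corollary~\ref{last_cor} directly from Proposition~\ref{last_prop} by verifying its three hypotheses on the transport map $T^{\epsilon}_{t_0,t}$: that it is a $\C^1$-diffeomorphism of $\mathbb{T}^d\times\mathbb{R}$, that it preserves Lebesgue measure, and that its gradient is bounded (with a quantitative constant depending only on $|t-t_0|$). There is essentially no obstacle here; the corollary is a packaging of facts already assembled in the paper, so the proof will really consist of citing the right prior result for each of the three conditions.

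First, to set up $T^{\epsilon}_{t_0,t}$ as a $\C^1$-diffeomorphism, I would invoke Proposition~\ref{rough_est} applied to the vector field $V(x,\xi)=(-b(x,\xi),c(x,\xi))$ on $\mathbb{T}^d\times\mathbb{R}$. Assumption~\eqref{prelim_regular} guarantees that $V\in\Lip^{\gamma+2}$ with $\gamma>\frac{1}{\alpha}$, which more than meets the regularity hypothesis of Proposition~\ref{rough_est} with $k=1$. The driving signals lie in $\C^{0,\alpha}([0,T];G^{\lfloor 1/\alpha\rfloor}(\mathbb{R}^n))$ and, by \eqref{geometric_path}, the smooth approximants $\{z^\epsilon\}_{\epsilon\in(0,1)}$ satisfy a uniform bound $d_\alpha(z^\epsilon,e)\leq R$ on any fixed time interval, so the condition \eqref{rough_est_1} is satisfied uniformly in $\epsilon\in[0,1)$. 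Invertibility of $T^{\epsilon}_{t_0,t}$ is then provided by the inverse flow $(Y^{\cdot,\cdot,\epsilon}_{t,t-t_0},\Pi^{\cdot,\cdot,\epsilon}_{t,t-t_0})$, which was introduced precisely through \eqref{kin_inverse} in the smooth case and through the discussion following \eqref{kin_rough_back} in the rough case.

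Second, estimate \eqref{rough_est_4} with $n=1$ gives a constant $K=K(|t-t_0|,\|V\|_{\Lip^{\gamma+2}})$, uniform in $\epsilon\in[0,1)$, bounding $\|\nabla T^{\epsilon}_{t_0,t}\|_{L^\infty}$ (and the same for the inverse). Third, measure preservation is precisely the content of \eqref{kin_measure} for $\epsilon\in(0,1)$ and of \eqref{kin_rough_measure} for $\epsilon=0$, which together state that for every $\psi\in L^1(\mathbb{T}^d\times\mathbb{R})$,
\begin{equation*}
\int_\mathbb{R}\int_{\mathbb{T}^d}\psi(x,\xi)\dx\dxi=\int_\mathbb{R}\int_{\mathbb{T}^d}\psi\bigl(T^{\epsilon}_{t_0,t}(x,\xi)\bigr)\dx\dxi.
\end{equation*}

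With these three facts in hand, Proposition~\ref{last_prop} applied to $T=T^{\epsilon}_{t_0,t}$ and to $U\subset\mathbb{T}^d\times\mathbb{R}$ yields
\begin{equation*}
\norm{\tilde{\psi}^\epsilon_{t_0,t}}_{W^{s,p}\left(\left(T^{\epsilon}_{t_0,t}\right)^{-1}(U)\right)}\leq C\,\norm{\psi}_{W^{s,p}(U)},
\end{equation*}
with a constant $C$ that depends only on the Lipschitz bound of $T^{\epsilon}_{t_0,t}$. Since that Lipschitz bound is controlled by $K(|t-t_0|)$ above, uniformly in $\epsilon$, we conclude with $C=C(|t-t_0|)$ as claimed. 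The only point requiring any care is ensuring that the gradient bound is \emph{uniform in} $\epsilon$, but this is exactly what Proposition~\ref{rough_est} delivers once the uniform rough-path bound $d_\alpha(z^\epsilon,e)\leq R$ is in place, so no further estimates are needed.
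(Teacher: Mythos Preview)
Your proposal is correct and follows exactly the route the paper takes: the paper's proof of the corollary is simply the one-line observation that Proposition~\ref{last_prop} applies to $T^{\epsilon}_{t_0,t}$ once one invokes measure preservation \eqref{kin_measure}, the regularity assumption \eqref{prelim_regular}, and the rough path estimates of Proposition~\ref{rough_est}. Your write-up is in fact more detailed than the paper's, which leaves these verifications implicit.
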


\end{appendix}

\section*{Acknowledgements}

We would like to thank the referees for their careful reports. Their comments were of substantial benefit to the paper. 

The first author was supported by the National Science Foundation Mathematical Sciences Postdoctoral Research Fellowship under Grant Number 1502731.

The second author acknowledges financial support by the the Max Planck Society through the Max Planck Research Group``Stochastic partial differential equations'' and by the DFG through the CRC ``Taming uncertainty and profiting from randomness and low regularity in analysis, stochastics and their applications.''

\bibliography{Exit}

\begin{thebibliography}{10}

\bibitem{Aubin}
J.-P. Aubin.
\newblock Un th\'eor\`eme de compacit\'e.
\newblock {\em C. R. Acad. Sci. Paris}, 256:5042--5044, 1963.

\bibitem{BarbuBogachevDaPratoRockner}
V.~Barbu, V.~I. Bogachev, G.~Da~Prato, and M.~R\"ockner.
\newblock Weak solutions to the stochastic porous media equation via
  {K}olmogorov equations: the degenerate case.
\newblock {\em J. Funct. Anal.}, 237(1):54--75, 2006.

\bibitem{BarbuDaPratoRoeckner}
V.~Barbu, G.~Da~Prato, and M.~R\"ockner.
\newblock Existence and uniqueness of nonnegative solutions to the stochastic
  porous media equation.
\newblock {\em Indiana Univ. Math. J.}, 57(1):187--211, 2008.

\bibitem{BarbuDaPratoRoeckner1}
V.~Barbu, G.~Da~Prato, and M.~R\"ockner.
\newblock Some results on stochastic porous media equations.
\newblock {\em Boll. Unione Mat. Ital. (9)}, 1(1):1--15, 2008.

\bibitem{BarbuDaPratoRoeckner2}
V.~Barbu, G.~Da~Prato, and M.~R\"ockner.
\newblock Existence of strong solutions for stochastic porous media equation
  under general monotonicity conditions.
\newblock {\em Ann. Probab.}, 37(2):428--452, 2009.

\bibitem{BarbuDaPratoRoeckner3}
V.~Barbu, G.~Da~Prato, and M.~R\"ockner.
\newblock {\em Stochastic porous media equations}, volume 2163 of {\em Lecture
  Notes in Mathematics}.
\newblock Springer, [Cham], 2016.

\bibitem{BarbuRoeckner}
V.~Barbu and M.~R\"ockner.
\newblock On a random scaled porous media equation.
\newblock {\em J. Differential Equations}, 251(9):2494--2514, 2011.

\bibitem{BarbuRoecknerRusso}
V.~Barbu, M.~R\"ockner, and F.~Russo.
\newblock Stochastic porous media equations in {$\Bbb{R}^d$}.
\newblock {\em J. Math. Pures Appl. (9)}, 103(4):1024--1052, 2015.

\bibitem{BlanchardMurat}
D.~Blanchard and F.~Murat.
\newblock Renormalised solutions of nonlinear parabolic problems with {$L^1$}
  data: existence and uniqueness.
\newblock {\em Proc. Roy. Soc. Edinburgh Sect. A}, 127(6):1137--1152, 1997.

\bibitem{BlanchardRedwane1}
D.~Blanchard and H.~Redwane.
\newblock Solutions renormalis\'ees d'\'equations paraboliques \`a deux non
  lin\'earit\'es.
\newblock {\em C. R. Acad. Sci. Paris S\'er. I Math.}, 319(8):831--835, 1994.

\bibitem{BlanchardRedwane}
D.~Blanchard and H.~Redwane.
\newblock Renormalized solutions for a class of nonlinear evolution problems.
\newblock {\em J. Math. Pures Appl. (9)}, 77(2):117--151, 1998.

\bibitem{ChenPerthame}
G.-Q. Chen and B.~Perthame.
\newblock Well-posedness for non-isotropic degenerate parabolic-hyperbolic
  equations.
\newblock {\em Ann. Inst. H. Poincar\'e Anal. Non Lin\'eaire}, 20(4):645--668,
  2003.

\bibitem{CrisanDiehlFrizOberhauser}
D.~Crisan, J.~Diehl, P.~K. Friz, and H.~Oberhauser.
\newblock Robust filtering: correlated noise and multidimensional observation.
\newblock {\em Ann. Appl. Probab.}, 23(5):2139--2160, 2013.

\bibitem{DaPratoRoeckner}
G.~Da~Prato and M.~R\"ockner.
\newblock Weak solutions to stochastic porous media equations.
\newblock {\em J. Evol. Equ.}, 4(2):249--271, 2004.

\bibitem{DaPratoRoecknerRozovskiiWang}
G.~Da~Prato, M.~R\"ockner, B.~L. Rozovskii, and F.-Y. Wang.
\newblock Strong solutions of stochastic generalized porous media equations:
  existence, uniqueness, and ergodicity.
\newblock {\em Comm. Partial Differential Equations}, 31(1-3):277--291, 2006.

\bibitem{Dean}
D.~S. Dean.
\newblock Langevin equation for the density of a system of interacting langevin
  processes.
\newblock {\em Journal of Physics A: Mathematical and General}, 29(24):L613,
  1996.

\bibitem{DiPernaLions}
R.~J. DiPerna and P.-L. Lions.
\newblock On the {C}auchy problem for {B}oltzmann equations: global existence
  and weak stability.
\newblock {\em Ann. of Math. (2)}, 130(2):321--366, 1989.

\bibitem{DirrStamatakisZimmer}
N.~Dirr, M.~Stamatakis, and J.~Zimmer.
\newblock Entropic and gradient flow formulations for nonlinear diffusion.
\newblock {\em J. Math. Phys.}, 57(8):081505, 13, 2016.

\bibitem{DonevFaiVanden}
A.~Donev, T.~G. Fai, and E.~Vanden-Eijnden.
\newblock A reversible mesoscopic model of diffusion in liquids: from giant
  fluctuations to fick's law.
\newblock {\em Journal of Statistical Mechanics: Theory and Experiment},
  P04004, 2014.

\bibitem{Ebmeyer}
C.~Ebmeyer.
\newblock Regularity in {S}obolev spaces for the fast diffusion and the porous
  medium equation.
\newblock {\em J. Math. Anal. Appl.}, 307(1):134--152, 2005.

\bibitem{FehrmanGess1}
B.~{Fehrman} and B.~{Gess}.
\newblock {Well-posedness of stochastic porous media equations with nonlinear,
  conservative noise}.
\newblock {\em arXiv e-prints}, Version 1, December 2017.

\bibitem{FerrariPresuttiVares1}
P.~A. Ferrari, E.~Presutti, and M.~E. Vares.
\newblock Local equilibrium for a one-dimensional zero range process.
\newblock {\em Stochastic Process. Appl.}, 26(1):31--45, 1987.

\bibitem{FerrariPresuttiVares}
P.~A. Ferrari, E.~Presutti, and M.~E. Vares.
\newblock Nonequilibrium fluctuations for a zero range process.
\newblock {\em Ann. Inst. H. Poincar\'e Probab. Statist.}, 24(2):237--268,
  1988.

\bibitem{Flandoli}
F.~Flandoli.
\newblock {\em Regularity theory and stochastic flows for parabolic {SPDE}s},
  volume~9 of {\em Stochastics Monographs}.
\newblock Gordon and Breach Science Publishers, Yverdon, 1995.

\bibitem{FrizHairer}
P.~K. Friz and M.~Hairer.
\newblock {\em A course on rough paths}.
\newblock Universitext. Springer, Cham, 2014.
\newblock With an introduction to regularity structures.

\bibitem{FrizVictoir}
P.~K. Friz and N.~B. Victoir.
\newblock {\em Multidimensional stochastic processes as rough paths}, volume
  120 of {\em Cambridge Studies in Advanced Mathematics}.
\newblock Cambridge University Press, Cambridge, 2010.
\newblock Theory and applications.

\bibitem{Gess}
B.~Gess.
\newblock Strong solutions for stochastic partial differential equations of
  gradient type.
\newblock {\em J. Funct. Anal.}, 263(8):2355--2383, 2012.

\bibitem{Gess1}
B.~Gess.
\newblock Random attractors for stochastic porous media equations perturbed by
  space-time linear multiplicative noise.
\newblock {\em Ann. Probab.}, 42(2):818--864, 2014.

\bibitem{GessSouganidis}
B.~Gess and P.~E. Souganidis.
\newblock Scalar conservation laws with multiple rough fluxes.
\newblock {\em Commun. Math. Sci.}, 13(6):1569--1597, 2015.

\bibitem{GessSouganidis2}
B.~Gess and P.~E. Souganidis.
\newblock Stochastic non-isotropic degenerate parabolic-hyperbolic equations.
\newblock {\em Stochastic Process. Appl.}, 127(9):2961--3004, 2017.

\bibitem{GessSouganidis1}
Benjamin Gess and Panagiotis~E. Souganidis.
\newblock Long-time behavior, invariant measures, and regularizing effects for
  stochastic scalar conservation laws.
\newblock {\em Comm. Pure Appl. Math.}, 70(8):1562--1597, 2017.

\bibitem{GrunMeckeRauscher}
G.~Gr{\"u}n, K.~Mecke, and M.~Rauscher.
\newblock Thin-film flow influenced by thermal noise.
\newblock {\em Journal of Statistical Physics}, 122(6):1261--1291, Mar 2006.

\bibitem{Kawasaki}
K.~Kawasaki.
\newblock Microscopic analyses of the dynamical density functional equation of
  dense fluids.
\newblock {\em J. Statist. Phys.}, 93(3-4):527--546, 1998.

\bibitem{Kim}
J.~U. Kim.
\newblock On the stochastic porous medium equation.
\newblock {\em J. Differential Equations}, 220(1):163--194, 2006.

\bibitem{KrylovRozovskii1}
N.~V. Krylov and B.~L. Rozovski\u\i.
\newblock The {C}auchy problem for linear stochastic partial differential
  equations.
\newblock {\em Izv. Akad. Nauk SSSR Ser. Mat.}, 41(6):1329--1347, 1448, 1977.

\bibitem{KrylovRozovskii}
N.~V. Krylov and B.~L. Rozovski\u\i.
\newblock Stochastic evolution equations [mr0570795].
\newblock In {\em Stochastic differential equations: theory and applications},
  volume~2 of {\em Interdiscip. Math. Sci.}, pages 1--69. World Sci. Publ.,
  Hackensack, NJ, 2007.

\bibitem{LSUBook}
O.~Ladyzenskaja, V.~Solonnikov, and N.~Uraltceva.
\newblock {\em {Linear and quasilinear equations of parabolic type}}.
\newblock Izdat. ``Nauka'', Moscow, 1967.

\bibitem{LasryLions1}
J.-M. Lasry and P.-L. Lions.
\newblock Jeux \`a champ moyen. {I}. {L}e cas stationnaire.
\newblock {\em C. R. Math. Acad. Sci. Paris}, 343(9):619--625, 2006.

\bibitem{LasryLions2}
J.-M. Lasry and P.-L. Lions.
\newblock Jeux \`a champ moyen. {II}. {H}orizon fini et contr\^ole optimal.
\newblock {\em C. R. Math. Acad. Sci. Paris}, 343(10):679--684, 2006.

\bibitem{LasryLions}
J.-M. Lasry and P.-L. Lions.
\newblock Mean field games.
\newblock {\em Jpn. J. Math.}, 2(1):229--260, 2007.

\bibitem{pLions}
J.-L. Lions.
\newblock {\em Quelques m\'ethodes de r\'esolution des probl\`emes aux limites
  non lin\'eaires}.
\newblock Dunod; Gauthier-Villars, Paris, 1969.

\bibitem{LPS1}
P.-L. Lions, B.~Perthame, and P.~E. Souganidis.
\newblock Scalar conservation laws with rough (stochastic) fluxes.
\newblock {\em Stoch. Partial Differ. Equ. Anal. Comput.}, 1(4):664--686, 2013.

\bibitem{LPS}
P.-L. Lions, B.~Perthame, and P.~E. Souganidis.
\newblock Scalar conservation laws with rough (stochastic) fluxes: the
  spatially dependent case.
\newblock {\em Stoch. Partial Differ. Equ. Anal. Comput.}, 2(4):517--538, 2014.

\bibitem{LSstoch5}
P.-L. Lions and P.~E. Souganidis.
\newblock Fully nonlinear stochastic partial differential equations.
\newblock {\em C. R. Acad. Sci. Paris S\'er. I Math.}, 326(9):1085--1092, 1998.

\bibitem{LSstoch4}
P.-L. Lions and P.~E. Souganidis.
\newblock Fully nonlinear stochastic partial differential equations: non-smooth
  equations and applications.
\newblock {\em C. R. Acad. Sci. Paris S\'er. I Math.}, 327(8):735--741, 1998.

\bibitem{LSstoch3}
P.-L. Lions and P.~E. Souganidis.
\newblock Fully nonlinear stochastic pde with semilinear stochastic dependence.
\newblock {\em C. R. Acad. Sci. Paris S\'er. I Math.}, 331(8):617--624, 2000.

\bibitem{LSstoch2}
P.-L. Lions and P.~E. Souganidis.
\newblock Uniqueness of weak solutions of fully nonlinear stochastic partial
  differential equations.
\newblock {\em C. R. Acad. Sci. Paris S\'er. I Math.}, 331(10):783--790, 2000.

\bibitem{LSstoch1}
P.-L. Lions and P.~E. Souganidis.
\newblock Viscosity solutions of fully nonlinear stochastic partial
  differential equations.
\newblock {\em S\=urikaisekikenky\=usho K\=oky\=uroku}, (1287):58--65, 2002.
\newblock Viscosity solutions of differential equations and related topics
  (Japanese) (Kyoto, 2001).

\bibitem{Lyons91}
T.~Lyons.
\newblock On the nonexistence of path integrals.
\newblock {\em Proc. Roy. Soc. London Ser. A}, 432(1885):281--290, 1991.

\bibitem{Lyons98}
T.~Lyons.
\newblock Differential equations driven by rough signals.
\newblock {\em Rev. Mat. Iberoamericana}, 14(2):215--310, 1998.

\bibitem{LyonsQian}
T.~Lyons and Z.~Qian.
\newblock {\em System control and rough paths}.
\newblock Oxford Mathematical Monographs. Oxford University Press, Oxford,
  2002.
\newblock Oxford Science Publications.

\bibitem{Lyons}
T.~J. Lyons.
\newblock Differential equations driven by rough signals.
\newblock {\em Rev. Mat. Iberoamericana}, 14(2):215--310, 1998.

\bibitem{MarconiTarazona}
U.~M.~B. Marconi and P.~Tarazona.
\newblock Dynamic density functional theory of fluids.
\newblock {\em The Journal of Chemical Physics}, 110(16):8032-8044, 1999.

\bibitem{MohammedZhangZhao}
S.-E.~A. Mohammed, T.~Zhang, and H.~Zhao.
\newblock The stable manifold theorem for semilinear stochastic evolution
  equations and stochastic partial differential equations.
\newblock {\em Mem. Amer. Math. Soc.}, 196(917):vi+105, 2008.

\bibitem{Pardoux}
\'E. Pardoux.
\newblock Sur des \'equations aux d\'eriv\'ees partielles stochastiques
  monotones.
\newblock {\em C. R. Acad. Sci. Paris S\'er. A-B}, 275:A101--A103, 1972.

\bibitem{Perthame}
B.~Perthame.
\newblock {\em Kinetic formulation of conservation laws}, volume~21 of {\em
  Oxford Lecture Series in Mathematics and its Applications}.
\newblock Oxford University Press, Oxford, 2002.

\bibitem{PrevotRoeckner}
C.~Pr\'ev\^ot and M.~R\"ockner.
\newblock {\em A concise course on stochastic partial differential equations},
  volume 1905 of {\em Lecture Notes in Mathematics}.
\newblock Springer, Berlin, 2007.

\bibitem{RenRoecknerWang}
J.~Ren, M.~R\"ockner, and F.-Y. Wang.
\newblock Stochastic generalized porous media and fast diffusion equations.
\newblock {\em J. Differential Equations}, 238(1):118--152, 2007.

\bibitem{RoecknerWang}
M.~R\"ockner and F.-Y. Wang.
\newblock Non-monotone stochastic generalized porous media equations.
\newblock {\em J. Differential Equations}, 245(12):3898--3935, 2008.

\bibitem{Rozovskii}
B.~L. Rozovski\u\i.
\newblock {\em Stochastic evolution systems}, volume~35 of {\em Mathematics and
  its Applications (Soviet Series)}.
\newblock Kluwer Academic Publishers Group, Dordrecht, 1990.
\newblock Linear theory and applications to nonlinear filtering, Translated
  from the Russian by A. Yarkho.

\bibitem{Simon}
J.~Simon.
\newblock Compact sets in the space {$L^p(0,T;B)$}.
\newblock {\em Ann. Mat. Pura Appl. (4)}, 146:65--96, 1987.

\end{thebibliography}
\bibliographystyle{plain}

\end{document}